\documentclass[a4paper,leqno]{amsart}



\usepackage[final=true]{hyperref,bookmark}
\usepackage[usenames,dvipsnames]{xcolor} 


\usepackage[T1]{fontenc}
\usepackage[utf8]{inputenc}


\usepackage[final]{microtype}


\usepackage{mathtools}
\mathtoolsset{mathic}


\usepackage{amsmath,
amsfonts,
amsthm,
amssymb,
extarrows,
amsthm,
comment,
mathscinet,
eucal,
amssymb
}
\usepackage{
bm,
upgreek,
tensor,
mathrsfs,
textcomp,
bbm,
braket
}

\usepackage[shortlabels]{enumitem}

\usepackage[bbgreekl]{mathbbol}


\makeatletter
\DeclareFontFamily{OMX}{MnSymbolE}{}
\DeclareSymbolFont{MnLargeSymbols}{OMX}{MnSymbolE}{m}{n}
\SetSymbolFont{MnLargeSymbols}{bold}{OMX}{MnSymbolE}{b}{n}
\DeclareFontShape{OMX}{MnSymbolE}{m}{n}{
    <-6>  MnSymbolE5
   <6-7>  MnSymbolE6
   <7-8>  MnSymbolE7
   <8-9>  MnSymbolE8
   <9-10> MnSymbolE9
  <10-12> MnSymbolE10
  <12->   MnSymbolE12
}{}
\DeclareFontShape{OMX}{MnSymbolE}{b}{n}{
    <-6>  MnSymbolE-Bold5
   <6-7>  MnSymbolE-Bold6
   <7-8>  MnSymbolE-Bold7
   <8-9>  MnSymbolE-Bold8
   <9-10> MnSymbolE-Bold9
  <10-12> MnSymbolE-Bold10
  <12->   MnSymbolE-Bold12
}{}

\let\llangle\@undefined
\let\rrangle\@undefined
\DeclareMathDelimiter{\llangle}{\mathopen}%
                     {MnLargeSymbols}{'164}{MnLargeSymbols}{'164}
\DeclareMathDelimiter{\rrangle}{\mathclose}%
                     {MnLargeSymbols}{'171}{MnLargeSymbols}{'171}
\makeatother


\usepackage{tikz}
\usetikzlibrary{patterns,matrix,through,arrows,decorations.pathreplacing,decorations.markings,shadows,shapes.geometric,positioning,calc,backgrounds,fit}

\tikzstyle directed=[postaction={decorate,decoration={markings,
    mark=at position #1 with {\arrow{>}}}}]
\tikzstyle rdirected=[postaction={decorate,decoration={markings,
    mark=at position #1 with {\arrow{<}}}}]

\tikzset{anchorbase/.style={baseline={([yshift=-0.5ex]current bounding box.center)}},
anchorzero/.style={baseline={([yshift=-0.5ex]0,0)}},
arrowinthemiddle/.style={postaction=decorate,decoration={markings,mark=at position 0.5 with {\arrow{>}}}},
arrowinthemiddlerev/.style={postaction=decorate,decoration={markings,mark=at position 0.5 with {\arrow{<}}}},
cross line/.style={preaction={draw=white,line width=4pt,-}},
int/.style={very thick},
zero/.style={thin,dotted},
uno/.style={thin},
smallnodes/.style={every node/.style={font=\footnotesize}},
}

 \usepackage[strict]{changepage}

\usepackage{graphicx}


\setcounter{secnumdepth}{2}
\numberwithin{equation}{section}

\usepackage{etoolbox}
\makeatletter
\let\ams@starttoc\@starttoc
\makeatother
\usepackage{parskip}
\makeatletter
\let\@starttoc\ams@starttoc
\patchcmd{\@starttoc}{\makeatletter}{\makeatletter\parskip\z@}{}{}
\makeatother
\frenchspacing


\newtheoremstyle{myplain} {6pt plus 6pt minus 2pt}
{6pt plus 6pt minus 2pt}
{\itshape}
{}
{\bfseries}
{.}
{.5em}
{}

\theoremstyle{myplain}
\newtheorem{theorem}{Theorem}[section]
\newtheorem*{theorem*}{Theorem}
\newtheorem{lemma}[theorem]{Lemma}
\newtheorem{prop}[theorem]{Proposition}
\newtheorem{corollary}[theorem]{Corollary}
\newtheorem{conjecture}[theorem]{Conjecture}

\newtheoremstyle{mydefinition} {6pt plus 6pt minus 2pt}
{6pt plus 6pt minus 2pt}
{\itshape}
{}
{\bfseries}
{.}
{.5em}
{}

\theoremstyle{mydefinition}
\newtheorem{definition}[theorem]{Definition}

\newtheoremstyle{myexample} {6pt plus 6pt minus 2pt}
{6pt plus 6pt minus 2pt}
{}
{}
{\scshape}
{.}
{.5em}
{}

\theoremstyle{myexample}
\newtheorem{example}[theorem]{Example}

\newtheoremstyle{myremark} {6pt plus 6pt minus 2pt}
{6pt plus 6pt minus 2pt}
{}
{}
{\scshape}
{.}
{.5em}
{}

\theoremstyle{myremark}
\newtheorem{remark}[theorem]{Remark}

\begingroup
    \makeatletter
    \@for\theoremstyle:=mydefinition,myremark,myplain,myexample\do{%
        \expandafter\g@addto@macro\csname th@\theoremstyle\endcsname{%
            \addtolength\thm@preskip\parskip
            }%
        }
\endgroup

\newtheoremstyle{citing}
{3pt}{3pt}
{\itshape}
{0pt}{\bfseries}
{.}
{ }
{\thmnote{#3}}
\theoremstyle{citing}


\DeclareSymbolFontAlphabet{\mathbb}{AMSb}
\DeclareSymbolFontAlphabet{\mathbbol}{bbold}
\DeclareMathAlphabet{\mathpzc}{OT1}{pzc}{m}{it}

\DeclareSymbolFont{usualmathcal}{OMS}{cmsy}{m}{n}
\DeclareSymbolFontAlphabet{\mathucal}{usualmathcal}


\newcommand{\N}{\mathbb{N}}
\newcommand{\Z}{\mathbb{Z}}

\newcommand{\R}{\mathbb{R}}
\newcommand{\C}{\mathbb{C}}


\newcommand{\maps}{\colon}
\newcommand{\nadj}{\begin{tikzpicture}[anchorbase, scale=.8] \draw[line width=.33pt] (0,0) -- (.4,0); \draw[semithick] (.3,.15) -- (.1,-.15); \end{tikzpicture}}

\DeclareMathOperator{\Hom}{Hom}

\DeclareMathOperator{\lcm}{lcm}
\DeclareMathOperator{\HOM}{HOM}

\newcommand{\id}{\mathrm{id}}



\renewcommand{\epsilon}{\varepsilon}

\renewcommand{\phi}{\varphi}

\renewcommand{\emptyset}{\varnothing}


\newcommand{\calC}{{\mathcal{C}}}

\newcommand{\calK}{{\mathcal{K}}}
\newcommand{\calL}{{\mathcal{L}}}

\newcommand{\sfT}{{\mathsf{T}}}



\newcommand{\B}{\mathbb{B}}
\newcommand{\boldsfT}{{\boldsymbol{\sfT}}}
\newcommand{\calBe}{{\mathcal{B}e}}

\hyphenation{Grothen-dieck}



\hypersetup{
  colorlinks = true,
  urlcolor = blue,
  pdfauthor = {Anthony M. Licata and Hoel Queffelec},
  pdfkeywords = {},
  pdftitle = {},
  pdfsubject = {},
  pdfpagemode = UseNone,
  bookmarksopen = true,
  bookmarksopenlevel = 3,
  pdfdisplaydoctitle = true }

\title[Braid groups of type ADE]{Braid groups of type ADE, Garside monoids, and the categorified root lattice}

\author{Anthony M. Licata}
\address{Mathematical Sciences Institute, Australian National University, John Dedman Building, 27 Union Lane, Canberra ACT 2601}
\email{amlicata@gmail.com}

\author{Hoel Queffelec}
\address{CNRS and Institut Montpelli\'erain Alexander Grothendieck, Universit\'e de Montpellier, Place Eug\`ene Bataillon, 34095 Montpellier, France}
\email{hoel.queffelec@umontpellier.fr}

\begin{document}

\begin{abstract}
We study Artin-Tits braid groups $\B_W$ of type ADE via the action of $\B_W$ on the homotopy category $\calK$ of graded projective zigzag modules (which categorifies the action of the Weyl group $W$ on the root lattice).  Following Brav-Thomas \cite{BT}, we define a metric on $\B_W$ induced by the canonical $t$-structure on $\calK$, and prove that this metric on $\B_W$ agrees with the word-length metric in the canonical generators of the standard positive monoid $\B_W^+$ of the braid group.  We also define, for each choice of a Coxeter element $c$ in $W$, a baric structure on $\calK$. We use these baric structures to define metrics on the braid group, and we identify these metrics with the word-length metrics in the Birman-Ko-Lee/Bessis dual generators of the associated dual positive monoid $\B_{W.c}^\vee$.  As consequences, we give new proofs that the standard and dual positive monoids inject into the group, give linear-algebraic solutions to the membership problem in the standard and dual positive monoids, and provide new proofs of the faithfulness of the action of $\B_W$ on $\calK$.  Finally, we use the compatibility of the baric and $t$-structures on $\calK$ to prove a conjecture of Digne and Gobet regarding the canonical word-length of the dual simple generators of ADE braid groups.
\end{abstract}

\maketitle

\setcounter{tocdepth}{1}
\tableofcontents

%
\section*{Introduction}
%

A basic tool in the combinatorial study of a Coxeter group $W$ is the linear action of $W$ on the root lattice.  The first important point about the action of $W$ on the root lattice is that it is faithful.  As a result, one can address basic combinatorial and group-theoretic questions about Coxeter groups, such as the word problem, by using tools of finite-dimensional linear algebra.

The Artin-Tits braid groups $\B_W$ associated to $W$ are much less well-understood than the Coxeter groups themselves; for example, in the generality of arbitrary Artin-Tits groups, the word problem is still open.  Perhaps one feature of the difficulty in studying $\B_W$ is the lack of a clear substitute for the root lattice: the action of $W$ on the root lattice can be $q$-deformed to the Burau representation of $\B_W$, but the Burau representation fails to be faithful outside of small rank.

In fact, there is a good candidate $\calK$ for a ``root lattice" for $\B_W$, though it is not a lattice but rather a triangulated category.  This triangulated category, which we refer to as the categorified root lattice, has a number of explicit realizations; for example, when $W$ is simply-laced, one can take for $\calK$ the derived category of modules over the associated preprojective algebra.  (For more general Coxeter groups, the categorified root lattice may be constructed as a quotient of the homotopy category of Soergel bimodules.)  At present, the question of whether or not the braid group $\B_W$ acts faithfully on the categorified root lattice is still open.  However, when the Coxeter group $W$ is finite, faithfulness is known: this was proven in type $A_2$ by Rouquier-Zimmermann \cite{RouquierZimmermann}, in type $A_n$ by Khovanov-Seidel \cite{KhS}, in type ADE by Brav-Thomas \cite{BT}, and for arbitrary finite $W$ by Jensen \cite{Jensen}. Faithfulness is also known in affine type A by work of Riche \cite{Riche}, Ishii-Ueda-Uehara~\cite{ishii2010stability} and Gadbled-Thiel-Wagner \cite{GTW}, as well as in the case of the free group by work of the first author \cite{Licata_free}.  Thus, at least in these cases, one can attempt to study $\B_W$ via its action on the triangulated category $\calK$ somewhat analogously to the way one studies $W$ via its action on the root lattice $\Lambda$.

The aim of the present paper is to take up such a study when $W$ is a finite Weyl group of type ADE. We take as our model for $\calK$ the homotopy category of graded modules over the zigzag algebra of the Dynkin diagram $\Gamma$.  Our main goal is to explain how the homological algebra of $\calK$ can be used to define metrics on $\B_W$, and to then combinatorially describe these metrics.  We consider two kinds of homological decompositions of $\calK$ into positive and negative pieces: t-structures and baric structures.  These decompositions turn out to be closely related to the standard and dual Garside monoids inside $\B_W$.

If a group $G$ acts on a triangulated category $\mathcal{T}$ by triangulated auto-equivalences, then one way to produce a pseudo-length function on $G$ is as follows: first, fix a t-structure $(\mathcal{T}^{\geq 0},\mathcal{T}^{\leq 0})$ with heart $\mathcal{T}^0$; given $g\in G$,
there is a smallest closed interval $[a,b]$, $a,b\in \mathbb{R}\cup \{\pm \infty\}$ such that
$g(\mathcal{T}^0)\subset \mathcal{T}^{[a,b]}$.  We then define the length $l(g)$ of $g$ to be the length of the interval $[a,b]$.  In good situations, the function $d(g,h) = l(h^{-1}g)$ will be a metric on $g$, though if the action of $G$ on $\mathcal{T}$ is not faithful, it will at best be a pseudo-metric.  Analogous definitions give rise to metrics on $G$ using other decompositions -- such as baric structures -- on $\mathcal{T}$, rather than t-structures.  For groups with interesting 2-representation theory (such as Artin-Tits groups and mapping class groups of surfaces), such metrics should carry interesting geometric information about the group.
 
In particular, in Theorem \ref{thm:2} we identify the metric on $\B_W$ induced from the canonical $t$-structure on $\calK$ with the canonical word-length metric on $\B_W$ coming from the positive lifts from the Weyl group (which are the canonical generators of the standard Garside monoid).  Though we give complete independent proofs of all the statements required for the proof of Theorem \ref{thm:2}, this theorem is inspired by -- and is in some sense largely a rederivation of -- the foundational work of Brav-Thomas on braid group actions on the derived category of a resolved Kleinian singularity \cite{BT}.

Our Theorem \ref{thm:1}, on the other hand, begins by choosing a Coxeter element $c\in W$, and using this choice to define a baric structure (rather than a $t$-structure) on $\calK$.  We use this baric structure to define a metric on $\B_W$; in Theorem \ref{thm:1} we identify this metric with the word-length metric on $\B_W$ coming from the generators of the dual Garside monoid on $\B_W$.  Taken together, Theorems \ref{thm:1} and \ref{thm:2} explain how both Garside structures on $\B_W$ can be studied in a parallel fashion via the action of $\B_W$ on the categorified root lattice.

Theorems \ref{thm:1} and \ref{thm:2} and the constructions that precede them have several important consequences.  For one, we obtain new proofs of the faithfulness of the action of $\B_W$ on $\calK$ (Corollary \ref{cor:faithfulness}).  As another consequence, we obtain new proofs of the injectivity of the canonical map from both the standard and dual Garside monoids $\B_W^+$, resp. ${\B_c^\vee}^+$ into the group $\B_W$ (Corollaries \ref{cor:moninj} and \ref{cor:dualmonoid}).  For the standard positive monoid, this injectivity was first established by Deligne \cite{Deligne72} and Brieskorn-Saito \cite{BrieskornSaito}, while for the dual positive monoid it is a theorem of Birman-Ko-Lee \cite{BKL} in type $A$ and by Bessis \cite{Bessis} and Brady-Watt \cite{BW} more generally.

We also give a solution to the membership problem in these monoids, by showing that 
\begin{itemize}
\item the canonical $t$-structure $(\calK^{\geq0},\calK^{\leq0})$ on $\calK$ has the property that 
\[
	\beta(\calK^{\geq0})\subset \calK^{\geq0} \iff \beta \in \B_W^+, \text{ and}
\]
\item the baric structure $(\calK_{\geq0},\calK_{\leq0})$ on $\calK$ associated to a Coxeter element $c\in W$ has the property that
\[
	\beta(\calK_{\geq0})\subset\calK_{\geq0} \iff \beta\in {\B_c^\vee}^+.
\]
\end{itemize}

To check each of the conditions $\beta(\calK^{\geq0})\subset \calK^{\geq0}$ and $\beta(\calK_{\geq0})\subset \calK_{\geq0}$ requires computing the action of $\beta$ on only finitely many objects of $\calK$; in turn, computing the action of $\beta$ on these finitely-many objects is equivalent to performing Gaussian elimination on a finite integer matrix (whose size depends on $\beta$).  Thus the above criteria provides a finite algorithm for determining whether or not a fixed word in the Artin braid generators represents a positive or dual-positive braid.  (Of course, computing the Garside or dual Garside normal form of a braid would also solve the above membership problem, so the point to emphasize here is the linear-algebraic nature of our solution.)  

An important motivation for our work is to study both Garside structures on $\B_W$ using the same categorical action, in an effort to shed some light on the relationship between these two structures.  In \cite{DigneGobet}, Digne-Gobet conjecture that any dual simple generator $\beta$ of the dual positive monoid can be written $\beta = x y^{-1}$, where $x$ and $y$ are both simple generators of the standard positive monoid.  This conjecture provides a basic relationship between the natural metrics on $\B_W$ coming from the standard and dual Garside monoids.  Digne-Gobet prove their conjecture in all irreducible type except for type $D_n$, with some of the proofs relying on a case-by-case computer analysis. In theorem \ref{thm:DGConj}, we use the compatibility of t- and baric structures on $\calK$ to give a uniform proof of their conjecture for all ADE braid groups, including the open type $D$.  A different proof for type D will also appear in a paper by Baumeister and Gobet~\cite{BG}.
 
The basic insight that triangulated categories with braid group actions should have $t$-structures compatible with the action of a distinguished positive monoid is originally due to Bezrukavnikov \cite{Bez_ICM}, who exploits this compatibility to prove a number of deep results in modular representation theory.  The baric structures of the current paper give an example of another kind of homological decomposition of a triangulated category compatible with the action of a positive monoid.

A more detailed outline of the contents of the paper is as follows:
\begin{itemize}
\item In Section \ref{sec:braids}, we collect some preliminary information about ADE Weyl groups and their braid groups, including the definitions of the standard and dual positive monoids and the associated word-length metrics.  We also recall ping-pong and dual ping-pong lemmas for braid groups (Lemmas \ref{lem:dualpingpong} and \ref{lem:pingpong}); these lemmas are used later in Section \ref{sec:pingpong} to establish the faithfulness of the action of $\B_W$ on the categorified root lattice $\calK$. 
\item In Section \ref{sec:zigzag} we recall the definition of the zigzag algebra $A_\Gamma$, its homotopy category $\calK$, and the action of the braid group on $\calK$.  We also define the $t$- and baric structures on $\calK$, whose compatibility with the braid group action is the central theme in the remaining sections.  We also define {\it root complexes} in $\calK$, which are important indecomposable complexes lying in the hearts of both the baric and $t$-structures.  The main new result in this section is Theorem \ref{thm:indecCplxes}, which establishes a bijection between root complexes and positive roots in the associated root system.  We also explain the relationship between this bijection and Gabriel's theorem, which concerns the representation theory of the undoubled quiver obtained by orienting the Dynkin diagram $\Gamma$ (see Proposition \ref{prop:equiv}).
\item Section \ref{sec:pingpong} is the technical heart of the paper.  We define explicit subsets of objects in $\calK$ and use these subsets to establish the assumptions of the ping-pong and dual ping-pong lemmas, giving new proofs of the faithfulness of the action of $\B_W$ on $\calK$.  The proofs of the injectivity of the maps from the standard and dual monoids into the group are also contained in this section.
\item The faithfulness proofs imply that the length functions defined by the $t$ and baric structures on $\calK$ induce metrics on $\B_W$.  In Section \ref{sec:metric}, we identify these metrics with word length metrics in standard and in dual generators (see Theorems \ref{thm:1} and \ref{thm:2}).
\item In Section \ref{sec:comparing}, we use the compatibility of $t$ and baric structures on $\calK$ to prove the conjecture of Digne-Gobet  \cite[Conjecture 8.7]{DigneGobet} for ADE braid group (see Theorem \ref{thm:DGConj}).
\end{itemize}

\noindent {\bf Acknowledgements:}		
We would like to thank Christian Blanchet,
Thomas Gobet,
Luis Paris,
Emmanuel Wagner
and Anne-Laure Thiel
for interesting discussions and comments.
This work was partially supported by the ARC DP 140103821 and the ANR Quantact.

%
\section{Artin-Tits braid groups in type ADE}\label{sec:braids}
%

%
\subsection{Weyl groups and braid groups}
%

In this section, we collect the definitions and classical facts about Weyl and braid groups, that we will freely use throughout the paper. For more details, we direct the reader to \cite{BjBr}, \cite{DDGKM}, and \cite{DehornoyParis}.

Let $\Gamma$ be a Dynkin diagram of type ADE, with vertex set $I$ of $n$ elements.
The Weyl group $W$ associated to $\Gamma$ has a Coxeter presentation:
\[
W=\langle s_i,\; i\in I\;|\; s_is_js_i=s_js_is_j,\; i\; \text{adjacent to}\; j \; \text{in}\; \Gamma,\; s_is_j=s_js_i,\; i\nadj j,\; s_i^2=1\rangle
\]

Denoting $S=\{s_i,i\in I\}$ the set of generators, $(W,S)$ is a Coxeter system. Recall that one can associate to it a root system as follows. Let $V^*$ be a real vector space of dimension $n$, with basis $\Pi=\{\alpha_i\}_{i\in I}$.  The basis vectors $\alpha_i$ are referred to as the \emph{simple roots}. We define a pairing $<-,->\maps V^*\times V^* \mapsto \R$ by
\[
<\alpha_i,\alpha_j>=
\begin{cases}
0 \; \text{if $i \nadj j$ in $\Gamma$;} \\
-1 \; \text{if $i \;\text{adjacent to}\; j$ in $\Gamma$;} \\
2 \; \text{if $i=j$.}
\end{cases}
\]

An action of $W$ on $V^*$ is defined by $s_i(x)=x-<\alpha_i,x>\alpha_i$, so that:

\begin{equation*}
s_i(\alpha_j) = 
\begin{cases}
-\alpha_i \;\text{if $i=j$};\\
\alpha_j+\alpha_i\;\text{if $i\;\text{adjacent to}\; j$ in $\Gamma$}; \\
\alpha_j \; \text{otherwise.}
\end{cases}
\end{equation*}

The root system $\Phi=\{w(\alpha_i)\}_{w\in W,i\in I}$ decomposes as $\Phi=\Phi^+\sqcup \Phi^-$, with $\Phi^+ = \mathbb{N}\Pi$ the positive roots and $\Phi^- = \Phi - \Phi^+ = -\Phi^+$ the negative roots. The root lattice is the free $\Z$-module generated by the $\alpha_i$. The above action on $V^*$ restricts to an action on the root lattice.

Let $l$ denote the word-length on $W$ in the Coxeter generators $s_i$.  The left and right descents sets of $w\in W$, denoted $D_R(w)$ and $D_L(w)$, are defined as follows
\[
	D_R(w) = \{s_i : l(ws_i)<l(w)\}, \ \ D_L(w) = \{s_i : l(s_iw)<l(w)\}.
\]

A \emph{reflection} in $W$ is any conjugate of a Coxeter generator $s_i$.  We denote by  $\sfT$ the set of reflections.  There is a one-to-one correspondence between $\Phi^+$ and $\sfT$ given by:
\[
w(\alpha_i)\in \Phi^+ \leftrightarrow ws_iw^{-1} \in \sfT
\]

The braid group $\B = \B_W$ associated to $\Gamma$ can be defined via the presentation
\[
\B=\langle \sigma_i,\; i\in I\;|\; \sigma_i\sigma_j\sigma_i=\sigma_j\sigma_i\sigma_j,\; i \;\text{adjacent to}\; j \; \text{in}\; \Gamma, \; \sigma_i\sigma_j=\sigma_j\sigma_i,\; i\nadj j\rangle
\]
There is a surjective group homomorphism $\pi\colon\B\mapsto W$ given by $\sigma_i \mapsto s_i$.

%
\subsection{Monoids and Garside structures in type ADE}
\label{subsec:Garside}
%

We recall the definition of a Garside structure here, and refer to \cite[I, 2.1]{DDGKM} for further details.

\begin{definition}
A Garside monoid is a pair $(M,\Delta)$, with $M$ a monoid, such that
\begin{itemize}
\item $M$ is left- and right-cancellative, that is, $\forall a,b,c\in M$, $ab=ac\Rightarrow b=c$ and $ba=bc\Rightarrow a=c$;
\item there exists $l:M\mapsto \N$ satisfying $l(fg)\geq l(f)+l(g)$ and $g\neq 1\Rightarrow l(g)\neq 0$;
\item any two elements of $M$ have a left and right $\lcm$ and a left and right $\gcd$;
\item $\Delta$ is a Garside element of $M$, meaning that the left and right divisors of $\Delta$ coincide and generate $M$;
\item the family of divisors of $\Delta$ in $M$ is finite. 
\end{itemize}
\end{definition}

A braid group of type ADE has two interesting Garside structures on it.  The underlying monoid $M$ of the first Garside structure is the classical braid monoid $\B_W^+$:
\[
\B_W^+=\langle \sigma_i,\; i\in I\;|\; \sigma_i\sigma_j\sigma_i=\sigma_j\sigma_i\sigma_j,\; i \;\text{adjacent to}\; j \; \text{in}\; \Gamma, \; \sigma_i\sigma_j=\sigma_j\sigma_i,\; i\nadj j\rangle_{monoid}
\]
It is a theorem of Deligne \cite{Deligne72} and Brieskorn-Saito \cite{BrieskornSaito} that the canonical map of monoids $\B_W^+ \longrightarrow \B_W$ is injective (and the categorical considerations later in the paper will give an alternative proof of that fact, see Corollary \ref{cor:moninj}).

The Garside element $\Delta$ of the first Garside structure is the positive lift from the longest element $w_0$ in the Weyl group, while the length function $l$ is the word length in the Artin generators $\{\sigma_i\}_{i\in I}$. An important fact about the positive monoid $\B_W^+$, which we will make use of in later sections, is that $\B_W^+$ is a lattice for left divisibility, so that any non-empty finite subset of $\B_W^+$ has both a greatest common divisor (gcd) and a least common multiple (lcm), both of which are themselves in the positive monoid $\B_W^+$.

In writing the Coxeter presentation of the Weyl group $W$ and its braid group $\B_W$, we have chosen a set of simple roots in order to give generators in our presentation.  More natural, from some points of view, is to choose for generators of $W$ the entire conjugacy class of reflections, and to present $\B_W$ by suitably lifting these reflections to the braid group.  This is the starting point of a very rich ``dual'' point of view on the braid group, originating in the work of Birman-Ko-Lee \cite{BKL} and Bessis \cite{Bessis} and further developed in work of Bessis-Digne-Michel \cite{BeDiMi} and Brady and Watt \cite{BW}.  Part of this dual point of view produces a second Garside structure on $\B_W$, whose definition we now recall.  (See also \cite[Section 3]{DigneGobet}.)

To begin, we choose an orientation $\Gamma_{\vec{o}}$ of $\Gamma$. The orientation $\Gamma_{\vec{o}}$ specifies a Coxeter element $c\in W$, defined by taking the product of all generators $s_i$ in such a way that if there is an oriented edge from $i$ to $j$ in $\Gamma_{\vec{o}}$, then $s_j$ is on the left of $s_i$ in the expression of $c$ (see Example \ref{ex:Cox}).

Once we have fixed a Coxeter element $c=s_{i_1}\cdots s_{i_n}$, we let $\gamma=\sigma_{i_1}\cdots \sigma_{i_n}$ 
be the positive lift of $c$ to the braid group.  We then lift the reflections $T\subset W$ to their ``dual positive lifts" $\boldsfT\subset \B_W$,
given by
(see \cite[Proposition 3.13]{DigneGobet}):
\begin{equation}\label{eq:dualpositivelift}
\boldsfT:=\{ \gamma^k\sigma_{i_1}\cdots \sigma_{i_{j}}\sigma_{i_{j+1}}\sigma_{i_{j}}^{-1}\cdots \sigma_{i_1}^{-1}\gamma^{-k}\},
\end{equation}
with $j\in \{1,2,\dots,n\}$. 

The positive monoid ${\B_c^\vee}^+ $ associated to the dual Garside structure on $\B_W$ is the monoid generated by the dual positive reflections $\boldsfT$.  In order to give a presentation of this monoid, and to discuss the length function associated to the dual Garside structure, we let$[1,c]_T\subset W$ denote the interval between $1$ and $c$ in the absolute partial order on $W$:
\[
	[1,c]_T = \{ u\in W \mid c = u v, \ \ l_{refl}(u) + l_{refl}(v) = l_{refl}(c) = n.\}
\]
Here $l_{refl} : W \rightarrow \mathbb{N}$ is the word length function in the generating set $T$ of all reflections. 

This generating set gives rise to an alternative presentation of the braid group: we have a generator $\tau_t$ for each reflection $t\in T\subset W$, and these generators are subject to those relations between distinct reflections in $W$ which are visible in the interval $[1,c]_T$ of the Weyl group.  More precisely, if we define
\[
	{\B_c^\vee}^+ = \langle \tau_t \mid \tau_{t_1} \tau_{t_2} = \tau_{t_2} \tau_{t_3} \text{ if } t_1t_2 = t_2t_3 \in [1,c]_T \rangle_{monoid}
\]
and 
\[
	{\B_c^\vee} =  \langle \tau_t \mid \tau_{t_1} \tau_{t_2} = \tau_{t_2} \tau_{t_3} \text{ if } t_1t_2 = t_2t_3 \in [1,c]_T \rangle_{group}
\]
then the map which takes $\tau_t$ to the dual positive lift (\ref{eq:dualpositivelift}) of the reflection $t$ in the braid group $\B_W$ defines a group isomorphism
\[
	\B_c^\vee \cong \B_W.
\]
Therefore, this second presentation gives rise to a morphism (of monoids)
\[
	{\B_c^\vee}^+ \longrightarrow \B_W
\]
It is a theorem of Birman-Ko-Lee in type $A$ \cite{BKL} and Bessis and Brady-Watt \cite{Bessis,BW} in type $ADE$ that this morphism is injective.  The categorical constructions of this paper will be another proof of this fact (see Corollary \ref{cor:dualmonoid}).
  
The second Garside structure on $\B_W$ then consists of the monoid ${\B_c^\vee}^+$, the length function given by the word-length $l_{refl}$ in the reflections $\{\tau_t\}$, and the Garside element given by $\gamma$.  The definition of the dual Garside structure depends on the choice of Coxeter element (or, equivalently, on the choice of orientation of $\Gamma$); however, since all Coxeter elements are conjugate, the resulting dual Garside structures are also all equivalent to each other.

A key subset of the dual positive monoid, which plays a large role in the rest of the paper, are the Bessis braids $\calBe^+\subset {\B_c^\vee}^+$ consisting of all dual positive divisors of $\gamma$:
\begin{equation}
\calBe^+=\{\beta : \gamma=\alpha\beta,\text{ with } \alpha,\beta\in {\B_c^\vee}^+ \text{ and } l_{refl}(\alpha)+l_{refl}(\beta)=l_{refl}(\gamma)\}.
\end{equation}
In fact the apparent left-right asymmetry in the above definition is illusory: for $\beta \in \calBe^+$, there exists $\alpha\in \calBe^+$ such that $\gamma = \alpha\beta$ and $ l_{refl}(\alpha)+l_{refl}(\beta)=l_{refl}(\gamma)$ if and only if there exists $\alpha'\in \calBe^+$ such that $\gamma =\beta \alpha'$ with $ l_{refl}(\beta)+l_{refl}(\alpha')=l_{refl}(\gamma)$.
 Note that the braid reflections $\boldsfT$, which generate the dual positive monoid, are themselves Bessis braids. 
 We denote by $\calBe^- = (\calBe^+)^{-1}$ the inverses of the Bessis braids. 

Bessis proves the following elementary property \cite[Fact 2.2.4]{Bessis}.
\begin{lemma}\label{lemma:8}
Let $\tau_{t_1}$ and $\tau_{t_2}$ be in $\boldsfT$. Then $\tau_{t_1}\tau_{t_2}\in \calBe^+ \Longrightarrow \tau_{t_2}^{-1}\tau_{t_1}\tau_{t_2}\in \boldsfT$.
\end{lemma}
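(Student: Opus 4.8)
The plan is to transport the statement to the Weyl group via $\pi$ and then read off the conclusion from one of the defining relations of the dual monoid. Recall that $\pi\colon\B_W\to W$ sends each braid reflection $\tau_t\in\boldsfT$ to the corresponding reflection $t\in T$ (this is immediate from \eqref{eq:dualpositivelift}), so it induces a monoid homomorphism ${\B_c^\vee}^+\to W$, $\tau_t\mapsto t$; and since every defining relator of ${\B_c^\vee}^+$ has two letters on each side, this monoid is homogeneous in the atoms $\boldsfT$, so a product of $k$ atoms is sent to a product of $k$ reflections and hence $l_{refl}(\pi(x))\le l_{refl}(x)$ for all $x\in{\B_c^\vee}^+$. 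With this in hand the proof will have two steps: (i) show that $\tau_{t_1}\tau_{t_2}\in\calBe^+$ forces $t_1t_2\in[1,c]_T$; and (ii) use this to identify $\tau_{t_2}^{-1}\tau_{t_1}\tau_{t_2}$ with the atom $\tau_{t_2t_1t_2}\in\boldsfT$.

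For step (i): by definition of $\calBe^+$ there is $\alpha\in{\B_c^\vee}^+$ with $\gamma=\alpha\,\tau_{t_1}\tau_{t_2}$ and $l_{refl}(\alpha)+l_{refl}(\tau_{t_1}\tau_{t_2})=l_{refl}(\gamma)=n$, hence $l_{refl}(\alpha)=n-2$. Applying $\pi$ gives $c=\pi(\alpha)\,(t_1t_2)$ with $l_{refl}(\pi(\alpha))\le n-2$ and $l_{refl}(t_1t_2)\le 2$; since $l_{refl}(c)=n$ and $l_{refl}$ is subadditive, both inequalities are forced to be equalities and $l_{refl}(\pi(\alpha))+l_{refl}(t_1t_2)=l_{refl}(c)$. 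By definition of the interval this says precisely that $t_1t_2\in[1,c]_T$ (and incidentally $l_{refl}(t_1t_2)=2$, so in particular $t_1\ne t_2$).

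For step (ii): because $t_1,t_2$ are reflections, $t_3:=t_2^{-1}t_1t_2=t_2t_1t_2$ is again a reflection, and $t_1t_2=t_2t_3$ is a second factorization of the element $t_1t_2\in[1,c]_T$ into two reflections. Therefore $\tau_{t_1}\tau_{t_2}=\tau_{t_2}\tau_{t_3}$ is one of the defining relations of ${\B_c^\vee}^+$, and multiplying by $\tau_{t_2}^{-1}$ on the left yields $\tau_{t_2}^{-1}\tau_{t_1}\tau_{t_2}=\tau_{t_3}\in\boldsfT$ (the last membership because $t_3\in T$ and $\boldsfT=\{\tau_t:t\in T\}$), which is the claim. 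The only real content is step (i), i.e. that a Bessis braid which is a product of two atoms projects to an element of reflection length exactly $2$ below $c$; this is a short consequence of $\pi$ not increasing reflection length together with $l_{refl}(\gamma)=l_{refl}(c)=n$, so I do not anticipate any serious obstacle. The one point to be careful about is simply that the relation $\tau_{t_1}\tau_{t_2}=\tau_{t_2}\tau_{t_3}$ is available, which is exactly the content of $t_1t_2\in[1,c]_T$ secured in step (i).
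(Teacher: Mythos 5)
Your argument is correct. Note that the paper does not actually prove this lemma --- it quotes it from \cite[Fact 2.2.4]{Bessis} --- so there is no in-text proof to compare against; your derivation directly from the presentation of ${\B_c^\vee}^+$ is a legitimate self-contained substitute. Step (i) is sound: the defining relations are homogeneous in the atoms, so atom-length on ${\B_c^\vee}^+$ is well defined, the projection to $W$ cannot increase reflection length, and the length count forces $l_{refl}(t_1t_2)=2$ and $c=\pi(\alpha)(t_1t_2)$ with lengths adding. The one point you should make explicit is that this exhibits $t_1t_2$ as a length-additive \emph{right} divisor of $c$, whereas the displayed definition of $[1,c]_T$ is phrased in terms of left divisors ($c=uv$ with $u\in[1,c]_T$); the two notions coincide because $l_{refl}$ is invariant under conjugation (set $v=(t_1t_2)^{-1}\pi(\alpha)(t_1t_2)$, so that $c=(t_1t_2)v$ with $l_{refl}(v)=l_{refl}(\pi(\alpha))=n-2$), but as written the sentence ``this says precisely that $t_1t_2\in[1,c]_T$'' elides that one line. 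Step (ii) is then immediate: $t_1t_2=t_2t_3$ with $t_3=t_2t_1t_2\in T$ is one of the defining relations of ${\B_c^\vee}^+$, and its image under the isomorphism $\B_c^\vee\cong\B_W$ gives $\tau_{t_2}^{-1}\tau_{t_1}\tau_{t_2}=\tau_{t_3}\in\boldsfT$.
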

Moreover, this property extends to Bessis braids. 
\[
\beta,\beta',\beta\beta'\in \calBe^+\Longrightarrow \beta'^{-1}\beta\beta'\in \calBe^+.
\]

\begin{definition}
Given $\beta\in \calBe^+$ a Bessis braid, we define the descent set of $\beta$, denoted $D_c^\vee(\beta)$, to consist 
of those reflections $\tau_t$ such that $\beta = \alpha \tau_t$, with $\alpha \in \calBe^+$.
\end{definition}

\begin{remark} \label{rem:descents}
The definition above is of {\emph right} descents, but, unlike the notion of descents associated with the standard positive monoid, Bessis braids have complete left-right symmetry in their descent sets.  That is, $D_c^\vee(\beta)$ may be equivalently defined as those reflections $\tau_t$ such that $\beta = \tau_t \alpha'$ with $\alpha'\in \calBe^+$.

For $\beta\in \calBe^+$, a reflection $\tau_t$ is a descent of $\beta$ if and only if $\tau_t$ appears in some minimal $l_{refl}$-length expression for $\beta$ in the generating set $\boldsfT$; moreover, in any minimal length expression for $\beta$, a given reflection $\tau_t$ will appear at most once.
\end{remark}

As with the original positive monoid $\B_W^+$, the dual positive monoid ${\B_c^\vee}^+$ is a combinatorial lattice. The following lemmas relate the lattice structure on ${\B_c^\vee}^+$ to the descent sets of a Bessis braid. 
\begin{lemma} \label{lemma:16}
Let $\beta\in \calBe^+$. Then $\lcm(D_c^\vee(\beta))=\beta$.
\end{lemma}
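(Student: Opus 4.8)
The plan is to combine the lattice structure of ${\B_c^\vee}^+$ with the description of descents in Remark \ref{rem:descents}. The observation I would isolate first is that, for $\beta\in\calBe^+$ and a reflection $\tau_t\in\boldsfT$, we have $\tau_t\in D_c^\vee(\beta)$ if and only if $\tau_t$ left-divides $\beta$ in ${\B_c^\vee}^+$. Here one uses that $l_{refl}$ is additive on the monoid ${\B_c^\vee}^+$ (every defining relation equates two words of length two, so $l_{refl}$ descends to a monoid homomorphism to $(\N,+)$) and that $\boldsfT$ generates ${\B_c^\vee}^+$: if $\beta=\tau_t\beta'$, prepending $\tau_t$ to a minimal expression for $\beta'$ gives a minimal expression for $\beta$ containing $\tau_t$, so $\tau_t\in D_c^\vee(\beta)$ by Remark \ref{rem:descents}; conversely, if $\tau_t\in D_c^\vee(\beta)$ then $\beta=\alpha\tau_t$ with $\alpha\in\calBe^+$, and the left--right symmetry of $D_c^\vee$ (Remark \ref{rem:descents}) gives also $\beta=\tau_t\alpha'$ with $\alpha'\in\calBe^+$, so $\tau_t$ left-divides $\beta$.

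Granting this, set $\delta:=\lcm\bigl(D_c^\vee(\beta)\bigr)$, computed in the lattice ${\B_c^\vee}^+$; if $\beta=1$ the statement is trivial, so assume $\beta\neq1$, whence $D_c^\vee(\beta)\neq\varnothing$. Every element of $D_c^\vee(\beta)$ left-divides $\beta$, so the join $\delta$ left-divides $\beta$: write $\beta=\delta\rho$ with $\rho\in{\B_c^\vee}^+$, and note $l_{refl}(\beta)=l_{refl}(\delta)+l_{refl}(\rho)$ by additivity. It then remains to show $\rho=1$.

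Suppose $\rho\neq1$. Since ${\B_c^\vee}^+$ is generated by $\boldsfT$, the nontrivial element $\rho$ is left-divisible by some $\tau_s\in\boldsfT$; write $\rho=\tau_s\rho'$. Concatenating a minimal expression for $\delta$, the letter $\tau_s$, and a minimal expression for $\rho'$ yields a word for $\beta$ of length $l_{refl}(\delta)+1+\bigl(l_{refl}(\rho)-1\bigr)=l_{refl}(\beta)$, hence a minimal expression for $\beta$; it contains $\tau_s$, so $\tau_s\in D_c^\vee(\beta)$ and therefore $\tau_s$ left-divides $\delta$, say $\delta=\tau_s\delta''$ (in particular $\delta\neq1$). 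Substituting, $\beta=\tau_s\,\delta''\,\tau_s\,\rho'$, and concatenating a minimal expression for $\delta''$ and one for $\rho'$ around the two occurrences of $\tau_s$ again gives a word for $\beta$ of length $l_{refl}(\beta)$ -- a minimal expression in which the reflection $\tau_s$ occurs twice. This contradicts the last assertion of Remark \ref{rem:descents}. Hence $\rho=1$, i.e.\ $\delta=\beta$.

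I do not expect a genuine obstacle: once the equivalence of the first paragraph is in place, the lemma is a short consequence of it together with the lattice property of ${\B_c^\vee}^+$. The two points requiring care are keeping track of the side on which one divides -- descents are defined via right factorizations, and it is the left--right symmetry of $D_c^\vee$ that lets one treat them as left divisors, so that the join $\delta$ becomes a left divisor of $\beta$ -- and the use of additivity of $l_{refl}$ on ${\B_c^\vee}^+$, which is what guarantees that the words exhibited above really are of minimal length and hence that Remark \ref{rem:descents} applies to them.
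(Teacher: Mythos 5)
Your proof is correct and takes essentially the same approach as the paper's: both factor $\beta$ as $\lcm(D_c^\vee(\beta))$ times a cofactor, observe that any reflection dividing a nontrivial cofactor would be a descent of $\beta$ and hence already divide the lcm, and derive a contradiction with the assertion of Remark \ref{rem:descents} that a reflection occurs at most once in any minimal expression. The only cosmetic difference is that you arrange the factors on the left (via the left--right symmetry of $D_c^\vee$) where the paper puts the lcm on the right.
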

\begin{proof}
Since $\beta$ is a multiple of all its descents, it follows that $\lcm(D_c^\vee(\beta))$ divides $\beta$, and we may write $\beta=\beta'\lcm(D_c^\vee(\beta))$, with $\beta'\in \calBe^+$. Now, any right descent $\tau$ of $\beta'$ is also a left descent of $\beta'$, and hence $\tau$ is also a left descent of $\beta$.  But if $\tau$ is a descent of $\beta'$, then $\tau$ is also a descent of $\beta$, and hence a descent of $\lcm(D_c^\vee(\beta))$; but this implies that there must be a minimal length expression for $\beta$ with two appearances of $\tau$ - one in a minimal length expression for $\beta'$ and one in the expression for $\lcm(D_c^\vee(\beta))$.  Since this is not possible, we conclude that $\beta'$ has no descents, and thus that $\beta'=1$.
\end{proof}

\begin{lemma} \label{lemma:lcm_comp}
Let $\gamma=\beta'\beta$ with $\beta$, $\beta'\in \calBe^+$ and $l_{refl}(\gamma)=l_{refl}(\beta)+l_{refl}(\beta')$, and suppose that $\tau_t\in \calBe^+$ divides neither $\beta$ nor $\beta'$. Then there exists $\tau_{t'}\in \boldsfT$ dividing $\beta'$ such that $\tau_{t'}\tau_t\notin \calBe^+$.
\end{lemma}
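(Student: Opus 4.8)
I would prove the contrapositive: assuming that $\tau_{t'}\tau_t \in \calBe^+$ for \emph{every} braid reflection $\tau_{t'}$ dividing $\beta'$ — equivalently, every $\tau_{t'} \in D_c^\vee(\beta')$ — I will deduce that $\tau_t$ divides $\beta$, contradicting the hypothesis.

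First set $\delta := \lcm(\beta',\tau_t)$ in the lattice ${\B_c^\vee}^+$. Since $\beta'$ and $\tau_t$ both divide $\gamma$, so does $\delta$, whence $\delta \in \calBe^+$; write $\delta = \beta'\eta$, where $\eta$ is again a Bessis braid (it divides $\delta$, hence $\gamma$) and $l_{refl}(\delta) = l_{refl}(\beta') + l_{refl}(\eta)$ since the dual monoid is homogeneous. As $\tau_t\nmid\beta'$ we have $l_{refl}(\eta)\geq 1$, and as $\delta$ divides $\gamma=\beta'\beta$ with $\beta'$ a common left factor, left-cancellation gives $\eta \mid \beta$. Thus it suffices to prove $\eta=\tau_t$: then $\delta = \beta'\tau_t$ divides $\gamma=\beta'\beta$, so $\tau_t\mid\beta$, the desired contradiction.

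To analyse $\delta$, write $D_c^\vee(\beta') = \{\tau_{t_1},\dots,\tau_{t_k}\}$; by Lemma \ref{lemma:16}, $\beta' = \lcm(\tau_{t_1},\dots,\tau_{t_k})$, so $\delta = \lcm(\tau_{t_1},\dots,\tau_{t_k},\tau_t)$. For each $i$, the standing assumption gives $\tau_{t_i}\tau_t\in\calBe^+$; by the left-right symmetry of descents of Bessis braids, $\tau_t$ also left-divides $\tau_{t_i}\tau_t$, so $\tau_{t_i}\tau_t$ is a common multiple of $\tau_{t_i}$ and $\tau_t$, and — $\tau_t$ being an atom not dividing $\tau_{t_i}$ — in fact $\lcm(\tau_{t_i},\tau_t) = \tau_{t_i}\tau_t$. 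Using the lattice identity $\bigl(\bigvee_i a_i\bigr)\vee b = \bigvee_i(a_i\vee b)$, Lemma \ref{lemma:8} (which rewrites $\tau_{t_i}\tau_t = \tau_t\tau_{r_i}$ with $\tau_{r_i} := \tau_t^{-1}\tau_{t_i}\tau_t \in \boldsfT$), and the identity $\lcm(ab_1,\dots,ab_k) = a\,\lcm(b_1,\dots,b_k)$ valid in any cancellative monoid with least common multiples, one obtains $\delta = \tau_t\cdot\lcm(\tau_{r_1},\dots,\tau_{r_k})$. Hence $\eta = \tau_t$ is equivalent to $l_{refl}\bigl(\lcm(\tau_{r_1},\dots,\tau_{r_k})\bigr) = l_{refl}(\beta')$, i.e. to the statement that conjugating the generating descents of $\beta'$ by $\tau_t$ does not lengthen their join. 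I would establish this by passing to the combinatorial model of $\calBe^+$: under the standard identification of $(\calBe^+,\text{left-divisibility})$ with the interval $[1,c]_T$ in the absolute order, and of $\lcm$ with join, the assumption says exactly that every reflection below $\pi(\beta')$ is noncrossing (relative to $c$) with $\pi(\tau_t)$, and a direct computation in the (generalized) noncrossing-partition structure of $[1,c]_T$ then shows that this forces $\pi(\beta')\vee\pi(\tau_t) = \pi(\beta')\,\pi(\tau_t)$, i.e. $\eta = \tau_t$.

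The main obstacle is exactly this last step: upgrading "each pairwise join $\tau_{t_i}\vee\tau_t$ covers $\tau_{t_i}$" to "$\beta'\vee\tau_t$ covers $\beta'$ and is realised by $\tau_t$". This genuinely uses the special combinatorics of the dual braid monoid (the lattice $[1,c]_T$), since in an arbitrary lattice the join with an atom can raise the rank by more than one even when all the relevant pairwise joins are covers; everything else is a formal consequence of the Garside/lattice structure together with Lemmas \ref{lemma:8} and \ref{lemma:16}.
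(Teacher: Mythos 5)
There is a genuine gap at the final step, and you have correctly identified where it is: the assertion that a ``direct computation in the noncrossing-partition structure'' forces $l_{refl}\bigl(\lcm(\tau_{r_1},\dots,\tau_{r_k})\bigr)=l_{refl}(\beta')$ is never carried out, and it is not a routine verification. Indeed, after your (correct) reductions you have $\tau_t\cdot\lcm(\tau_{r_1},\dots,\tau_{r_k})=\delta=\beta'\eta$, so the claim ``conjugating the descents by $\tau_t$ does not lengthen their join'' is \emph{exactly equivalent} to $l_{refl}(\eta)=1$, which is what you set out to prove. Conjugation by a single reflection $\tau_t$ is not a lattice automorphism of $[1,\gamma]$, so nothing formal gives you this; the argument as written is circular at its crucial point. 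The earlier steps (that $\lcm(\tau_{t_i},\tau_t)=\tau_{t_i}\tau_t$, the distribution of $\lcm$ over a common left factor in a cancellative monoid, the use of Lemmas \ref{lemma:8} and \ref{lemma:16}) are all fine, but they only transport the difficulty, they do not resolve it.

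The fix is to abandon the computation of $\lcm(\beta',\tau_t)$ and instead exploit the ambient element $\gamma\tau_t^{-1}$, which is what the paper does. Under your standing assumption, each $\tau_{t_i}\tau_t$ lies in $\calBe^+$, i.e.\ divides $\gamma$; by the left--right symmetry of divisibility in $\calBe^+$ this means each $\tau_{t_i}$ divides the length-$(n-1)$ Bessis braid $\gamma\tau_t^{-1}$. Hence $\beta'=\lcm\{\tau_{t_i}\}$ (Lemma \ref{lemma:16}) divides $\gamma\tau_t^{-1}$, so $\gamma\tau_t^{-1}=\beta'\beta''$ with $\beta''\in\calBe^+$ and $l_{refl}(\gamma)-1=l_{refl}(\beta')+l_{refl}(\beta'')$. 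Then $\gamma=\beta'\beta''\tau_t=\beta'\beta$, and cancellation gives $\beta=\beta''\tau_t$, so $\tau_t$ divides $\beta$ --- the desired contradiction. This sidesteps entirely the question of how the join behaves under conjugation, which is the step your argument cannot close.
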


\begin{proof}
 Let $\tau_t$ be a reflection which divides neither $\beta$ nor $\beta'$.  For another reflection $\tau_{t'}\in \boldsfT$, if  $\tau_{t'}\tau_t$ divides $\gamma$ for all $\tau_{t'}$ dividing $\beta'$, then  every such $\tau_{t'}$ divides $(\gamma \tau_{t}^{-1})$.  From this it follows that  $\beta'=\lcm(\{\tau_{t'} \text{ dividing }\beta'\})$ divides $(\gamma \tau_t^{-1})$, and we can write $(\gamma \tau_t^{-1})=\beta'\beta''$ with $l_{refl}(\gamma)-1=l_{refl}(\beta')+l_{refl}(\beta'')$.  But this implies that $\beta=\beta''\tau_t$, with 
 $\beta''\in \calBe^+$, which contradicts the assumption that $\tau_t$ does not divide $\beta$.
\end{proof}

\begin{example} \label{ex:Cox}
We give an example of the set $\calBe^+$ in type $A_3$, using the following orientation
\[
\begin{tikzpicture}
\node at (0,0) {$\bullet$};
\node at (1,0) {$\bullet$};
\node at (2,0) {$\bullet$};
\node at (0,.25) {$\scriptstyle 1$};
\node at (1,.25) {$\scriptstyle 2$};
\node at (2,.25) {$\scriptstyle 3$};
\draw [->] (.8,0) -- (.2,0);
\draw [->] (1.2,0) -- (1.8,0);
\end{tikzpicture}
\]
The associated Coxeter element is $c=s_1s_3s_2=s_3s_1s_2$, so that the dual Garside element $\gamma=\sigma_1\sigma_3\sigma_2$. The set $\boldsfT$ is then:
\[
\boldsfT=\{\sigma_1,\sigma_2,\sigma_3, \sigma_3\sigma_2\sigma_3^{-1},\sigma_{1}\sigma_2\sigma_1^{-1},\sigma_1\sigma_3\sigma_2\sigma_3^{-1}\sigma_1^{-1}\},
\]
while the lattice $\calBe^+$ is given as follows (the edges below denote the partial order on $\calBe^+$):
\[
\begin{tikzpicture}[anchorbase, scale=.7]
\node at (0,0) {$1$};
\node at (-7.5,3) {\small $\sigma_1$};
\node at (-4.5,3) {\small $\sigma_2$};
\node at (-1.5,3) {\small $\sigma_3$};
\node at (1.5,3) {\small $\sigma_1\sigma_2\sigma_1^{-1}$};
\node at (4.5,3) {\small $\sigma_3\sigma_2\sigma_3^{-1}$};
\node at (7.5,3) {\small $\sigma_1\sigma_3\sigma_2\sigma_3^{-1}\sigma_1^{-1}$};
\node at (-7.5,6) {\small $\sigma_1\sigma_3$};
\node at (-4.5,6) {\small $\sigma_1\sigma_2$};
\node at (-1.5,6) {\small $\sigma_3\sigma_2$};
\node at (1.5,6) {\small $\sigma_1\sigma_3\sigma_2\sigma_3^{-1}$};
\node at (4.5,6) {\small $\sigma_3\sigma_2^{-1}\sigma_1\sigma_2$};
\node at (7.5,6) {\small $\sigma_2^{-1}\sigma_1\sigma_3\sigma_2$};
\node at (0,9) {\small $c=\sigma_1\sigma_3\sigma_2$};
\draw (-.3,.2) -- (-7,2.8);
\draw (-.2,.25) -- (-4.3,2.75);
\draw (-.1,.3) -- (-1.4,2.7);
\draw (.3,.2) -- (7,2.7);
\draw (.2,.25) -- (4.3,2.75);
\draw (.1,.3) -- (1.4,2.7);
\draw [white,double,  double=black] (-7.5,3.5) -- (-7.5,5.5);
\draw [white,double, double=black] (-7.4,3.5) -- (-4.6,5.5);
\draw [white,double, double=black] (-7.3,3.5) -- (1.4,5.5);
\draw [white, double, ultra thick, double=black] (-4.5,3.5) -- (-4.5,5.5);
\draw [white, double, ultra thick, double=black] (-4.4,3.5) -- (-1.6,5.5);
\draw [white, double, ultra thick, double=black] (-1.6,3.5) -- (-7.4,5.5);
\draw [white, double, ultra thick, double=black] (-1.5,3.5) -- (-1.5,5.5);
\draw [white, double, ultra thick, double=black] (-1.4,3.5) -- (4.4,5.5);
\draw [white, double, ultra thick, double=black] (1.4,3.5) -- (-4.4,5.5);
\draw [white, double, double=black] (1.5,3.5) -- (4.5,5.5);
\draw [white, double, double=black] (1.6,3.5) -- (7.4,5.5);
\draw [white, double, ultra thick, double=black] (4.4,3.5) -- (-1.4,5.5);
\draw [white, double, ultra thick, double=black] (4.5,3.5) -- (1.5,5.5);
\draw [white, double, ultra thick, double=black] (4.6,3.5) -- (7.4,5.5);
\draw [white, double, ultra thick, double=black] (7.4,3.5) -- (1.6,5.5);
\draw [white, double, ultra thick, double=black] (7.5,3.5) -- (4.6,5.5);
\draw [white, double, double=black] (4.5,3.5) -- (1.5,5.5);
\draw [white, double, double=black] (7.4,3.5) -- (1.6,5.5);
\draw (-.3,8.8) -- (-7,6.2);
\draw (-.2,8.75) -- (-4.3,6.25);
\draw (-.1,8.7) -- (-1.4,6.3);
\draw (.3,8.8) -- (7,6.3);
\draw (.2,8.75) -- (4.3,6.25);
\draw (.1,8.7) -- (1.4,6.3);
\end{tikzpicture}
\]

\end{example}

%
\subsection{Standard and dual ping pong in type ADE}
%

Ping pong lemmas are ubiquitous in geometric group theory as tools to establish the faithfulness of (not-necessarily linear) actions of certain groups.   We will use two such lemmas -- one for each kind of Garside structure -- in establishing the faithfulness of the action of $\B_W$ on $\calK$; we record these lemmas here.  A ping pong proof of the faithfulness of a categorical action of the free group is appears in work of the first author, \cite{Licata_free}.

\begin{lemma} \label{lem:dualpingpong}
Let $Y$ be a set and suppose $\Psi\maps\B_W\mapsto \{\text{bijections}\;b_{ij}\maps Y\mapsto Y\}$ is a group homomorphism. Denote by $\Psi_\beta$ the image of an element $\beta\in \B_W$ under this homomorphism. Suppose that there exist disjoint non-empty subsets $\{X_w\}_{w\in \calBe^+}$ of $Y$ such that for all simple Bessis elements $u$ and $w$, $\Psi_u(X_w)\subset X_{lf(uw)}$, where $lf(uw)$ denotes the greatest left factor lying in $\calBe^+$. Then the group homomorphism $\Psi$ is injective.
\end{lemma}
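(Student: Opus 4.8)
The plan is to run a Garside-theoretic ping-pong argument. Since each $\Psi_\beta$ is a bijection of $Y$ and the sets $X_w$ are non-empty and pairwise disjoint, it is enough to show that for every $\beta\in\B_W$ with $\beta\neq 1$ there is some $w\in\calBe^+$ with $\Psi_\beta(X_w)\neq X_w$. I would first record two elementary facts. For a dual-positive element $x\in{\B_c^\vee}^+$, the quantity $lf(x)$ is nothing but the left gcd $\gcd(\gamma,x)$ -- the largest divisor of $\gamma$ that left-divides $x$ -- and it is non-trivial whenever $x\neq 1$, since every braid reflection $\tau_t$ left-divides $\gamma$ (the reflections are the atoms of ${\B_c^\vee}^+$ and lie in $\calBe^+$). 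And every $\beta\in\B_W\cong\B_c^\vee$ can be written $\beta=\gamma^{-p}\beta'$ with $\beta'\in{\B_c^\vee}^+$ and $p\geq 0$ -- the ``fundamental lemma'' for the Garside monoid ${\B_c^\vee}^+$ with Garside element $\gamma$ \cite{DDGKM} -- and I would fix such an expression with $p$ minimal, so that $\gamma$ does not left-divide $\beta'$, i.e. $\gcd(\gamma,\beta')\neq\gamma$ as soon as $p\geq 1$.

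The combinatorial core will be a telescoping estimate: if $\beta'\in{\B_c^\vee}^+$ is non-trivial with left-greedy normal form $\beta'=\beta_1\beta_2\cdots\beta_\ell$, so that each $\beta_i\in\calBe^+\setminus\{1\}$, each consecutive pair $(\beta_i,\beta_{i+1})$ is left-weighted (equivalently $\beta_i=\gcd(\gamma,\beta_i\beta_{i+1})$), and $\beta_1=\gcd(\gamma,\beta')$, then $\Psi_{\beta'}(X_1)\subset X_{\beta_1}$. I would prove this by induction on $\ell$. For $\ell=1$ it is the hypothesis of the lemma applied to the pair $(\beta_1,1)$, together with $lf(\beta_1)=\beta_1$. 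For $\ell\geq 2$, the suffix $\beta_2\cdots\beta_\ell$ is again a left-greedy normal form, with leading factor $\beta_2$, so by the inductive hypothesis $\Psi_{\beta_2\cdots\beta_\ell}(X_1)\subset X_{\beta_2}$; then the hypothesis of the lemma applied to $(\beta_1,\beta_2)$ gives $\Psi_{\beta'}(X_1)\subset\Psi_{\beta_1}(X_{\beta_2})\subset X_{lf(\beta_1\beta_2)}=X_{\beta_1}$, the last equality being exactly the left-weighted condition. As the special case $\beta'=\gamma^p$ (whose left-greedy form is $\gamma\cdots\gamma$, using $\gcd(\gamma,\gamma^2)=\gamma$), this also yields $\Psi_{\gamma^p}(X_1)\subset X_\gamma$ for every $p\geq 1$; equivalently $\Psi_\gamma$ carries every $X_w$ into $X_\gamma$.

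The conclusion should then be quick. If $\beta=\beta'$ is a non-trivial dual-positive element, then $\Psi_\beta(X_1)$ is a non-empty subset of $X_{\beta_1}$ with $\beta_1\neq 1$, hence disjoint from the non-empty set $X_1$, so $\Psi_\beta(X_1)\neq X_1$ and $\Psi_\beta\neq\id$. If $\beta=\gamma^{-p}\beta'$ with $p\geq 1$ minimal, suppose toward a contradiction that $\Psi_\beta=\id$; then $\Psi_{\beta'}=\Psi_\gamma^{\,p}$, and applying both sides to $X_1$ shows that the non-empty set $\Psi_{\beta'}(X_1)=\Psi_{\gamma^p}(X_1)$ lies in $X_{\gcd(\gamma,\beta')}\cap X_\gamma$; by disjointness of the $X_w$ this forces $\gcd(\gamma,\beta')=\gamma$, contradicting the minimality of $p$ (when $\beta'\neq 1$) or contradicting $\gamma\neq 1$ (when $\beta'=1$, where $\gcd(\gamma,\beta')=1$). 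Hence $\Psi_\beta\neq\id$ in all cases, so $\Psi$ is injective.

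The main obstacle will be the normal-form bookkeeping behind the telescoping claim: the facts that $lf$ of a dual-positive element is the left gcd with $\gamma$, that left-greedy factorizations exist and restrict to suffixes, and that the left-weighted relation reads $\beta_i=\gcd(\gamma,\beta_i\beta_{i+1})$ -- all standard Garside combinatorics \cite{DDGKM}, but precisely what forces $\Psi_{\beta_1}(X_{\beta_2})$ to land back in $X_{\beta_1}$ rather than leaking into some $X_v$ with $v$ a proper multiple of $\beta_1$. A secondary point is that one cannot simply push $X_1$ forward under $\Psi_\gamma^{-1}$, since the hypothesis only controls $\Psi_\gamma$ -- and hence $\Psi_\gamma^{\,p}$ for $p\geq 0$ -- on the pieces $X_w$; this is why the case $p\geq 1$ is handled by the indirect argument above rather than head-on.
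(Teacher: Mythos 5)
Your argument is correct, and it is essentially the standard Garside ping-pong argument: the paper does not spell out a proof but simply cites \cite[Section 3.2]{Licata_free}, where the same telescoping induction on the left-greedy normal form (leading factor $=\gcd(\gamma,\cdot)$, suffix again greedy, left-weightedness forcing $lf(\beta_1\beta_2)=\beta_1$) is carried out. Your handling of the negative powers of $\gamma$ via $\Psi_{\beta'}=\Psi_\gamma^{\,p}$ and disjointness is also the standard way to reduce to the dual-positive case, so there is nothing to correct.
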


\begin{proof}
See, for example, \cite[Section 3.2]{Licata_free}.
\end{proof}

\begin{remark} \label{rem:increfl}
Though we have not included the proof here, one can show that actually enough to check the inclusion condition for $u$ a reflection.
\end{remark}

The positive monoid for the standard Garside structure also has a ping pong lemma associated to it.  The proof is almost identical to that of Lemma \ref{lem:dualpingpong}.

\begin{lemma} \label{lem:pingpong}
Let $Y$ be a set and suppose $\Psi\maps\B_W\mapsto \{\text{bijections}\;b_{ij}\maps Y\mapsto Y\}$ is a group homomorphism. Denote by $\Psi_\beta$ the image of an element $\beta\in \B_W$ under this homomorphism. Suppose that there exist disjoint non-empty subsets $\{X_w\}_{w \in \mathcal{W}^+}$ of $Y$ such that for all elements $u,w\in \mathcal{W}^+$, $\Psi_u(X_w)\subset X_{lf(uw)}$, where $lf(uw)$ denotes the greatest left factor lying in $\mathcal{W}^+$. Then the group homomorphism $\Psi$ is injective.
\end{lemma}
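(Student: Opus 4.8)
The plan is to run the Garside-theoretic ping-pong argument of Lemma~\ref{lem:dualpingpong} (see \cite[Section~3.2]{Licata_free}), with the dual simple elements $\calBe^+$ replaced by the set $\mathcal{W}^+$ of standard simple braids, i.e.\ the divisors of the Garside element $\Delta$ in $\B_W^+$. Here, for a positive braid $p\in\B_W^+$, the element $lf(p)$ is the \emph{head} of $p$, namely the greatest common left divisor $\gcd(p,\Delta)$, which is exactly the greatest left factor of $p$ lying in $\mathcal{W}^+$. I would argue by contraposition: given $\beta\in\B_W$ with $\beta\neq 1$ and $\Psi_\beta=\id$, the goal is to produce a nonempty set of the form $\Psi_{\beta'}(X_1)$ that is simultaneously forced into two disjoint members of the family $\{X_w\}$, which is the required contradiction.

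First I would record the compatibility $lf\!\bigl(u\cdot lf(p)\bigr)=lf(up)$ for $u\in\mathcal{W}^+$ and $p\in\B_W^+$; this follows directly from the description of the head as $\gcd(-,\Delta)$, using that $u$ left-divides both $up$ and $\Delta$, together with the Garside fact that the left and right divisors of $\Delta$ coincide. Feeding this into the hypothesis $\Psi_u(X_w)\subset X_{lf(uw)}$ and inducting on word length yields $\Psi_a(X_1)\subset X_{lf(a)}$ for every positive braid $a$; since $lf(\Delta w)=\Delta$ for every simple $w$, this specialises to $\Psi_\Delta^{k}(X_w)\subset X_\Delta$ for all $k\geq 1$ and all simple $w$. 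I would also invoke two standard properties of the left Garside normal form $\beta=\Delta^{p}s_1\cdots s_q$: one may take $p=\inf(\beta)\in\Z$ with the residual positive braid $s_1\cdots s_q$ not left-divisible by $\Delta$; and when $q\geq 1$ one has $lf(s_1\cdots s_q)=s_1$, a simple element distinct from both $1$ and $\Delta$.

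The argument then splits into the usual three cases. If $q=0$, so $\beta=\Delta^{p}$ with $p\neq 0$: replacing $\beta$ by $\beta^{-1}$ (which still satisfies $\Psi_{\beta^{-1}}=\id$) we may assume $p\geq 1$, and then $X_1=\Psi_\beta(X_1)\subset X_\Delta$ contradicts $X_1\cap X_\Delta=\varnothing$. If $q\geq 1$ and $p\geq 0$: then $\Psi_\beta(X_1)=\Psi_\Delta^{p}\bigl(\Psi_{s_1\cdots s_q}(X_1)\bigr)\subset\Psi_\Delta^{p}(X_{s_1})$, which lies in $X_{s_1}$ if $p=0$ and in $X_\Delta$ if $p\geq 1$; since $s_1\notin\{1,\Delta\}$, in either case this contradicts $X_1=\Psi_\beta(X_1)$ and the disjointness of the family. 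Finally, if $q\geq 1$ and $p\leq -1$: set $P:=\Delta^{-p}\beta=s_1\cdots s_q\in\B_W^+$; from $\Psi_\beta=\id$ we obtain $\Psi_P=\Psi_\Delta^{-p}$, so the set $\Psi_P(X_1)$ lies both in $X_{lf(P)}=X_{s_1}$ and, because $-p\geq 1$, in $X_\Delta$. As $s_1\neq\Delta$ these are disjoint, so the image $\Psi_P(X_1)$ of the nonempty set $X_1$ under the bijection $\Psi_P$ would be empty --- impossible. Hence no such $\beta$ exists, and $\Psi$ is injective.

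I expect the only genuinely delicate point to be the bookkeeping in the last case: keeping the factors inside $\mathcal{W}^+$ when powers of $\Delta$ are moved across the positive part, and the fact that the head of a left-weighted product is its first factor --- both standard (\cite[Ch.~I]{DDGKM}), so this should be routine rather than a real obstacle. If one prefers to avoid left normal forms altogether, one can instead write every $\beta\neq 1$ as $\beta=\Delta^{p}a$ with $a\in\B_W^+$ not left-divisible by $\Delta$ and $p=\inf(\beta)$, and run the same three cases using only $\Psi_a(X_1)\subset X_{\mathrm{head}(a)}$ together with $\mathrm{head}(a)\notin\{1,\Delta\}$ when $a\neq 1$. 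Finally, as in Remark~\ref{rem:increfl}, one can show that to apply this lemma it suffices to verify the inclusion hypothesis when $u$ is one of the Artin generators $\sigma_i$.
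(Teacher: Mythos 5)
Your proof is correct, and it is essentially the argument the paper has in mind: the paper gives no details here, simply remarking that the proof is almost identical to that of Lemma~\ref{lem:dualpingpong} and citing \cite[Section~3.2]{Licata_free}, and your writeup is exactly that Garside ping-pong argument (head compatibility $lf(u\cdot lf(p))=lf(up)$, the $\Delta$-normal form, and the three-case contradiction against disjointness of the $X_w$) carried out for the standard Garside structure. The case analysis and the key facts you invoke all check out, so this is a faithful filling-in of the omitted proof rather than a genuinely different route.
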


%
\subsection{Standard and dual word-length metrics in type ADE} \label{subsec:prelimWordLength}
%

In this section we define the relevant word length metrics on $\B_W$. Two of the theorems in later sections will show how these metrics can be recovered from homological considerations: see Theorems \ref{thm:1} and \ref{thm:2}.

The two Garside structures considered in Section \ref{subsec:Garside} give rise to two word-length metrics on the braid group: the first is the word-length in the generating set consisting of positive Weyl braids $\mathcal{W}^+$ and their inverses, and the second is the word-length in the generating set of positive Bessis braids $\calBe^+$ and their inverses.  We briefly recall here some facts about geodesics in the braid group with respect to these two metrics.

Given a braid $\beta$, we denote by $L_{\mathcal{W}}(\beta)$ the minimal number of elements from $\mathcal{W}^+\cup (\mathcal{W}^+)^{-1}$ required to express $\beta$; $L_{\mathcal{W}}(\beta)$ is called the \emph{canonical length}, or sometimes the \emph{Charney length} of the braid $\beta$.

An important special fact about the braid groups associated to finite Coxeter groups (e.g. the braid groups of type ADE) is that any braid $\beta$ can be written as a product of a positive and a negative braid:
\[
	\beta=\beta^+\beta^{-}=\beta'^{-}\beta'^+ , \text{ with } \beta^+,\beta'^+,{\beta^-}^{-1},{\beta'^-}^{-1}\in \B_W^+. 
\]
Moreover, the expressions for $\beta$ above can be chosen with 
\[
	L_{\mathcal{W}}(\beta) =L_{\mathcal{W}}(\beta^+) + L_{\mathcal{W}}(\beta^-) = L_{\mathcal{W}}(\beta'^+)+L_{\mathcal{W}}(\beta'^-).
\]
Such expressions can be further refined into normal forms for $\beta$ by writing $\beta^+$ and $\beta^-$ in greedy form:
we say that an expression $\beta^+=\beta_1\cdots \beta_k$, with $\beta_i\in \mathcal{W}^+$, is \emph{left-greedy} if every left descent of $\beta_i$ is a right descent of $\beta_{i-1}$: 
\[
D_L(\pi(\beta_i))\subset D_R(\pi(\beta_{i-1})) \text{ for all }i = 1,\dots,k-1.
\]
Similarly, a \emph{right-greedy} expression for $\beta$ will be:
\[
\beta=\beta_1\cdots \beta_k,\; \text{ with } D_R(\pi(\beta_i))\subset D_L(\pi(\beta_{i+1})) \text{ for all } i=1,\dots,k-1.
\]

For a general braid $\beta\in \B_W$, we define a \emph{reduced minimal expression} for $\beta$ to be an expression of the following kind:
\[
\beta=\beta^-\beta^+,\; \beta^-\in \B_W^-,\;\beta^+\in \B_W^+
\]
with
\[
\beta^-=\beta_l^-\cdots \beta_1^-\;\text{right-greedy},\;\beta^+=\beta_1^+\cdots \beta_k^+\;\text{left-greedy},
\]
\[
	L_{\mathcal{W}}(\beta) = k+l, \text{ and }
\]
\[
D_R(\pi(\beta_1^{-})\cap D_L(\pi(\beta_1^+))=\emptyset.
\]
Every braid $\beta$ has a unique reduced minimal expression in the above sense. 

There is an entirely analogous story for geodesics in the word-length with respect to the Bessis braids $\calBe^+$ and their inverses in $\calBe^-=(\calBe^+)^{-1}$.  We define $L_{\calBe}(\beta)$ to be the minimal number of elements from $\calBe^+\cup \calBe^-$ needed to write $\beta$. $L_{\calBe}$ is the word-length metric in the generators from $\calBe^+\cup \calBe^-$.

Any braid $\beta$ can be written as a product of a dual positive and a dual negative braid:
\[
	\beta=\beta^+\beta^{-}=\beta'^{-}\beta'^+ , \text{ with } \beta^+,\beta'^+,{\beta^-}^{-1},{\beta'^-}^{-1}\in  {\B_c^\vee}^+. 
\]
Moreover, the expressions for $\beta$ above can be chosen with 
\[
	L_{\calBe}(\beta) =L_{\calBe}(\beta^+) + L_{\calBe}(\beta^-) = L_{\calBe}(\beta'^+)+L_{\calBe}(\beta'^-).
\]
Such expressions can be further refined into normal forms for $\beta$ by writing $\beta^+$ and $\beta^-$ in greedy form:
we say that an expression $\beta^+=\beta_1\cdots \beta_k$, with $\beta_i\in \calBe^+$, is \emph{left-greedy} if every descent of $\beta_i$ is a descent of $\beta_{i-1}$:
\[
D_c^\vee(\beta_i)\subset D_c^\vee(\beta_{i-1}) \text{ for all }i = 1,\dots,k-1.
\]
Similarly, a \emph{right-greedy} expression for $\beta$ will be:
\[
\beta=\beta_1\cdots \beta_k,\; \text{ with } D_c^\vee(\beta_i)\subset D_c^\vee(\beta_{i+1}) \text{ for all } i=1,\dots,k-1.
\]

For a general braid $\beta\in \B_W$, we define a \emph{dual reduced minimal expression} for $\beta$ an expression of the following kind:
\[
\beta=\beta^-\beta^+,\; \beta^-\in {\B_c^\vee}^-,\;\beta^+\in {\B_c^\vee}^+
\]
with
\[
\beta^-=\beta_l^-\cdots \beta_1^-\;\text{right-greedy},\;\beta^+=\beta_1^+\cdots \beta_k^+\;\text{left-greedy},
\]
\[
	L_{\calBe}(\beta) = k+l, \text{ and }
\]
\[
D_c^\vee(\beta_1^{-})\cap D_c^\vee(\beta_1^+)=\emptyset.
\]
Every braid $\beta$ has a unique reduced minimal expression in the above sense.

\subsection{The Digne-Gobet conjecture}

In relating the standard and dual Garside structures, a basic question is how to express simple dual elements, that is, elements from $\calBe^+$, in terms of generators of the classical monoid. Digne and Gobet investigated this question in \cite{DigneGobet}, and proposed the following conjecture.

\begin{conjecture}[\cite{DigneGobet}]
Let $\beta \in \calBe^+$, then $\beta \in [ \Delta^{-1},\Delta]$.
\end{conjecture}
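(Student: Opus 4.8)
The plan is to read the inclusion $\calBe^+\subseteq[\Delta^{-1},\Delta]$ through the two homological decompositions of $\calK$: I will reformulate membership in $[\Delta^{-1},\Delta]$ in terms of the canonical $t$-structure, reformulate the hypothesis $\beta\in\calBe^+$ in terms of the baric structure attached to $c$, and then bridge the two using the compatibility of the baric and $t$-structures. For the first reformulation, recall that $\beta\in[\Delta^{-1},\Delta]$ is equivalent to Digne--Gobet's form $\beta=xy^{-1}$ with $x,y$ standard simple braids, hence to $l,k\le 1$ in the reduced minimal expression $\beta=\beta^-\beta^+$ of Section~\ref{subsec:prelimWordLength}. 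Combining the membership criterion $\delta\in\B_W^+\iff\delta(\calK^{\geq0})\subseteq\calK^{\geq0}$ with the left--right divisor symmetry of $\Delta$ and the refinement of Theorem~\ref{thm:2} (which puts $\beta(\calK^0)$ in precisely the window $\calK^{[-l,k]}$), this becomes the clean assertion to be proved: \emph{for every $\beta\in\calBe^+$, $\beta(\calK^0)\subseteq\calK^{[-1,1]}$}, equivalently that both $\Delta\beta$ and $\Delta\beta^{-1}$ are positive braids.

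On the baric side, $\beta\in\calBe^+$ means $L_\calBe(\beta)=1$, so by Theorem~\ref{thm:1} and the baric membership criterion $\delta\in{\B_c^\vee}^+\iff\delta(\calK_{\geq0})\subseteq\calK_{\geq0}$, the element $\beta$ preserves $\calK_{\geq0}$ and has baric amplitude one: it carries the baric heart $\calK_0$ into $\calK_{[0,1]}$, while $\beta^{-1}\in\calBe^-$ carries $\calK_0$ into $\calK_{[-1,0]}$. The root complexes of Section~\ref{sec:zigzag} lie in $\calK_0$ (indeed in the hearts of both structures) and generate $\calK$ as a triangulated category, so it suffices to control $\beta(R)$ and $\beta^{-1}(R)$ for each root complex $R$, and for those we already know that they sit in a baric window of width one.

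The crux is then to convert the baric bound ``$\beta(R)$ occupies baric degrees $\{0,1\}$'' into the $t$-bound ``$\beta(R)$ occupies $t$-degrees $[-1,1]$'', using exactly the compatibility of the baric structure of $c$ with the canonical $t$-structure that makes root complexes live in both hearts. Two ingredients should drive this. First, periodicity: the relation $\gamma^h=\Delta^2$ (with $h$ the Coxeter number), together with the fact that $\Delta^2$ acts on $\calK$ as a homological shift up to an internal grading shift, shows that one full period of the baric filtration corresponds to a fixed homological shift, so baric degree and $t$-degree are coupled up to a bounded error. Second, $\gamma$ is simultaneously the dual Garside element and a \emph{standard} simple braid: $\gamma=\sigma_{i_1}\cdots\sigma_{i_n}$ is square-free, so $\gamma\preceq\Delta$ and $\gamma(\calK^0)\subseteq\calK^{[0,1]}$; thus advancing the baric filtration by ``the amount $\gamma$ advances it'' costs at most one $t$-degree upward, and since $\beta$ is a dual \emph{divisor} of $\gamma$ it advances the baric filtration by no more, so $\beta(R)\in\calK^{[-1,1]}$. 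Running the same reasoning for $\beta^{-1}$ pins the lower end of the window, and by the generation statement above we get $\beta(\calK^0)\subseteq\calK^{[-1,1]}$; the first paragraph then concludes.

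The main obstacle is precisely the \emph{quantitative} form of the baric--$t$ compatibility in the previous step. Width-one baric amplitude by itself only yields that $\beta$ has $t$-amplitude at most $2$, i.e. $\beta(\calK^0)\subseteq\calK^{[a,a+2]}$ for \emph{some} $a$ — that is, $L_\mathcal{W}(\beta)\le 2$, which is far weaker than membership in $[\Delta^{-1},\Delta]$ (it does not rule out, say, $\beta$ equivalent to $\Delta^2$). The real work is to pin $a=-1$: one must genuinely use that $\beta$ divides $\gamma$ on the dual side, not merely that $\beta$ is dual-positive, to force both the positive and the negative half of the reduced minimal expression of $\beta$ to be single simples. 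Once this centered compatibility statement is established, the argument is uniform — the bijection of Theorem~\ref{thm:indecCplxes} between root complexes and positive roots and the braid action on $\calK$ are case-free over ADE — so it gives the Digne--Gobet conjecture for all ADE braid groups, including the open type $D$.
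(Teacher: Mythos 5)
Your reduction of the statement is sound: by Theorem \ref{thm:2}, membership in $[\Delta^{-1},\Delta]$ is equivalent to $\calL'_{\pm}(\beta)\le 1$, i.e.\ to the assertion that $\beta$ carries the $t$-heart into $\calK^{[-1,1]}$ (equivalently, that the minimal complex of bimodules for $\beta$ is linear), and the baric constraint coming from $\beta\in\calBe^+$ is indeed the correct input. But the proposal stops exactly where the proof has to begin. You state yourself that width-one baric amplitude only gives a $t$-window of width two in an unknown position, and that ``the real work is to pin $a=-1$''; the two ingredients you offer for this do not do it. The periodicity $\gamma^h=\Delta^2$ only couples the two gradings up to a bounded error, which is the uncentered statement you already have. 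And the inference ``$\gamma$ is a standard simple, hence $\gamma(\calK^0)\subseteq\calK^{[0,1]}$; $\beta$ is a dual divisor of $\gamma$, hence $\beta$ advances the $t$-degree by no more'' is unjustified: dual divisors of $\gamma$ (e.g.\ $\sigma_1\sigma_2\sigma_1^{-1}$) are in general not standard positive braids and do not divide $\gamma$ in $\B_W^+$, so no monotonicity of $t$-amplitude along dual divisibility is available. Nothing in the proposal rules out a dual simple $\beta$ with, say, $\beta(\calK^0)\subseteq\calK^{[0,2]}$.

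For comparison, the paper closes this gap by a quite different device. It works with the minimal complex of $(A_\Gamma,A_\Gamma)$-bimodules for $\beta$: the baric constraint (Corollary \ref{cor:64}) forces every chain summand other than $A_\Gamma$ to be of the form $P_iQ_j\{-1\}\langle n\rangle$, so the differential is built from orientation-degree-zero bimodule maps, which are classified into four types; linearity fails only if components of type $P_iQ_j\{-1\}\langle n\rangle\to P_{i'}Q_{j'}\{-1\}\langle n-2\rangle$ occur. These are killed by enlarging $\Gamma$ to a graph $\widehat{\Gamma}$ with one extra vertex $x$ adjacent to every vertex and evaluating the bimodule complex on $P_x$: there $Q_iP_x\cong\C$ for all $i$, the dangerous components become zero since $\Hom(P_i,P_{i'}\langle-2\rangle)=0$ for $i\ne i'$, the resulting complex of $A_{\widehat\Gamma}$-modules splits as a direct sum of its $t$-slices, and indecomposability of $\beta P_x$ forces it into the single slice containing $P_x$. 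That evaluation argument (or some substitute for it) is the actual content of the theorem, and it is absent from your proposal.
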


In other words, dual simple braids can be expressed as the product of a simple braid and the inverse of a simple braid. In Section~\ref{sec:comparing}, we use the action of $\B$ on the categorified root lattice to give a uniform proof of this conjecture in type ADE.  We also note here that the  conjecture was already established via case-by-case analysis in types A and E (as well as in a number of other non-simply-laced types) in \cite{DigneGobet}.

%
\section{Zigzag algebras and categorical braid group actions}\label{sec:zigzag}
%

%
\subsection{Zigzag algebras}
%

Let $\bar{\Gamma}$ denote the oriented double of $\Gamma$, obtained by doubling all edges and orienting them in the two opposite directions. Let $Path(\bar{\Gamma})$ denote the path algebra of $\bar\Gamma$, which we regard as a $\C$-algebra. 
We draw basis elements of $Path(\bar{\Gamma})$ as paths, so that, for example, 
\begin{tikzpicture}[anchorbase,scale=.5]
\node at (0,0) {$\circ$};
\node at (0,.4) {$\scriptstyle a$};
\node at (1,1) {$\circ$};
\node at (1,.6) {$\scriptstyle b$};
\node at (2,0) {$\circ$};
\node at (2,.4) {$\scriptstyle c$};
\draw [->] (.1,.1) -- (.9,.9);
\draw [->] (1.1,.9) -- (1.9,.1);
\end{tikzpicture} 
denotes a length two path from vertex $a$ to vertex $c$ through vertex $b$.

 Then the zigzag algebra $A_{\Gamma}$ is the following quotient of $Path(\bar\Gamma)$:
\begin{equation}
A_{\Gamma}:=Path(\bar{\Gamma})/\left(
\begin{tikzpicture}[anchorbase,scale=.5]
\node at (0,0) {$\circ$};
\node at (0,.4) {$\scriptstyle a$};
\node at (1,1) {$\circ$};
\node at (1,.6) {$\scriptstyle b$};
\node at (2,0) {$\circ$};
\node at (2,.4) {$\scriptstyle c$};
\draw [->] (.1,.1) -- (.9,.9);
\draw [->] (1.1,.9) -- (1.9,.1);
\end{tikzpicture} = 0 \; \text{if $a\neq c$},\; 
\begin{tikzpicture}[anchorbase,scale=.5]
\node at (0,0) {$\circ$};
\node at (-.4,.3) {$\scriptstyle a$};
\node at (1,1) {$\circ$};
\node at (1.4,.7) {$\scriptstyle b$};
\draw [->] (0,.2) .. controls (.4,.8) and (.8,.95) .. (.85,.85) .. controls (.9,.75) and (.6,.2) .. (.2,0);
\end{tikzpicture} = \begin{tikzpicture}[anchorbase,scale=.5]
\node at (0,0) {$\circ$};
\node at (-.4,.3) {$\scriptstyle a$};
\node at (1,1) {$\circ$};
\node at (1.4,.7) {$\scriptstyle c$};
\draw [->] (0,.2) .. controls (.4,.8) and (.8,.95) .. (.85,.85) .. controls (.9,.75) and (.6,.2) .. (.2,0);
\end{tikzpicture} \; \forall a,b,c \in I
\right)
\end{equation}

So length two paths which start and end at distinct vertices are zero in $A_\Gamma$, and all length 2 paths which start and end at the same vertex $i$ are equal in $A_\Gamma$.  The above definition does not technically cover the type $A_1$ and $A_2$ cases: by convention, we take $A_\Gamma = \C[x]/x^2$ in type $A_1$; in type $A_2$ we define $A_\Gamma$ to be the quotient of $Path(\bar\Gamma)$ consisting of all length three paths.

\subsection{Gradings on $A_\Gamma$ and adjunctions}

There are a number of important non-negative gradings on the zigzag algebra.  One non-negative grading on $A_{\Gamma}$ is given by path length: all edges are assigned degree $1$; loops at a vertex are then of degree two. Shifts for this path-length grading will be denoted with angle brackets $\langle -\rangle$ in what follows.

Another non-negative grading on $A_\Gamma$ arises from the choice of an orientation $\vec{o}$ of $\Gamma$, resulting in the quiver denoted $\Gamma_{\vec{o}}$.  In this grading, we declare edges in $\bar{\Gamma}$ agreeing with the orientation $\Gamma_{\vec{o}}$ to be of degree 0, and edges of $\bar{\Gamma}$ not agreeing with the orientation $\Gamma_{\vec{o}}$ to have degree $1$.   In order to avoid confusion with the path-length grading on projective modules, we use the notation $\{-\}$ to denote a shift in orientation degree.  Though we will not always include it in the notation, it is important to remember that the orientation grading depends on a choice of orientation $\vec{o}$ of the Dynkin diagram, or, equivalently, on a choice of Coxeter element $c$ in the Weyl group $W$. By convention in type $A_1$ where $A_\Gamma=\C[x]/(x^2)$, $x$ has path length degree $2$ and orientation degree $1$.

After fixing an orientation $\vec{o}$, we endow $A_{\Gamma}$ with a bigrading by considering both the path-length and orientation gradings.  
We denote by $e_i$ the length 0 path at vertex $i$, which is an idempotent in $A_{\Gamma}$. The left $A_{\Gamma}$-module $P_i=A_{\Gamma}e_i$ is indecomposable and projective, as is the right $A_{\Gamma}$-module $e_iA_{\Gamma}$.  For bigraded $A_\Gamma$ modules $M,N$, we denote by $\Hom_{A_\Gamma}(M,N)$ the bidegree 0 
module homomorphisms from $M$ to $N$, and we define
\[
	\HOM_{A_\Gamma}(M,N) = \bigoplus_{k,l} \Hom_{A_\Gamma}(M,N\langle k \rangle \{l\})
\]
to be the space of all bigraded, but not necessarily degree 0, module homomorphisms.  For the indecomposable projective left modules $\{P_i\}$, we have:
\begin{equation} \label{eq:hompipjdual}
\HOM_{A_{\Gamma}-mod}(P_i,P_j)\cong
\begin{cases}
\C\oplus\C\{1\}\langle 2 \rangle \;\text{if $i=j$};\\
\C\langle 1 \rangle \; \text{if $i\mapsto j$ in $\Gamma_{\vec{o}}$};\\
\C\{1\} \langle 1 \rangle \; \text{if $j\mapsto i$ in $\Gamma_{\vec{o}}$};\\
0 \; \text{if $i\nadj j$}.
\end{cases}
\end{equation}

We set $Q_i = e_iA_{\Gamma} \langle -1 \rangle$, which is an indecomposable projective right $A_\Gamma$-module.
We have
\begin{equation} \label{eq:qipjdual}
Q_iP_j:= Q_i\otimes_{A_\Gamma} P_j\cong
\begin{cases}
\C\langle -1 \rangle \oplus \C\{1\}\langle 1 \rangle \;\text{if $i=j$};\\
\C \;\text{if $i\mapsto j$ in $\Gamma_{\vec{o}}$};\\
\C\{1\}\;\text{if $j\mapsto i$ in $\Gamma_{\vec{o}}$};\\
{0}\;\text{if $i\nadj j$}.
\end{cases}
\end{equation}
From these computations, the following lemma follows immediately.
\begin{lemma} \label{lem:dualPi_ex}
The functor given by tensoring on the left with $Q_i\langle 1 \rangle$ is right adjoint to tensoring with $P_i$, and tensoring with $Q_i\{-1\}\langle -1\rangle$ is left adjoint to it. In other words, for any left $\C$-module $X$ and any left $A_\Gamma$-module $Y$, there are isomorphisms of bigraded vector spaces
\begin{align*}
\HOM(P_iX,Y)&\simeq \HOM(X,Q_iY\langle1\rangle), \quad \text{and} \\
\HOM(Y,P_iX)&\simeq \HOM(Q_iY\{-1\}\langle -1\rangle,X) \\
\end{align*}
\end{lemma}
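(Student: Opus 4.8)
The plan is to deduce both adjunctions from two standard tensor--hom adjunctions, together with an identification of the $\C$-linear duals of $P_i$ and $Q_i$; all of the bidegree shifts $\langle\pm1\rangle$ and $\{\pm1\}$ enter through this last identification. Write $D(-)=\HOM_\C(-,\C)$ for the graded $\C$-linear dual. For the right adjunction I would argue directly from $P_i=A_\Gamma e_i$: the functor $X\mapsto A_\Gamma e_i\otimes_\C X$, from bigraded $\C$-modules to left $A_\Gamma$-modules, has the tensor--hom right adjoint $Y\mapsto\HOM_{A_\Gamma}(A_\Gamma e_i,Y)$, and $\HOM_{A_\Gamma}(A_\Gamma e_i,Y)\cong e_iY$ is the elementary identity for the projective module generated by an idempotent. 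Hence
\[
\HOM_{A_\Gamma}(P_iX,\,Y)\;\cong\;\HOM_\C(X,\,e_iY).
\]
Since $Q_i=e_iA_\Gamma\langle-1\rangle$ by definition, $Q_iY=e_iY\langle-1\rangle$, so $e_iY=Q_iY\langle1\rangle$ and the first isomorphism of the lemma follows; note that no finiteness hypothesis is needed here.

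For the left adjunction I would use that $P_i=A_\Gamma e_i$ is finite-dimensional over $\C$, so that there is a natural isomorphism of left $A_\Gamma$-modules $P_i\otimes_\C X\cong\HOM_\C\bigl(D(P_i),\,X\bigr)$, where $A_\Gamma$ acts on the target through the right action on $D(P_i)$. Feeding this into the tensor--hom adjunction $\HOM_{A_\Gamma}\bigl(Y,\,\HOM_\C(M,X)\bigr)\cong\HOM_\C\bigl(M\otimes_{A_\Gamma}Y,\,X\bigr)$, valid for any right $A_\Gamma$-module $M$, gives
\[
\HOM_{A_\Gamma}(Y,\,P_iX)\;\cong\;\HOM_\C\bigl(D(P_i)\otimes_{A_\Gamma}Y,\,X\bigr).
\]
It remains to identify $D(P_i)$ as a right $A_\Gamma$-module, and this is the one place the Frobenius structure is used: $A_\Gamma$ is a symmetric graded Frobenius algebra whose trace form is supported on the loops and therefore has path-length degree $2$ and orientation degree $1$, which yields a bimodule isomorphism $A_\Gamma\cong D(A_\Gamma)\langle2\rangle\{1\}$; multiplying by $e_i$ and dualizing then gives $D(P_i)=D(A_\Gamma e_i)\cong e_iA_\Gamma\langle-2\rangle\{-1\}=Q_i\langle-1\rangle\{-1\}$. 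Substituting, $D(P_i)\otimes_{A_\Gamma}Y\cong Q_iY\{-1\}\langle-1\rangle$, which is the second isomorphism of the lemma.

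As a consistency check --- and presumably the route behind the claim that the lemma is immediate from \eqref{eq:hompipjdual} and \eqref{eq:qipjdual} --- both asserted isomorphisms are natural and additive, so it suffices to verify them for $X=\C$ and $Y=P_j$: the right-adjunction statement then reads $\HOM_{A_\Gamma}(P_i,P_j)\cong Q_iP_j\langle1\rangle$ and the left-adjunction statement reads $\HOM_{A_\Gamma}(P_j,P_i)\cong D\bigl(Q_iP_j\{-1\}\langle-1\rangle\bigr)$, and comparing \eqref{eq:hompipjdual} with \eqref{eq:qipjdual} bidegree by bidegree --- using that $Q_iP_j$ is finite-dimensional and concentrated in the displayed bidegrees, so that $D$ simply negates them --- confirms both. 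I would present the structural argument above as the proof and keep this comparison as the sanity check. The only step that is not purely formal is the identification $D(P_i)\cong Q_i\langle-1\rangle\{-1\}$, i.e.\ pinning down the bidegree of the Frobenius trace in both the path-length and the orientation grading; this obstacle is mild, since that bidegree is exactly the data already recorded in \eqref{eq:qipjdual}, so in the write-up I would simply quote those computations rather than re-deriving the Frobenius form.
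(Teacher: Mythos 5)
Your proof is correct. The paper itself offers no argument here beyond the remark that the lemma ``follows immediately'' from the displayed computations \eqref{eq:hompipjdual} and \eqref{eq:qipjdual}, which is exactly the reduction-to-generators check you relegate to a sanity check; your structural argument (tensor--hom adjunction for $A_\Gamma e_i\otimes_\C-$, plus the symmetric Frobenius self-duality $D(P_i)\cong Q_i\{-1\}\langle-1\rangle$ coming from the trace supported on the loops in bidegree $\langle2\rangle\{1\}$) is a legitimate and more complete justification, and your bidegree bookkeeping checks out against \eqref{eq:hompipjdual} and \eqref{eq:qipjdual} in all four cases. Either version suffices; yours has the advantage of explaining where the shifts $\langle1\rangle$ and $\{-1\}\langle-1\rangle$ come from rather than just verifying them.
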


 Similarly, we have:
 \begin{lemma} \label{lem:dualotherside}
   The functor given by tensoring on the right with $Q_i\{-1\}\langle -1 \rangle$ is right adjoint to tensoring with $P_i$, and tensoring with $Q_i\langle 1\rangle$ is left adjoint to it. In other words, for any right $\C$-module $X$ and any right $A_\Gamma$-modules $Y$, there are isomorphisms of bigraded vector spaces
\begin{align*}
    \HOM(XQ_i\langle1\rangle,Y)&\simeq\HOM(X,YP_i) \quad \text{and} \\
    \HOM(Y,XQ_i\{-1\}\langle-1\rangle)&\simeq \HOM(YP_i,X).
\end{align*}
\end{lemma}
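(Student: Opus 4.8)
The plan is to run the argument of Lemma~\ref{lem:dualPi_ex} verbatim on the right-module side, so that the whole statement reduces to tensor--hom adjunction, the bimodule structure of $P_i$ and $Q_i$, and the fact that $A_\Gamma$ is a bigraded symmetric algebra. First I would unwind the notation: for a right $A_\Gamma$-module $Y$, ``tensoring with $P_i$'' means $Y\mapsto Y\otimes_{A_\Gamma}P_i=Y\otimes_{A_\Gamma}A_\Gamma e_i\cong Ye_i$ (a bigraded vector space), while for a bigraded vector space $X$, ``tensoring with $Q_i\langle1\rangle$'' and ``tensoring with $Q_i\{-1\}\langle-1\rangle$'' mean $X\mapsto X\otimes_\C e_iA_\Gamma$ and $X\mapsto X\otimes_\C e_iA_\Gamma\{-1\}\langle-2\rangle$ respectively (right $A_\Gamma$-modules).

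For the first isomorphism -- that tensoring with $Q_i\langle1\rangle$ is left adjoint to tensoring with $P_i$ -- I would note that $X\otimes_\C e_iA_\Gamma$ is a direct sum of bigraded shifts of the projective right module $e_iA_\Gamma$, and that evaluation at the idempotent $e_i$ gives a natural isomorphism $\HOM_{A_\Gamma}(e_iA_\Gamma,Y)\cong Ye_i$. Assembling these over a homogeneous basis of $X$, and absorbing the $\langle1\rangle$ back into $Q_i=e_iA_\Gamma\langle-1\rangle$, gives $\HOM_{A_\Gamma}(XQ_i\langle1\rangle,Y)\cong\HOM_\C(X,Ye_i)=\HOM_\C(X,YP_i)$; this uses only projectivity and tensor--hom, just as for the first isomorphism of Lemma~\ref{lem:dualPi_ex}.

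For the second isomorphism -- that tensoring with $Q_i\{-1\}\langle-1\rangle$ is right adjoint to tensoring with $P_i$ -- I would instead use symmetry of $A_\Gamma$: the symmetrizing trace is dual to the loop at each vertex, which sits in bidegree $\langle2\rangle\{1\}$ (visible as the socle summand $\C\{1\}\langle2\rangle$ of $\HOM_{A_\Gamma}(P_i,P_i)$ in \eqref{eq:hompipjdual}), so the trace gives an isomorphism of bigraded right $A_\Gamma$-modules $e_iA_\Gamma\cong D(A_\Gamma e_i)\langle2\rangle\{1\}$, with $D$ denoting bigraded vector-space duality. Hence $Q_i\{-1\}\langle-1\rangle\cong D(P_i)$, and since $P_i$ is finite-dimensional, $XQ_i\{-1\}\langle-1\rangle\cong\Hom_\C(P_i,X)$ as right $A_\Gamma$-modules. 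The tensor--hom adjunction $\HOM_{A_\Gamma}(Y,\Hom_\C(P_i,X))\cong\HOM_\C(Y\otimes_{A_\Gamma}P_i,X)$ then reads $\HOM_{A_\Gamma}(Y,XQ_i\{-1\}\langle-1\rangle)\cong\HOM_\C(YP_i,X)$, matching the second isomorphism of Lemma~\ref{lem:dualPi_ex}.

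I do not expect a real obstacle; the one delicate point -- and the reason one cannot simply quote Lemma~\ref{lem:dualPi_ex} for $A_\Gamma^{\mathrm{op}}$ -- is that path reversal, the natural anti-automorphism of $A_\Gamma$, interchanges the two halves of the orientation grading, so the $\{\pm1\}$-shifts do not transport formally and must be recomputed. I would fix them by checking against \eqref{eq:qipjdual} that $Q_i\langle1\rangle\otimes_{A_\Gamma}P_i$ and $Q_i\{-1\}\langle-1\rangle\otimes_{A_\Gamma}P_i$ lie in precisely the bidegrees forced by the two adjunctions, which pins down every shift.
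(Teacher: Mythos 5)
Your proposal is correct, and it is essentially the argument the paper leaves implicit: the lemma is stated as following ``similarly'' from the computations \eqref{eq:hompipjdual} and \eqref{eq:qipjdual}, i.e.\ from projectivity plus tensor--hom adjunction for the left adjoint and from the symmetric (Frobenius) structure $Q_i\{-1\}\langle-1\rangle\cong D(P_i)$ for the right adjoint, exactly as you do. Your closing remark about why one cannot transport Lemma~\ref{lem:dualPi_ex} through the path-reversal anti-automorphism (it swaps the two orientation degrees, so the $\{\pm1\}$ shifts must be recomputed) correctly identifies the only genuinely delicate point, and checking the shifts against \eqref{eq:qipjdual} settles it.
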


%
\subsection{Categories of complexes}
%

We denote by $\calK$ the homotopy category of bounded complexes of finitely-generated bigraded projective $A_{\Gamma}$-modules, where the bigrading on $A_{\Gamma}$ is given by the path-length grading and the orientation grading for a fixed orientation $\vec{o}$ of $\Gamma$.  Thus, an object of $\calK$ is a bounded complex of finitely generated bigraded projective modules
\[
	Y = (Y^m,\partial^m),\ \ \partial^m:Y^m \rightarrow Y^{m+1},\ \  \partial^{m+1}\circ \partial^m = 0.
\]
A morphism $f$ from $X$ to $Y$ is a collection of $A_{\Gamma}$ module maps $f^i:X^i \rightarrow Y^i$ intertwining the differentials.  Two maps $f,g:X\rightarrow Y$ are equal in $\calK$ if $f-g$ is nullhomotopic.
We let $[k]$ denote the auto-equivalence which shifts a complex $k$ degrees to the left: 
\[
	Y[k]^m = Y^{k+m}, \ \ \ \partial_{Y[k]} = (-1)^k\partial_Y. 
\]

The pair $(\calK,[1])$ is a finitely-generated, linear triangulated category.

Given a map of complexes $f : X\rightarrow Y$, the cone of $f$ is the complex $X[1] \oplus Y$ with the differential 
\[
	\partial (x,y) = (-\partial_{X}(x),f(x)+\partial_{Y}(y)).
\]

%
\subsection{Minimal complexes}
%

Let $Y$ be a complex of graded projective $A_{\Gamma}$-modules. We say that $Y$ is a {\it minimal complex} if $Y$ is indecomposable in the additive category $Com(A_\Gamma-mod)$ of bounded complexes of graded projective $A_{\Gamma}$-modules, that is:
\[
Y \cong Y_1\oplus Y_2 \in Com(A_\Gamma-Mod) \implies Y_1\cong 0\text{ or }Y_2\cong 0.
\]
The important point about minimal complexes is that if $Z\in \calK$ is indecomposable in the homotopy category $\calK$, then $Z$ has a representative $Y$ in the additive category $Com(A_\Gamma-mod)$ which is minimal.  In particular, when regarded as a complex of $A_{\Gamma}$-modules, $Y$ has no contractible summands.  Moreover, any two minimal representatives $Y_1$, $Y_2$ of such a $Z\in \calK$ are isomorphic in $Com(A_\Gamma)-mod$, so that the chain groups of a minimal representative are determined up to isomorphism as graded projective $A_{\Gamma}$ modules.  We thus refer to $Y$ as {\it the} minimal complex associated to its homotopy class, with the understanding that the chain groups of $Y$ are only determined up to non-canonical isomorphism.   As every indecomposable $Z\in \calK$ is homotopy equivalent to a minimal complex, in some arguments it will be convenient to study $Z$ by considering its associated minimal complex and studying the chain groups of that complex.

%
\subsection{The canonical $t$-structure on $\calK$}
\label{subsec:tbstruct}
%

The goal of this section is to briefly describe two ways to use a grading on $A_{\Gamma}$ to ``slice" objects of the homotopy category of projective modules into homogeneous pieces.  The first way, which uses well-known homological machinery, is via the {\it canonical $t$-structure} on $\calK$.  We assume some familiarity with the notion of a $t$-structure here, and refer the reader to \cite{GelfandManin}.  We also refer the reader to \cite{MOS} for a detailed discussion of hearts of $t$-structures on the homotopy category of complexes of projective modules over finite dimensional non-negatively-graded algebras.

Let $Y\in \calK$, and assume that $Y$ is minimal.  The chain groups of $Y$ are direct sums of shifts of graded projective modules $P_j \{l\} \langle k \rangle[m]$ for various $j\in I$ and $l,k,m\in \Z$.  A chain summand $P_j \{l\} \langle k \rangle[m]$ appearing in $Y$ is said to have \emph{level} $m-k$.  (Note that the orientation grading shift $\{l\}$ does not enter in the definition of the level.)
Given $N\in \Z$, we may define $\calK^N(Y)\in \calK$ to be the complex obtained by considering only those terms of the minimal complex of level $N$; the differential in the complex $\calK^N(Y)$ is the restriction of the differential of the minimal complex of $Y$ to those terms which have level $N$.  
We denote the full subcategory of $\calK$ consisting of complexes of the form $\calK^N(Y)$ by $\calK^N\subset \calK$.

For $N\in \Z$, the composition $\tau^N = \calK^N[-N]: \calK \rightarrow \calK$ is the $N$th {\it truncation functor} associated to the \emph{canonical $t$-structure} on $\calK$.  The heart $\calK^0$ of this $t$-structure consists of {\it linear complexes}, that is, complexes $Y\in \calK$ such that $Y\cong \tau^0(Y)$.  Note that for any complex $Y\in \calK$ and any $N\in \Z$, the complex $\tau^N(Y)$ is linear.  Given $Y\in \calK$, we will refer to the complexes $\tau^N(Y)$ as the {\it $t$-slices} of $Y$.  

At one point in the sequel we will also make use of the canonical t-structure on a homotopy category of $(A_\Gamma, A_\Gamma)$-bimodules.  The definition of this $t$-structure is almost identical to that given above for $A_\Gamma$ modules. In particular, the complexes of bimodules we will consider have chain groups isomorphic to $A_\Gamma$ and bimodules of the form $P_i\otimes_\C Q_i\{l\} \langle k \rangle[m]$; the level of such a summand is defined to be $m-k$, just as for modules.

\subsection{The $\vec{o}$-baric structure on $\calK$} \label{subsec:baricstructure}

In addition to $t$-structures, there is another way to slice a complex $Y\in \calK$ into homogeneous pieces, which is to ignore the homological degree completely and slice the minimal complex of $Y$ using only the internal orientation grading on $A_{\Gamma}$-modules.  The resulting decomposition of $Y$ arises not from a $t$-structure, but rather from a {\it baric structure}, a notion introduced by Achar and Treumann in \cite{AcharTreumann} for general triangulated categories.  

In our specific situation, the relevant baric structure is straightforward to describe:  for $k\in \Z$, let $\calK_k$ denote the full subcategory of $\calK$ consisting of complexes $Y = (Y^m,\partial^m)$ such that, in the minimal complex of $Y$, in all homological degrees all of the indecomposable projective summands in that homological degree have their orientation grading shifted exactly by $-k$.  In other words, in the minimal complex of $Y\in \calK_k$, the chain groups of the minimal complex are direct sums of projective modules of the form $P_i \langle n \rangle\{ k \}[m]$ for various $n,m\in \Z$.  
(There is no condition on the internal path-length grading shifts $\langle n \rangle$ or the homological shifts $[m]$ in the definition of the categories $\calK_k$.)  The subcategories $\{\calK_k\}$ define a 
baric structure in the sense of \cite{AcharTreumann}.  When we wish to emphasize the choice of the orientation $\vec{o}$ in the definition of the $\vec{o}$-baric structure, we will add it to the baric notation, and write, for example, $\calK_N^{\vec{o}}(Y)$.

Given $N\in \Z$ and $Y\in \calK$, we define $\calK_N(Y)$ to be the complex -- well-defined up to homotopy -- obtained by considering only those terms in the minimal complex of $Y$ whose orientation degree shift is $\{-N\}$, and restricting the boundary map to such terms.  We refer to the complexes 
$\tau_N(Y) = \calK_N(Y)\{N\}\in \calK_0$ as  the $o$-{\it baric slices} of the object $Y$.  To avoid confusion between $t$-slices and baric slices, we will denote the $N$th baric slices of $Y$ with $N$ as a subscript and the $N$th $t$-slice of $Y$ with $N$ as a superscript.

An important difference between $t-$ and baric- structures is in the relationship these structures have to the homological shift functor $[1]$; for example, the heart of a $t$-structure $\calK^0$ satisfies $\calK^0[1] = \calK^1$, while the baric heart $\calK_0$ satisfies $\calK_0[1] = \calK_0$.

Given an interval $[k,l]\subset \Z$, we set 
$\calK_{[k,l]}$ to be the full subcategory of $\calK$ consisting of those complexes $Y$ whose non-zero baric slices live in $\calK_m$ for $k\leq m \leq l$.  Similarly, $\calK^{[k,l]}$ is the full subcategory of $\calK$ whose non-zero $t$-slices live in $\calK^m$ for $k\leq m \leq l$.  The subcategories $\calK_{\leq k}$, $\calK_{\geq k}$, $\calK^{\leq k}$, $\calK^{\geq k}$ are defined similarly.

%
\subsection{The braid group action on $\calK$}
%

Following Khovanov-Seidel \cite{KhS},  the braid group $\B_W$ of $\Gamma$ acts by triangulated autoequivalences of $\calK$.
The braid generators $\sigma_i$ and $\sigma_i^{-1}$ act by tensoring with complexes of bimodules:
\begin{equation}\label{eq:braid}
\sigma_i \mapsto \left( A_{\Gamma}\rightarrow P_i\otimes Q_i\{-1\}\langle -1 \rangle \right), \\
\sigma_i^{-1} \mapsto \left( P_i\otimes Q_i\langle 1 \rangle \rightarrow A_{\Gamma}\right),
\end{equation}
where the bimodule $A_{\Gamma}$ is in homological degree $0$ in both complexes. 

The bimodule maps which define the differentials above are as follows:

\begin{align*}
1\mapsto&
\sum_{i'\;\text{adjacent to}\;i}
\begin{tikzpicture}[anchorbase,scale=.8]
\node at (0,0) {$\bullet$};
\node at (0,.3) {\small $i$};
\node at (-1,0) {$\bullet$};
\node at (-1,.3) {\small $i'$};
\draw [->] (-.9,.1) .. controls (-.6,.3) and (-.4,.3) .. (-.1,.1);
\node at (0,-.3) {};
\end{tikzpicture}
\otimes
\begin{tikzpicture}[anchorbase,scale=.8]
\node at (0,0) {$\bullet$};
\node at (0,.3) {\small $i$};
\node at (-1,0) {$\bullet$};
\node at (-1,.3) {\small $i'$};
\draw [<-] (-.9,.1) .. controls (-.6,.3) and (-.4,.3) .. (-.1,.1);
\node at (0,-.3) {};
\end{tikzpicture}
\;+\;
\begin{tikzpicture}[anchorbase,scale=.8]
\node at (0,0) {$\bullet$};
\node at (0,.3) {\small $i$};
\draw [->] (-.1,-.1) .. controls (-.6,-.3) and (-.9,-.1) .. (-.9,0) .. controls (-.9,.1) and (-.6,.3) .. (-.1,.1);
\node at (0,-.3) {};
\end{tikzpicture}
\otimes
\begin{tikzpicture}[anchorbase,scale=.8]
\node at (0,0) {$\bullet$};
\node at (0,.3) {\small $i$};
\node at (0,-.3) {};
\end{tikzpicture}
\\
&+\;
\begin{tikzpicture}[anchorbase,scale=.8]
\node at (0,0) {$\bullet$};
\node at (0,.3) {\small $i$};
\node at (0,-.3) {};
\end{tikzpicture}
\otimes
\begin{tikzpicture}[anchorbase,scale=.8]
\node at (0,0) {$\bullet$};
\node at (0,.3) {\small $i$};
\draw [->] (.1,-.1) .. controls (.6,-.3) and (.9,-.1) .. (.9,0) .. controls (.9,.1) and (.6,.3) .. (.1,.1);
\node at (0,-.3) {};
\end{tikzpicture}
\end{align*}
and
\begin{equation*}
\begin{tikzpicture}[anchorbase,scale=.8]
\node at (0,0) {$\bullet$};
\node at (0,.3) {\small $i$};
\node at (0,-.3) {};
\end{tikzpicture}
\otimes
\begin{tikzpicture}[anchorbase,scale=.8]
\node at (0,0) {$\bullet$};
\node at (0,.3) {\small $i$};
\node at (0,-.3) {};
\end{tikzpicture}
\mapsto
\begin{tikzpicture}[anchorbase,scale=.8]
\node at (0,0) {$\bullet$};
\node at (0,.3) {\small $i$};
\node at (0,-.3) {};
\end{tikzpicture}
\end{equation*}

\begin{prop}
The assignment (\ref{eq:braid}) defines a homomorphism from the braid group $\B_W$ to the group of (homotopy classes of) 
complexes of bigraded $(A_\Gamma, A_\Gamma)$ bimodules.
\end{prop}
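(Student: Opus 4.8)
The plan is to exploit the Artin presentation of $\B_W$: it is generated by the $\sigma_i$ subject to the braid relations $\sigma_i\sigma_j\sigma_i=\sigma_j\sigma_i\sigma_j$ for $i$ adjacent to $j$ and the commutation relations $\sigma_i\sigma_j=\sigma_j\sigma_i$ for $i\nadj j$. Write $R_i$ and $R_i'$ for the complexes of $(A_\Gamma,A_\Gamma)$-bimodules attached to $\sigma_i$ and $\sigma_i^{-1}$ by (\ref{eq:braid}), and write juxtaposition for $\otimes_{A_\Gamma}$. To produce the desired homomorphism it is enough to check, up to homotopy, that (i) $R_iR_i'\simeq A_\Gamma\simeq R_i'R_i$, so that the class of $R_i$ is invertible with inverse the class of $R_i'$; (ii) $R_iR_j\simeq R_jR_i$ whenever $i\nadj j$; and (iii) $R_iR_jR_i\simeq R_jR_iR_j$ whenever $i$ and $j$ are adjacent. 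Item (i) lets one define a homomorphism from the free group on the $\sigma_i$ into the group of invertible homotopy classes, and items (ii)--(iii) show that this homomorphism kills the Artin relations, hence factors through $\B_W$.

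For (i), I would compute $R_iR_i'$ termwise. Its chain bimodules are assembled from $A_\Gamma\otimes_{A_\Gamma}A_\Gamma\cong A_\Gamma$, from $A_\Gamma\otimes_{A_\Gamma}(P_i\otimes Q_i)$ and $(P_i\otimes Q_i)\otimes_{A_\Gamma}A_\Gamma$ (each $\cong P_i\otimes Q_i$ up to a shift), and from $(P_i\otimes Q_i)\otimes_{A_\Gamma}(P_i\otimes Q_i)\cong P_i\otimes(Q_iP_i)\otimes Q_i$; since (\ref{eq:qipjdual}) with $i=j$ gives $Q_iP_i\cong\C\langle-1\rangle\oplus\C\{1\}\langle1\rangle$, this last bimodule splits into two shifted copies of $P_i\otimes Q_i$. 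So the complex is $A_\Gamma$ in homological degree $0$ together with four $P_i\otimes Q_i$-type summands; reading the differentials off the explicit unit and trace bimodule maps displayed just above the proposition (which are exactly the adjunction maps of Lemmas \ref{lem:dualPi_ex} and \ref{lem:dualotherside}), I expect the four extra summands to cancel in two contractible pairs, so that passage to the minimal complex leaves precisely $A_\Gamma$. The computation of $R_i'R_i$ is symmetric.

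For (ii), when $i\nadj j$ equation (\ref{eq:qipjdual}) gives $Q_iP_j=0=Q_jP_i$, hence $(P_i\otimes Q_i)\otimes_{A_\Gamma}(P_j\otimes Q_j)=0=(P_j\otimes Q_j)\otimes_{A_\Gamma}(P_i\otimes Q_i)$. Both $R_iR_j$ and $R_jR_i$ therefore collapse to the two-term complex with $A_\Gamma$ in degree $0$ and $(P_i\otimes Q_i\oplus P_j\otimes Q_j)\{-1\}\langle-1\rangle$ in degree $1$, the differential being the sum of the two unit maps; the canonical associativity and commutativity isomorphisms of $\otimes_{A_\Gamma}$ then identify these two complexes directly.

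Step (iii) is the substantive one, and the step I expect to be the main obstacle. The plan is to expand $R_iR_jR_i$ as a complex of bimodules concentrated in homological degrees $0$ through $3$, evaluating every internal factor $Q_aP_b$ via (\ref{eq:qipjdual}): because $i$ and $j$ are adjacent, $Q_iP_j\cong\C$ and $Q_jP_i\cong\C$ up to grading shifts, so each chain bimodule becomes an explicit finite sum of shifts of $A_\Gamma$, $P_i\otimes Q_i$, $P_j\otimes Q_j$, $P_i\otimes Q_j$ and $P_j\otimes Q_i$. Using the explicit differentials one then cancels the contractible summands and checks that the minimal complex has the shape
\[
0\longrightarrow A_\Gamma\longrightarrow (P_i\otimes Q_i\oplus P_j\otimes Q_j)\{-1\}\langle-1\rangle\longrightarrow (P_i\otimes Q_j)\{-2\}\langle-2\rangle\longrightarrow 0
\]
(up to a choice of shift conventions), with differentials built from the unit maps and from the length-one paths $i\to j$ and $j\to i$ in $\bar\Gamma$; the same reduction applied to $R_jR_iR_j$ should produce a complex with the same chain bimodules and the same differentials, which gives (iii). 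The difficulty is essentially bookkeeping: carrying the path-length grading $\langle-\rangle$, the orientation grading $\{-\}$, and all Koszul signs in the triple tensor product through the eliminations without error. This computation is the bigraded refinement of the one carried out by Khovanov--Seidel \cite{KhS} in type $A$, and their argument adapts essentially verbatim; alternatively, one may observe from (\ref{eq:hompipjdual}) that $\{P_i\}_{i\in I}$ is a configuration of spherical objects with the $\Ext$-groups of an ADE Dynkin diagram, recognize $R_i\otimes_{A_\Gamma}(-)$ as the associated spherical twist autoequivalence, and appeal to the standard fact that twists along such a configuration satisfy the braid relations of $\B_W$.
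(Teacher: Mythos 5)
Your strategy coincides with the paper's: the paper gives no computation of its own but simply cites Khovanov--Seidel \cite[Proposition 2.4, Theorem 2.5]{KhS} (and \cite{huerfano2001category}), and the content of that reference is exactly the termwise tensor-product-and-Gaussian-elimination verification of (i)--(iii) that you outline; your fallback via spherical twists is also legitimate. One concrete slip in the only place you commit to an explicit answer: the minimal complex of $R_iR_jR_i$ has \emph{both} $P_i\otimes Q_j$ and $P_j\otimes Q_i$ (suitably shifted) in homological degree $2$ --- the summand $P_j\otimes(Q_jP_i)\otimes Q_i\cong P_j\otimes Q_i$ coming from the vertex $A_\Gamma\otimes(P_jQ_j)\otimes(P_iQ_i)$ of the cube has no summand of the same isomorphism type in degrees $1$ or $3$ to cancel against, so it survives. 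Indeed your asymmetric answer could not equal the analogous reduction of $R_jR_iR_j$, whereas the symmetric one does; with that corrected the argument goes through as you describe.
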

The proof that these complexes of bimodules satisfy the braid relations is given in \cite[Proposition 2.4, Theorem 2.5]{KhS} (see also \cite{huerfano2001category}).
For future use, we note some particular computations of braids acting on the modules $P_i$ here.
\begin{align}
\sigma_i P_j=
\begin{cases}
P_i[-1]\{-1\}\langle-2\rangle \; \text{if $i=j$};\\
P_j\rightarrow P_i\{-1\}\langle -1 \rangle \; \text{with $P_j$ in homological degree zero, if $i\rightarrow j$}; \\
P_j\rightarrow P_i\langle -1 \rangle \; \text{with $P_j$ in homological degree zero, if $j\rightarrow i$}; \\
P_j \; \text{if $i\nadj j$}.
\end{cases}
\label{rel:KS1}
\\
\sigma_i^{-1} P_j=
\begin{cases}
P_i[1]\{1\}\langle 2\rangle\; \text{if $i=j$};\\
P_i\langle 1 \rangle \rightarrow P_j \; \text{with $P_j$ in homological degree zero, if $i\rightarrow j$}; \\
P_i\{1\}\langle 1 \rangle \rightarrow P_j \; \text{with $P_j$ in homological degree zero, if $j\rightarrow i$}; \\
P_j \; \text{if $i\nadj j$}.
\end{cases}
\label{rel:KS2}
\end{align}

Above we kept track of all gradings independently, and only indicated the shifts when they are non-zero.  
Note that $\sigma_i(P_j) \in \calK^0$ unless $j=i$, in which case $\sigma_i(P_i) \in \calK^1$.

This immediately implies the following lemma, that we will freely use throughout the paper.

\begin{lemma}
  If $\beta\in \B_W^+$, then $\beta (\calK^{\geq 0})\subset \calK^{\geq 0}$, and $\beta^{-1}9\calK^{\leq 0})\subset \calK^{\leq 0}$.
\end{lemma}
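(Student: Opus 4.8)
The plan is to reduce to the braid generators and then propagate from the indecomposable projectives to the whole aisle using that the $\sigma_i$ act by triangulated functors. First I would note that it suffices to establish $\sigma_i(\calK^{\geq 0})\subseteq\calK^{\geq 0}$ and $\sigma_i^{-1}(\calK^{\leq 0})\subseteq \calK^{\leq 0}$ for each single generator $i\in I$: since $\B_W^+$ is generated as a monoid by $\{\sigma_i\}_{i\in I}$, any $\beta\in\B_W^+$ acts on $\calK$ as a composite of the functors $\sigma_i$ (and $\beta^{-1}$ as the corresponding composite of the $\sigma_i^{-1}$), and the properties ``preserves $\calK^{\geq 0}$'' and ``preserves $\calK^{\leq 0}$'' are visibly stable under composition of functors.

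For the generator statement I would use two inputs. On the one hand, $\calK^{\geq 0}$ and $\calK^{\leq 0}$ are the two aisles of the canonical $t$-structure on $\calK$, so each is closed under extensions, with $\calK^{\geq 0}[1]=\calK^{\geq 1}\subseteq\calK^{\geq 0}$ and $\calK^{\leq 0}[-1]=\calK^{\leq -1}\subseteq\calK^{\leq 0}$; moreover (after replacing $Y$ by its minimal representative) an object $Y\in\calK^{\geq 0}$ is exactly a minimal complex all of whose chain summands are shifts $P_j\{l\}\langle k\rangle[m]$ of level $m-k\geq 0$, and every such $Y$ is obtained from these summands --- each of which, as an object of $\calK$, lies in $\calK^{m-k}\subseteq\calK^{\geq 0}$ --- by an iterated tower of mapping cones, namely its brutal truncations. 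On the other hand, the computation \eqref{rel:KS1} and the remark following it give $\sigma_i(P_j)\in\calK^{[0,1]}\subseteq\calK^{\geq 0}$ for every $j\in I$; since $\sigma_i$ commutes with the grading shifts and with $[1]$, this yields $\sigma_i\big(P_j\{l\}\langle k\rangle[m]\big)=(\sigma_iP_j)\{l\}\langle k\rangle[m]\in\calK^{[m-k,\,m-k+1]}\subseteq\calK^{\geq 0}$ whenever $m-k\geq 0$.

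Combining the two, since $\sigma_i$ is triangulated it carries the brutal-truncation cone tower of $Y\in\calK^{\geq 0}$ to a cone tower built out of the objects $(\sigma_iP_j)\{l\}\langle k\rangle[m]\in\calK^{\geq 0}$ just described; as $\calK^{\geq 0}$ is closed under extensions and under $[1]$, it follows that $\sigma_iY\in\calK^{\geq 0}$, hence $\beta(\calK^{\geq 0})\subseteq\calK^{\geq 0}$ for all $\beta\in\B_W^+$. The statement for $\beta^{-1}$ is the mirror image: \eqref{rel:KS2} gives $\sigma_i^{-1}(P_j)\in\calK^{[-1,0]}\subseteq\calK^{\leq 0}$ for all $j$, and the same cone-tower argument --- now using that $\calK^{\leq 0}$ is closed under extensions and under $[-1]$ --- gives $\sigma_i^{-1}(\calK^{\leq 0})\subseteq\calK^{\leq 0}$, so composing along a positive word for $\beta$ gives $\beta^{-1}(\calK^{\leq 0})\subseteq\calK^{\leq 0}$.

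Given \eqref{rel:KS1} and \eqref{rel:KS2}, there is essentially no real obstacle here; the only point that deserves care is the step from the shifted indecomposable projectives to an arbitrary object of the aisle, i.e.\ recording that an aisle of this $t$-structure is the extension-closure (under positive shifts) of the shifted projectives it contains --- compare the discussion of hearts of such $t$-structures in \cite{MOS} --- so that triangulatedness of $\sigma_i$ finishes the argument, together with keeping the grading-shift conventions straight so that one sees the level of $\sigma_i(P_j\{l\}\langle k\rangle[m])$ never drops below $m-k$.
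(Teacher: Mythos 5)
Your proof is correct and takes essentially the same approach the paper intends: the paper states this lemma as an immediate consequence of the computations \eqref{rel:KS1}--\eqref{rel:KS2} and the observation that $\sigma_i(P_j)\in\calK^{[0,1]}$, giving no further argument, and your write-up supplies exactly the implicit details (reduction to the monoid generators, the level bookkeeping under shifts, and the extension-closure of the aisles via the brutal-truncation cone tower). Nothing to correct.
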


As mentioned in the introduction, one of the results of this paper is that the converse also holds: this follows, for example, from Proposition~\ref{prop:4}. 

\begin{remark}
As explained in \cite{KhS}, the action of $\B_W$ on the Grothendieck group is equivalent to the Burau representation of the braid group.  Since our action is on a category of bigraded chain complexes, passing to the Grothendieck group gives a 2-variable version of this Burau representation.
\end{remark}

%
\subsection{Root complexes in type ADE}
%

The goal of this section is to describe a relationship between the isomorphism classes of indecomposable linear complexes and the corresponding ADE root system.  To that end, fix an orientation $\vec{o}$ of the ADE Dynkin diagram $\Gamma$.  Recall that the choice of orientation gives rise to both a non-negative grading on the zigzag algebra $A_\Gamma$, and also to a baric structure on $\calK$, with baric heart denoted $\calK_0(\vec{o})$.  (For notional ease, we will sometimes drop the orientation $\vec{o}$ from the notation for the baric structure, and write $\calK_0$ instead of $\calK_0(\vec{o})$.)  Let $\calK^0_0= \calK^0 \cap \calK_0$ denote the intersection of the baric heart $\calK_0$ and the heart of the canonical $t$-structure $\calK^0$.  Objects of $\calK^0_0$ are complexes homotopic to a complex of indecomposable projectives whose underlying chain group is a direct sum of projective modules of the form $P_i\{0\}\langle -k\rangle$ lying in cohomological degree $k$.
Note that the shift $\langle 1 \rangle [1]$ preserves $\calK^0_0$.

In order to connect the representation theory of $A_\Gamma$ to the combinatorics of the associated root system, we will explain a relationship between the representation theory of the zigzag algebra $A_\Gamma$ (in particular, the category $\calK^0_0$) and the representations of the oriented quiver $\Gamma_{\vec{o}}$.  To do this, we regard the path algebra of $\Gamma_{\vec{o}}$ as a $\Z$-graded algebra, where the grading is by path-length.  A representation $(V_i,f_{ij})$ of $\Gamma_{\vec{o}}$ is said to be graded if the corresponding representation of the path algebra of $\Gamma_{\vec{o}}$ is graded.  Thus, in a graded representation of $\Gamma_{\vec{o}}$, the vector spaces $V_i$ at each vertex $i$ are themselves $\Z$-graded, and the linear maps assigned to edges of the quiver are of degree 1.   Let $\mbox{Rep}(\Gamma_{\vec{o}})$ denote the abelian category of graded representations of $\Gamma_{\vec{o}}$.  

We will define a functor
\[
	F_{\vec{o}} : \mbox{Rep}(\Gamma_{\vec{o}}) \longrightarrow \calK^0_0.
\]  
Let $(V_i, f_{ij})_{i,j}$ be a graded representation of $\Gamma_{\vec{o}}$.  If
\[
	V_i = \oplus_m V_i(m)
\]
where the $V_i(m)$ are the graded pieces, then the underlying projective module of the complex $F_{\vec{o}}( (V_i, f_{ij})_{i,j})$ is defined to be
\[
	\bigoplus_{i} \oplus_m P_i\langle -m \rangle [-m] \otimes_{\C} V_i(m),
\]
so that the underlying graded vector spaces $V_i$ of the representation become the multiplicity spaces of the projective $P_i$.  The differential $d$ on $F_{\vec{o}}( (V_i, f_{ij})_{i,j}) )$ 
is defined by setting its $i,j$ component to be $f_{ij} x_{ij}$ (where $x_{ij}$ is the edge from $i$ to $j$ in the zigzag algebra):
\[
	d = \oplus_{i,j} f_{ij} x_{ij}.
\]
It is clear from the definition that the resulting complex  $\big(F_{\vec{o}}( (V_i, f_{ij})_{i,j}) ,d \big)$ is an object of $\calK^0_0$.  
Moreover, we have the following.

\begin{prop}\label{prop:equiv}
The functor $F_{\vec{o}}:\mbox{Rep}(\Gamma_{\vec{o}}) \longrightarrow \calK^0_0$ is an equivalence of abelian categories.
\end{prop}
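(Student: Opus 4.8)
The plan is to prove directly that $F_{\vec{o}}$ is additive, fully faithful, and essentially surjective; since $\mbox{Rep}(\Gamma_{\vec{o}})$ is abelian (taking, to match the finiteness built into $\calK$, finite-dimensional graded representations supported in finitely many degrees), this exhibits $\calK^0_0$ as an abelian category and $F_{\vec{o}}$ as an equivalence of abelian categories. The first step is to define $F_{\vec{o}}$ on morphisms: a morphism $\phi=(\phi_i)\colon(V_i,f_{ij})\to(V'_i,f'_{ij})$ of graded representations (each $\phi_i$ of internal degree $0$, with $\phi_j f_{ij}=f'_{ij}\phi_i$) is sent to $\bigoplus_i \id_{P_i}\otimes\phi_i$; this is a chain map precisely because of the intertwining relations, functoriality is immediate, and additivity is clear.

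Full faithfulness rests on two points, both consequences of the $\Hom$-computation \eqref{eq:hompipjdual} together with minimality of the complexes $F_{\vec{o}}(V)$, whose summand $P_i\langle -m\rangle[-m]$ sits in cohomological degree $m$. First, any chain homotopy between objects of $\calK^0_0$ is zero: its components would be maps $P_i\langle-m\rangle\to P_j\langle -m+1\rangle$, i.e.\ maps $P_i\to P_j$ of path-length degree $-1$, and $A_\Gamma$ is non-negatively graded. Hence $\Hom_{\calK}\big(F_{\vec{o}}(V),F_{\vec{o}}(V')\big)$ is literally the space of chain maps. Second, a chain map preserves cohomological degree, so its components are maps $P_i\langle-m\rangle\otimes V_i(m)\to P_j\langle -m\rangle\otimes V'_j(m)$, that is, a map $P_i\to P_j$ of path-length and orientation degree $0$ tensored with a linear map; by \eqref{eq:hompipjdual} the map of projectives vanishes unless $i=j$, where it is a scalar. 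So every chain map has the form $\bigoplus_i\id_{P_i}\otimes\phi_i$, and commuting with the differentials $\bigoplus f_{ij}x_{ij}$ unwinds, one oriented edge at a time, to exactly the condition $\phi_j f_{ij}=f'_{ij}\phi_i$. Thus chain maps are the same as morphisms of graded representations, and $F_{\vec{o}}$ is fully faithful.

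For essential surjectivity, let $Z\in\calK^0_0$ and replace it by its minimal complex. By the description of $\calK^0\cap\calK_0$, in cohomological degree $k$ this minimal complex is a sum of copies of $P_i\langle -k\rangle\{0\}$, $i\in I$; recording the multiplicities gives graded vector spaces $W_i=\bigoplus_k W_i(k)$. A differential component $P_i\langle-k\rangle\otimes W_i(k)\to P_j\langle -k-1\rangle\otimes W_j(k+1)$ is, again by \eqref{eq:hompipjdual}, zero unless $i\to j$ in $\Gamma_{\vec{o}}$ — the only case giving a map $P_i\to P_j$ of path-length degree $1$ and orientation degree $0$ — in which case it is $x_{ij}$ tensored with a unique linear map $W_i(k)\to W_j(k+1)$; assembling these over $k$ gives $f_{ij}\colon W_i\to W_j$ of internal degree $1$. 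Here $d^2=0$ is automatic: a composite $x_{ij}x_{jl}$ vanishes in $A_\Gamma$ unless $l=i$, which would require both $i\to j$ and $j\to i$ in $\Gamma_{\vec{o}}$, impossible for an orientation of a simple graph. So $(W_i,f_{ij})$ is an object of $\mbox{Rep}(\Gamma_{\vec{o}})$ and the minimal complex of $Z$ is exactly $F_{\vec{o}}\big((W_i,f_{ij})\big)$; hence $F_{\vec{o}}$ is essentially surjective.

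Putting the three points together gives an equivalence of additive categories; transporting the abelian structure of $\mbox{Rep}(\Gamma_{\vec{o}})$ then makes $F_{\vec{o}}$ an equivalence of abelian categories. If one wants this abelian structure to be the one $\calK^0_0$ carries as a full subcategory of the heart $\calK^0$, one further checks that kernels and cokernels in $\calK^0$ of maps between orientation-pure linear complexes remain orientation-pure, which is once more immediate from the grading bookkeeping. I do not anticipate a serious obstacle here: the argument is essentially a matter of organizing the three gradings (homological, path-length, orientation) and invoking \eqref{eq:hompipjdual}. The one fact worth isolating cleanly is the vanishing of homotopies between objects of $\calK^0_0$ — a standard feature of linear complexes over a non-negatively graded algebra, and exactly what makes $\Hom$ in $\calK$ computable on the nose.
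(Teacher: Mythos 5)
Your proof is correct and follows essentially the same route as the paper: your essential-surjectivity step is exactly the paper's explicit construction of the inverse functor $G_{\vec{o}}$ on minimal complexes, and your full-faithfulness argument spells out what the paper leaves implicit in the phrase ``clear that $G_{\vec{o}}$ and $F_{\vec{o}}$ are inverse functors.'' The one detail you isolate that the paper glosses over---the vanishing of all chain homotopies between objects of $\calK^0_0$ for degree reasons, which is what makes $\Hom$ in the homotopy category computable as honest chain maps---is a worthwhile addition but not a different method.
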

\begin{proof}
We construct the inverse $G_{\vec{o}}$ of $F_{\vec{o}}$ explicitly as follows.  Let $\calC\in \calK^0_0$ be a minimal complex.  The underlying chain group of $\calC$ is 
isomorphic to
\[
	\oplus_i \oplus_k P_i\langle -k \rangle [-k] \otimes N_i(k),
\]
where the multiplicity space $N_i(k)$ is a $\C$-vector space.
We define a graded representation $G_{\vec{o}}(\calC)$ by putting the graded vector space $\oplus_k N_i(k)$ at vertex $i$.  (Note that, since homotopic minimal complexes have isomorphic underlying projective modules, the isomorphism class of the above vector space is independent of the choice of minimal complex $\calC$.)
For neighboring vertices $i,j$ of $\Gamma_{\vec{o}}$, let $d_{ij}$ denote the component of differential $d$ of $\calC$ which maps from the $P_i$-isotypic component of $\calC$ to the $P_j$-isotypic component.  Since $\calC\in \calK^0$, it follows that $d_{ij}$ is of the form
\[
	d_{ij} = \sum_{k}f_{i,j}(k) x_{ij}, 
\]
where
\[
	 f_{ij}(k): N_i(k) \longrightarrow N_j(k+1).
\]
We complete the definition of $G_{\vec{o}}$ by putting the linear map $f_{ij} = \sum_k f_{ij}(k)$ on the edge from $i$ to $j$ in $\Gamma_{\vec{o}}$.  
It is then clear from the construction that $G_{\vec{o}}$ and $F_{\vec{o}}$ are inverse functors.
\end{proof}

Let $\tilde{\calL}$ denote the set of isomorphism classes of indecomposable objects of $\calK^0_0$, and let $\calL = \tilde{\calL}/\langle 1 \rangle [1]$ denote the set of equivalence classes of objects in $\tilde{\calL}$ up to the shift $\langle 1 \rangle [1]$.  The assignment $\chi(P_i) = \alpha_i$ extends to a well-defined map
\[
	\mbox{dim}: \calL \longrightarrow \mbox{span}_\mathbb{N}\{\alpha_i\}
\]
from $\calL$ to the non-negative cone in the root lattice of the corresponding ADE root system.  (Here $\alpha_i$ are the simple roots.)  We denote by 
\[
	\Phi^+ \subset \mbox{span}_\mathbb{N}\{\alpha_i\}
\]
the set of positive roots.  As a corollary of Proposition \ref{prop:equiv}, together with Gabriel's theorem~\cite{Gabriel}, we obtain the following.

\begin{theorem} \label{thm:indecCplxes}
For $\Gamma$ a Dynkin diagram of type $ADE$, the map $\chi$ defines a bijection between $\calL$ and $\Phi^+$.
\end{theorem}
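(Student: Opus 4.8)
The plan is to transport the statement along the equivalence $F_{\vec{o}}$ of Proposition~\ref{prop:equiv} and then appeal to Gabriel's theorem; the only genuine work is passing from \emph{graded} representations of $\Gamma_{\vec{o}}$ (the source of $F_{\vec{o}}$) to ordinary ungraded representations (the setting of Gabriel's theorem).

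First I would record two compatibilities of $F_{\vec{o}}\colon\mbox{Rep}(\Gamma_{\vec{o}})\to\calK^0_0$ that are visible from its definition. It carries the internal grading-shift functor on $\mbox{Rep}(\Gamma_{\vec{o}})$ to the autoequivalence $\langle 1\rangle[1]$ of $\calK^0_0$ (up to inverse, depending on conventions), because the degree-$m$ piece $V_i(m)$ of a representation becomes the multiplicity space of $P_i\langle -m\rangle[-m]$. And $\chi\circ F_{\vec{o}}$ is the dimension-vector map $\underline{\dim}\colon (V_i,f_{ij})\mapsto\sum_i(\dim V_i)\alpha_i$, since the total multiplicity space of $P_i$ in $F_{\vec{o}}((V_i,f_{ij}))$ is $\bigoplus_m V_i(m)=V_i$. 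Being an equivalence of abelian categories, $F_{\vec{o}}$ sends indecomposables to indecomposables; combined with the two compatibilities it induces a bijection between $\calL$ and the set of isomorphism classes of indecomposable graded representations of $\Gamma_{\vec{o}}$ modulo grading shift, intertwining $\chi$ with $\underline{\dim}$.

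Next I would reduce the graded representation theory to the ungraded one, using that the underlying graph of $\Gamma$ is a tree. Fix a level function $h\colon I\to\Z$ with $h(j)=h(i)+1$ for every arrow $i\to j$ of $\Gamma_{\vec{o}}$ (such an $h$ exists and is unique up to a global additive constant, by propagating along the tree). For a graded representation $V=(V_i,f_{ij})$ and $r\in\Z$, setting $V^{(r)}_i:=V_i(h(i)+r)$ defines a subrepresentation: for an arrow $i\to j$ one has $f_{ij}\big(V^{(r)}_i\big)\subseteq V_j(h(i)+r+1)=V_j(h(j)+r)=V^{(r)}_j$. Since $V=\bigoplus_{r\in\Z}V^{(r)}$ as graded representations, an indecomposable $V$ must coincide with a single $V^{(r)}$; after a grading shift we may then assume each $V_i$ is concentrated in degree $h(i)$, so $V=\widetilde{M}$ for the ungraded representation $M=(V_i,f_{ij})$ graded by placing $M_i$ in degree $h(i)$. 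Because each $M_i$ sits in a single degree, $\End_{\mathrm{gr}}(\widetilde{M})=\End(M)$, so $\widetilde{M}$ is graded-indecomposable exactly when $M$ is indecomposable (Krull--Schmidt for finite-dimensional graded modules). Hence forgetting the grading and the assignment $M\mapsto\widetilde{M}$ are mutually inverse bijections between isomorphism classes of indecomposable graded representations modulo grading shift and isomorphism classes of indecomposable ungraded representations of $\Gamma_{\vec{o}}$, and both preserve $\underline{\dim}$.

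Finally, Gabriel's theorem~\cite{Gabriel} identifies $\underline{\dim}$ with a bijection from the set of isomorphism classes of indecomposable representations of the Dynkin quiver $\Gamma_{\vec{o}}$ onto $\Phi^+$. Composing the chain of bijections $\calL\leftrightarrow\{\text{graded indecomposables}\}/\text{shift}\leftrightarrow\{\text{ungraded indecomposables}\}\xrightarrow{\underline{\dim}}\Phi^+$ and recalling that the composite is $\chi$, the theorem follows. The main obstacle is the reduction in the third paragraph — making sure that restricting to per-vertex single-degree representations loses nothing — and the point that makes it painless is precisely that $\Gamma$ is a tree, which both produces the level function $h$ and forces every graded representation to split along the levels of $h$.
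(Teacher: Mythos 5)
Your proof is correct and follows the same route the paper intends: transport the problem along the equivalence $F_{\vec{o}}$ of Proposition~\ref{prop:equiv} and invoke Gabriel's theorem. The paper states the result as an immediate corollary without spelling out the passage from graded to ungraded representations of $\Gamma_{\vec{o}}$; your level-function argument (using that $\Gamma$ is a tree, so every graded representation splits along the levels of $h$ and indecomposables are, up to shift, ungraded representations placed at those levels) supplies exactly that implicit step, correctly.
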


\begin{remark}\label{rem:reflectionfunctors}
Though we will not use them in the sequel, we mention here how the Bernstein-Gelfand-Ponomarev reflection functors \cite{BGP} between the categories $\mbox{Rep}(\Gamma_{\vec{o}})$ for the various orientations $\vec{o}$ appear in relation to the braid group action on $\calK$.  
For an orientation $\vec{o}$, let 
\[
	\tau_{\vec{o}}: \calK \longrightarrow \calK^0_0(\vec{o})
\]
denote the truncation (``0th cohomology") functor for the $t$-structure whose heart is $\calK^0_0(\vec{o})$.

For $i$ a sink of the orientation $\vec{o}$, we define
\[
	S^+_i = \tau_{s_i\vec{o}} \sigma_i : \calK^0_0(\vec{o}) \longrightarrow \calK^0_0(s_i \vec{o}).
\]
Note that the target  of $S_i$ is the intersection of the $t$-heart $\calK^0$ and the baric heart $\calK_0(s_i \vec{o})$ of the quiver with orientation $s_i \vec{o}$.
Similarly, for $i$ a source of $\vec{o}$, we define 
\[
	S_i^- = \tau_{s_i\vec{o}} \sigma_i^{-1}: \calK^0_0(\vec{o}) \longrightarrow \calK^0_0(s_i \vec{o}).
\]
$S_i^{\pm}$ are additive functors, but they are not equivalences.  Under the equivalences $F_{\vec{o}}$ between the hearts $\calK^0_0(c)$ and the categories 
$\mbox{Rep}(\Gamma_{\vec{o}})$ defined above, the functors $S_i^{\pm}$ are sent to the BGP reflection functors.
\end{remark}

For future use, we also record here the basic compatibility between the braid Coxeter element $\gamma$ defined by an orientation $\vec{o}$ of the Dynkin diagram and the $\vec{o}$-baric structure determined by the same orientation: the action of $\gamma$ on $\calK$ moves complexes up in $\vec{o}$-baric level up by one.

\begin{lemma} \label{lemma:cShift}
Let $\calC\in \calK_0$. Then $\gamma\cdot \calC \in \calK_{1}$.
\end{lemma}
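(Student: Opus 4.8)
\emph{Plan.} I would prove the lemma in two steps: a reduction to the action on the indecomposable projectives $P_i$, and then an explicit downward induction using the Khovanov--Seidel formulas \eqref{rel:KS1}. For the reduction, recall that $\gamma$ acts by tensoring with an invertible complex of bigraded $(A_\Gamma,A_\Gamma)$-bimodules, so it is a triangulated autoequivalence of $\calK$ commuting with the shift functors $\langle 1\rangle$, $\{1\}$ and $[1]$. Any $\calC\in\calK_0$ has a minimal complex whose chain groups are finite direct sums of modules $P_i\langle p\rangle\{0\}[q]$, and hence (by filtering the complex by homological degree) is obtained from shifts of the $P_i$ by finitely many mapping cones. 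On the other hand $\calK_1=\calK_0\{-1\}$ is closed under mapping cones (a cone of complexes all of whose chain summands are orientation-shifted by $\{-1\}$ again has this property, and passing to a minimal representative only deletes chain summands), under internal shifts, and under $[\cdot]$ (since $\calK_0[1]=\calK_0$). As $\gamma$ takes triangles to triangles and commutes with these shifts, $\gamma(\calK_0)\subset\calK_1$ will follow once $\gamma P_i\in\calK_1$ for every $i\in I$.

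For the second step, write $\gamma=\sigma_{i_1}\sigma_{i_2}\cdots\sigma_{i_n}$ with the ordering such that $c=s_{i_1}\cdots s_{i_n}$ is the Coxeter element attached to $\vec{o}$; concretely, every oriented edge of $\Gamma_{\vec{o}}$ runs from a later index to an earlier one. Put $U_j=\{i_j,i_{j+1},\dots,i_n\}$, so that $U_{n+1}=\varnothing$ and $U_1=I$. The heart of the argument is the following statement $(\star_j)$, which I would prove by downward induction on $j$ from $n+1$ to $1$: for every $k\in I$, each indecomposable projective summand of the minimal complex of $\sigma_{i_j}\sigma_{i_{j+1}}\cdots\sigma_{i_n}(P_k)$ has the form $P_m\langle p\rangle\{l\}[q]$ with $l=1$ precisely when $m\in U_j$. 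The base case $j=n+1$ is trivial, as the product is empty and $P_k=P_k\langle 0\rangle\{0\}[0]$. For the inductive step one applies $\sigma_{i_j}$ to the minimal complex of $\sigma_{i_{j+1}}\cdots\sigma_{i_n}(P_k)$; each resulting chain summand is a chain summand of $\sigma_{i_j}(P_m\langle p\rangle\{l\}[q])$ for some chain summand $P_m\langle p\rangle\{l\}[q]$ controlled by $(\star_{j+1})$, and \eqref{rel:KS1} describes $\sigma_{i_j}$ in the four cases $m=i_j$, $m$ adjacent to $i_j$ with $m\in U_{j+1}$, $m$ adjacent to $i_j$ with $m\notin U_{j+1}$, and $m$ not adjacent to $i_j$. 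The adapted ordering forces the edge between $i_j$ and an adjacent $m$ to point $i_j\to m$ exactly when $m\notin U_{j+1}$ (where $m$ sits at level $0$, and $m\notin U_j$) and $m\to i_j$ exactly when $m\in U_{j+1}$ (where $m$ sits at level $1$, and $m\in U_j$). In the first case \eqref{rel:KS1} yields $P_m$ (staying at level $0$) together with $P_{i_j}\{-1\}$ (moving to level $1$, with $i_j\in U_j$); in the second case, shifting the whole level-$1$ input by $\{1\}$, it yields $P_m\{1\}$ and $P_{i_j}\{1\}$, both at level $1$ with both indices in $U_j$; for $m=i_j$ it sends $P_{i_j}$ (level $0$) to $P_{i_j}[-1]\{-1\}\langle-2\rangle$ (level $1$, $i_j\in U_j$); and the non-adjacent case is inert. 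Thus every chain summand of $\sigma_{i_j}\cdots\sigma_{i_n}(P_k)$ carries the level prescribed by $(\star_j)$, and passing to its minimal complex preserves this. Specializing $(\star_1)$, where $U_1=I$, shows that every projective summand of the minimal complex of $\gamma P_k$ is orientation-shifted by $\{-1\}$, that is, $\gamma P_k\in\calK_1$; together with the reduction this proves the lemma.

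I expect the main obstacle to be purely organizational --- pinning down the invariant $(\star_j)$ so that the four cases of \eqref{rel:KS1} all line up --- together with two small points that must be handled with care: that $\sigma_{i_j}$ commutes with the orientation shift $\{1\}$ (used above to transport entire level-$1$ summands), which is clear since $\sigma_{i_j}$ is tensoring with a complex of bigraded bimodules; and that passing to a minimal complex leaves orientation levels undisturbed, since a contractible summand is cancelled against one of the same bidegree and its removal changes no other summand's level. The degenerate low-rank conventions are covered by the same formulas: in type $A_1$ one has $\gamma=\sigma_1$ and $\gamma P_1=P_1[-1]\{-1\}\langle-2\rangle\in\calK_1$, and type $A_2$ is an immediate check from \eqref{rel:KS1}.
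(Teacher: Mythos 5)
Your proof is correct and follows essentially the same route as the paper's: reduce to the indecomposable projectives $P_i$ and then track orientation degrees letter by letter through $\gamma$, using that the ordering of the $\sigma_{i_j}$ is adapted to the orientation so that each application of $\sigma_{i_j}$ either preserves a chain summand's baric level or raises it from $0$ to $1$ exactly when the new summand is a copy of $P_{i_j}$. The only blemish is a recurring sign slip --- in the statement of $(\star_j)$ and in the second case of the induction you write the orientation shift as $l=1$, resp.\ $\{1\}$, where the paper's convention for baric level $1$ requires $\{-1\}$ --- but your ``level'' bookkeeping and your final conclusion (every summand of the minimal complex of $\gamma P_k$ is shifted by $\{-1\}$, hence $\gamma P_k\in\calK_1$) are internally consistent and correct.
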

\begin{proof}
It suffices to show that $\gamma \cdot P_i\in \calK_1$ for all $i$.  So consider $\gamma \cdot P_i$. We have $\gamma = \sigma_1 \dots \sigma_{i-1} \sigma_i \sigma_{i+1}\dots \sigma_n$.  Since for $j>i$ the only morphisms from $P_i$ to $P_j\langle k \rangle \{l \}$ 
have $l =1$, it follows that 
$\sigma_{i+1}\dots \sigma_n P_i$ has a minimal complex of the form
 $P_i\rightarrow X$, where $X\in \calK_1$ is an object whose minimal complex has as underlying chain group a direct sum of modules of the form $P_j\{-1\}$, with $j>i$.  
 Since $\sigma_i(P_i) \cong P_i\{-1\} \in \calK_1$, applying $\sigma_i$ to $\sigma_{i+1}\dots \sigma_n P_i$ results in a complex which lives in $\calK_1$, and whose underlying chain groups are direct sums of $P_j\{-1\}$ with $j\geq i$. 
 But now when we apply 
 $\sigma_1\dots \sigma_{i-1}$ to this, the result will remain in $\calK_1$, since all homs from $P_k$ to $P_j$ for $k<j$ are of orientation degree 0.
 Thus $\gamma \cdot P_i\in \calK_1$, as desired.
 \end{proof}

\subsection{The reflection complexes $\calC_t$ and $\mathfrak{C}_t$}
Let $\phi$ be a positive root, and $t\in W_\Gamma$ the associated reflection in the Weyl group.  As explained in Section \ref{subsec:Garside}, the choice of orientation $\vec{o}$ (equivalently, the choice of Coxeter element in $W_\Gamma$) defines a lift of $t$ to the braid group, via 
\[
	t \mapsto \tau=\gamma^k\sigma_{i_1}\dots\sigma_{i_j}\sigma_{i_{j+1}}\sigma_{i_j}^{-1}\dots \sigma_{i_1}^{-1}\gamma^{-k}.
\]
The basic compatibility between this lift and Theorem \ref{thm:indecCplxes} is that the indecomposable complex associated to $\phi$ in Theorem \ref{thm:indecCplxes} is
\[
\calC_{t}=\gamma^k\sigma_{i_1}\dots \sigma_{i_j}P_{i_{j+1}}\{-k\}.
\]
Note, however, that under Theorem \ref{thm:indecCplxes}, the complex assigned to a positive root is only well-defined up to a shift.  Thus it is perhaps more natural to associate to the positive root $\phi$ the direct sum $\mathfrak{C}_t$ of all shifts of the complex $\calC_t$ above:
  \[
\mathfrak{C}_t:=\oplus_{l\in \Z}\calC_t[l]\langle l\rangle.
\]
The complex $\mathfrak{C}_t$ is a direct sum of bounded complexes, but is itself not bounded.  Thus technically $\mathfrak{C}_t$ is not an object of $\calK$.  However, note that for fixed $Y\in \calK$, the morphism spaces between $Y\in \calK$ and $\mathfrak{C}_t$ is a well-defined, finite-dimensional bigraded vector space.

In the definition of $\mathfrak{C}_t$ we have included shifts $\langle l\rangle$ in the path length grading so that the summands lie in the heart intersection $\calK^0_0 = \calK^0\cap \calK_0$. In latter parts of the paper which are not concerned with the canonical $t$-structure -- for example in the proof of Proposition \ref{prop:toolsdualpp2}, we will omit these path-length grading shifts.

%
\section{Categorical ping pong}\label{sec:pingpong}
%

%
\subsection{Standard ping pong and the canonical t-structure}
%

The orientation shifts $\{k\}$ have no bearing on the statements in this subsection, so we choose to omit them here.
For $w\in W$, let $w_+$ denote the positive lift to the braid group.  For the purposes of this section, we define a \emph{negative braid complex}
to be a complex of the form $\beta P_i$ for some negative braid $\beta\in \B_W^-$.  Negative braid complexes live in the non-negative part $\mathcal{K}^{\leq 0}$ of the canonical $t$-structure on $\calK$.

We denote by $rf(\beta)$ and $lf(\beta)$ the right, respectively, left factor in the greedy normal form of a braid.  If $\beta$ is a positive braid, the factor $lf(\beta)$ is the longest positive lift from $W$ which divides $\beta$ on the left in the positive braid monoid; moreover, $lf(\beta)$ is the least common multiple of all positive lifts from $W$ which divide $\beta$ on the left in the positive monoid.  Similarly, if $\beta$ is a negative braid, the factor $rf(\beta)$ is the longest negative lift from $W$ which divides $\beta$ on the right; in particular, for $\beta$ positive, 
$lf(\beta) = rf(\beta^{-1})^{-1}$.

For $w\in W$, we define sets 
$X^w$ as follows: a complex $\mathcal{C} \in \mathcal{K}^{\geq 0}$ is in $X^w$ if for all \emph{negative braid complexes} $Y$,
\[
	\Hom(\mathcal{C}, Y) = 0 \iff {w_+}^{-1} (Y) \in \mathcal{K}^{<0}.
\]

So, for example, the complex $\mathcal{C} = \oplus_{i\in I} P_i$ is in the set $X^{\mbox{id}}$, since 
\[
	\Hom(\oplus_{i\in I} P_i, Y) = 0 \iff Y\in \mathcal{K}^{<0} \iff \mbox{id} (Y) \in \mathcal{K}^{<0}.
\]

We have the following. 
\begin{prop} \label{prop:Bruhatping}
Let $\mathcal{C}\in X^u$.  Then, for $w\in W$, $w_+(\mathcal{C}) \in X^{lf(w_+u_+)}$.
\end{prop}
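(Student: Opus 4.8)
The plan is to strip the statement of its dependence on $u$ and $\mathcal{C}$, reducing it to a property of the braid action on $\calK$ alone, and then to prove that property from the left‑greedy normal form.

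\emph{Reducing to a statement about the braid action.} Since $w_+$ is a triangulated autoequivalence of $\calK$, I would use $\Hom(w_+(\mathcal{C}),Y)\cong\Hom(\mathcal{C},w_+^{-1}(Y))$ for all $Y$, together with the observation that if $Y=\beta P_i$ is a negative braid complex then so is $w_+^{-1}(Y)=(w_+^{-1}\beta)P_i$, because $w_+^{-1}\beta\in\B_W^-$. Feeding $w_+^{-1}(Y)$ into the defining property of $X^u$ gives
\[
\Hom\bigl(w_+(\mathcal{C}),Y\bigr)=0\ \Longleftrightarrow\ (w_+u_+)^{-1}(Y)\in\calK^{<0}.
\]
As $w_+(\mathcal{C})\in\calK^{\geq0}$ automatically (positive braids preserve $\calK^{\geq0}$), the proposition becomes equivalent to the following assertion, which mentions neither $\mathcal{C}$ nor $u$: setting $g:=w_+u_+$ and $v:=lf(g)\in\mathcal{W}^+$, every negative braid complex $Y$ satisfies $g^{-1}(Y)\in\calK^{<0}\iff v^{-1}(Y)\in\calK^{<0}$.

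\emph{The easy implication.} Write $g=vr$ with $r\in\B_W^+$ and $vr$ in left‑greedy normal form, so that $v=lf(g)$ and $D_L(\pi(r))\subseteq D_R(\pi(v))$. The functor $r^{-1}$ is triangulated, commutes with $[1]$, and sends $\calK^{\leq0}$ into itself, hence preserves $\calK^{<0}=\calK^{\leq0}[-1]$; thus $v^{-1}(Y)\in\calK^{<0}$ forces $g^{-1}(Y)=r^{-1}v^{-1}(Y)\in\calK^{<0}$. For the reverse implication I would argue as follows. Assume $v^{-1}(Y)\notin\calK^{<0}$. Since $v^{-1}(Y)\in\calK^{\leq0}$ (inverses of positive braids preserve $\calK^{\leq0}$), the truncation $\mathcal{D}:=\tau^0\bigl(v^{-1}(Y)\bigr)$ is a nonzero object of $\calK^0$, hence, by Theorem~\ref{thm:indecCplxes}, a direct sum of shifts $\calC_{t}\langle a\rangle[a]\{b\}$ of root complexes. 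Applying $r^{-1}$ to the distinguished triangle $\tau^{\leq-1}(v^{-1}(Y))\to v^{-1}(Y)\to\mathcal{D}\to\tau^{\leq-1}(v^{-1}(Y))[1]$ and using that $r^{-1}$ preserves $\calK^{<0}$, the relevant terms of the long exact $t$‑cohomology sequence vanish and one gets $\tau^0\bigl(g^{-1}(Y)\bigr)\cong\tau^0\bigl(r^{-1}(\mathcal{D})\bigr)$.

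\emph{Reduction to a survival statement for root complexes.} Since $r^{-1}$ commutes with $\langle1\rangle[1]$ and $\{1\}$, and these shifts preserve $\calK^0$, it suffices to understand $\tau^0$ of $\sigma_{k_p}^{-1}\cdots\sigma_{k_1}^{-1}$ applied to each summand $\calC_t$. A single generator $\sigma_k^{-1}$ applied to a root complex $\calC_t$ keeps it inside $\calK^0$ — merely reflecting its root — unless that root equals $\alpha_k$, in which case $\calC_t\simeq P_k$ up to shift and $\sigma_k^{-1}(P_k)$ lands in $\calK^{<0}$; and $\sigma_{k_{j+1}}^{-1}$ preserves $\calK^{<0}$, so only the summands still living in $\calK^0$ at each stage matter. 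Iterating along a word $\sigma_{k_1}\cdots\sigma_{k_p}$ for $r$ therefore shows that $\tau^0(r^{-1}(\mathcal{D}))$ is the sum of those summands $\calC_t$ of $\mathcal{D}$ whose root is never negated along the word. So the whole matter reduces to exhibiting one summand $\calC_t$ of $\mathcal{D}$ that survives $r^{-1}$.

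\emph{The combinatorial core, and the main obstacle.} This last step is the crux, and the place where the compatibility of the canonical $t$‑structure with the standard Garside structure is genuinely used. The hard part will be to combine the Khovanov–Seidel formulas~(\ref{rel:KS1})--(\ref{rel:KS2}) with a description — which I expect is established in the part of this section preceding the proposition — of which root complexes can occur in $\tau^0(v^{-1}(Y))$ for $Y$ a negative braid complex: the admissible roots should be constrained in terms of the right descent set $D_R(\pi(v))$ of $\pi(v)$. Granting such a description, the left‑greedy inequality $D_L(\pi(r))\subseteq D_R(\pi(v))$ forbids an initial segment of a reduced word for $r$ from negating every admissible root, so at least one summand of $\mathcal{D}$ survives $r^{-1}$ and $\tau^0(g^{-1}(Y))\neq0$, i.e. $g^{-1}(Y)\notin\calK^{<0}$, as required. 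Making this confrontation precise — matching the set of roots permitted in $\tau^0(v^{-1}(Y))$ against the inversion data of $\pi(r)$ under the Garside constraint — is the main technical obstacle in the proof; the rest is formal manipulation of the $t$‑structure.
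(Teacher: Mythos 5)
Your opening reduction is exactly the paper's: by adjunction $\Hom(w_+\mathcal{C},Y)\cong\Hom(\mathcal{C},w_+^{-1}Y)$, the complex $w_+^{-1}Y$ is again a negative braid complex, and the proposition collapses to the assertion that for $g=w_+u_+$ and $v=lf(g)$ one has $g^{-1}Y\in\calK^{<0}\iff v^{-1}Y\in\calK^{<0}$ for every negative braid complex $Y$. Your easy implication is also fine. But that remaining assertion is precisely Lemma \ref{lem:neg_gen}, i.e.\ it is the entire content of the proposition, and your treatment of it does not close. The paper proves it by first establishing Lemma \ref{lem:neg} --- for $\beta\in\B_W^-$, $\beta P_i\in\calK^{<0}$ if and only if $\sigma_i^{-1}$ is a right descent of $\beta$, proved by induction on $l(\beta)$ directly from the Khovanov--Seidel formulas --- and then invoking the lattice identity $rf(\beta\alpha)=rf(rf(\beta)\alpha)$. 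Your route through $\tau^0$ and root complexes ends with ``Granting such a description\dots'': the characterization of which roots can occur in $\tau^0(v^{-1}Y)$ in terms of $D_R(\pi(v))$ is never stated, let alone proved, and the confrontation with $D_L(\pi(r))\subseteq D_R(\pi(v))$ is not carried out. Since everything before that point is formal, this acknowledged gap is the whole proof.

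Two of the intermediate steps are also not sound as written. First, Theorem \ref{thm:indecCplxes} classifies indecomposables of $\calK^0_0=\calK^0\cap\calK_0$, not of the $t$-heart $\calK^0$; the latter is (essentially) graded modules over the preprojective algebra and has strictly more indecomposables than positive roots. In type $A_2$, for instance, $\sigma_1P_2$ and $\sigma_2P_1$ are non-isomorphic indecomposable linear complexes of class $\alpha_1+\alpha_2$ whose chain summands sit in different orientation degrees, and only one of them is of the form $\calC_t\langle a\rangle[a]\{b\}$. So your decomposition of $\mathcal{D}=\tau^0(v^{-1}Y)$ into shifted root complexes does not follow from the cited theorem. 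Second, the claim that $\sigma_k^{-1}$ sends a root complex to a (shifted) root complex for the reflected root, landing in $\calK^{<0}$ exactly when the root is $\alpha_k$, is a genuine Brav--Thomas-type statement about the interaction of the twist functors with the $t$-structure; it is not among the results available in the paper in this form and you give no argument for it. I would recommend abandoning the root-complex detour and instead proving the descent criterion of Lemma \ref{lem:neg} by induction on word length, after which the proposition follows in two lines.
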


Note that Proposition \ref{prop:Bruhatping} has as a consequence the fact that the sets $X^w$ are non-empty, as the set $X^\id$ is itself non-empty.  In order to prove Proposition \ref{prop:Bruhatping}, we need a couple of preliminary results. 

\begin{lemma}\label{lem:neg}
Let $\beta\in \B_W^-$ be an element of the classical negative braid monoid.  Then 
\[
	\beta P_i \in \mathcal{K}^{<0} \Leftrightarrow \beta = \alpha \sigma_i^{-1},
\]
where $\alpha \in \B_W^-$ and $ l(\alpha)+1 =l(\beta)$.  
\end{lemma}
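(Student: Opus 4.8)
The two implications call for quite different arguments, and I would treat them separately.

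For the forward implication, $\beta=\alpha\sigma_i^{-1}$ with $\alpha\in\B_W^-$ and $l(\alpha)=l(\beta)-1$ immediately gives, using the Khovanov--Seidel computation $\sigma_i^{-1}P_i\cong P_i[1]\{1\}\langle 2\rangle$, that $\beta P_i\cong(\alpha P_i)[1]\{1\}\langle 2\rangle$. Since $\alpha$ is a negative braid, $\alpha P_i\in\calK^{\leq 0}$, and the shift $[1]\langle 2\rangle$ lowers the level of every indecomposable chain summand by one: a summand $P_j\{l\}\langle k\rangle[m]$ has level $m-k$, which the shift turns into $(m+1)-(k+2)=m-k-1$, the orientation shift $\{l\}$ being irrelevant to the level. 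Hence $\beta P_i\in\calK^{\leq -1}=\calK^{<0}$.

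The content is the converse, which I would prove by induction on $r:=l(\beta)$, in the equivalent form ``$\beta P_i\in\calK^{<0}\Rightarrow\sigma_i^{-1}$ right-divides $\beta$ in $\B_W^-$''. The case $r=0$ is vacuous, since $P_i\notin\calK^{<0}$. For $r\geq 1$, assume $\beta P_i\in\calK^{<0}$ and, for contradiction, that $\sigma_i^{-1}$ is not a right divisor of $\beta$. As $\beta\neq 1$, some generator $\sigma_m^{-1}$ right-divides $\beta$; fix one and write $\beta=\hat\beta\sigma_m^{-1}$ with $\hat\beta\in\B_W^-$ of length $r-1$ (necessarily $m\neq i$), so that $\beta P_i=\hat\beta(\sigma_m^{-1}P_i)$. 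If $m\nadj i$, then $\sigma_m^{-1}P_i\cong P_i$, so $\hat\beta P_i\cong\beta P_i\in\calK^{<0}$; and $\sigma_i^{-1}$ cannot right-divide $\hat\beta$, since $\hat\beta=\alpha'\sigma_i^{-1}$ would give $\beta=\alpha'\sigma_i^{-1}\sigma_m^{-1}=\alpha'\sigma_m^{-1}\sigma_i^{-1}$ (the generators commute), contradicting our assumption; so the inductive hypothesis forces $\hat\beta P_i\notin\calK^{<0}$, a contradiction. This settles every case in which $\beta$ has a right divisor $\sigma_m^{-1}$ with $m\nadj i$.

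The remaining, delicate, case is when every right divisor $\sigma_m^{-1}$ of $\beta$ has $m$ adjacent to $i$. For such an $m$, the object $\sigma_m^{-1}P_i$ is the two-term linear complex $P_m\langle 1\rangle\to P_i$ (with an extra $\{1\}$ on $P_m$ for one orientation of the edge, irrelevant to levels), with $P_i$ in cohomological degree $0$; this yields a distinguished triangle $\hat\beta P_m\langle 1\rangle\to\hat\beta P_i\to\beta P_i\to\hat\beta P_m\langle 1\rangle[1]$. Since $\hat\beta$ is negative and $\langle 1\rangle$ lowers levels by one, $\hat\beta P_m\langle 1\rangle\in\calK^{\leq -1}$, and the long exact sequence of the canonical $t$-structure then exhibits $\tau^0(\hat\beta P_i)$ as a subobject of $\tau^0(\beta P_i)$. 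Hence $\beta P_i\in\calK^{<0}$ forces $\hat\beta P_i\in\calK^{<0}$; if $\sigma_i^{-1}\nmid\hat\beta$ this already contradicts the inductive hypothesis, so we may assume $\hat\beta=\alpha'\sigma_i^{-1}$, i.e. $\beta=\alpha'\sigma_i^{-1}\sigma_m^{-1}$.

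The last step, which I expect to be the main obstacle, is to rule out $\beta=\alpha'\sigma_i^{-1}\sigma_m^{-1}$ with $m$ adjacent to $i$, $\sigma_i^{-1}\nmid\beta$, and $\beta P_i\in\calK^{<0}$. The plan is: since $\alpha'\sigma_i^{-1}$ is right-divisible by $\sigma_i^{-1}$, the slice $\tau^0(\alpha'\sigma_i^{-1}P_i)$ vanishes while $\tau^{-1}(\alpha'\sigma_i^{-1}P_i)$ is controlled (through the first implication) by $\tau^0(\alpha' P_i)$; applying $\sigma_m^{-1}$ via the triangle $\alpha'\sigma_i^{-1}P_m\langle 1\rangle\to\alpha'\sigma_i^{-1}P_i\to\beta P_i\to\alpha'\sigma_i^{-1}P_m\langle 1\rangle[1]$ should then resurrect a nonzero level-$0$ summand of $\beta P_i$ -- morally the ``$P_m$-part'' of $\sigma_i^{-1}P_m$, which has no target in $\alpha'\sigma_i^{-1}P_i$. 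Making this rigorous is where one genuinely needs the compatibility of the braid action with \emph{both} the canonical $t$-structure and the Garside lattice of $\B_W^+$: one uses the combinatorial fact that, for $m$ adjacent to $i$, $\sigma_i^{-1}\sigma_m^{-1}$ right-divides $\beta$ whereas $(\sigma_i\sigma_m\sigma_i)^{-1}$ does not, to identify precisely which level-$0$ summand survives in the minimal complex of $\beta P_i=\hat\beta(\sigma_m^{-1}P_i)$. This bookkeeping -- the interplay between the right-descent combinatorics of the negative braid $\beta$ and the action of $\sigma_m^{-1}$ on the heart $\calK^0$ -- is the part I would expect to require the most care.
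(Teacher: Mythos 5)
Your forward implication and the first two reductions of the converse match the paper's inductive argument: the commuting case, and the reduction (via the triangle $\hat\beta P_m\langle 1\rangle\to\hat\beta P_i\to\beta P_i$, which is the paper's Gaussian-elimination step in different language) to $\hat\beta P_i\in\calK^{<0}$ and hence $\hat\beta=\alpha'\sigma_i^{-1}$. But the final step is a genuine gap: you explicitly stop at ``rule out $\beta=\alpha'\sigma_i^{-1}\sigma_m^{-1}$ with $m$ adjacent to $i$, $\sigma_i^{-1}\nmid\beta$, and $\beta P_i\in\calK^{<0}$'' and offer only a plan, which you yourself flag as the main obstacle. Moreover the plan as stated points in a more complicated direction than necessary: no interplay with the Garside lattice of $\B_W^+$ or careful slice bookkeeping is needed.

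The missing idea is a single computation coming from the braid relation: for $m$ adjacent to $i$ one has $\sigma_i^{-1}\sigma_m^{-1}P_i\cong P_m[1]\langle 1\rangle$ (up to an orientation shift), and $[1]\langle 1\rangle$ preserves the level, hence preserves $\calK^{<0}$. Therefore $\beta P_i\cong\alpha' P_m[1]\langle 1\rangle\in\calK^{<0}$ forces $\alpha' P_m\in\calK^{<0}$, and a second application of the inductive hypothesis (to the shorter braid $\alpha'$ acting on $P_m$) gives $\alpha'=\alpha''\sigma_m^{-1}$. Then
\[
\beta=\alpha''\,\sigma_m^{-1}\sigma_i^{-1}\sigma_m^{-1}=\alpha''\,\sigma_i^{-1}\sigma_m^{-1}\sigma_i^{-1},
\]
so $\sigma_i^{-1}$ does right-divide $\beta$ after all, contradicting your standing assumption (equivalently: completing the induction directly, without arguing by contradiction). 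This is exactly how the paper closes the argument; the ``resurrected level-$0$ summand'' you were hunting for is precisely $\calK^0(\alpha'P_m)$, which is nonzero exactly when $\sigma_m^{-1}\nmid\alpha'$, i.e.\ exactly when $\sigma_i^{-1}\nmid\beta$.
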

(In the statement of the above lemma, for $\gamma\in \B_W$,  $l(\gamma)$ denotes the word-length of $\gamma$ in the Artin generators $\sigma_i^{\pm1}$.)

\begin{proof}
The proof is by induction on $l(\beta)$. Suppose that $\beta P_i \in \mathcal{K}^{<0}$.  Let $\beta = \beta' \sigma_j^{-1}$, with $l(\beta')+1=l(\beta)$.  
If $i=j$, we are done, so suppose that $i\neq j$.

If $\sigma_i\sigma_j= \sigma_j\sigma_i$, then $\sigma_j^{-1} P_i \cong P_i$, whence $\beta' P_i \in \mathcal{K}^{< 0}$.  Thus, by, induction, $\beta' = \beta'' \sigma_i^{-1}$, with $l(\beta'') + 1 = l(\beta')$; setting $\alpha = \beta''\sigma_j^{-1}$, it follows that $\beta = \alpha \sigma_i^{-1}$, with $l(\alpha) + 1 = l(\beta)$, as desired.

So now suppose that $\sigma_i \sigma_j \sigma_i = \sigma_j \sigma_i \sigma_j$ and that we have
\[
	\beta' (P_j\langle 1\rangle \rightarrow P_i) \in \mathcal{K}^{<0}.
\]
Since $\beta'$ is a negative braid, $\beta' (P_j)\in \mathcal{K}^{\leq 0}$. From this it follows that $\beta' P_i\in \mathcal{K}^{<0}$; for if 
$\beta' P_i$ has a non-zero $t$-slice in $\calK^{\geq 0}$, then by assumption any terms in $\mathcal{K}^0(\beta' P_i)$ must cancel under Gaussian elimination with terms appearing in $\mathcal{K}^{\geq 1}(\beta' P_j)$.  But since $\beta'$ is a negative braid, there are no terms appearing in $\mathcal{K}^{\geq 1}(\beta' P_j)$.  Thus $\beta' P_i \in \mathcal{K}^{<0}$, and therefore by induction hypothesis, $\beta' = \beta''\sigma_i^{-1}$ with $l(\beta')=l(\beta'')+1$.    Now $\beta'' \sigma_i^{-1}\sigma_j^{-1}P_i\cong \beta''P_j [1]\langle 1\rangle \in \mathcal{K}^{<0}$.  Since the shift $[1]\langle 1 \rangle$ preserves $\mathcal{K}^{<0}$, it follows that $\beta'' P_j\in \mathcal{K}^{<0}$, whence $\beta'' = \beta'''\sigma_j^{-1}$ with $l(\beta'')=l(\beta''')+1$ by the induction hypothesis.

Now
$\beta = \beta''' \sigma_j^{-1}\sigma_i^{-1}\sigma_j^{-1} = (\beta'''\sigma_i^{-1}\sigma_j^{-1})\sigma_i^{-1}$, as desired.

The converse claim is clear, since if $\beta = \beta' \sigma_i^{-1}$, with $\beta'$ a negative braid, then 
$\beta P_i \cong \beta' P_i [1]\langle 2 \rangle$; as $P_i [1]\langle 2 \rangle \in \mathcal{K}^{<0}$, and $\beta'$ is a negative braid, it follows that $ \beta' P_i [1]\langle 2 \rangle\in \mathcal{K}^{<0}$.
\end{proof}

An immediate corollary of the above Lemma that we leave as an exercise to the interested reader is that the sets $X^w$ are pairwise disjoint.

The above lemma also has the following important consequence: 

\begin{corollary} \label{cor:moninj}
The canonical morphisms of monoids $\B_W^+ \longrightarrow \B_W$ and $\B_W^-\longrightarrow \B_W$ are injective.
\end{corollary}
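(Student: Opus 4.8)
The plan is to use Lemma~\ref{lem:neg} as a detector of right divisors: for a negative braid word $\beta$, whether a given $\sigma_i^{-1}$ right-divides $\beta$ in the monoid $\B_W^-$ is equivalent to the condition $\beta P_i\in\calK^{<0}$, and this condition depends only on the image of $\beta$ in $\B_W$. An induction on word length then yields injectivity of $\B_W^-\to\B_W$, and the positive case follows by symmetry.

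First I would record two elementary facts about the presented monoids $\B_W^{\pm}$. The braid relations are length-preserving, so the number $l(\beta)$ of letters in any expression of $\beta\in\B_W^{\pm}$ is a well-defined invariant; and any $\beta\in\B_W^-$ with $\beta\neq 1$ is right-divisible in $\B_W^-$ by at least one generator $\sigma_i^{-1}$, since writing $\beta$ as a nonempty product of generators exhibits its final letter as such a divisor, with quotient of length $l(\beta)-1$.

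Next I would carry out the main step: the canonical homomorphism $p\colon\B_W^-\to\B_W$ is injective. Suppose $\beta,\beta'\in\B_W^-$ satisfy $p(\beta)=p(\beta')$; I argue by induction on $l(\beta)$ that $\beta=\beta'$. If $l(\beta)=0$ then $\beta=1$, so $p(\beta')$ is the identity of $\B_W$ and acts as the identity autoequivalence of $\calK$; in particular $\beta'P_i\cong P_i$, which lies in $\calK^0$ and hence not in $\calK^{<0}$, for every $i$. By Lemma~\ref{lem:neg} this means $\beta'$ is right-divisible by no generator, so $\beta'=1$ by the second fact above. If $l(\beta)>0$, choose $i$ and $\alpha\in\B_W^-$ with $\beta=\alpha\sigma_i^{-1}$ and $l(\alpha)+1=l(\beta)$; Lemma~\ref{lem:neg} gives $\beta P_i\in\calK^{<0}$, hence $\beta'P_i\in\calK^{<0}$ since $p(\beta)=p(\beta')$, hence, applying Lemma~\ref{lem:neg} to $\beta'$, we obtain $\alpha'\in\B_W^-$ with $\beta'=\alpha'\sigma_i^{-1}$ and $l(\alpha')+1=l(\beta')=l(\beta)$. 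Now $p(\alpha)\sigma_i^{-1}=p(\beta)=p(\beta')=p(\alpha')\sigma_i^{-1}$ in the group $\B_W$; cancelling $\sigma_i^{-1}$ on the right gives $p(\alpha)=p(\alpha')$, so $\alpha=\alpha'$ by the inductive hypothesis, and therefore $\beta=\alpha\sigma_i^{-1}=\alpha'\sigma_i^{-1}=\beta'$.

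Finally, for $\B_W^+\to\B_W$: one checks directly that $\sigma_i\mapsto\sigma_i^{-1}$ respects the braid relations (it is the composite of the inversion anti-automorphism with the word-reversal anti-automorphism), so it defines a group automorphism of $\B_W$ and simultaneously an isomorphism of presented monoids $\B_W^+\xrightarrow{\ \sim\ }\B_W^-$; these fit into a commuting square with the two canonical maps to $\B_W$, so injectivity of $\B_W^-\to\B_W$ forces injectivity of $\B_W^+\to\B_W$. I do not expect a genuine obstacle here, since Lemma~\ref{lem:neg} does all the real work; the one point needing care is the bookkeeping — Lemma~\ref{lem:neg} must be read as a statement about elements of the presented monoid $\B_W^-$ acting through $p$, and the cancellation of $\sigma_i^{-1}$ is permissible only after passing to the group $\B_W$.
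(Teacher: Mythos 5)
Your proposal is correct and takes essentially the same route as the paper: Lemma~\ref{lem:neg} is used to detect the final letter $\sigma_i^{-1}$ of a negative braid word from the condition $\beta P_i\in\calK^{<0}$, which depends only on the image of $\beta$ in $\B_W$, and an induction peeling off one letter at a time gives injectivity --- this is exactly what the paper means by ``an expression for $\beta$ can be read off inductively from the action on the $P_i$.'' Your reduction of the positive case to the negative one via the automorphism $\sigma_i\mapsto\sigma_i^{-1}$ is a harmless packaging of the paper's ``similar argument.''
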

\begin{proof}
By Lemma \ref{lem:neg}, an expression for $\beta\in \B_W^-$ in the monoid generators $\{\sigma_i^{-1}\}$ can be read off inductively from the action of $\beta$ on the indecomposable projective modules $\{P_i\}$.  Thus the morphism of monoids
\[
	\B_W^- \longrightarrow [\mbox{Aut}(\calK)]
\]
is injective, where here $[\mbox{Aut}(\calK)]$ is the group of isomorphism classes of autoequivalences of $\calK$.  
Since the above monoid morphism factors through $\B_W$ via the canonical map $\B_W^-\longrightarrow \B_W$, this canonical map must therefore also be injective.  The proof of injectivity for the map from positive monoid $\B_W^+$ to $\B_W$ follows by a similar argument.
\end{proof}

\begin{lemma}\label{lem:neg_gen}
Let $Y$ be a negative braid complex and let $\beta$ be a negative braid.
Then
\[
\beta Y \in \mathcal{K}^{<0} \iff rf(\beta)(Y)\in \mathcal{K}^{<0}.
\]
\end{lemma}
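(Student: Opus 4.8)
The plan is to prove both implications by induction on $l(\beta)$, using Lemma~\ref{lem:neg} as the base case analog and the structure of greedy normal forms. The key observation is that $rf(\beta)$ (the right factor in the greedy normal form of a negative braid) is the longest negative lift from $W$ dividing $\beta$ on the right, and that $\beta Y \in \calK^{<0}$ should be detected by whether $rf(\beta)$ already ``uses up'' enough of the generators relevant to $Y$.

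First I would handle the backward implication. Write $\beta = \beta' \cdot rf(\beta)$, where $\beta'$ is a negative braid with $l(\beta') + l(rf(\beta)) = l(\beta)$. If $rf(\beta)(Y) \in \calK^{<0}$, then since $\beta'$ is a negative braid it sends $\calK^{<0}$ into $\calK^{<0}$ (this is the negative-braid analog of the Lemma just before the ``Root complexes'' subsection, applied to inverses), so $\beta Y = \beta'(rf(\beta) Y) \in \calK^{<0}$. This direction is essentially immediate.

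For the forward implication, suppose $\beta Y \in \calK^{<0}$ with $Y = \delta P_i$ a negative braid complex. I would induct on $l(\beta)$. Write $\beta = \beta' \sigma_j^{-1}$ with $l(\beta') + 1 = l(\beta)$, chosen so that $\sigma_j^{-1}$ is NOT a right divisor of $rf(\beta)$ unless we are careful — actually the cleaner approach is: pick any expression $\beta = \beta'\sigma_j^{-1}$. Using the formulas \eqref{rel:KS1}–\eqref{rel:KS2} for how $\sigma_j^{-1}$ acts on $P_i$, and arguing as in the proof of Lemma~\ref{lem:neg} (the Gaussian-elimination argument showing that terms in positive $t$-slices of $\beta' P_i$ cannot be cancelled by $\beta' P_k$ for $k$ adjacent to $i$, because $\beta'$ is negative), one reduces to showing $\beta' Y' \in \calK^{<0}$ for a suitable negative braid complex $Y'$ obtained from $Y$ by acting with $\sigma_j^{-1}$. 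Then by induction $rf(\beta')(Y') \in \calK^{<0}$, and one must promote this to $rf(\beta)(Y) \in \calK^{<0}$. This last step uses the lattice structure of $\B_W^+$ (hence $\B_W^-$): $rf(\beta)$ is the $\gcd$-type right factor, and one shows that the set of negative braids $\mu$ with $\mu(Y) \in \calK^{<0}$ is closed upward under right division and has $rf(\beta)$ as its appropriate extremal element whenever $\beta$ lies in it.

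\textbf{Main obstacle.} The hard part will be the bookkeeping in the forward direction: showing that the condition ``$\beta Y \in \calK^{<0}$'' depends only on $rf(\beta)$ and not on the rest of $\beta$, i.e.\ that one can always ``push'' the cancellation into the right factor. Concretely, the subtle point is an exchange-type argument: if $\beta Y \in \calK^{<0}$ but $\sigma_j^{-1}$ (a chosen rightmost letter of $\beta$) does not ``interact'' with $Y$ in the way needed, one must commute or braid-relate letters of $\beta$ to expose the letters that do interact — exactly the combinatorics that makes $rf(\beta)$ the right invariant. I expect this to require carefully iterating the case analysis of Lemma~\ref{lem:neg} (the commuting case $\sigma_i\sigma_j = \sigma_j\sigma_i$ versus the braid case $\sigma_i\sigma_j\sigma_i = \sigma_j\sigma_i\sigma_j$) together with the fact, recalled in Section~\ref{subsec:prelimWordLength}, that greedy normal forms interact well with left/right divisibility in the Garside monoid.
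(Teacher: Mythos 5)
Your backward implication is fine and matches the paper. But your forward implication has a genuine gap: the entire difficulty is concentrated in the step you yourself flag as ``the main obstacle'' --- promoting $rf(\beta')(Y')\in\calK^{<0}$ to $rf(\beta)(Y)\in\calK^{<0}$ --- and your plan does not resolve it. Worse, the induction as you set it up does not obviously reduce: after peeling off $\sigma_j^{-1}$ and applying the inductive hypothesis to $\beta'$, the promotion step is essentially the lemma itself for the braid $rf(\beta')\sigma_j^{-1}$ acting on $Y$, and this braid need not be shorter than $\beta$ (e.g.\ when $\beta'$ is already a negative lift from $W$). The claim that ``the set of negative braids $\mu$ with $\mu(Y)\in\calK^{<0}$ is closed upward under right division'' is not justified and is, in substance, a restatement of the lemma to be proved.

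The missing idea is that no new induction or exchange argument is needed at all. Write $Y=\alpha P_i$ with $\alpha$ a negative braid, so that $\beta Y\cong(\beta\alpha)P_i$ and Lemma~\ref{lem:neg} applies directly to the single negative braid $\beta\alpha$: the hypothesis $\beta Y\in\calK^{<0}$ says exactly that $\sigma_i^{-1}$ right-divides $\beta\alpha$, hence right-divides $rf(\beta\alpha)$, so $rf(\beta\alpha)P_i\in\calK^{<0}$. The promotion you are worried about is then handled in one stroke by the standard Garside identity
\[
rf(\beta\alpha)=rf\bigl(rf(\beta)\cdot\alpha\bigr),
\]
which says precisely that the rightmost greedy factor of a product depends on the left element only through its own rightmost factor. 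From $rf\bigl(rf(\beta)\alpha\bigr)P_i\in\calK^{<0}$ and the fact that negative braids preserve $\calK^{<0}$ one gets $rf(\beta)\alpha P_i=rf(\beta)Y\in\calK^{<0}$. So the combinatorics you propose to redo letter by letter (commuting versus braid-relation cases) is already packaged in Lemma~\ref{lem:neg} together with this one identity about greedy normal forms; without invoking it (or reproving it), your argument does not close.
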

\begin{proof}
Write $Y = \alpha P_i$ with $\alpha$ a negative braid.  Then
\[
	\beta Y \cong \beta \alpha P_i,
\]
so, by Lemma \ref{lem:neg}, $\beta \alpha$ has $\sigma_i^{-1}$ as a right descent.  Thus $rf(\beta \alpha)$ has $\sigma_i^{-1}$ as a right descent, so one can write $rf(\beta\alpha)=\eta \sigma_i^{-1}$ with $\eta \in \B_W^{-}$. Since $\sigma_i^{-1}P_i\in \calK^{<0}$ and $\eta \calK^{<0}\subset \calK^{< 0}$, then
\[
rf(\beta\alpha) P_i\in \mathcal{K}^{<0}.
\]	
Since
\[
	rf(\beta \alpha) = rf ( rf(\beta) \cdot \alpha),
\]
it follows that $ rf ( rf(\beta)\cdot \alpha) P_i\in \mathcal{K}^{<0}$, and hence that
$rf(\beta) \cdot \alpha P_i \in \mathcal{K}^{<0}$.  

Thus $rf(\beta) Y \in \mathcal{K}^{<0}$.  The converse is clear.

\end{proof}

Now we prove Proposition \ref{prop:Bruhatping}.

\begin{proof}
Suppose $\mathcal{C}\in X^w$.   We show that $\sigma_i\mathcal{C} \in X^{lf(\sigma_i w)}$.
If $Y$ is a negative braid complex,
\[
	\Hom(\sigma_i \mathcal{C} , Y) \cong \Hom(\mathcal{C} ,\sigma_i^{-1} Y).
\]
        Since $\mathcal{C}\in X^{w}$, and $\sigma_i^{-1} Y$ is a negative braid complex, we have
\[
	\mathcal{C}\in X^w \iff w_+^{-1}\sigma_i^{-1} Y\in \mathcal{K}^{<0}.
\]
But now, by the previous Lemma \ref{lem:neg_gen}, 
\[
	w_+^{-1}\sigma_i^{-1} Y\in \mathcal{K}^{<0}\iff rf(w_+^{-1}\sigma_i^{-1})Y\in \mathcal{K}^{<0}.
\]
Since $rf(w_+^{-1}\sigma_i^{-1}) = lf(\sigma_iw_+)^{-1}$, this completes the proof.
\end{proof}

The sets $X^w$ satisfy the requirements to apply Lemma \ref{lem:pingpong}, and thus one obtains a new proof of the main theorem of Brav-Thomas in \cite{BT}:
\begin{corollary} \label{cor:faithfulness}
For $W$ a Weyl group of type ADE, the action of the braid group $\B_W$ on $\calK$ is faithful.
\end{corollary}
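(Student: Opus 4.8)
The plan is to deduce the statement formally from the standard ping-pong lemma, Lemma~\ref{lem:pingpong}, applied to the family of subsets $\{X^w\}_{w\in W}$ constructed just above the corollary. Concretely, I would take for $Y$ the (skeletally small) set of isomorphism classes of objects of $\calK$, for $\Psi\maps\B_W\to\{\text{bijections of }Y\}$ the map induced by the braid action~(\ref{eq:braid}) on isomorphism classes, and for the distinguished family the sets $X_{w_+}:=X^w$, indexed by the canonical positive generating set $\mathcal{W}^+$ via the bijection $w\mapsto w_+$ between $W$ and $\mathcal{W}^+$. Note that $\Psi$ factors through the action homomorphism $\B_W\to\mathrm{Aut}(\calK)$, so once $\Psi$ is shown to be injective, the action is faithful.

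It then remains to check the three hypotheses of Lemma~\ref{lem:pingpong}. For non-emptiness: the complex $\bigoplus_{i\in I}P_i$ lies in $X^{\id}$, as observed after the definition of the sets $X^w$, so $X^{\id}\neq\emptyset$; and since $lf(w_+)=w_+$ for $w_+\in\mathcal{W}^+$, Proposition~\ref{prop:Bruhatping} applied with $u=\id$ gives $w_+(X^{\id})\subset X^{w}$, whence $X^{w}\neq\emptyset$ for every $w\in W$. Pairwise disjointness of the $X^w$ is precisely the corollary to Lemma~\ref{lem:neg} recorded after its proof. Finally, the inclusion property $\Psi_u(X_w)\subset X_{lf(uw)}$ required for all $u,w\in\mathcal{W}^+$ is, after the reindexing $X_{w_+}=X^w$, exactly the content of Proposition~\ref{prop:Bruhatping} (take $u=b_+$ and $w=a_+$ there: for $\mathcal{C}\in X^a$ one has $b_+(\mathcal{C})\in X^{lf(b_+a_+)}$). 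With all three hypotheses in hand, Lemma~\ref{lem:pingpong} yields injectivity of $\Psi$, and hence faithfulness of the $\B_W$-action on $\calK$.

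The corollary itself is thus a mechanical assembly of already-established facts, and I do not expect any genuine obstacle inside its proof; the only points requiring care are bookkeeping ones, namely matching the index conventions of Lemma~\ref{lem:pingpong} to those of Proposition~\ref{prop:Bruhatping} and noting that the $X^w$ do form a set. The real work lies upstream: in Proposition~\ref{prop:Bruhatping}, whose inductive step reduces the action of a general $w_+$ to that of a single $\sigma_i$ and then invokes Lemma~\ref{lem:neg_gen} to replace the condition $w_+^{-1}\sigma_i^{-1}Y\in\calK^{<0}$ by $rf(w_+^{-1}\sigma_i^{-1})Y\in\calK^{<0}$ (using $rf(w_+^{-1}\sigma_i^{-1})=lf(\sigma_iw_+)^{-1}$), and ultimately in the combinatorial Lemma~\ref{lem:neg} on which both rest.
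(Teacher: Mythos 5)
Your proposal is correct and follows exactly the paper's route: the corollary is obtained by feeding the sets $X^w$ into the standard ping-pong Lemma~\ref{lem:pingpong}, with non-emptiness coming from $\bigoplus_{i\in I}P_i\in X^{\id}$ together with Proposition~\ref{prop:Bruhatping}, disjointness from the remark following Lemma~\ref{lem:neg}, and the inclusion hypothesis being precisely Proposition~\ref{prop:Bruhatping}. The paper leaves this assembly implicit in a single sentence; your write-up simply makes the bookkeeping explicit.
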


%
\subsection{Dual ping pong and the $\vec{o}$-baric structure}
%

In the previous subsection, we used the compatibility between the classical positive and negative monoids and the canonical $t$-structure on $\calK$ to define complexes for standard ping pong.  In this subsection, we use the compatibility of the dual positive and negative monoids with the $\vec{o}$-baric structure on $\calK$ to give analogous dual ping pong 
constructions.

Recall that $\calK_0$ denotes the heart of the $\vec{o}$-baric structure on $\calK$.
For $X\in \calK_0$, we define a positive Bessis braid $\nu^+(X)\in \calBe^+$ and a negative Bessis braid 
$\nu^-(X)\in \calBe^-$ by
\begin{equation}
 \nu_+(X)=\min\{\beta\in \calBe^+|\beta\cdot X\in \calK_{[1,\infty)}\},
\end{equation}
and
\begin{equation}
\nu_-(X)=\min\{\beta\in \calBe^-|\beta\cdot X\in \calK_{(-\infty,-1]}\}.
\end{equation}

Thus $\nu^+(X)$ is the greatest common divisor of all the positive Bessis braids which lift $X$ into the strictly positive part of the $\vec{o}$-baric structure, while $\nu^-(X)$ is the greatest common divisor of all the negative Bessis braids which lower $X$ into the strictly negative part of the $\vec{o}$-baric structure.

\begin{remark}\label{rem:factorizationBe+}
Let $X\in \calK_0$ and $\beta \in \calBe^+$. By the minimality of $\nu_+(X)$, if $\beta\cdot X\in \calK_{[1,\infty)}$ then $\nu_+(X)$ divides $\beta$ in the dual positive monoid.  In fact, as we will show later, $\nu_+(X)(X)\in \calK_{[1,\infty)}$ (see Corollary \ref{cor:nu}).  Thus the set of Bessis braids which lift a fixed complex from the baric heart $\calK_0$ into the positive part $\calK_{\geq 1}$ is closed under greatest common divisors.
\end{remark}
The following propositions are the main results needed in the construction of dual ping pong.

\begin{prop} \label{prop:toolsdualpp}
Let $\tau_u,\tau_t\in \boldsfT\subset \calBe^+$ be associated to reflections $u,t\in T$ with $u\neq t$. Then:
\begin{enumerate}
\item \label{el1} $\tau_t\tau_u\in \calBe^+\Leftrightarrow \tau_t\tau_u\tau_t^{-1}\in \boldsfT \Leftrightarrow \tau_t\mathfrak{C}_u\cong\mathfrak{C}_{tut^{-1}}$;
\item \label{el2} $\tau_t\tau_u\notin \calBe^+\Leftrightarrow \tau_t\tau_u\tau_t^{-1}\notin \boldsfT\Leftrightarrow$
$\tau_t\cdot \calC_u \in \calK_{[0,1]}$ with the following conditions on the baric slices 
\begin{itemize}
\item the top baric slice  $\calK_1(\tau_t\cdot \calC_u)$ is isomorphic to a direct sum of shifts of $\calC_t$;
\item the bottom baric slice $\calK_0(\tau_t\cdot \calC_u)$ satisfies 
$$\nu_+(\calK_0(\tau_t\cdot \calC_u))\tau_t=\lcm(\tau_t,\tau_u)\in \calBe^+.$$
\end{itemize}
\end{enumerate}
\end{prop}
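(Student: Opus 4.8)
The plan is to establish the equivalences by working through the chain of biconditionals, treating the first two (purely group-theoretic) equivalences first and then connecting them to the categorical statements. The equivalence $\tau_t\tau_u\in\calBe^+ \Leftrightarrow \tau_t\tau_u\tau_t^{-1}\in\boldsfT$ in \eqref{el1} is essentially Lemma~\ref{lemma:8} together with the observation that $\tau_t\tau_u$ has $l_{refl}$-length $2$ exactly when it is a Bessis braid (since reflections generate, and any product of two distinct reflections lying in $[1,c]_T$ lifts to an element of $\calBe^+$); the dichotomy ``$\tau_t\tau_u\in\calBe^+$ or not'' is then a genuine dichotomy, which immediately gives the first half of \eqref{el2}. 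So the real content is the categorical half.

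For the categorical equivalences I would argue via the root complexes $\calC_u,\calC_t$ and their relation to positive roots through Theorem~\ref{thm:indecCplxes}. First I would recall that $\tau_t$ acts on $\calK$ as (a conjugate/shift of) a spherical twist along $\calC_t$, so that $\tau_t\cdot\calC_u$ is computed by the twist triangle
\[
	\HOM^\bullet(\calC_t,\calC_u)\otimes \calC_t \longrightarrow \calC_u \longrightarrow \tau_t\cdot\calC_u.
\]
The key numerical input is the Euler form / graded dimension of $\HOM^\bullet(\calC_t,\calC_u)$, which by the equivalence of Proposition~\ref{prop:equiv} with $\mathrm{Rep}(\Gamma_{\vec o})$ (and standard facts about the Euler form on a hereditary category computing $\langle\chi(\calC_t),\chi(\calC_u)\rangle$) is controlled by the pairing $\langle\phi_t,\phi_u\rangle$ of the corresponding positive roots. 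When $\langle\phi_t,\phi_u\rangle=0$ the twist leaves $\calC_u$ essentially undisturbed and one checks $\tau_t\cdot\calC_u$ is again a single root complex, namely $\calC_{tut^{-1}}$ up to shift — this is the case $\tau_t\tau_u\in\calBe^+$. When the roots are ``adjacent'' ($\langle\phi_t,\phi_u\rangle=-1$, the non-commuting case), the twist produces a two-step complex; tracking the $\vec o$-baric degrees of the projective summands (using that $\tau_t=\gamma^k(\text{positive-then-negative word})\gamma^{-k}$ and Lemma~\ref{lemma:cShift} to control how $\gamma^{\pm k}$ shifts baric level) shows $\tau_t\cdot\calC_u\in\calK_{[0,1]}$. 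The top baric slice is then the image of the $\HOM$-complex, a sum of shifts of $\calC_t$; the bottom baric slice is a root complex $\calC_w$ with $\chi(\calC_w)=\phi_u+\phi_t = $ the root for $\lcm(\tau_t,\tau_u)\tau_t^{-1}$, and one reads off $\nu_+(\calK_0(\tau_t\cdot\calC_u))\,\tau_t = \lcm(\tau_t,\tau_u)$ from the defining minimality of $\nu_+$ together with the fact that $\lcm(\tau_t,\tau_u)$ is the unique length-$2$ Bessis braid divisible on the right by $\tau_t$ and whose other factor is the reflection carrying $\calK_0(\tau_t\cdot\calC_u)$ up one baric level.

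The main obstacle I anticipate is the bookkeeping in the non-commuting case: one must verify that the spherical-twist triangle, after passing to minimal complexes, genuinely splits across baric levels $0$ and $1$ with the top slice a \emph{direct sum of shifts} of $\calC_t$ (not merely filtered by such), and one must pin down the identity $\nu_+(\calK_0(\tau_t\cdot\calC_u))\tau_t=\lcm(\tau_t,\tau_u)$ rather than just ``$\nu_+$ divides $\lcm(\tau_t,\tau_u)\tau_t^{-1}$''. For the former I would use minimality of the complex together with the vanishing $\HOM(\calC_t,\calC_t\langle n\rangle[m])=0$ except in the two ``spherical'' degrees, which forces the $\calC_t$-part to be a genuine summand in baric level $1$. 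For the latter I would combine Remark~\ref{rem:factorizationBe+} (the forward-reference to Corollary~\ref{cor:nu} that $\nu_+$ is actually achieved) with Lemma~\ref{lemma:16} applied to $\lcm(\tau_t,\tau_u)$, whose descent set is exactly $\{\tau_t,\tau_u\}$, to conclude the product is $\lcm(\tau_t,\tau_u)$ on the nose.
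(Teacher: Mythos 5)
There is a genuine gap at the heart of your categorical argument: you propose to decide which case of the dichotomy occurs from the value of the root pairing $\langle\phi_t,\phi_u\rangle$, asserting that $\tau_t\tau_u\in\calBe^+$ is the case $\langle\phi_t,\phi_u\rangle=0$ and that the two-slice picture occurs when $\langle\phi_t,\phi_u\rangle=-1$. This is false, and the paper's own example following Proposition \ref{prop:toolsdualpp2} is a counterexample: in type $A_3$ with $c=s_1s_3s_2$ one has $\sigma_1\sigma_2\in\calBe^+$ but $\sigma_2\sigma_1\notin\calBe^+$, even though the unordered pair of roots is $\{\alpha_1,\alpha_2\}$ with $\langle\alpha_1,\alpha_2\rangle=-1$ in both products. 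Membership of $\tau_t\tau_u$ in $\calBe^+$ is governed not by the Euler form, which only sees the total (graded) dimension of $\HOM^{\bullet}(\calC_t,\calC_u)$, but by the \emph{orientation degree} in which that Hom space is concentrated --- this is precisely the content of Proposition \ref{prop:toolsdualpp2} --- and that degree depends on the order of the factors and on the chosen Coxeter element. Consequently your description of case (1) is also off: when $\tau_t\tau_u\in\calBe^+$ with $t,u$ non-commuting, $\tau_t\calC_u\cong\calC_{tut^{-1}}$ is a genuinely \emph{different} root complex, not an ``undisturbed'' $\calC_u$; and the case $\langle\phi_t,\phi_u\rangle=+1$ is not addressed at all. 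Relatedly, the first equivalence is not ``essentially Lemma \ref{lemma:8}'': that lemma (with Remark \ref{rem:descents}) gives only the implication $\tau_t\tau_u\in\calBe^+\Rightarrow\tau_t\tau_u\tau_t^{-1}\in\boldsfT$, while the converse --- that $\tau_t\tau_u\tau_t^{-1}=\tau_{t'}\in\boldsfT$ forces $\tau_{t'}\tau_t\in\calBe^+$ --- is nontrivial, since a product of two elements of $\boldsfT$ need not divide $\gamma$ (again $\sigma_2\sigma_1$ above); the paper obtains this converse only through the categorical characterization of $\calBe^+$ in Lemma \ref{lemma:2}.

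For contrast, the paper's proof never invokes a spherical-twist triangle or the Euler form. It proves directly (Lemma \ref{lemma:technical1a}) that $\beta\calC_u\in\calK_{[0,1]}$ for every $\beta\in\calBe^+$ by writing $\tau_t=\gamma^k\sigma_{i_1}\cdots\sigma_{i_j}\sigma_{i_{j+1}}\sigma_{i_j}^{-1}\cdots\sigma_{i_1}^{-1}\gamma^{-k}$ and tracking the orientation degree letter by letter (using that $\Hom(P_i,P_l\{m\})=0$ for $m\neq 0$ when $\sigma_i$ precedes $\sigma_l$ in $\gamma$, together with Lemma \ref{lemma:cShift}); Lemma \ref{lemma:2} upgrades this to a criterion for membership in $\calBe^+$; Lemma \ref{lemma:technical1b} identifies which baric slice $\beta\calC_u$ lands in via the conjugation and divisibility conditions; and Corollary \ref{cor:nu} together with Lemma \ref{lemma:technical2} yields the $\lcm$ identity. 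Your closing paragraph on $\nu_+$ and $\lcm(\tau_t,\tau_u)$ is in the right spirit (it parallels Lemma \ref{lemma:technical2}), but it cannot be completed until the baric-degree analysis is carried out correctly rather than read off from the Cartan pairing.
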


In the above, the statement  $t\mathfrak{C}_u\cong\mathfrak{C}_{tut^{-1}}$ is equivalent to the statement that $\tau_t\calC_u$ is isomorphic to $\calC_{tut^{-1}}\langle k \rangle [-k]$ for some $k\in \Z$. Informally, the second item in the above proposition says that $\tau_t\cdot \calC_u$ looks like
\begin{equation*}
 \tau_t\cdot \calC_u =
\begin{tikzpicture}[anchorbase,scale=.5]
\draw (0,0) rectangle (2,1);
\node at (1,.5) {$X$};
\draw [->] (1.5,1) -- (1.5,1.7);
\node at (2,2.2) {$\bigoplus \calC_t\{-1\}$};
\node at (1.5,-1.4) {};
\end{tikzpicture}
\end{equation*}
where $X=\calK_0(\tau_t\cdot \calC_u)$.

Proposition \ref{prop:toolsdualpp} has a direct translation for the reflections in $\calBe^-$. 
\begin{prop} \label{prop:toolsdualpp_neg}
Let $\tau_u^{-1},\tau_t^{-1}\in\boldsfT^{-1}$ with $u\neq t\in T$. Then:
\begin{enumerate}
\item \label{el1_neg} $\tau_t^{-1}\tau_u^{-1}\in \calBe^-\Leftrightarrow \tau_t^{-1}\tau_u^{-1}\tau_t\in \boldsfT^{-1} \Leftrightarrow \tau_t^{-1}\mathfrak{C}_u=\mathfrak{C}_{t^{-1}ut}$;
\item \label{el2_neg} $\tau_t^{-1}\tau_u^{-1}\notin \calBe^-\Leftrightarrow \tau_t^{-1}\tau_u^{-1}\tau_t\notin \boldsfT^{-1}\Leftrightarrow$
$\tau_t^{-1}\cdot \calC_u \in \calK_{[-1,0]}$ with the following conditions on the baric slices 
\begin{itemize}
\item the bottom baric slice  $\calK_{-1}(\tau_t^{-1}\cdot \calC_u)$ is isomorphic to a direct sum of shifts of $\calC_t$;
\item the top baric slice $\calK_0(\tau_t^{-1}\cdot \calC_u)$ satisfies 
$$\nu_-(\calK_0(\tau_t^{-1}\cdot \calC_u))\tau_t^{-1}=\lcm(\tau_t^{-1},\tau_u^{-1})\in \calBe^-.$$
\end{itemize}
\end{enumerate}
\end{prop}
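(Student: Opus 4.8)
The plan is to obtain Proposition~\ref{prop:toolsdualpp_neg} as the mirror image of Proposition~\ref{prop:toolsdualpp}, rather than to repeat the computation. The relevant symmetry replaces $\sigma_i$ by $\sigma_i^{-1}$ throughout: comparing~(\ref{rel:KS1}) with~(\ref{rel:KS2}) shows that this substitution negates the homological and the two internal gradings, hence exchanges $\calK^{\geq0}$ with $\calK^{\leq0}$ and, for the $\vec{o}$-baric structure, $\calK_{[1,\infty)}$ with $\calK_{(-\infty,-1]}$ and $\calK_{1}$ with $\calK_{-1}$. On the braid group it implements the anti-automorphism $\beta\mapsto\beta^{-1}$, so it sends $\calBe^+$ to $\calBe^-$ and $\boldsfT$ to $\boldsfT^{-1}$, it exchanges $\nu_+$ with $\nu_-$, and — since it preserves dimension vectors, i.e. the map $\chi$ of Theorem~\ref{thm:indecCplxes} — it fixes each root complex $\calC_t$ up to a shift $[l]\langle l\rangle$, equivalently fixes $\mathfrak{C}_t$. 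I would realize this symmetry as an honest contravariant autoequivalence $\mathbb{D}$ of $\calK$, built from graded $\C$-linear duality together with the graded symmetric Frobenius structure of $A_\Gamma$ and the path-reversal anti-automorphism of $A_\Gamma$, normalized so that $\mathbb{D}$ carries each $P_i$ to a shift of $P_i$ and takes the action of $\sigma_i$ to that of $\sigma_i^{-1}$.

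Granting such a $\mathbb{D}$, each clause of Proposition~\ref{prop:toolsdualpp} maps to the corresponding clause of Proposition~\ref{prop:toolsdualpp_neg}. Applying $\mathbb{D}$ to part~(\ref{el1}) gives part~(\ref{el1_neg}): the equivalence $\tau_t\tau_u\in\calBe^+\Leftrightarrow\tau_t\tau_u\tau_t^{-1}\in\boldsfT$ becomes $\tau_t^{-1}\tau_u^{-1}\in\calBe^-\Leftrightarrow\tau_t^{-1}\tau_u^{-1}\tau_t\in\boldsfT^{-1}$, where one also invokes Lemma~\ref{lemma:8} and its extension to Bessis braids to control the conjugation together with the identity $t^{-1}ut=tut^{-1}$ in $W$, and the equivalence $\tau_t\mathfrak{C}_u\cong\mathfrak{C}_{tut^{-1}}$ becomes $\tau_t^{-1}\mathfrak{C}_u\cong\mathfrak{C}_{t^{-1}ut}$. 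Applying $\mathbb{D}$ to part~(\ref{el2}) gives part~(\ref{el2_neg}): from $\tau_t\cdot\calC_u\in\calK_{[0,1]}$ with top slice a sum of shifts of $\calC_t$ and bottom slice $X$ satisfying $\nu_+(X)\tau_t=\lcm(\tau_t,\tau_u)$, one obtains $\tau_t^{-1}\cdot\calC_u\in\calK_{[-1,0]}$ with the roles of the two slices reversed, the new bottom slice a sum of shifts of $\calC_t$, and the new top slice $X'$ satisfying $\nu_-(X')\tau_t^{-1}=\lcm(\tau_t^{-1},\tau_u^{-1})$; here I use that $\mathbb{D}$ exchanges the top and bottom baric slices and that $\beta\mapsto\beta^{-1}$ intertwines least common multiples in $\calBe^+$ with those in $\calBe^-$.

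The step I expect to be the main obstacle is making $\mathbb{D}$ precise, specifically its interaction with the orientation grading: path reversal replaces the $\vec{o}$-degree of a path by its complement, so some care with grading shifts — and possibly a twist by the $w_0$-induced diagram automorphism — is needed to guarantee that $\mathbb{D}$ carries the $\vec{o}$-baric structure back to itself and each $\calC_t$ to a shift of $\calC_t$ for the \emph{same} $t$, as well as to verify the compatibility of $\nu_\pm$ and $\lcm$ asserted above. If this bookkeeping turns out to be more cumbersome than illuminating, the safe alternative is to prove Proposition~\ref{prop:toolsdualpp_neg} directly by running the proof of Proposition~\ref{prop:toolsdualpp} line by line with $\sigma_i$ replaced by $\sigma_i^{-1}$ and~(\ref{rel:KS1}) replaced by~(\ref{rel:KS2}): every ingredient — the relevant Khovanov--Seidel computations, the cone descriptions of $\tau_t^{-1}\cdot\calC_u$, and the lattice lemmas (Lemmas~\ref{lemma:8}, \ref{lemma:16}, \ref{lemma:lcm_comp}) — has an evident negative counterpart, so the argument goes through with only sign changes.
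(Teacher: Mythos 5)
Your fallback is in fact the paper's proof: Proposition~\ref{prop:toolsdualpp_neg} receives no separate argument there and is presented as the ``direct translation'' of Proposition~\ref{prop:toolsdualpp}. All of the supporting machinery (Lemmas~\ref{lemma:technical1a}, \ref{lemma:2}, \ref{lemma:technical1b}, Corollary~\ref{cor:nu}, Lemma~\ref{lemma:technical2}) dualizes verbatim upon exchanging $\sigma_i$ with $\sigma_i^{-1}$, (\ref{rel:KS1}) with (\ref{rel:KS2}), $\calK_{[0,1]}$ with $\calK_{[-1,0]}$, $\gamma$ with $\gamma^{-1}$, and $\nu_+$ with $\nu_-$; the paper itself already invokes ``the analog of item (1) for dual negative braids'' inside the proof of Lemma~\ref{lemma:technical1b}. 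So the line-by-line mirroring is correct and is the intended route.

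Your primary route via a contravariant duality $\mathbb{D}$ has a concrete defect beyond grading bookkeeping. Conjugation by a contravariant autoequivalence is order-preserving on compositions of functors, so if $\mathbb{D}\sigma_i\mathbb{D}^{-1}\cong\sigma_i^{-1}$ then $\mathbb{D}$ implements the \emph{automorphism} $\sigma_{i_1}\cdots\sigma_{i_k}\mapsto\sigma_{i_1}^{-1}\cdots\sigma_{i_k}^{-1}$ of $\B_W$, not the inversion anti-automorphism $\beta\mapsto\beta^{-1}$. This automorphism sends $\gamma=\sigma_{i_1}\cdots\sigma_{i_n}$ to $\sigma_{i_1}^{-1}\cdots\sigma_{i_n}^{-1}$, which is the inverse of the Coxeter braid of the \emph{reversed} orientation rather than $\gamma^{-1}$; consequently it carries $\boldsfT$ and $\calBe^+$ to the dual data attached to $-\vec{o}$, not to $\boldsfT^{-1}$ and $\calBe^-$ for the fixed orientation appearing in the statement. (Relatedly, path reversal sends a path of length $\ell$ and orientation degree $d$ to one of orientation degree $\ell-d$, so the induced grading twist mixes the orientation and path-length gradings, and it is not automatic that $\mathbb{D}$ preserves the $\vec{o}$-baric structure.) These issues can presumably be repaired by composing with orientation reversal and tracking the resulting identification of the two dual Garside structures, but at that point the direct translation is both shorter and safer.
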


\begin{prop} \label{prop:toolsdualpp2}
Let $\tau_u$, $\tau_v$ be positive reflections in $\calBe^+$. Then:
\[
\tau_u\tau_v\in \calBe^+\Leftrightarrow \Hom(\calC_u,\mathfrak{C}_v)=0.
\]
\end{prop}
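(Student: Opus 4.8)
The plan is to reduce the statement to the two previous propositions (Proposition \ref{prop:toolsdualpp} and its negative analogue) by analyzing the minimal complex of $\tau_u \cdot \calC_v$ and computing $\Hom(\calC_u, \mathfrak{C}_v) = \Hom(\tau_u \cdot \calC_u, \tau_u \cdot \mathfrak{C}_v)$ after applying the autoequivalence $\tau_u$. Since $\tau_u \cdot \calC_u$ (up to a grading shift) is again a reflection complex, in fact $\tau_u \calC_u \cong \calC_u[\pm\text{something}]$ only when $\tau_u\tau_u$-type relations hold, so more usefully I would instead apply $\tau_v^{-1}$ (or work directly): $\Hom(\calC_u, \mathfrak{C}_v) = \Hom(\tau_v^{-1}\calC_u, \tau_v^{-1}\mathfrak{C}_v)$, and $\tau_v^{-1}\mathfrak{C}_v = \mathfrak{C}_{v^{-1}vv}$-style — wait, $\tau_v^{-1} \calC_v \cong P_{i}\{k\}[-k]$ for the appropriate simple, since by definition $\calC_v = \gamma^k \sigma_{i_1}\cdots\sigma_{i_j} P_{i_{j+1}}\{-k\}$ and $\tau_v = \gamma^k \sigma_{i_1}\cdots \sigma_{i_j}\sigma_{i_{j+1}}\sigma_{i_j}^{-1}\cdots\sigma_{i_1}^{-1}\gamma^{-k}$, so $\tau_v^{-1}\calC_v = \gamma^k\sigma_{i_1}\cdots\sigma_{i_j}\sigma_{i_{j+1}}^{-1}P_{i_{j+1}}\{-k\} \cong \gamma^k\sigma_{i_1}\cdots\sigma_{i_j}P_{i_{j+1}}[1]\{1-k\}\langle 2\rangle = \calC_v[1]\{1\}\langle 2\rangle$. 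So conjugation by $\tau_v$ doesn't simplify $\calC_v$ to a projective either. The cleanest route is therefore to work with the baric slices directly.

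First I would establish the "$\Leftarrow$" direction in its contrapositive form: assume $\tau_u\tau_v \notin \calBe^+$ and show $\Hom(\calC_u, \mathfrak{C}_v) \neq 0$. By Lemma \ref{lemma:8} and the symmetry of Bessis descent sets (Remark \ref{rem:descents}), $\tau_u\tau_v \notin \calBe^+$ is equivalent to $\tau_v\tau_u \notin \calBe^+$, hence by Proposition \ref{prop:toolsdualpp}\eqref{el2} (applied with the roles $t = v$, $u = u$) we get that $\tau_v \cdot \calC_u \in \calK_{[0,1]}$ with top baric slice a sum of shifts of $\calC_v$. Applying $\tau_v^{-1}$ and using that $\tau_v^{-1}$ moves baric levels compatibly (it is built from $\gamma^{\pm}$ which shift by $\pm 1$ via Lemma \ref{lemma:cShift}, together with braid generators whose baric behavior is controlled by \eqref{rel:KS1}–\eqref{rel:KS2}), I expect $\calC_u$ to receive a nonzero map from a shift of $\tau_v^{-1}\calC_v$, which is a shift of $\calC_v$; dualizing/using the $\Hom$ being between $\calC_u$ and the full shift-closure $\mathfrak{C}_v$ then yields $\Hom(\calC_u, \mathfrak{C}_v) \neq 0$. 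Concretely: the top baric slice being $\bigoplus \calC_v\{-1\}$ means there is a nonzero chain map (the inclusion of the top baric piece is a morphism in $\calK$ up to the appropriate triangle), giving a nonzero element of $\Hom(\calC_v\{-1\}[\ast], \tau_v\calC_u)$, i.e. after adjusting by $\tau_v^{-1}$, a nonzero element of $\Hom(\calC_u, \mathfrak{C}_v)$.

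For the "$\Rightarrow$" direction, assume $\tau_u\tau_v \in \calBe^+$ and show $\Hom(\calC_u, \mathfrak{C}_v) = 0$. Here I would use Proposition \ref{prop:toolsdualpp}\eqref{el1}: $\tau_u\tau_v \in \calBe^+$ gives $\tau_u \mathfrak{C}_v \cong \mathfrak{C}_{uvu^{-1}}$, and also $\tau_u\tau_u\tau_v$... no — rather, since $\tau_v\tau_u\in\calBe^+$ too, $\tau_v\mathfrak{C}_u \cong \mathfrak{C}_{vuv^{-1}}$, so $\mathfrak{C}_u \cong \tau_v^{-1}\mathfrak{C}_{vuv^{-1}}$. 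Then $\Hom(\calC_u, \mathfrak{C}_v)$: apply $\tau_u^{-1}$ to get $\Hom(\tau_u^{-1}\calC_u, \tau_u^{-1}\mathfrak{C}_v)$. Now $\tau_u^{-1}\calC_u \cong \calC_u[1]\{1\}\langle 2\rangle$ (the computation above), and $\tau_u^{-1}\mathfrak{C}_v = \tau_u^{-1}(\tau_u\cdot\tau_u^{-1}\mathfrak{C}_v)$... this is getting circular, so instead I would argue: when $\tau_u\tau_v\in\calBe^+$, the reflections $u,v$ "do not cross" in a suitable sense and $\calC_u, \calC_v$ can be placed simultaneously in the baric heart of a common orientation (or: both are summands of baric slices of a single Bessis braid $\lcm(\tau_u,\tau_v)$ acting, which lies entirely in a two-step baric window), and in that situation a $\Hom$-vanishing follows from the $t$-structure/baric compatibility — specifically from the fact that distinct root complexes sitting in the same baric level and differing appropriately in the $t$-filtration have no maps between them, combined with the equivalence with $\mathrm{Rep}(\Gamma_{\vec o})$ of Proposition \ref{prop:equiv} where the vanishing becomes a statement that certain indecomposable quiver representations have no homomorphisms. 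The cleanest formulation: reduce to $\Hom$ in $\mathrm{Rep}(\Gamma_{\vec o})$ between the indecomposables corresponding to the positive roots $\phi_u, \phi_v$ after twisting to an orientation for which both $\calC_u$ and a representative of $\mathfrak{C}_v$ lie in $\calK^0_0$ of that orientation.

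The main obstacle I anticipate is the "$\Rightarrow$" direction: controlling $\Hom(\calC_u, \mathfrak{C}_v)$ requires understanding maps into an \emph{unbounded} complex $\mathfrak{C}_v = \bigoplus_l \calC_v[l]\langle l\rangle$, so one must show that \emph{every} grading/homological shift of $\calC_v$ is orthogonal to $\calC_u$. The baric structure only directly controls one shift at a time; to handle all shifts simultaneously I expect one needs to either (a) invoke the equivalence with graded quiver representations and use that $\Ext^{\geq 0}$ between the two relevant indecomposables vanishes in the hereditary category $\mathrm{Rep}(\Gamma_{\vec o})$ (hereditarity killing higher $\Ext$, and $\Hom$ vanishing by a root-system/Euler-form computation when $\tau_u\tau_v\in\calBe^+$ forces the roots to be "compatible"), or (b) first establish, via Proposition \ref{prop:toolsdualpp}\eqref{el1}, that acting by $\tau_u$ sends $\mathfrak{C}_v$ to $\mathfrak{C}_{uvu^{-1}}$ and then bootstrap using that $\calC_u \cong \tau_u \calC_u$ up to shift only in degenerate cases — so I would commit to route (a): translate everything through $F_{\vec o}$ and reduce the whole proposition to the statement that, for positive roots $\phi_u,\phi_v$, one has $\tau_u\tau_v\in\calBe^+$ iff the corresponding indecomposable $A_\Gamma$-quiver representations have vanishing total $\Hom$, which is a finite root-combinatorial check using Gabriel's theorem and the structure of the interval $[1,c]_T$.
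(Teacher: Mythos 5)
Your proposal has genuine gaps in both directions. The most serious is the claim, at the start of your ``$\Leftarrow$'' argument, that $\tau_u\tau_v\notin\calBe^+$ is equivalent to $\tau_v\tau_u\notin\calBe^+$ ``by Lemma \ref{lemma:8} and Remark \ref{rem:descents}.'' This is false, and the paper's own Example following the proposition refutes it: there $\sigma_1\sigma_2\in\calBe^+$ while $\sigma_2\sigma_1\notin\calBe^+$. What the left--right symmetry of descent sets actually gives is $\tau_u\tau_v=\tau_v\tau_{v^{-1}uv}$, i.e.\ the \emph{conjugated} reflection appears on the other side, not $\tau_u$ itself. The statement you are proving is genuinely asymmetric in $u$ and $v$, which is why $\Hom(\calC_u,\mathfrak{C}_v)$ and $\Hom(\calC_v,\mathfrak{C}_u)$ detect different conditions. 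Compounding this, your transport of the inclusion of the top baric slice is in the wrong direction: a nonzero map $\calC_v\{-1\}[\ast]\to\tau_v\calC_u$ becomes, after applying $\tau_v^{-1}$, a nonzero map \emph{into} $\calC_u$ from a shift of $\calC_v$, i.e.\ an element of a $\Hom(\mathfrak{C}_v,\calC_u)$-type space, not of $\Hom(\calC_u,\mathfrak{C}_v)$. (The corrected version of your idea would apply Proposition \ref{prop:toolsdualpp}\eqref{el2} to $\tau_u\cdot\calC_v$, whose top slice consists of shifts of $\calC_u$; but even then one must check that the resulting shift of $\calC_u$ lands among the shifts $[l]\langle l\rangle\{0\}$ allowed in $\mathfrak{C}_v$, which you do not address.)

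For the ``$\Rightarrow$'' direction you ultimately commit to translating the statement through $F_{\vec o}$ into a claim about vanishing of total $\Hom$ between indecomposable graded representations of $\Gamma_{\vec o}$, to be verified by ``a finite root-combinatorial check using Gabriel's theorem and the structure of $[1,c]_T$.'' This is a restatement of the proposition, not a proof: the content of the proposition is precisely that membership of $\tau_u\tau_v$ in $\calBe^+$ matches this $\Hom$-vanishing, and no argument is given for why the interval $[1,c]_T$ controls morphisms between the corresponding indecomposables. The paper's proof avoids all of this by working with the complex of $(A_\Gamma,A_\Gamma)$-bimodules attached to $\tau_u\tau_v$: writing $\tau_u=\mu\sigma_i\mu^{-1}$ and $\tau_v=\delta\sigma_j\delta^{-1}$, the tensor product of the two cones has exactly four terms, its baric slices in degrees $0$ and $1$ are automatically of the required form, and the only possible obstruction to $\tau_u\tau_v\in\calBe^+$ (via Corollary \ref{cor:64}) is the degree-$2$ baric slice of $\mu P_i\otimes Q_i\mu^{-1}\delta P_j\otimes Q_j\delta^{-1}\{-2\}$; by adjunction its vanishing is literally the condition $\Hom(\mu P_i,\delta P_j[k])=0$ for all $k$, i.e.\ $\Hom(\calC_u,\mathfrak{C}_v)=0$. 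This single computation gives both implications at once and automatically keeps the asymmetry in $u,v$ straight. I would encourage you to redo the proof along those lines, or at minimum to repair the two directional errors above before attempting your route.
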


The proof of Propositions \ref{prop:toolsdualpp} and \ref{prop:toolsdualpp_neg} will be postponed until Section~\ref{sec:prooftoolsdualpp}, while Proposition \ref{prop:toolsdualpp2} will be established in Section \ref{sec:prooftoolsdualpp2}.

\begin{example}
We give an example to illustrate the content of Proposition \ref{prop:toolsdualpp2}.  Consider the following orientation of the $A_3$ Dynkin diagram, whose corresponding Coxeter element is $c=s_1s_3s_2$.
\[
\begin{tikzpicture}
\node at (0,0) {$\bullet$};
\node at (1,0) {$\bullet$};
\node at (2,0) {$\bullet$};
\node at (0,.25) {$\scriptstyle 1$};
\node at (1,.25) {$\scriptstyle 2$};
\node at (2,.25) {$\scriptstyle 3$};
\draw [->] (.8,0) -- (.2,0);
\draw [->] (1.2,0) -- (1.8,0);
\end{tikzpicture}
\]

Now, 
\[
	\Hom(P_1,P_2\{k\})\neq 0 \implies k=-1, and 
\] 
\[
	\Hom(P_2,P_1\{k\})\neq 0 \implies k=0.  
\]
From this we see, via Proposition \ref{prop:toolsdualpp2}, that $\sigma_1\sigma_2\in \calBe^+$, but $\sigma_2\sigma_1\notin \calBe^+$. Similarly, since 
\[
	\Hom(P_3,P_1\{k\}) = \Hom(P_3,P_1\{k\}) = 0\text{ for all } k,
\]
we have that $\sigma_1\sigma_3 = \sigma_3\sigma_1 \in \calBe^+$.

On the other hand, consider $\tau_u = \sigma_2$ and $\tau_v=\sigma_1\sigma_3\sigma_2\sigma_3^{-1}\sigma_1^{-1}$; the associated reflection complexes are: 
\[
P_2\;\text{and}\;
\begin{tikzpicture}[anchorbase]
\node at (0,0) {$P_2$};
\node at (1,.3) {$P_1$};
\node at (1,-.3) {$P_3$};
\node at (1,0) {$\oplus$};
\draw [->] (.2,.1) -- (.8,.3);
\draw [->] (.2,-.05) -- (.8,-.25);
\end{tikzpicture}
\]
Then there are non-trivial orientation-degree 0 maps $f$ and $g$ in both directions between these two complexes: 
\[
\begin{tikzpicture}[anchorbase]
\node at (0,0) {$P_2$};
\node at (1,.3) {$P_1$};
\node at (1,-.3) {$P_3$};
\node at (1,0) {$\oplus$};
\draw [->] (.2,.1) -- (.8,.3);
\draw [->] (.2,-.05) -- (.8,-.25);
\node at (0,-1.5) {$P_2$};
\draw [->] (0,-.3) -- (0,-1.2);
\node at (-.2,-.7) {$f$};
\end{tikzpicture}
\quad\text{and}\quad
\begin{tikzpicture}[anchorbase]
\node at (0,0) {$P_2$};
\node at (1,.3) {$P_1$};
\node at (1,-.3) {$P_3$};
\node at (1,0) {$\oplus$};
\draw [->] (.2,.1) -- (.8,.3);
\draw [->] (.2,-.05) -- (.8,-.25);
\node at (1.2,1.5) {$P_2$};
\draw [->] (1.6,1.3) .. controls (1.9,.75) and (1.8,.25) .. (1.2,-.2);
\node at (1.9,.6) {$g$};
\end{tikzpicture}
\]
and indeed, neither $\tau_u\tau_v$ nor $\tau_v\tau_u$ is in $\calBe^+$.
\end{example}

%
\subsection{Dual ping pong}
%

For $w\in \calBe^+$, we define $X_w\subset \calK_{\geq 0}$ as follows:

\begin{equation} \label{Xw}
X_w:=\{C \in \calK_{\geq 0}\mid \Hom(C,\mathfrak{C}_t)=0\; \Leftrightarrow\; t|w\}
\end{equation}
We claim that the sets $X_w$ satisfy the conditions of the dual ping pong lemma \ref{lem:dualpingpong}.

It is clear from the definition that the sets $\{X_w\}_{w\in \calBe^+}$ are disjoint, as $w\in \calBe^+$ is determined by its reflection factors. Now, using Remark \ref{rem:increfl}, it suffices to prove that for $\tau_u$ a Bessis reflection and $w\in \calBe^{+}$, we have $\tau_uX_w\subset X_{lf(\tau_uw)}$. 

To see this, write $\tau_uw=lf(\tau_uw)w'$, and let $t$ be a reflection.

\begin{itemize}
\item If $\tau_u^{-1}\tau_t\tau_u\in \calBe^+$, then using  Proposition \ref{prop:toolsdualpp_neg}, item \ref{el1_neg}, 
we have that 
\[
	\tau_u^{-1}\mathfrak{C}_t\cong\mathfrak{C}_{u^{-1}tu}. 
\]
Then, for $C\in X_w$,
\begin{align*}
\Hom (\tau_uC,\mathfrak{C}_t) \cong 0 & \Longleftrightarrow \Hom(C,\tau_u^{-1}\mathfrak{C}_t)\cong0 
\Longleftrightarrow \Hom(C,\mathfrak{C}_{u^{-1}tu})\cong 0 \\
&\Longleftrightarrow \tau_u^{-1}\tau_t\tau_u|w \Longleftrightarrow \tau_t|\tau_uw \Longleftrightarrow \tau_t|lf(\tau_uw).
\end{align*}
\item If $\tau_u^{-1}\tau_t\tau_u\notin \calBe^+$, then by Proposition \ref{prop:toolsdualpp_neg}, we have that $\tau_u^{-1}\calC_t\in \calK_{[-1,0]}$ with bottom baric slice in $\calK_{-1}(\tau_u^{-1}\calC_t)$ isomorphic to a direct sum of shifts of $\calC_u$. Let us denote the top baric slice $\calK_{0}(\tau_u^{-1}\calC_t)$ by $X_0$; the complex $X_0$ satisfies $\nu_+(X_0)^{-1}\tau_u^{-1}=\lcm(\tau_t^{-1},\tau_u^{-1})$. In a picture, we have:
\[
\tau_u^{-1}\cdot \calC_t=
\begin{tikzpicture}[anchorbase,scale=.5]
\draw (0,0) rectangle (2,1);
\node at (1,.5) {$X_0$};
\draw [->] (.5,-.9) -- (.5,0);
\node at (.5,-1.3) {$\oplus\calC_u\{1\}$};
\node at (1,2.2) {};
\end{tikzpicture}
\]
Since $C\in \calK^{\geq 0}$, $\calC_u\{1\}\in \calK_{<0}$, and there are no $A_\Gamma$-module maps from minimal complexes in $\calK^{\geq 0}$ to minimal complexes in $\calK_{<0}$, we see that
$\Hom(C,\tau_u^{-1}\calC_t)\cong \Hom(C,X_0)$. 

Now, by Theorem \ref{thm:indecCplxes}, $X_0$ is isomorphic to a direct sum of reflection complexes,
\[
	X_0=\oplus_{i\in J}\calC_{t_i}\langle k_i \rangle [k_i], 
\]
so that
\begin{align*}
\Hom(C,\tau_u^{-1}\mathfrak{C}_t)=0 &\Longleftrightarrow \Hom(C,\mathfrak{C}_{t_i})=0\; \forall i\in J \Longleftrightarrow \tau_{t_i}|w \; \forall i \in J.
\end{align*}
Note that the last item is equivalent to $lcm_{i\in J}\{\tau_{t_i}\} | w$.
Now, by Corollary \ref{cor:nu} (which will be proven along with Proposition \ref{prop:toolsdualpp} in the next subsection) 
\[
	\nu_+(X_0)=\lcm(\tau_{t_i}),
\]
and we arrive at
\[
	\Hom(C,\tau_u^{-1}\mathfrak{C}_t)=0 \Longleftrightarrow \nu_+(X_0) | w.
\]
Finally, we note that $\lcm(\tau_t,\tau_u)=\tau_u\nu_+(X_0)$, so that 
\[
	 \nu_+(X_0) | w \Longleftrightarrow lcm(\tau_t,\tau_u) | \tau_uw \Longleftrightarrow \tau_t|\tau_uw \Longleftrightarrow \tau_t|lf(\tau_uw).
\]
\end{itemize}

The last condition we need to check is that the sets $X_w$ are non-empty, and we do this by constructing an explicit complex in each $X_w$.

To do this, write $\gamma=w'w$, with $l_{refl}(w) + l_{refl}(w') = l_{refl}(\gamma)$, and let $C=\bigoplus_{\tau_t|w'}\calC_t$. 
We claim that $C\in X_w$.  Indeed, if $\tau_{t_0}|w$, then $\forall \tau_t|w'$, we have that $\tau_t\tau_{t_0}\in \calBe^+$ and by Lemma \ref{prop:toolsdualpp2}, it follows that $\Hom(C,\mathfrak{C}_{t_0})=0$.  On the other hand, if $\tau_{t_0}|w'$, then $\calC_{t_0}$ is one of the summands $C$ and thus $\Hom(C,\mathfrak{C}_{t_0})\neq 0$.  Finally, if $\tau_{t_0}$ divides neither $w$ nor $w'$, then Lemma \ref{lemma:lcm_comp} implies that there exists a reflection $t_1$ with $\tau_{t_1}| w'$ and $\tau_{t_1}\tau_{t_0}\notin \calBe^+$. By Lemma \ref{prop:toolsdualpp2}, we have $\Hom(\calC_{t_1},\mathfrak{C}_{t_0})\neq 0$.  Thus we conclude that
\[
	\Hom(C,\mathfrak{C}_t) = 0 \iff \tau_t|w,
\]
so that $C\in X_w$.  This completes the proof that the sets $\{X_w\}_{w\in \calBe^+}$ satisfy the requirements of Lemma \ref{lem:dualpingpong} for dual ping pong. As a consequence, we obtain another proof of Corollary~\ref{cor:faithfulness}, that is, of the main theorem of Brav-Thomas in \cite{BT}.

%
\subsection{Proof of Proposition \ref{prop:toolsdualpp}}
\label{sec:prooftoolsdualpp}
%

We begin with the following lemma.

\begin{lemma} \label{lemma:technical1a}
Let $\beta\in \calBe^+$ and let $\calC_u\in \calK_0$ be a reflection complex. We have $\beta\cdot \calC_u \in \calK_{[0,1]}$.
\end{lemma}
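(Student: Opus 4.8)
The plan is to sandwich $\beta\cdot\calC_u$ between baric levels $0$ and $1$, using as the only substantial input the fact (Lemma~\ref{lemma:cShift}) that the Coxeter braid $\gamma$ shifts every baric level up by exactly one, together with the observation that a dual-positive braid cannot lower baric level.

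So the first thing I would prove is: for every $\delta\in{\B_c^\vee}^+$ we have $\delta(\calK_{\geq 0})\subset\calK_{\geq 0}$, and symmetrically $\delta^{-1}(\calK_{\leq 0})\subset\calK_{\leq 0}$. Because ${\B_c^\vee}^+$ is generated by the braid reflections $\boldsfT$, because $\calK_{\geq 0}$ is closed under cones, shifts and summands, and because every object of $\calK_0$ is an iterated cone of $P_i$'s (shifted only in the path-length and homological gradings), it suffices to show $\tau_t(P_i)\in\calK_{\geq 0}$ for each braid reflection $\tau_t$ and each $i\in I$. Here I would use that $\tau_t$ is the twist functor attached to the reflection complex $\calC_t$: writing $\tau_t=\rho\,\sigma_{i_{j+1}}\,\rho^{-1}$ with $\rho=\gamma^k\sigma_{i_1}\cdots\sigma_{i_j}$, so that $\calC_t\cong\rho(P_{i_{j+1}})\{-k\}$, conjugating the Khovanov--Seidel triangle for $\sigma_{i_{j+1}}$ by $\rho$ exhibits $\tau_t(X)$ as the cone of a morphism $X\to \calC_t\otimes_{\C}V_X$ with $V_X$ a finite-dimensional bigraded multiplicity space; a bookkeeping of the orientation-degree shifts in the Khovanov--Seidel bimodules (cf.~(\ref{eq:qipjdual})) shows that when $X\in\calK_{\geq 0}$ all orientation twists of $\calC_t\in\calK_0$ appearing in $\calC_t\otimes_{\C}V_X$ keep it in $\calK_{\geq 0}$, and hence $\tau_t(X)\in\calK_{\geq 0}$. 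The statement for $\delta^{-1}$ follows symmetrically, using the other Khovanov--Seidel triangle.

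Granting this, the lemma follows in a few lines. Pick $\alpha\in\calBe^+$ with $\gamma=\alpha\beta$ and $l_{refl}(\alpha)+l_{refl}(\beta)=n$. As $\calC_u\in\calK_0\subset\calK_{\geq 0}$ and $\beta\in{\B_c^\vee}^+$, the first step gives $\beta\cdot\calC_u\in\calK_{\geq 0}$. For the other inequality, $\alpha\cdot(\beta\cdot\calC_u)=\gamma\cdot\calC_u\in\calK_1\subset\calK_{\leq 1}$ by Lemma~\ref{lemma:cShift}; applying the symmetric part of the first step to $\alpha^{-1}$, shifted up by one (so that $\alpha^{-1}(\calK_{\leq 1})\subset\calK_{\leq 1}$), we get $\beta\cdot\calC_u=\alpha^{-1}\cdot(\gamma\cdot\calC_u)\in\calK_{\leq 1}$. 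The two bounds together give $\beta\cdot\calC_u\in\calK_{[0,1]}$.

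The real work is the first step: even though it is only the ``easy half'' of the dual-monoid membership criterion, it is not formal, because a braid reflection is a mixed word in the positive and negative Artin generators and one must pass through the twist description of $\tau_t$ and carefully verify the orientation-degree shifts appearing in the Khovanov--Seidel bimodules. If that bookkeeping turns out to be awkward, the fallback is an induction on $l_{refl}(\beta)$: peel off a descent $\beta=\beta'\tau_t$, apply the inductive hypothesis to $\beta'\cdot\calC_u$, and control the extra twist by $\tau_t$ using the explicit Khovanov--Seidel triangles together with Theorem~\ref{thm:indecCplxes} to identify the reflection complexes occurring in the baric slices of $\beta'\cdot\calC_u$ --- at the price of a slice-by-slice analysis.
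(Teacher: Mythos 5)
Your proposal is correct and matches the paper's strategy: the paper likewise first shows that a single braid reflection sends $\calK_0$ into $\calK_{[0,1]}$ (hence that elements of $\calBe^+$ preserve $\calK_{\geq 0}$), and then obtains the upper bound by writing $\gamma=\beta'\beta$ and combining Lemma \ref{lemma:cShift} with the fact that the dual negative braid $\beta'^{-1}$ cannot raise the top baric degree. The one caveat is that the step you defer as ``bookkeeping'' --- controlling the orientation degrees arising from $\tau_t=\gamma^k\sigma_{i_1}\cdots\sigma_{i_{j+1}}\cdots\sigma_{i_1}^{-1}\gamma^{-k}$ --- is precisely the entire content of the paper's proof (a letter-by-letter walk through this word), so your twist-triangle packaging, in which the multiplicity space is a Hom-space between objects of the baric heart and is therefore concentrated in orientation degrees $0$ and $1$ by (\ref{eq:hompipjdual}), would still need to be carried out explicitly for the argument to be complete.
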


\begin{proof}
Recall that any $\tau_t\in \boldsfT$ is of the form $\tau_t=\gamma^k\sigma_{i_1}\cdots \sigma_{i_j}\sigma_{i_{j+1}}\sigma_{i_j}^{-1}\cdots\sigma_{i_1}^{-1}\gamma^{-k}$, with $\gamma=\sigma_{i_1}\cdots \sigma_{i_n}$. Using Lemma \ref{lemma:cShift}, we have that $\gamma^{-k}\calC_u\in \calK_{-k}$. Now, we successively apply letters $\sigma_{i}^{-1}$ in the order they appear in the reduced expression for $\gamma$.  When $\sigma_i^{-1}$ appears to the left of  $\sigma_l^{-1}$ in the expression for $\gamma$, the definition of the orientation grading on the zigzag algebra $A_\Gamma$ implies that $\Hom(P_i,P_l\{m\})=0$ for $m\neq0$.  From this it follows that $\sigma_{i_j}^{-1}\cdots \sigma_{i_1}^{-1} \gamma^{-k}\calC_u\in \calK_{[-k-1,-k]}$. Now. applying $\sigma_{i_{j+1}}$ will yield a complex in $\calK_{[-k-1,-k+1]}$, with 
$\calK_{k+1}(\sigma_{i_{j+1}}\sigma_{i_j}^{-1}\cdots \sigma_{i_1}^{-1} \gamma^{-k}\calC_u)$ isomorphic to
 a direct sum of shifts of $P_{i_{j+1}}$ (or equal to zero).  Moreover, the fact that 
 \[
 	\Hom(P_{i_m},P_{i_{j+1}}\{r\})\neq 0 \implies r=1 \text{ for } m\leq j
\]
implies that the bottom baric slice does not change when we apply $\sigma_{i_{j+1}}$:
\[
\calK_{k-1}(\sigma_{i_{j+1}}\sigma_{i_j}^{-1}\cdots \sigma_{i_1}^{-1} \gamma^{-k}\calC_u) \cong
\calK_{k-1}(\sigma_{i_j}^{-1}\cdots \sigma_{i_1}^{-1} \gamma^{-k}\calC_u).
\]
 
Now, note that if $Y$ is any minimal complex in $\calK_{-k-1}$ whose underlying chain group only contains projective modules of the form $P_{i_1}\{-k-1\},\dots,P_{i_j}\{-k-1\}$, then $\sigma_{i_1}\cdots \sigma_{i_j}(Y) \in \calK_{-k}$.  Since the bottom slice of  
$\sigma_{i_{j+1}}\sigma_{i_j}^{-1}\cdots \sigma_{i_1}^{-1} \gamma^{-k}\calC_u$ is of this form, it follows that 

$$\sigma_{i_1}\cdots \sigma_{i_j}\sigma_{i_{j+1}}\sigma_{i_j}^{-1}\cdots \sigma_{i_1}^{-1}\gamma^{-k}\calC_u\in \calK_{[-k,-k+1]}.$$ 

Thus $\gamma^k\sigma_{i_1}\cdots \sigma_{i_j}\sigma_{i_{j+1}}\sigma_{i_j}^{-1}\cdots \sigma_{i_1}^{-1}\gamma^{-k}\calC_u\in \calK_{[0,1]}$, and we have that $\beta \calC_u\in \calK_{\geq 0}$.

Thus we see that for any reflection $t$, and any $\calC \in \calK_0$, $\tau_t(\calC) \subset \calK_{\geq0}$.  From this it follows that for any $\beta\in \calBe^+$ and any $\calC\in \calK_0$, $\beta\calC \in \calK_{\geq0}$.  Moreover, for any $\beta\in \calBe^+$, there exists $\beta'\in \calBe^+$ with $\gamma=\beta'\beta$.  Since $\gamma(\calC)\in\calK_1\subset \calK_{\leq 1}$ for all $\calC\in \calK_0$, we must therefore have $\beta \calC \in \calK_{\leq 1}$.  This shows that $\beta \calC\in \calK_{[0,1]}$.

\end{proof}

\begin{remark}\label{rem:3}
It also follows from the proof of Lemma \ref{lemma:technical1a} that if $\beta, \beta', \beta'\beta\in \calBe^+$, and $X\in \calK_0$, but $\calK_1(\beta X) \neq 0$, then $\calK_1(\beta'\beta X)\neq 0.$
\end{remark}

The characterization from Lemma \ref{lemma:technical1a} actually gives a criterion for determining whether or not a product of reflections is a Bessis braid:

\begin{lemma}\label{lemma:2}
Let $\tau_{t_1},\dots, \tau_{t_k}\in \boldsfT$. Then 
\[
	\tau_{t_1}\dots \tau_{t_k} \in \calBe^+ \Longleftrightarrow (\tau_{t_1}\cdots \tau_{t_k})Y \in \calK_{[0,1]}
	\text{ for all }Y\in \calK_0.
\]
\end{lemma}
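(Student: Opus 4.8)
The plan is to prove the two implications separately. The forward direction $(\Rightarrow)$ is immediate from the strengthened form of Lemma~\ref{lemma:technical1a} established in its proof, namely that every $\beta\in\calBe^+$ satisfies $\beta(\calK_0)\subseteq\calK_{[0,1]}$: if $\tau_{t_1}\cdots\tau_{t_k}\in\calBe^+$, then it sends each $Y\in\calK_0$ into $\calK_{[0,1]}$.

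For the converse I would first record two baric facts. From the proof of Lemma~\ref{lemma:technical1a}, each $\tau_t\in\boldsfT$ sends $\calK_0$ into $\calK_{[0,1]}$; since the braid action commutes with the orientation-degree shift, which intertwines $\calK_m$ with $\calK_{m+1}$, this gives $\tau_t(\calK_m)\subseteq\calK_{[m,m+1]}$ for all $m$, so any product of elements of $\boldsfT$ preserves $\calK_{\geq 0}$. Dually, since $\tau_t\in\calBe^+$ we may write $\gamma=\delta\tau_t$ with $\delta\in\calBe^+$, whence $\tau_t^{-1}=\gamma^{-1}\delta$ and $\tau_t^{-1}(\calK_0)\subseteq\gamma^{-1}(\delta(\calK_0))\subseteq\gamma^{-1}(\calK_{[0,1]})=\calK_{[-1,0]}$ (using that $\gamma$ shifts baric degree by exactly $1$, by Lemma~\ref{lemma:cShift} and the fact that $\gamma$ is an equivalence); hence $\tau_t^{-1}(\calK_m)\subseteq\calK_{[m-1,m]}$, so $\tau_t^{-1}$, and any product of inverse reflections, preserves $\calK_{\leq 1}$.

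Now suppose $\beta:=\tau_{t_1}\cdots\tau_{t_k}$ maps every $Y\in\calK_0$ into $\calK_{[0,1]}$. Let $j$ be maximal with $\beta_0:=\tau_{t_j}\cdots\tau_{t_k}\in\calBe^+$ (such $j$ exists since $\tau_{t_k}\in\boldsfT\subseteq\calBe^+$); I claim $j=1$, which gives $\beta\in\calBe^+$. If $j>1$, put $\tau:=\tau_{t_{j-1}}$ and $\rho:=\tau_{t_1}\cdots\tau_{t_{j-2}}$, so $\tau\beta_0\notin\calBe^+$ and $\beta=\rho\,\tau\,\beta_0$. Since $\rho^{-1}$ preserves $\calK_{\leq 1}$, the hypothesis forces $\tau\beta_0 Y=\rho^{-1}\beta Y\in\calK_{\leq 1}$ for every $Y\in\calK_0$. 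Thus everything reduces to the following statement: \emph{if $\beta_0\in\calBe^+$, $\tau_t\in\boldsfT$ and $\tau_t\beta_0\notin\calBe^+$, then there is $Y_0\in\calK_0$ with $\calK_2(\tau_t\beta_0 Y_0)\neq 0$.} To prove this I would use that $\calBe^+$ is precisely the set of divisors of the Garside element $\gamma$, so that, writing $\gamma=\beta_0'\beta_0$ with $\beta_0'\in\calBe^+$, the condition $\tau_t\beta_0\notin\calBe^+$ becomes: $\tau_t$ is not a (left, equivalently right) divisor of $\beta_0'$. The aim is then to produce $Y_0\in\calK_0$ whose image $\beta_0 Y_0\in\calK_{[0,1]}$ has top baric slice $\calK_1(\beta_0 Y_0)$ isomorphic to a shift of a single root complex $\calC_u$ with $u\neq t$ and $\tau_t\tau_u\notin\calBe^+$; the natural candidates for $Y_0$ are direct sums of root complexes $\calC_v$ with $\tau_v\mid\beta_0'$, exactly as in the construction of explicit elements of the sets $X_w$, and Lemma~\ref{lemma:lcm_comp} (applied to $\gamma=\beta_0'\beta_0$ and the reflection $\tau_t$), Lemma~\ref{lemma:16}, and the lattice structure of ${\B_c^\vee}^+$ are what guarantee the existence of such a $u$. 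Granting this, since the top baric slice sits at the end of a truncation triangle and $\tau_t$ sends $\calK_{\leq 0}$ into $\calK_{\leq 1}$, one obtains $\calK_2(\tau_t\beta_0 Y_0)\cong\calK_2\big(\tau_t\,\calK_1(\beta_0 Y_0)\big)$, which by Proposition~\ref{prop:toolsdualpp}, item~\ref{el2} (applied to $\tau_t\calC_u$, suitably baric-shifted) is a nonzero direct sum of shifts of $\calC_t$; combined with the reduction this contradicts $\beta Y_0\in\calK_{[0,1]}$, so $j=1$.

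The step I expect to be the real obstacle is this last claim, and inside it the control of the top baric slice $\calK_1(\beta_0 Y_0)$: one must choose $Y_0$ so that this slice consists only of root complexes $\calC_u$ with $\tau_t\tau_u\notin\calBe^+$, so that applying $\tau_t$ genuinely produces a nonzero level-$2$ slice with no cancellation against the image of the level-$0$ part. This will require careful bookkeeping with Proposition~\ref{prop:toolsdualpp}, the bijection of Theorem~\ref{thm:indecCplxes}, the descent combinatorics of Bessis braids (Lemmas~\ref{lemma:16} and~\ref{lemma:lcm_comp}), and a separate treatment of the case in which $\tau_t$ already divides $\beta_0$ (so that $\tau_t$ would appear twice), where one instead tracks how $\beta_0$ acts on $\calC_t$ itself.
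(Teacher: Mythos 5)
Your forward direction and your reduction of the converse are fine: the observation that products of elements of $\boldsfT$ preserve $\calK_{\geq 0}$ and products of their inverses preserve $\calK_{\leq 1}$ correctly reduces everything to the claim that if $\beta_0\in\calBe^+$, $\tau_t\in\boldsfT$ and $\tau_t\beta_0\notin\calBe^+$, then some $Y_0\in\calK_0$ has $\calK_2(\tau_t\beta_0Y_0)\neq 0$. But this claim is exactly where your proof stops being a proof: you describe candidates for $Y_0$ and the bookkeeping you would need, and you explicitly flag the control of $\calK_1(\beta_0Y_0)$ as an unresolved obstacle. Worse, the tools you propose to invoke to close it are circular in the paper's logical order: Proposition \ref{prop:toolsdualpp}, item \ref{el2} (that $\tau_t\tau_u\notin\calBe^+$ forces a nonzero top baric slice built from $\calC_t$) is proved via Lemma \ref{lemma:technical1b} and Lemma \ref{lemma:technical2}, both of which cite Lemma \ref{lemma:2} itself; similarly Corollary \ref{cor:nu}, which you would need to identify $\calK_1(\beta_0 Y_0)$ with an lcm of reflections, sits downstream of this lemma. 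So as written the hard implication is not established, and the route you sketch cannot be completed without first reproving, from scratch, statements essentially equivalent to the lemma.

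The paper avoids constructing a witness complex altogether. It argues the converse by induction on $k$: the hypothesis on $\tau_{t_1}\cdots\tau_{t_k}$, together with Remark \ref{rem:3}, shows that both $\tau_{t_2}\cdots\tau_{t_k}$ and $(\tau_{t_2}^{-1}\tau_{t_1}\tau_{t_2})\tau_{t_3}\cdots\tau_{t_k}$ satisfy the hypothesis with $k-1$ letters, hence divide $\gamma$ by induction; since $t_1\neq t_2$ these are two \emph{distinct} elements of reflection length $k-1$, both dividing the length-$k$ braid $\tau_{t_1}\cdots\tau_{t_k}$, so their lcm (which divides $\gamma$ by the lattice property of $\calBe^+$) must equal $\tau_{t_1}\cdots\tau_{t_k}$. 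The categorical input is thus used only to verify the inductive hypothesis for the two shorter words, and the combinatorial lattice structure of $[1,\gamma]$ does the rest. If you want to salvage your approach, you would need a self-contained proof of your key claim that does not pass through Proposition \ref{prop:toolsdualpp} or Corollary \ref{cor:nu}; alternatively, replace the witness-complex step by the paper's lcm argument, which your maximal-suffix setup does not obviously accommodate.
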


\begin{proof}
It follows immediately from Lemma \ref{lemma:technical1a} that 
\[
\tau_{t_1}\dots \tau_{t_k} | \gamma \implies (\tau_{t_1}\cdots \tau_{t_k})\calK_0 \subset \calK_{[0,1]}.
\]

For the converse, we induct on $k$.  If $k=1$, the claim is clear. For $k>1$, by induction hypothesis and Remark \ref{rem:3}, both $\tau_{t_2}\cdots \tau_{t_k}$ and $(\tau_{t_2}^{-1}\tau_{t_1}\tau_{t_2})\cdots \tau_{t_k}$ are divisors of $\gamma$, so
$\alpha = \lcm(\tau_{t_2}\cdots \tau_{t_k}, (\tau_{t_2}^{-1}\tau_{t_1}\tau_{t_2})\cdots \tau_{t_k})$
divides $\gamma$ as well. The braid $\tau_{t_1}\tau_{t_2}\cdots \tau_{t_k}$ has length $k$ in the reflection generators, and is a multiple of both the length $k-1$ braid $\tau_{t_2}\cdots \tau_{t_k}$ and the length $k-1$ braid $(\tau_{t_2}^{-1}\tau_{t_1}\tau_{t_2})\cdots \tau_{t_k}$.  Since $t_1\neq t_2$, it follows that these two length $k-1$ braids are distinct, and thus that
$\alpha = \tau_{t_1}\tau_{t_2}\cdots \tau_{t_k}$.  Since $\alpha$ divides $\gamma$, this proves the claim.
\end{proof}

\begin{lemma} \label{lemma:technical1b}
Let $\beta\in \calBe^+$ be a Bessis braid and let $\calC_u$ be a reflection complex.  We have the following:
\begin{enumerate}
\item \label{item:tl1} $\beta\cdot \calC_u\in \calK_0\Longleftrightarrow \beta \tau_u\beta^{-1}\in \boldsfT$ and $\tau_u$ does not divide $\beta$;
\item \label{item:tl2} $\beta\cdot \calC_u\in \calK_1\Longleftrightarrow \tau_u|\beta$.
\end{enumerate}
\end{lemma}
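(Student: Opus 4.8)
The plan is to establish item~(2) first, by an explicit computation, and then to deduce item~(1) from item~(2) by recognizing the dual braid reflections $\tau_t$ as twist autoequivalences attached to the reflection complexes $\calC_t$.

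\emph{Item (2).} The computational heart is the identity $\tau_u\cdot\calC_u\cong\calC_u[-1]\{-1\}\langle-2\rangle$. Writing $\tau_u=\gamma^k\sigma_{i_1}\cdots\sigma_{i_j}\sigma_{i_{j+1}}\sigma_{i_j}^{-1}\cdots\sigma_{i_1}^{-1}\gamma^{-k}$ and $\calC_u=\gamma^k\sigma_{i_1}\cdots\sigma_{i_j}P_{i_{j+1}}\{-k\}$ as in Section~\ref{subsec:Garside} and the definition of $\calC_u$, all the negative letters of $\tau_u$ cancel against $\calC_u$, leaving $\gamma^k\sigma_{i_1}\cdots\sigma_{i_j}(\sigma_{i_{j+1}}P_{i_{j+1}})\{-k\}$; now $\sigma_{i_{j+1}}P_{i_{j+1}}\cong P_{i_{j+1}}[-1]\{-1\}\langle-2\rangle$ by~(\ref{rel:KS1}) and braids commute with shifts. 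Since $\calC_u\in\calK_0$ and the shift $\{-1\}$ raises the $\vec o$-baric level by one, this shows $\tau_u\cdot\calC_u\in\calK_1$; hence $\tau_u$ lies in $\{\beta\in\calBe^+:\beta\cdot\calC_u\in\calK_{[1,\infty)}\}$, so $\nu_+(\calC_u)$ divides $\tau_u$ by Remark~\ref{rem:factorizationBe+}, and as $\tau_u$ is a dual atom and $\calC_u\not\cong0$ lies in $\calK_0$ we get $\nu_+(\calC_u)=\tau_u$. If $\tau_u$ divides $\beta$, write $\beta=\alpha\tau_u$ with $\alpha\in\calBe^+$; then $\beta\cdot\calC_u\cong(\alpha\cdot\calC_u)[-1]\{-1\}\langle-2\rangle\in\calK_{[1,2]}$ by Lemma~\ref{lemma:technical1a}, while also $\beta\cdot\calC_u\in\calK_{[0,1]}$ by that lemma, so $\beta\cdot\calC_u\in\calK_1$. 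Conversely, if $\beta\cdot\calC_u\in\calK_1\subset\calK_{[1,\infty)}$ then $\nu_+(\calC_u)=\tau_u$ divides $\beta$ by Remark~\ref{rem:factorizationBe+}. This proves~(2).

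\emph{Item (1).} By the definition~(\ref{eq:braid}) of the action, $\sigma_i$ is the Khovanov--Seidel twist autoequivalence attached to $P_i$; therefore $\tau_t$, being $\sigma_{i_{j+1}}$ conjugated by $a_t=\gamma^k\sigma_{i_1}\cdots\sigma_{i_j}$, is the twist attached to $a_t(P_{i_{j+1}})\cong\calC_t$ (twists being insensitive to shifts). Consequently, for any braid $\beta$ the autoequivalence $\beta\tau_u\beta^{-1}$ is the twist attached to $\beta\cdot\calC_u$. Now suppose $\beta\cdot\calC_u\in\calK_0$: then $\beta\cdot\calC_u\notin\calK_1$, so $\tau_u$ does not divide $\beta$ by item~(2); moreover $\beta\cdot\calC_u$ is indecomposable, and an indecomposable object of $\calK_0$ is a shift $\calC_v\langle m\rangle[n]$ of a reflection complex --- this follows from Theorem~\ref{thm:indecCplxes} once one notes that in $\calK_0$ the only differentials between orientation-degree-zero indecomposable projectives are the edges of $\Gamma_{\vec o}$, which preserve the level of the canonical $t$-structure and thus force an indecomposable object of $\calK_0$ to be linear up to an overall homological shift. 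Hence $\beta\tau_u\beta^{-1}$ is the twist attached to $\calC_v$, that is $\tau_v\in\boldsfT$. Conversely, if $\beta\tau_u\beta^{-1}=\tau_v\in\boldsfT$ and $\tau_u\nmid\beta$, then the twists attached to $\beta\cdot\calC_u$ and to $\calC_v$ coincide; applying both to $\calC_v$ and using $\tau_v\cdot\calC_v\cong\calC_v[-1]\{-1\}\langle-2\rangle$ (the computation of item~(2) for $v$), the defining triangle of the twist shows $\HOM(\beta\cdot\calC_u,\calC_v)\otimes(\beta\cdot\calC_u)$ to be a sum of shifts of $\calC_v$, so by indecomposability $\beta\cdot\calC_u\cong\calC_v\langle m\rangle[n]\{p\}$; since $\beta\cdot\calC_u\in\calK_{[0,1]}$ by Lemma~\ref{lemma:technical1a} while $\calK_0(\beta\cdot\calC_u)\neq0$ by item~(2), its single baric level is $0$, i.e.\ $\beta\cdot\calC_u\in\calK_0$. (One can instead argue item~(1) by induction on $l_{refl}(\beta)$: factor off a dual simple left divisor $\tau_t$, check $\tau_u$ still does not divide $\beta'=\tau_t^{-1}\beta$, use Remark~\ref{rem:3} to propagate a nonzero top baric slice, the identity $\beta\tau_u\beta^{-1}=\tau_t(\beta'\tau_u\beta'^{-1})\tau_t^{-1}$, and reduce to the single-reflection case.)

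\emph{The main obstacle.} The delicate points are the structural inputs used for item~(1): the identification of $\tau_t$ with the twist attached to $\calC_t$ together with the fact that such a twist determines its object up to the various grading shifts, and the classification of the indecomposable objects of the baric heart $\calK_0$ as precisely the shifts of reflection complexes. The first is standard in the Khovanov--Seidel/Seidel--Thomas formalism but needs care with the bigrading (one may bypass the abstract ``twist determines object'' statement by the explicit triangle argument above); the second requires supplementing Theorem~\ref{thm:indecCplxes} with a short analysis of which morphisms between indecomposable projectives can appear in a differential within $\calK_0$. The remaining bookkeeping --- in particular pinning the baric level of $\beta\cdot\calC_u$ down to $0$ rather than a shifted copy --- is exactly what items~(2) and Lemma~\ref{lemma:technical1a} are for.
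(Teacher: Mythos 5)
There is a genuine circularity in your proof of item~(2), which then propagates into the converse direction of item~(1). You establish $\nu_+(\calC_u)=\tau_u$ by noting that $\nu_+(\calC_u)$ divides $\tau_u$ and that $\tau_u$ is an atom, and you rule out $\nu_+(\calC_u)=1$ ``because $\calC_u\not\cong 0$ lies in $\calK_0$.'' But at this stage $\nu_+(\calC_u)$ is only the \emph{gcd} of the set $S=\{\beta\in\calBe^+:\beta\calC_u\in\calK_{[1,\infty)}\}$; it is not yet known to lie in $S$ (that is precisely Corollary~\ref{cor:nu}, which the paper deduces \emph{from} this lemma). So $\nu_+(\calC_u)=1$ is not excluded by $\calC_u\notin\calK_{\geq 1}$: it would occur exactly if some $\beta\in S$ were not divisible by $\tau_u$, i.e.\ exactly if the forward implication of item~(2) failed. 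In other words, ``$\nu_+(\calC_u)=\tau_u$'' is logically equivalent to the statement you are trying to prove, and your argument assumes it. (If instead you read $\nu_+$ as a literal minimum of $S$, then it is Remark~\ref{rem:factorizationBe+} --- the divisibility of every element of $S$ by $\nu_+$ --- that becomes unjustified, since a minimum need not exist a priori.) Since your converse of item~(1) invokes item~(2) to pin the baric level of $\beta\calC_u$ down to $0$ rather than $1$, that step is also affected.

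A second, milder issue: in the forward direction of item~(1) you pass from ``$\beta\tau_u\beta^{-1}$ and $\tau_v$ induce the same autoequivalence (both being the twist along $\calC_v$)'' to ``$\beta\tau_u\beta^{-1}=\tau_v\in\boldsfT$'' as braid group elements. This requires injectivity of $\B_W\to[\mathrm{Aut}(\calK)]$. Faithfulness is indeed available from the standard ping pong (Corollary~\ref{cor:faithfulness}), so this is not fatal, but it is a substantial unstated input, and it makes the dual-ping-pong proof of faithfulness depend on the standard one. The paper avoids both problems by proving item~(1) first and without twists: $\tau_u\nmid\beta$ is immediate (if $\beta=\alpha\tau_u$ then $\beta\calC_u\cong\alpha\calC_u\{-1\}\in\calK_{\geq1}$), and $\beta\tau_u\beta^{-1}\in\boldsfT$ follows from Lemma~\ref{lemma:2}: one checks $\beta\tau_u\cdot Y\in\calK_{[0,1]}$ for every $Y\in\calK_0$ (using that $\calK_1(\tau_uY)$ is a sum of shifts of $\calC_u$ and that $\beta\calC_u\in\calK_0$), concludes $\beta\tau_u\in\calBe^+$, and then uses the group theory of the dual monoid. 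The converse of item~(1) is then one line ($\beta\tau_u\calC_u\in\calK_{[0,1]}$ and $\tau_u\calC_u\in\calK_1$ force $\beta\calC_u\in\calK_0$), and item~(2) follows by applying the negative analogue of item~(1) to $\gamma^{-1}\beta\in\calBe^-$. Your observations that differentials inside $\calK_0$ preserve the $t$-level, so that indecomposables of $\calK_0$ are shifts of reflection complexes, and your explicit computation $\tau_u\calC_u\cong\calC_u[-1]\{-1\}\langle-2\rangle$, are both correct and useful, but they do not by themselves close the gap above.
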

\begin{proof}
For (\ref{item:tl1}), let $\tau_u=\mu \sigma_j \mu^{-1}$ as in \ref{eq:dualpositivelift}, so that $\calC_u\cong \mu P_j$.
Suppose that $\beta\calC_u=\beta\mu P_j\in \calK_0$.  Since $\beta \calC_u\in \calK_0$, it follows that $\tau_u$ does not divide $\beta$, for if $\beta = \alpha \tau_u$ with $\alpha\in \calBe^+$, then 
\[
	\beta \calC_u \cong \alpha \calC_u \{-1\} \in \calK_{\geq 1}.
\] 
We will use Lemma \ref{lemma:2} to show that $\beta \tau_u \beta^{-1}$ is in $\boldsfT$.   To see this, let $Y \in \calK_0$, so that $\tau_uY\in \calK_{[0,1]}$.  The top slice
$\calK_1(\tau_uY)$ is either 0 or is isomorphic to a direct sum of shifts of $\calC_u$.
Now, since $\beta \calC_u\in \calK_0$ and $\beta\in \calBe^+$ it follows that 
$\beta \tau_u \cdot Y \subset \calK_{[0,1]}$, too, and by Lemma \ref{lemma:2} it follows that $\beta \tau_u\in \calBe^+$.  This implies that $\beta \tau_u \beta^{-1}\in \boldsfT$.  

For the converse, suppose $\beta \tau_u \beta^{-1}\in \boldsfT$ and $\tau_u$ does not divide $\beta$.  Then 
$\beta \tau_u \in \calBe^+$, whence $\beta \tau_u \calC_u\in \calK_[0,1]$.  Since $\tau_u \calC_u\cong \calC_u\{-1\}\in \calK_1$, this shows that $\beta \calC_u$ must be in $\calK_0$.

We now prove (\ref{item:tl2}). Suppose first that $\beta\calC_u\in \calK_{1}$, so that $\gamma^{-1}\beta\calC_u\in \calK_0$. Since $\gamma^{-1}\beta\in\calBe^{-}$, we can use the analog of item \ref{item:tl1} for dual negative braids to conclude that $\gamma^{-1}\beta \tau_u^{-1}\in \calBe^-$.  Thus $\beta \tau_u^{-1}$ is in $\calBe^+$, and $\tau_u$ is a factor of $\beta$. For the converse, write $\beta = \beta'\tau_u$ with $\beta'\in \calBe^+$ and $\tau_u$ not dividing $\beta'$.  Then, by item (\ref{item:tl1}), $\beta'\calC_u\in \calK_0$.  Thus
$$\beta\calC_u = \beta' \tau_u\calC_u = \beta'\calC_u\{-1\}\in \calK_1,$$
as desired.
\end{proof}

As a consequence, we get the following corollary, which establishes the link between the lattice structure on the interval $[1,\gamma]=\calBe^+$ in the dual positive monoid ${\B_c^\vee}^+$ and the action of $\B_W$ on $\calK$, equipped with its $\vec{o}$-baric structure.

\begin{corollary}\label{cor:nu}
For $X\in \calK_0$, recall that $\nu_+(X) = \gcd\{\beta \in \calBe^+ : \beta X\in \calK_{>0}\}$.
We have
\begin{itemize}
\item $\nu_+(X)X\in \calK_1$, and 
\item $\nu_+(\oplus_{t\in J}\calC_t)=\lcm\{\tau_t\}_{t\in J}$.
\end{itemize}
\end{corollary}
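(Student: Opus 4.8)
The plan is to reduce the statement to a computation in the lattice $\calBe^+=[1,\gamma]$, by first decomposing an arbitrary object of $\calK_0$ into reflection complexes and then reading off $\nu_+$ from the second part of Lemma~\ref{lemma:technical1b}.

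\emph{Step 1: decompose objects of $\calK_0$.} First I would upgrade Proposition~\ref{prop:equiv} from the heart intersection $\calK^0_0$ to all of $\calK_0$, showing that every $X\in\calK_0$ is isomorphic to a finite direct sum $\bigoplus_k \calC_{t_k}\langle a_k\rangle[b_k]$ of shifts of reflection complexes. The point is that the canonical $t$-structure on $\calK$ restricts to a bounded $t$-structure on $\calK_0$ with heart $\calK^0_0$ — taking the level-$N$ truncation of a minimal complex does not change its orientation-grading shifts — and, by Proposition~\ref{prop:equiv}, this heart is equivalent to the (hereditary) category of graded representations of the Dynkin quiver $\Gamma_{\vec o}$. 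A bounded triangulated category with a hereditary heart has the property that every object is the direct sum of shifts of its cohomology objects; combining this with Theorem~\ref{thm:indecCplxes}, which identifies the indecomposables of $\calK^0_0$ with shifts of reflection complexes, yields the desired decomposition of $X$.

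\emph{Step 2: identify $\nu_+(X)$.} Since $\B_W$ acts by additive autoequivalences commuting with the shifts $\langle\cdot\rangle$ and $[\cdot]$ — neither of which affects the $\vec o$-baric level — for $\beta\in\calBe^+$ we get $\beta X\cong\bigoplus_k(\beta\calC_{t_k})\langle a_k\rangle[b_k]$, and by Lemma~\ref{lemma:technical1a} each summand $\beta\calC_{t_k}$ lies in $\calK_{[0,1]}$. Hence $\beta X\in\calK_{>0}$ if and only if every $\beta\calC_{t_k}$ lies in $\calK_1$, which by the second part of Lemma~\ref{lemma:technical1b} holds if and only if $\tau_{t_k}$ divides $\beta$. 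Therefore $\{\beta\in\calBe^+: \beta X\in\calK_{>0}\}$ is exactly the set of elements of $\calBe^+$ divisible by $\lcm_k\{\tau_{t_k}\}$; since the $\tau_{t_k}$ lie in $\boldsfT\subset\calBe^+$ and $\calBe^+$ is a lattice, this lcm lies in $\calBe^+$, so the set is a principal filter whose gcd is its generator $\lcm_k\{\tau_{t_k}\}$. Thus $\nu_+(X)=\lcm_k\{\tau_{t_k}\}$, which manifestly lies in the filter, so $\nu_+(X)X\in\calK_{[0,1]}\cap\calK_{>0}=\calK_1$ by Lemma~\ref{lemma:technical1a}. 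This proves the first bullet; the second is the special case $X=\bigoplus_{t\in J}\calC_t$, for which the $t_k$ range precisely over $J$.

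\emph{The main obstacle.} The lattice bookkeeping in Step 2 is routine, as is keeping the left/right divisibility conventions straight — harmless here since descents of Bessis braids are left--right symmetric (Remark~\ref{rem:descents}). The step needing genuine care is Step 1: checking that the restricted $t$-structure on $\calK_0$ is bounded with heart $\calK^0_0$, and that $\calK_0$ is the bounded derived category of this heart. The latter uses crucially that paths of length $\geq 2$ vanish in $A_\Gamma$, so that $\calK^0_0$ really is equivalent to representations of the \emph{hereditary} path algebra of $\Gamma_{\vec o}$ rather than of its radical-square-zero quotient. If one wished to avoid invoking this decomposition, the first bullet could instead be approached by showing directly that $\{\beta\in\calBe^+:\beta X\in\calK_{>0}\}$ is closed under gcd: applying a reflection to the baric truncation triangle $\calK_1(\delta X)\to\delta X\to\calK_0(\delta X)\xrightarrow{+1}$ and using that its outer terms land in $\calK_{\geq 1}$ and $\calK_{[0,1]}$ respectively pushes the argument to a smaller instance — but finding a well-founded induction parameter there is the awkward point, so I expect the decomposition route to be the cleaner one.
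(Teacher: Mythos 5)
Your proposal is correct and follows essentially the same route as the paper: decompose $X$ (up to shifts) as a direct sum of reflection complexes via Theorem \ref{thm:indecCplxes}, then use Lemma \ref{lemma:technical1b}(\ref{item:tl2}) to identify $\{\beta\in\calBe^+ : \beta X\in\calK_{>0}\}$ as the principal filter generated by $\lcm\{\tau_{t_j}\}$, whose gcd is that lcm. The only difference is that you justify the decomposition step (reducing from $\calK_0$ to $\calK^0_0$) more carefully than the paper, which simply asserts it after applying homological shifts to the indecomposable summands.
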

\begin{proof}
Let $X\in \calK_0$.  After possibly applying homological shifts $[k]$ to the indecomposable summands of $X$, we may assume that $X\in \calK^0_0(c)$.  By Theorem \ref{thm:indecCplxes}, we may then write $X=\oplus_{j\in J}\calC_{t_j}$. Now, consider the set 
$\{\beta\in \calBe^+|\beta\cdot X \in \calK_{1}\}$.   By Lemma \ref{lemma:technical1b} (\ref{item:tl2}) above, if $\beta\in\{\beta\in \calBe^+|\beta\cdot X \in \calK_{1}\}$ and $j\in J$, then $\tau_{t_j}$ divides $\beta$.  Thus $\lcm\{\tau_{t_j}\}_{j\in J}$ divides $\beta$, and therefore $\lcm\{\tau_{t_j}\}_{j\in J}$ divides $\nu_+(X)$.  This shows that $\nu_+(X)X\in \calK_1$.  Since $\lcm\{\tau_{t_j}\}_{j\in J} \in \{\beta\in \calBe^+|\beta\cdot X \in \calK_{1}\}$, it now follows that $\nu_+(X) = \lcm\{\tau_{t_j}\}_{j\in J}$.
\end{proof}

As a consequence of Corollary \ref{cor:nu}, together with the relationship between $\gcd$s and $\lcm$s of subsets of $\calBe^+$, we obtain the following relationship between $\nu_+(X)$ and $\nu_-(X)$ for complexes $X\in \calK_0$.

\begin{lemma}\label{lemma:5a}
$\nu_{-}(X)=\nu_+(X)^{-1}$
\end{lemma}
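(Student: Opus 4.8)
The plan is to reduce to the case where $X$ is a direct sum of reflection complexes, invoke Corollary~\ref{cor:nu} together with its dual--negative analogue, and then finish with a length count using the additivity of $l_{refl}$ on ${\B_c^\vee}^+$ and on ${\B_c^\vee}^-$. For the reduction I would note that applying homological shifts $[k]$ to the indecomposable summands of $X$ changes neither $\nu_+(X)$ nor $\nu_-(X)$ (the $\vec o$-baric structure is insensitive to $[1]$, since $\calK_0[1]=\calK_0$, and the braid action commutes with $[1]$ and with the orientation shift $\{1\}$), so I may assume $X\in\calK^0_0$ and hence, by Theorem~\ref{thm:indecCplxes}, $X\cong\bigoplus_{j\in J}\calC_{t_j}$. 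By Corollary~\ref{cor:nu} we have $\nu_+(X)\cdot X\in\calK_1$; since $\nu_+(X)\calC_{t_j}\in\calK_{[0,1]}$ for each $j$ by Lemma~\ref{lemma:technical1a}, and the baric slices of a direct sum are the direct sums of the baric slices of the summands, each $\nu_+(X)\calC_{t_j}$ in fact lies in $\calK_1$.

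The key step is to show $\nu_+(X)^{-1}\cdot X\in\calK_{\leq-1}$. From $\nu_+(X)\calC_{t_j}\in\calK_1$ and Lemma~\ref{lemma:technical1b}, item~(\ref{item:tl2}), the reflection $\tau_{t_j}$ divides $\nu_+(X)$ in $\calBe^+$; by the left--right symmetry of descent sets of Bessis braids (Remark~\ref{rem:descents}) I may write $\nu_+(X)=\tau_{t_j}\beta_j$ with $\beta_j\in\calBe^+$, so that $\nu_+(X)^{-1}=\beta_j^{-1}\tau_{t_j}^{-1}$ exhibits $\tau_{t_j}^{-1}$ as a descent of $\nu_+(X)^{-1}\in\calBe^-$, i.e. $\tau_{t_j}^{-1}$ divides $\nu_+(X)^{-1}$. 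Applying the dual--negative form of Lemma~\ref{lemma:technical1b}, item~(\ref{item:tl2}), then gives $\nu_+(X)^{-1}\calC_{t_j}\in\calK_{-1}$ for every $j$, and hence $\nu_+(X)^{-1}\cdot X\in\calK_{-1}\subset\calK_{(-\infty,-1]}$. Consequently $\nu_+(X)^{-1}$ lies in the set $\{\beta\in\calBe^-\mid\beta\cdot X\in\calK_{(-\infty,-1]}\}$, whose $\gcd$ is $\nu_-(X)$; so $\nu_-(X)$ divides $\nu_+(X)^{-1}$, and writing $\nu_+(X)^{-1}=\nu_-(X)\,\eta$ with $\eta\in{\B_c^\vee}^-$ the additivity of $l_{refl}$ gives $l_{refl}(\nu_+(X))=l_{refl}(\nu_-(X))+l_{refl}(\eta)$.

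Running the identical argument with the roles of $\calBe^+$ and $\calBe^-$ exchanged (using the dual--negative analogue of Corollary~\ref{cor:nu}, which reads $\nu_-(X)=\lcm\{\tau_{t_j}^{-1}\}_{j\in J}$, and of Lemmas~\ref{lemma:technical1a}, \ref{lemma:technical1b}) shows $\nu_-(X)^{-1}\cdot X\in\calK_{[1,\infty)}$, whence $\nu_+(X)$ divides $\nu_-(X)^{-1}$ in ${\B_c^\vee}^+$ and $l_{refl}(\nu_-(X))=l_{refl}(\nu_+(X))+l_{refl}(\eta')$ for some $\eta'$. Adding the two length identities forces $l_{refl}(\eta)=l_{refl}(\eta')=0$, hence $\eta=\eta'=1$, and $\nu_-(X)=\nu_+(X)^{-1}$.

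I do not expect a single hard obstacle here; the work is bookkeeping, and the two points to handle with care are: (i) that the dual--negative analogues of Lemma~\ref{lemma:technical1b} and Corollary~\ref{cor:nu} are available, which follows from the very same proofs via the symmetry between $\calBe^+$ and $\calBe^-$ implemented by inversion (an anti-isomorphism ${\B_c^\vee}^+\to{\B_c^\vee}^-$), exactly as already invoked inside the proof of Lemma~\ref{lemma:technical1b}; and (ii) keeping left/right divisibility straight --- the argument above is arranged so that both-sided divisibility is used only once, in passing from ``$\tau_{t_j}\mid\nu_+(X)$'' to ``$\tau_{t_j}^{-1}\mid\nu_+(X)^{-1}$'', which is precisely Remark~\ref{rem:descents}, with the final step relying only on additivity of $l_{refl}$ rather than on any identification of left and right $\lcm$'s. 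Alternatively, one can bypass the length count: Corollary~\ref{cor:nu} and its negative analogue identify $\nu_+(X)=\lcm\{\tau_{t_j}\}$ in ${\B_c^\vee}^+$ and $\nu_-(X)=\lcm\{\tau_{t_j}^{-1}\}$ in ${\B_c^\vee}^-$, and since inversion is an anti-isomorphism ${\B_c^\vee}^+\cong{\B_c^\vee}^-$ under which, by Remark~\ref{rem:descents} and Lemma~\ref{lemma:16}, the $\lcm$ of a family of braid reflections is preserved, one gets $\big(\lcm\{\tau_{t_j}\}\big)^{-1}=\lcm\{\tau_{t_j}^{-1}\}$ directly.
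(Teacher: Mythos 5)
Your proof is correct and matches the paper's approach: the paper gives no argument beyond the remark that the lemma follows from Corollary~\ref{cor:nu} together with the relationship between $\gcd$s and $\lcm$s in $\calBe^{\pm}$, which is precisely your closing ``alternative'' (inversion is an anti-isomorphism identifying $\lcm\{\tau_{t_j}\}$ with $\lcm\{\tau_{t_j}^{-1}\}^{-1}$). Your longer mutual-divisibility-plus-length-count argument is a valid, fully detailed expansion of the same idea, using only results established before this point.
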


The following Lemma will now complete the proof of Proposition \ref{prop:toolsdualpp}.

\begin{lemma} \label{lemma:technical2}
Let $\tau_t,\tau_u\in \boldsfT$.  Let $X = \calK_0(t\calC_u)$.   Then $\nu_+(X)\tau_t=\lcm(\tau_t,\tau_u)\in \calBe^+$.
\end{lemma}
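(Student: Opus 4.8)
The statement to prove is that for $\tau_t,\tau_u\in\boldsfT$ with $u\neq t$ (the case $u=t$ being trivial since then $\tau_t\calC_u\cong\calC_u\{-1\}\in\calK_1$, so $X=0$ and both sides equal $\tau_t$), writing $X=\calK_0(\tau_t\calC_u)$, one has $\nu_+(X)\tau_t=\lcm(\tau_t,\tau_u)\in\calBe^+$. The plan is to split into the two dichotomous cases governed by whether $\tau_t\tau_u\in\calBe^+$, using the already-established Lemma~\ref{lemma:technical1a} (so $\tau_t\calC_u\in\calK_{[0,1]}$, hence $X$ is a genuine object of $\calK_0$, and after homological shifts lies in $\calK_0^0$, so by Theorem~\ref{thm:indecCplxes} it is a direct sum of reflection complexes and $\nu_+(X)$ makes sense via Corollary~\ref{cor:nu}).

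First I would handle the case $\tau_t\tau_u\notin\calBe^+$. By Lemma~\ref{lemma:technical1b}(\ref{item:tl1}), since $\tau_t\notin$ divides nothing here and $\tau_t\tau_u\tau_t^{-1}\notin\boldsfT$ (equivalent to $\tau_t\tau_u\notin\calBe^+$ by Lemma~\ref{lemma:8}), the complex $\tau_t\calC_u$ is \emph{not} in $\calK_0$; combined with $\tau_t\calC_u\in\calK_{[0,1]}$ this forces $\calK_1(\tau_t\calC_u)\neq 0$, and by Lemma~\ref{lemma:technical1b}(\ref{item:tl2}) applied after noting $\tau_t\tau_u\calC_u\cong\tau_t\calC_u\{-1\}$, the top slice $\calK_1(\tau_t\calC_u)$ is a direct sum of shifts of $\calC_t$. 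Now the key computation: $\lcm(\tau_t,\tau_u)$ is the smallest Bessis braid divisible on the left by both $\tau_t$ and $\tau_u$; I want to show it equals $\nu_+(X)\tau_t$. Consider $\tau_t\calC_u\in\calK_{[0,1]}$ with bottom slice $X$ and top slice a sum of shifts of $\calC_t$. Applying $\nu_+(X)$ lifts $X$ into $\calK_1$ (Corollary~\ref{cor:nu}) and, since $\nu_+(X)\tau_t\calC_u$ then sits in $\calK_{[1,2]}$, Lemma~\ref{lemma:2} tells us $\nu_+(X)\tau_t\tau_u\in\calBe^+$. To see it is exactly the lcm: any $\tau_u$-divisible Bessis braid $\beta$ with $\tau_t\,|\,\beta$ can be written $\beta=\beta'\tau_t$ (using left-right symmetry of descents, Remark~\ref{rem:descents}); then $\beta\tau_u^{-1}\in\calBe^+$ by Lemma~\ref{lemma:technical1b}(\ref{item:tl2}) so $\beta'\tau_t\calC_u\cong\beta'\calC_u'$ for the appropriate reflection complex, and since the top slice contributes the $\calC_t$-part while the bottom slice $X$ must be lifted, $\beta'$ must be divisible by $\nu_+(X)$; hence $\nu_+(X)\tau_t\,|\,\beta$. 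Combined with $\nu_+(X)\tau_t$ itself being a common left-multiple of $\tau_t$ and $\tau_u$ (the latter because $\nu_+(X)\tau_t\tau_u\in\calBe^+$ means $\tau_t\tau_u\,|\,(\cdot)$ via Lemma~\ref{lemma:8}, and one checks $\tau_u$ divides it), we conclude equality.

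Next the case $\tau_t\tau_u\in\calBe^+$: here Lemma~\ref{lemma:technical1b}(\ref{item:tl1}) gives $\tau_t\calC_u\in\calK_0$ (since $\tau_t\tau_u\tau_t^{-1}\in\boldsfT$ and $\tau_u\nmid\tau_t$), so $X=\calK_0(\tau_t\calC_u)=\tau_t\calC_u\cong\mathfrak C_{tut^{-1}}$-summand, i.e.\ $X\cong\calC_{tut^{-1}}$ up to shift. Then by Corollary~\ref{cor:nu}, $\nu_+(X)=\tau_{tut^{-1}}$, so I must show $\tau_{tut^{-1}}\tau_t=\lcm(\tau_t,\tau_u)$. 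But $\tau_{tut^{-1}}\tau_t=\tau_t\tau_u$ in $\B_c^\vee$ (this is precisely the dual braid relation coming from $t(tut^{-1})=tu$... rather, from $(tut^{-1})t = tu \in [1,c]_T$), and $\tau_t\tau_u\in\calBe^+$ is by hypothesis a Bessis braid divisible by both $\tau_t$ (on the left) and $\tau_u$ (on the right, hence—by left-right descent symmetry—on the left too), and minimality among such is immediate since it has reflection length $2$. So $\lcm(\tau_t,\tau_u)=\tau_t\tau_u=\tau_{tut^{-1}}\tau_t=\nu_+(X)\tau_t$, as required.

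The main obstacle I anticipate is the bookkeeping in the $\tau_t\tau_u\notin\calBe^+$ case: showing cleanly that $\nu_+(\calK_0(\tau_t\calC_u))\tau_t$ divides every common left-multiple of $\tau_t$ and $\tau_u$ in $\calBe^+$. This requires carefully relating the $\calK_0$-part and $\calK_1$-part of $\beta\tau_t\calC_u$ for a candidate multiple $\beta$ — in effect showing that the action of a Bessis braid on the two baric slices of $\tau_t\calC_u$ is "independent enough" that lifting the whole of $\tau_t\calC_u$ into $\calK_{\geq 1}$ forces lifting the bottom slice $X$ separately — and invoking Corollary~\ref{cor:nu} and Remark~\ref{rem:3} at the right moment. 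Once that divisibility is in hand, the reverse divisibility ($\lcm(\tau_t,\tau_u) \mid \nu_+(X)\tau_t$) follows from $\nu_+(X)\tau_t\tau_u\in\calBe^+$ together with Lemma~\ref{lemma:8}, and the two inequalities pin down the lcm.
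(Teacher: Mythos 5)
Your easy case ($\tau_t\tau_u\in\calBe^+$) is fine, and your argument that $\nu_+(X)\tau_t$ divides every common multiple of $\tau_t$ and $\tau_u$ (nothing in $\calK_{\geq1}$ can cancel the degree-$0$ baric slice of $\beta'X$) is essentially the second half of the paper's proof. But there is a genuine gap in the step you treat as routine: establishing that $\nu_+(X)\tau_t$ lies in $\calBe^+$ at all, i.e.\ that it \emph{is} a common multiple of $\tau_t$ and $\tau_u$. You deduce this from ``$\nu_+(X)\tau_t\calC_u$ sits in $\calK_{[1,2]}$, so Lemma~\ref{lemma:2} tells us $\nu_+(X)\tau_t\tau_u\in\calBe^+$.'' This fails twice over. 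First, Lemma~\ref{lemma:2} characterizes membership in $\calBe^+$ by the action on \emph{every} object of $\calK_0$, not on the single object $\calC_u$, so knowing where $\calC_u$ lands proves nothing. Second, the stated conclusion is actually false whenever the lemma holds: since $\tau_u$ right-divides $\lcm(\tau_t,\tau_u)=\nu_+(X)\tau_t$, the product $\nu_+(X)\tau_t\tau_u$ sends $\calC_u$ into $\calK_{\geq 2}$ (equivalently, $\tau_u$ would appear twice in a reduced $T$-expression), so it cannot lie in $\calBe^+$. The subsequent appeal to Lemma~\ref{lemma:8} to get $\tau_u\mid\nu_+(X)\tau_t$ therefore rests on a false premise.

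The missing content is exactly what occupies the first paragraph of the paper's proof. Writing $X=\oplus_{s\in J}\calC_s$, one observes $\tau_t^{-1}X\in\calK_0$ (from $\tau_t^{-1}(\tau_t\calC_u)=\calC_u$), whence Lemma~\ref{lemma:technical1b} gives $\tau_s\calC_t\in\calK_0$ for every $s\in J$; the $\gcd$/$\lcm$ duality through $\gamma^{-1}$ (Corollary~\ref{cor:nu} together with Lemma~\ref{lemma:5a}) then upgrades this to $\nu_+(X)\calC_t=\lcm\{\tau_s\}_{s\in J}\calC_t\in\calK_0$. Only with this in hand can one verify the hypothesis of Lemma~\ref{lemma:2} for an \emph{arbitrary} $Y\in\calK_0$: the top baric slice of $\tau_tY$ is a sum of shifts of $\calC_t\{-1\}$, and keeping $\nu_+(X)\tau_tY$ inside $\calK_{[0,1]}$ requires precisely that $\nu_+(X)$ not lift $\calC_t$ out of $\calK_0$. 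Once $\nu_+(X)\tau_t\in\calBe^+$ is established this way, Lemma~\ref{lemma:technical1b}(\ref{item:tl2}) legitimately yields $\tau_u\mid\nu_+(X)\tau_t$, and your two divisibilities close the argument. (The case split on $\tau_t\tau_u\in\calBe^+$ is harmless but unnecessary; the paper handles both cases uniformly.)
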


\begin{proof}
Let us first prove that $\nu_+(X)\tau_t\in \calBe^+$. As in the proof of Corollary \ref{cor:nu}, we may assume that $X=\oplus_{s\in J}\calC_s$. From $\tau_t^{-1}(\tau_t\calC_u)=\calC_u$, we deduce that $\tau_t^{-1}\cdot X\in \calK_{0}$.
Now, by Lemma \ref{lemma:technical1b}, this implies that $\tau_t^{-1}\tau_s^{-1}\in \calBe^{-}$ for all $s\in J$. Using Lemma \ref{lemma:technical1b} again, we see that $\tau_s\calC_t\in \calK_{0}$, and thus $\gamma^{-1}\tau_s\calC_t\in \calK_{-1}$. By Corollary \ref{cor:nu}),  
\[
	\gcd\{\gamma^{-1}\tau_s\}_{s\in J} \calC_t \cong \gamma^{-1}\lcm\{\tau_s\}_{s\in J} \calC_t \in \calK_{<0},
\]
and thus $\lcm(\{\tau_s\}_{J})\cdot \calC_t\in \calK_{0}$. This implies that $\lcm(\{\tau_s\}_{s\in J})t\cdot \calK_0 \subset \calK_{[0,1]}$, and by Lemma \ref{lemma:2}, we obtain $\nu_+(X)\tau_t\in \calBe^+$.

Clearly $\nu_+(X)\tau_t \calC_u\in \calK_{>0}$, so that $\tau_u|\nu_+(X)t$.  Since $\tau_t$ divides $\nu_+(X) t$ as well, we have that $\lcm(\tau_t,\tau_u)$ divides $\nu_+(X)\tau_t$. Now, writing $\lcm(\tau_t,\tau_u)=\zeta \tau_u$, with $\zeta\in \calBe^+$, we have that $\zeta X\in \calK_{>0}$, so that $\nu_+(X)$ divides $\zeta$.  Thus $\nu_+(X)t$ divides $\lcm(\tau_t,\tau_u)$. This shows that $\nu_+(X)t=\lcm(\tau_t,\tau_u)$.
\end{proof}

This completes the proof of Proposition \ref{prop:toolsdualpp}.

%
\subsection{Proof of Proposition \ref{prop:toolsdualpp2}} \label{sec:prooftoolsdualpp2}
%

Up to this point we have described some information about a braid $\beta$ that can be obtained by considering the action of $\beta$ on complexes of $A_{\Gamma}$-modules.  In this section we collect some information directly about the complex of bimodules associated $\beta$.  This will be of use in the proof of Proposition \ref{prop:toolsdualpp2} and later again in Section \ref{sec:comparing}.

Outside of homological degree 0, the chain groups of a minimal complex associated to a braid $\beta$ are direct sums of bigraded $(A_\Gamma,A_\Gamma)$ bimodules of the form $P_i\otimes Q_j \{k\}\langle l\rangle$.  In homological degree 0, the minimal complex is a direct sum of such bimodules, along with a single copy of the bimodule $A_\Gamma\{0\}\langle 0 \rangle$ itself, which by the conventions of this paper sits in homological degree 0.  The following lemma concerns the baric slices of such a complex of bimodules.  In particular, note that the existence of the term $A_\Gamma\{0\}\langle 0 \rangle$ in homological degree 0 of the minimal complex of $B$ implies that $\calK_0(B) \neq 0$.

\begin{lemma} \label{lemma:dualbimod1}
Let $B$ be a complex of $(A_\Gamma, A_\Gamma)$ bimodules associated to a braid.  Suppose that 
for some $k\geq 0$, $\calK_{-k}(B) \neq 0$, but $\calK_{(-\infty,-k-1]}(B) = 0$.  Moreover, in the case $k=0$, so that $\calK_{<0}(B) = 0$, assume that $B$ is not isomorphic to $A_\Gamma$.  Then $\exists i\in I$ such that $\calK_{-k-1}(B\cdot P_i)\neq 0$.  
\end{lemma}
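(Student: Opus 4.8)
The plan is to exhibit the required index $i$ and a witnessing chain-group summand of the minimal complex of $B\cdot P_i$ by hand, starting from a carefully chosen summand at the bottom baric level of $B$.

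First I would locate that bottom level. The minimal complex of $B$ has chain groups which are direct sums of bimodules $P_a\otimes Q_b\{l\}\langle n\rangle[r]$ together with a single copy of $A_\Gamma\{0\}\langle0\rangle$ in homological degree $0$; the summand $A_\Gamma$ lies at baric level $0$, while a summand $P_a\otimes Q_b\{l\}\langle n\rangle$ lies at baric level $-l$. The hypothesis says that $-k$ is the lowest baric level occurring in $B$. If $k>0$, every summand at level $-k$ has the form $P_a\otimes Q_b\{k\}\langle n\rangle[r]$; if $k=0$, the assumption $B\not\cong A_\Gamma$ forces the minimal complex to contain, besides the $A_\Gamma$ term, at least one summand $P_a\otimes Q_b\{0\}\langle n\rangle[r]$ (since $A_\Gamma\otimes_{A_\Gamma}P_i\cong P_i$ contributes nothing below level $0$ after tensoring). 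So in all cases I can fix a summand $S=P_a\otimes Q_b\{k\}\langle n\rangle[r]$ of the minimal complex of $B$ at baric level $-k$, and I will choose it with $r$ maximal among all such bottom-level summands.

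Next I would apply $-\otimes_{A_\Gamma}P_b$ and read off a candidate. By \eqref{eq:qipjdual}, $Q_b\otimes_{A_\Gamma}P_b\cong\C\langle-1\rangle\oplus\C\{1\}\langle1\rangle$, so $S\otimes P_b$ contains the summand $T:=P_a\{k+1\}\langle n+1\rangle[r]$, which lies at baric level $-k-1$. The same computation — the orientation shift contributed by $Q\otimes P_b$ is always $0$ or $1$ — shows that $B\cdot P_b\in\calK_{\geq -k-1}$, and that every level-$(-k-1)$ chain-group summand of $B\cdot P_b$ arises from a level-$(-k)$ summand of $B$; by maximality of $r$ there is therefore no level-$(-k-1)$ summand of $B\cdot P_b$ in homological degree $r+1$. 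The claim now reduces to showing that $T$ survives the passage to the minimal complex of $B\cdot P_b$.

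This is the main point. Reduction to the minimal complex proceeds by Gaussian elimination, which can cancel a summand only against a summand of the \emph{same} bidegree in an adjacent homological degree, via an identity component of the differential; hence $T$ could only disappear by cancelling with a copy of $P_a\{k+1\}\langle n+1\rangle$ in homological degree $r+1$ (already excluded) or $r-1$. In the remaining case, since the differential of $B\cdot P_b$ is $\partial_B\otimes\id_{P_b}$, such an identity component would be induced by a component of $\partial_B$ between two bottom-level summands of $B$; enumerating the finitely many possibilities and tracking both the path-length and the orientation gradings, one finds that each candidate either forces the relevant component of $\partial_B$ to be an isomorphism of summands (impossible, since $B$ is invertible in $\calK$ and so its minimal complex is indecomposable, i.e.\ its differential has no identity components), or requires a bimodule map of strictly negative path-length degree (impossible), or factors through a path of length $\geq 3$ in $A_\Gamma$ (hence is zero). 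Thus no identity component of $\partial_{B\cdot P_b}$ meets $T$, so $T$ persists in the minimal complex of $B\cdot P_b$, and $\calK_{-k-1}(B\cdot P_b)\neq0$ with $i=b$. I expect the only real obstacle to be this last case analysis — the grading bookkeeping, and in particular noticing that the would-be cancelling maps vanish exactly because of the zigzag relations; the choice of $S$ with maximal homological degree is precisely what eliminates the case of degree $r+1$, and one should double-check that nothing beyond minimality of $B$ and the structure of $A_\Gamma$ is used (in particular no positivity hypothesis on the underlying braid).
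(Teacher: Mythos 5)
Your reduction to the survival of the witness summand $T=P_a\{k+1\}\langle n+1\rangle[r]$ is fine up to the last step, but the concluding trichotomy (``isomorphism of summands / negative path-length degree / factors through a length $\geq 3$ path'') is false, and the choice of $r$ \emph{maximal} is exactly the wrong extremal choice. There is a fourth kind of orientation-degree-$0$ component of $\partial_B$ between two bottom-level summands, namely $\id_{P_a}\otimes(\text{edge}):P_aQ_{b'}\{k\}\langle n+1\rangle\to P_aQ_b\{k\}\langle n\rangle$ with the edge oriented $b\to b'$ in $\vec{o}$ (this is the second bullet in the list of degree-$\{0\}$ bimodule maps appearing in the proof of Theorem \ref{thm:DGConj}). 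Since $b\to b'$, one has $Q_{b'}P_b\cong\C\{1\}$, so the source also contributes a level-$(-k-1)$ summand $P_a\{k+1\}\langle n+1\rangle[r-1]$ after tensoring with $P_b$; and left multiplication by the arrow $b\to b'$ sends the generator of $e_{b'}Ae_b$ to the loop $X_b$, i.e.\ \emph{isomorphically} onto the $\C\{1\}\langle1\rangle$-part of $Q_bP_b$ that produces $T$. This component of $\partial_B$ has path-length degree $1$, is not an isomorphism of bimodule summands, and does not factor through a length-$3$ path, yet it becomes an identity component of $\partial_{B\cdot P_b}$ killing $T$. Concretely, take $B=\sigma_i^{-1}\sigma_j^{-1}$ with $i\to j$ in $\vec{o}$: the bottom (level-$0$) summands are $P_iQ_j\langle2\rangle$, then $P_iQ_i\langle1\rangle\oplus P_jQ_j\langle1\rangle$, then $A_\Gamma$; choosing $S=P_iQ_i\langle1\rangle$ (maximal position among non-$A_\Gamma$ summands) and $b=i$ gives a $T$ that cancels against $P_iQ_j\langle2\rangle\otimes P_i$, and indeed $BP_i\cong P_j\{1\}\langle1\rangle[1]$ contains no trace of your witness. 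The lemma's conclusion still holds there, but via a different summand, so your proof as written does not establish it. (A secondary, inherited issue: for $k=0$ the hypothesis ``$B\not\cong A_\Gamma$'' does not force $\calK_0(B)$ to contain a $P_aQ_b$-summand --- $B=\sigma_i$ is a counterexample; what is actually needed, and what the paper's proof and Corollary \ref{cor:64} use, is $\calK_0(B)\not\cong A_\Gamma$.)

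For comparison, the paper avoids chain-level bookkeeping entirely: it locates a summand $P_jQ_i\{k\}$ of the bottom slice $B^{bot}=\calK_{-k}(B)$ which is a \emph{subcomplex}, so that the corresponding morphism $B^{bot}\to P_jQ_i[s]\{k\}$ is nonzero in the homotopy category, and then applies the adjunction of Lemma \ref{lem:dualotherside} to get $\Hom(B^{bot}\otimes P_i,P_j[s]\{k+1\})\neq0$, which forces $\calK_{-k-1}(B\otimes P_i)\neq0$; note that the index $i$ is dictated by the right-hand factor of the chosen subcomplex summand. If you want to keep your direct Gaussian-elimination argument, the repair is to choose $S$ in \emph{minimal} homological position among bottom-level $P_aQ_b$-summands: then no level-$(-k-1)$ summand sits in position $r-1$, and the differential \emph{out of} $T$ has no identity components because every map out of the loop part of $Q_bP_b$ either multiplies $X_b$ by a positive-length path (hence vanishes by the zigzag relations) or is $\id_{Q_b}$ tensored with a non-isomorphism on the $P$-side (or is excluded by minimality of $B$). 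With that modification, and with the hypothesis corrected to $\calK_0(B)\not\cong A_\Gamma$, your argument goes through.
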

Note that in the statement of the above Lemma, $\calK_{-k}(B)$ is a complex of $(A_\Gamma,A_\Gamma)$ bimodules, while $\calK_{-k-1}(B\cdot P_i)$ is a complex of left $A_\Gamma$ modules.

\begin{proof}
  
 Consider $B^{bot}:=\calK_{-k}(B)$ the degree $-k$ slice of $B$.   We claim that the minimal complex of $B^{bot}$ has a bimodule $P_j\otimes Q_i\{k\}$ as a {\it subcomplex}. This is clear if $k>0$. If $k=0$, then by assumption the chain groups of the minimal complex of $\calK_{0}(B)$ have at least one summand other than the summand $A_\Gamma$. Moreover, since all nonzero maps from $A_\Gamma$ to  $P_j\otimes Q_i$ are of strictly positive degree, it follows that no other summand of the chain group of $\calK_{0}(B)$ is mapped into from $A_\Gamma$ via the differential.  It then follows that the minimal complex of $\calK_0(B)$ has a shift of $P_j\otimes Q_i$ as a subcomplex.

The fact that $P_j\otimes Q_i$ is a subcomplex implies that for some $s\in \Z$,
 \[
  \Hom(B^{bot},P_j\otimes Q_i[s]\{k\})\cong \C \neq 0,
 \]
as the above hom space contains the identity map out of the $P_j\otimes Q_i[s]$ summand of $B$, and when $P_j\otimes Q_i[s]$ is a summand of a minimal complex, this map cannot be homotopic to zero.

Now by Lemma \ref{lem:dualotherside}, we have:
\begin{align*}
  \Hom(B^{bot}\otimes_{A_\Gamma} P_i,P_j[s]\{k+1\}) &\simeq \Hom(B^{bot}\otimes_{A_\Gamma} P_i,P_j[s]\{k\})\{1\} \\
  &\simeq \Hom(B^{bot},P_j\otimes_\C Q_i[s]\{k\})\neq 0.
\end{align*}
The non-vanishing of $ \Hom(B^{bot}\otimes_{A_\Gamma} P_i,P_j[s]\{k+1\})$ implies that $\calK_{-k-1}(B\otimes P_i)\neq 0$, which concludes the proof.
\end{proof}

An analogous argument proves a similar statement for top baric slices:

\begin{lemma} \label{lemma:dualbimod2}
Let $B$ be a minimal complex of bimodules associated to a braid. Assume that for some $k>0$, $\calK_{k}(B)\neq 0$ but for all $l>k$, $\calK_l(B)=0$. Then $\exists i\in I$ such that $\calK_{k}(B\otimes P_i)\neq 0$.
\end{lemma}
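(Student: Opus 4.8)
The plan is to run the top‑degree mirror of the proof of Lemma \ref{lemma:dualbimod1}; in fact it should be slightly cleaner, because the hypothesis $k>0$ removes the need to treat the summand $A_\Gamma$ separately. First I would set $B^{top} := \calK_k(B)$. Since $B$ is associated to a braid its bimodule summands are of the form $P_i\otimes_\C Q_j$ or $A_\Gamma\{0\}$, and as $k>0$ the latter does not appear in $B^{top}$; thus the chain groups of the minimal complex of $B^{top}$ are direct sums of shifted bimodules $P_j\otimes_\C Q_l\{-k\}$. Next I would reduce to $B^{top}$: the computation \eqref{eq:qipjdual} of $Q_l\otimes_{A_\Gamma}P_i$ shows that tensoring (on the right) a bimodule in baric degree $m$ with $P_i$ lands in $\calK_{[m-1,m]}$, so this operation never raises baric degree. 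Combined with $\calK_{>k}(B)=0$, this gives $\calK_k(B\otimes P_i)\cong\calK_k(B^{top}\otimes P_i)$ for every $i$, since only the top slice of $B$ can contribute to baric degree $k$ after tensoring. Hence it is enough to exhibit some $i$ with $\calK_k(B^{top}\otimes P_i)\neq 0$.

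To do this I would pick a bimodule summand $M=P_j\otimes_\C Q_l\{-k\}\langle n\rangle$ of the minimal complex of $B^{top}$ sitting in its maximal homological degree $s$. The differential out of the top homological degree vanishes, so $M$ is a subcomplex of $B^{top}$, and the inclusion $M[s]\hookrightarrow B^{top}$ is not null‑homotopic because $B^{top}$ is minimal; therefore $\Hom_\calK(M[s],B^{top})\neq 0$. I would then apply the adjunction of Lemma \ref{lem:dualotherside} with $i=l$ — now to a morphism \emph{into} $B^{top}$ rather than out of it, which keeps the orientation shift unchanged instead of raising it by one — to get
\[
	0\neq\Hom_\calK(M[s],B^{top})\cong\Hom_\calK\bigl(P_j\langle n-1\rangle\{-k\}[s],\,B^{top}\otimes_{A_\Gamma}P_l\bigr).
\]
Finally, $B^{top}\otimes P_l$ lies in $\calK_{\le k}$, so $\{-k\}$ is the smallest orientation shift occurring in its minimal complex; a non‑zero degree‑zero map into such a complex from the shifted projective $P_j\langle n-1\rangle\{-k\}$, which lies in baric degree $k$, therefore forces the minimal complex of $B^{top}\otimes P_l$ to contain a chain summand of orientation shift $\{-k\}$ — an orientation‑degree $-k$ class can only sit inside a $P\{-k\}$‑summand once all summands have orientation shift $\ge\{-k\}$. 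So $\calK_k(B^{top}\otimes P_l)\neq 0$, and by the reduction above $\calK_k(B\otimes P_l)\neq 0$, which proves the claim with $i=l$.

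The part I expect to be most delicate is the orientation‑grading bookkeeping in the use of Lemma \ref{lem:dualotherside}: one must select the instance of the adjunction that preserves the orientation shift (the one coming from the $\C\langle-1\rangle$ summand of $Q_l\otimes_{A_\Gamma}P_l$ in \eqref{eq:qipjdual}), so that the resulting non‑vanishing $\Hom$ detects baric degree exactly $k$ rather than $k-1$; the conversion of this non‑vanishing into non‑vanishing of the top baric slice then relies essentially on $k$ being the extremal (top) baric degree, mirroring the role of the new extremal bottom degree $-k-1$ in the proof of Lemma \ref{lemma:dualbimod1}.
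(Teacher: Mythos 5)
Your proposal is correct and is exactly the argument the paper intends: the paper proves this lemma only by declaring it ``analogous'' to Lemma \ref{lemma:dualbimod1}, and you have carried out that mirror faithfully, correctly switching to the inclusion of a top-homological-degree summand (a genuine subcomplex, non-null-homotopic by minimality) and to the instance of the adjunction in Lemma \ref{lem:dualotherside} that preserves the orientation shift, so that the resulting nonzero morphism from an object of baric degree $k$ into $B^{top}\otimes P_l\in\calK_{[k-1,k]}$ forces $\calK_k(B^{top}\otimes P_l)\neq 0$ via the vanishing of maps from $\calK_{\geq k}$ into $\calK_{\leq k-1}$. Your explicit reduction $\calK_k(B\otimes P_i)\cong\calK_k(B^{top}\otimes P_i)$ and the observation that $k>0$ removes the $A_\Gamma$-summand caveat are both correct and, if anything, make the argument slightly more complete than the paper's.
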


As a consequence, we have the following relationship between bounds on the baric slices of the complex of bimodules $B$ associated to a braid $\beta$ and the baric slices of complex of left modules $B\otimes_{A_\Gamma} (\oplus_i  P_i)$.

\begin{corollary} \label{cor:64}
  Let $B$ be a complex of bimodules associated to a braid $\beta$. Then the complex of left modules $B\otimes_{A_\Gamma} P_i\in \calK_{[0,1]}$ for all $i$ if and only if the complex of bimodules $B$ has only non-zero baric slices in degree $0$ and $1$, with 
  \[
\calK_0(B) \cong A_\Gamma
  \]
\end{corollary}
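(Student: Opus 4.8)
The plan is to deduce the corollary directly from Lemmas \ref{lemma:dualbimod1} and \ref{lemma:dualbimod2}, using the general fact (recorded just before Lemma \ref{lemma:dualbimod1}) that the minimal complex of $B$ always contains a copy of $A_\Gamma\{0\}\langle 0\rangle$ in homological degree $0$, so that $\calK_0(B)\neq 0$ for \emph{every} braid $\beta$. I will prove the two implications separately.

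For the ``if'' direction, suppose $\calK_l(B)=0$ for $l\notin\{0,1\}$ and $\calK_0(B)\cong A_\Gamma$. Tensoring on the right with $P_i$ can only involve the chain groups already present: each bimodule summand $P_j\otimes Q_k\{m\}$ of $B$ in baric degree $-m\in\{0,1\}$ contributes $(P_j\otimes Q_k\{m\})\otimes_{A_\Gamma}P_i\cong P_j\{m\}\otimes_\C(Q_kP_i)$, and by \eqref{eq:qipjdual} the space $Q_kP_i$ is concentrated in orientation degrees $0$ and $1$ — indeed $Q_kP_k\cong\C\langle-1\rangle\oplus\C\{1\}\langle1\rangle$, while $Q_kP_i$ for $k\neq i$ has orientation degree $0$ or $1$ depending on the edge orientation. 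Hence such a summand lands in baric degrees $\{m,m+1\}\subset\{0,1,2\}$; and the summand $A_\Gamma\{0\}\langle0\rangle$ of $\calK_0(B)$ contributes $P_i$, which lies in $\calK_0$. The only way baric degree $2$ could appear is from the $Q_kP_k\cong\C\{1\}\langle1\rangle$-type term applied to a summand already in baric degree $1$; but I must rule this out. This is the one genuinely non-vacuous point, and I would handle it by invoking Lemma \ref{lemma:dualbimod2} in contrapositive form: if $\calK_2(B\otimes P_i)\neq 0$ for some $i$ while $\calK_l(B)=0$ for $l\geq 2$, that would contradict nothing directly — rather, the right statement is that the top baric slice of $B\otimes(\oplus_i P_i)$ is controlled by the top baric slice of $B$. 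More carefully: $\calK_{[0,1]}$ for all $i$ is exactly the hypothesis, so I only need the converse. Let me restructure.

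For the ``only if'' direction — which is the substantive one — assume $B\otimes_{A_\Gamma}P_i\in\calK_{[0,1]}$ for all $i$. First, $\calK_{<0}(B)=0$: if $\calK_{-k}(B)\neq0$ for some $k\geq1$ with $\calK_{(-\infty,-k-1]}(B)=0$, then since $k\geq 1$ we are not in the excluded case $B\cong A_\Gamma$, so Lemma \ref{lemma:dualbimod1} gives some $i$ with $\calK_{-k-1}(B\otimes P_i)\neq0$, contradicting $B\otimes P_i\in\calK_{[0,1]}$. Next, $\calK_{\geq2}(B)=0$: if $\calK_k(B)\neq0$ for some $k\geq2$ maximal, Lemma \ref{lemma:dualbimod2} gives some $i$ with $\calK_k(B\otimes P_i)\neq0$, again contradicting $B\otimes P_i\in\calK_{[0,1]}$. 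So $B$ has nonzero baric slices only in degrees $0$ and $1$. Finally I must show $\calK_0(B)\cong A_\Gamma$. We know $\calK_0(B)$ contains the summand $A_\Gamma$ in homological degree $0$; suppose it had another summand, i.e.\ $B$ is not isomorphic to $A_\Gamma$ as far as its degree-$0$ baric slice goes. Then apply the $k=0$ case of Lemma \ref{lemma:dualbimod1}: since $\calK_{<0}(B)=0$ and (by assumption toward a contradiction) the minimal complex of $\calK_0(B)$ has a summand other than $A_\Gamma$, the lemma produces $i\in I$ with $\calK_{-1}(B\otimes P_i)\neq0$, contradicting $B\otimes P_i\in\calK_{[0,1]}\subset\calK_{\geq0}$. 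Hence $\calK_0(B)\cong A_\Gamma$, completing the proof; and for the ``if'' direction one simply runs the orientation-degree bookkeeping above, now knowing $\calK_0(B)=A_\Gamma$ contributes only $P_i\in\calK_0$ and every other summand sits in baric degree $1$, whose tensor with $P_i$ lands in $\calK_{[0,1]}$ by the $Q_kP_j$ computation, since no baric-degree-$1$ bimodule summand is of the self-loop type $P_k\otimes Q_k$ (that summand would have to come with a $\{1\}$ shift making it baric degree $-1$, not $1$, or else appear in $\calK_0(B)=A_\Gamma$ which it does not).

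The main obstacle I anticipate is the bookkeeping in the ``if'' direction confirming that tensoring a baric-degree-$1$ bimodule summand $P_j\otimes Q_k\{-1\}$ with $P_i$ never produces baric degree $2$: this requires knowing that the only way $Q_kP_i$ contributes orientation degree $1$ is the edge-disagreeing case or the $i=k$ self-pairing, and that in the latter case the $\{1\}\langle1\rangle$ piece of $Q_iP_i$ combined with the ambient $\{-1\}$ shift gives baric degree $0$, not $2$ — a sign/shift consistency check I would carry out explicitly against \eqref{eq:qipjdual}. Everything else is a direct application of the two preceding lemmas.
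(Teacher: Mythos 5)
Your proof is correct and takes essentially the same route as the paper, which states this corollary as an immediate consequence of Lemmas \ref{lemma:dualbimod1} and \ref{lemma:dualbimod2} without writing out the bookkeeping. One small repair to the ``if'' direction: your closing parenthetical claim that no baric-degree-$1$ summand of $B$ has the form $P_k\otimes Q_k\{-1\}$ is false (the complex for $\sigma_k$ itself contains exactly such a summand), but it is also unnecessary, since by \eqref{eq:qipjdual} one has $(P_k\otimes Q_k\{-1\})\otimes_{A_\Gamma}P_k\cong P_k\{-1\}\langle-1\rangle\oplus P_k\langle1\rangle\in\calK_{[0,1]}$; more generally $Q_kP_i$ is concentrated in orientation degrees $0$ and $1$, so tensoring with $P_i$ can only preserve or lower the baric degree by one, which means the genuine danger in that direction is baric degree $-1$ (eliminated precisely by $\calK_0(B)\cong A_\Gamma$) rather than baric degree $2$.
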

Since we have already shown that a braid $\beta\in \calBe^+$ if and only if $\beta (P_i)\in \calK_{[0,1]}$ for all $i$, it follows that $\beta\in \calBe^+$ if and only if the complex of bimodules $B$ associated to $\beta$ has only non-zero baric slices in degree $0$ and $1$, with 
  \[
\calK_0(B) \cong A_\Gamma
  \]

We may now complete the proof of Proposition \ref{prop:toolsdualpp2}.

\begin{proof}[Proof of Proposition \ref{prop:toolsdualpp2}]
  Recall the statement of Proposition \ref{prop:toolsdualpp2}: if $u$, $v\in T$, then
  \[
\tau_u\tau_v\in \calBe^+\quad \Leftrightarrow \quad \Hom(\calC_u,\mathfrak{C}_v)=0.
  \]

 By Corollary \ref{cor:64}, in order to show that $\tau_u\tau_v\in \calBe^+$, we must show that the complex of bimodules assigned to $\tau_u\tau_v$ has non-trivial baric slices only in degrees 0 and 1, with the degree $0$ baric slice $\calK_0(\tau_u\tau_v) \cong A_\Gamma$.
  
  Let us consider $\tau_u=\mu \sigma_i \mu^{-1}$, so that $\calC_u=\mu P_i\in \calK_0$, and $\tau_v=\delta \sigma_j \delta^{-1}$ with $\calC_v=\delta P_j\in \calK_0$.  The complex $B$ of bimodules associated to $\tau_u\tau_v$ can then be described as follows:
\[
B\cong
\begin{tikzpicture}[anchorbase]
\node at (0,0) {$A_\Gamma$};
\node at (2.5,1) {$\mu P_i \otimes Q_i \mu^{-1}\{-1\}$};
\node at (2.5,-1) {$\delta P_j \otimes Q_j \delta^{-1}\{-1\}$};
\node at (6.5,0) {$\mu P_i \otimes Q_i \mu^{-1}\delta P_j\otimes Q_j \delta^{-1}\{-2\}$};
\draw [->] (.5,.2) -- (1.5,.7);
\draw [->] (.5,-.2) -- (1.5,-.7);
\draw [->] (3.5,.7) -- (4.5,.2);
\draw [->] (3.5,-.7) -- (4.5,-.2);
\end{tikzpicture}
\]

From the fact that $\mu P_i$ and $\delta P_j$ are in $\calK_0$ (and similarly the fact that $Q_i\mu^{-1}$ and $Q_j\delta^{-1}$ are in the baric heart for right $A_\Gamma$ modules), we see that:
\begin{itemize}
\item $\calK_n(B)=0 \text{ for all } n\notin\{0,1,2\};$
\item $\calK_0(B)\cong A_\Gamma;$
\item $\calK_2(B) \cong \calK_2\big (\mu P_i \otimes Q_i \mu^{-1}\delta P_j\otimes Q_j \delta^{-1}\{-2\}\big).$
\end{itemize}

Furthermore, since $\mu P_i$ and $Q_j\delta^{-1}$ are complexes of left (resp. right) $A_\Gamma$ modules in the baric heart, it follows that $\calK_2(B)= 0$ if and only if the degree 0 component of the graded vector space $Q_i\mu^{-1}\delta P_j$ vanishes; that is, if and only if 
$\Hom_{grVect}(\C,Q_i\mu^{-1}\delta P_j[k])=\{0\}$ for all $k\in \Z$.

Then, by adjunction, we see that:
\[
\Hom_{grVect}(\C,Q_i\mu^{-1}\delta P_j[k])\simeq \Hom_{A_\Gamma}(P_i,\mu^{-1}\delta P_j[k])\simeq \Hom_{A_\Gamma}(\mu P_i,\delta P_j[k]).
\]
for all $k\in \Z$.
Thus, we conclude that $uv\in \calBe^+$ if and only if
\[
\Hom(\calC_u,\mathfrak{C}_v)=\{0\},
\]
as desired.
\end{proof}

%
\section{Homological interpretations of word-length metrics}\label{sec:metric}
%

%
\subsection{The dual metric and the $o$-baric structure} \label{subsec:absolutemetric}
%

%
\subsubsection{Word-length}
%

We now would like to give a better description of how the orientation grading we have considered on the category of $A_\Gamma$-modules interacts with the structure of the braid group. The final result, Theorem \ref{thm:1}, will allow one to read off the word-length of a braid in terms of generators from Bessis' braids.

For $\beta$ a braid in the braid group $\B_W$, define:
\begin{align}
\calL^\gamma_+(\beta)&:=\max \{k\in \Z:\forall X\in \calK_0, \calK_k(\beta X)\neq 0\} \\
\calL^\gamma_-(\beta)&:=\max \{k\in \Z:\forall X\in \calK_0, \calK_{-k}(\beta X)\neq 0\} \\
\calL^\gamma(\beta)&:=\max(\calL^\gamma_+(\beta),0)+\max(\calL^\gamma_-(\beta),0)
\end{align}

\begin{theorem} \label{thm:1}
Let $\beta\in \B_W$. Then we have:
\begin{enumerate}
\item \label{item:1thm1} $d(\beta_1,\beta_2)=l(\beta_2^{-1}\beta_1)$ is a metric on $\B_W$;
\item \label{item:2thm1} $\calL_+^\gamma(\beta)$ equals the number of letters from $\calBe^+$ used in any minimal length expression for $\beta$ in the alphabet $\calBe^+\cup \calBe^-$;
\item \label{item:3thm1} $\calL_-^\gamma(\beta)$ equals the number of letters from $\calBe^-$ used in any minimal length expression for $\beta$ in the alphabet $\calBe^+\cup \calBe^-$;
\item \label{item:4thm1} $-\calL^\gamma_+(\beta)$ equals the number of times $\gamma^{-1}$ appears in any minimal length expression for $\beta$ in the alphabet $\calBe^+\cup \calBe^-$;
\item \label{item:5thm1} $-\calL_-^\gamma(\beta)$ equals the number of times $\gamma$ appears in any minimal length expression for $\beta$ in the alphabet $\calBe^+\cup \calBe^-$.
\end{enumerate}
In any of the above statement, if the considered number is negative, it stands for zero letters appearing in the expression. Combined together, these statements tell that $\calL^\gamma$ is the word-length metric $L_c$ in the generators $\calBe^+\cup \calBe^-$.
\end{theorem}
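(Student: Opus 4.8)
The plan is to reduce the entire statement to the single identity
\[
	\calL^\gamma_+(\beta)=\sup_c(\beta),\qquad \calL^\gamma_-(\beta)=-\inf_c(\beta),
\]
where $\inf_c(\beta):=\max\{m\in\Z\mid \gamma^{-m}\beta\in{\B_c^\vee}^+\}$ and $\sup_c(\beta):=\min\{m\in\Z\mid \gamma^{-m}\beta\in{\B_c^\vee}^-\}$ are the infimum and supremum of $\beta$ in the dual Garside structure $({\B_c^\vee}^+,\gamma)$. Granting this, items (\ref{item:2thm1})--(\ref{item:5thm1}) and the concluding identity $\calL^\gamma=L_{\calBe}$ are the standard combinatorics of the dual normal form $\beta=\gamma^{\inf_c(\beta)}s_1\cdots s_r$ (with $s_i\in\calBe^+\setminus\{1,\gamma\}$, the word $s_1\cdots s_r$ left-greedy, $r=\sup_c(\beta)-\inf_c(\beta)$): in a geodesic word in the alphabet $\calBe^+\cup\calBe^-$ the number of letters from $\calBe^+$ is $\max(\sup_c(\beta),0)$, the number from $\calBe^-$ is $\max(-\inf_c(\beta),0)$, the number of $\gamma^{-1}$'s is $\max(-\sup_c(\beta),0)$, the number of $\gamma$'s is $\max(\inf_c(\beta),0)$, and $L_{\calBe}(\beta)$ is the sum of the first two. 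Item (\ref{item:1thm1}) then follows formally: since $\calL^\gamma=L_{\calBe}$ is subadditive, invariant under inversion, and vanishes exactly at $1$, the function $d(\beta_1,\beta_2)=L_{\calBe}(\beta_2^{-1}\beta_1)$ is a (left-invariant) metric.

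Two reductions set things up. First, by Lemma \ref{lemma:cShift} and its evident inverse, $\gamma^{\pm1}$ shifts every $\vec{o}$-baric slice by $\pm1$, so $\calL^\gamma_\pm(\gamma^m\beta)=\calL^\gamma_\pm(\beta)\pm m$, matching $\sup_c(\gamma^m\beta)=\sup_c(\beta)+m$ and $\inf_c(\gamma^m\beta)=\inf_c(\beta)+m$; hence the identity is invariant under $\beta\mapsto\gamma^{-\inf_c(\beta)}\beta$ and we may assume $\beta\in{\B_c^\vee}^+$ with $\inf_c(\beta)=0$. Second, writing $N=\sup_c(\beta)$ we have $\gamma^{-N}\beta\in{\B_c^\vee}^-$; expressing this as a product of generators from $\calBe^-$ and iterating the negative analogue of Lemma \ref{lemma:technical1a} shows a dual-negative braid carries $\calK_0$ into $\calK_{\leq0}$, so $\beta\,\calK_0=\gamma^N\bigl(\gamma^{-N}\beta\,\calK_0\bigr)\subset\calK_{\leq N}$ and $\calL^\gamma_+(\beta)\leq N$; directly from Lemma \ref{lemma:technical1a}, $\beta\,\calK_0\subset\calK_{\geq0}$, so $\calL^\gamma_-(\beta)\leq 0$. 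It remains to prove the lower bounds $\calL^\gamma_+(\beta)\geq N$ and $\calL^\gamma_-(\beta)\geq 0$.

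I would prove $\calL^\gamma_+(\beta)\geq N$ by induction on $N$, using the left-greedy normal form $\beta=s_1\cdots s_N$. Choose a reflection $\tau_u\in D_c^\vee(s_N)$; by greediness ($D_c^\vee(s_{i+1})\subset D_c^\vee(s_i)$) it is a descent of every $s_i$. With $s_N=s_N'\tau_u$, $s_N'\in\calBe^+$ (so $\tau_u\nmid s_N'$, else $\tau_u$ would repeat in a minimal expression for $s_N$), Lemma \ref{lemma:technical1b} gives $s_N\calC_u\cong(s_N'\calC_u)\{-1\}$ with $s_N'\calC_u\in\calK_0$ indecomposable, hence $s_N'\calC_u\cong\calC_{u_1}$ for a reflection $u_1$ with $\tau_{u_1}=s_N'\tau_u(s_N')^{-1}\in\boldsfT$. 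The essential claim is that $\tau_{u_1}$ is again a common descent of $s_1,\dots,s_{N-1}$, so that $s_1\cdots s_{N-1}$ is a left-greedy word of length $N-1$ to which the inductive hypothesis applies with test complex $\calC_{u_1}$: it yields $\calK_{N-1}\bigl((s_1\cdots s_{N-1})\calC_{u_1}\bigr)\neq0$, whence $\calK_N(\beta\calC_u)=\calK_N\bigl((s_1\cdots s_{N-1})\calC_{u_1}\{-1\}\bigr)\neq 0$. The lower bound $\calL^\gamma_-(\beta)\geq0$ (for $N=1$, $\beta$ is a proper simple, so $D_c^\vee(\beta)\subsetneq\boldsfT$; picking $\tau_u\notin D_c^\vee(\beta)$ gives $\tau_u\nmid\beta$, hence $\beta\calC_u\in\calK_{[0,1]}\setminus\calK_1$ by Lemmas \ref{lemma:technical1a} and \ref{lemma:technical1b}, so $\calK_0(\beta\calC_u)\neq0$), and more generally the whole identity for $\calL^\gamma_-$, is then obtained by the mirror argument, with $\calK_{\geq0}$ and $\calK_{\leq0}$, $\calBe^+$ and $\calBe^-$, and Propositions \ref{prop:toolsdualpp} and \ref{prop:toolsdualpp_neg} interchanged.

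The main obstacle is the ``propagation'' claim just used: that at each greedy step the top baric slice $\calK_k\bigl((s_1\cdots s_k)\calC_u\bigr)$ is, up to an internal shift, a single reflection complex $\calC_{u_k}$ whose reflection is a descent of $s_k$, so that the induction closes. Proving it amounts to showing that if $\tau_u\in D_c^\vee(s)$ and $s$ is a right-factor of a simple $s'$, then the conjugate $(s\tau_u^{-1})\,\tau_u\,(s\tau_u^{-1})^{-1}$ lies in $\boldsfT$ and is a descent of $s'$; I would obtain this by an induction on reflection-length, parallel to the proofs of Propositions \ref{prop:toolsdualpp} and \ref{prop:toolsdualpp_neg}, combining the greedy inclusions with the descent-lattice Lemmas \ref{lemma:16}, \ref{lemma:lcm_comp}, \ref{lemma:8} and Corollary \ref{cor:nu}. (Should the baric membership criterion $\beta\in{\B_c^\vee}^+\iff\beta\,\calK_{\geq0}\subset\calK_{\geq0}$ be available beforehand, the lower bound is immediate, since $\beta\,\calK_0\subset\calK_{\leq N-1}$ would give $\gamma^{-(N-1)}\beta\,\calK_0\subset\calK_{\leq0}$, hence $\gamma^{-(N-1)}\beta\in{\B_c^\vee}^-$, contradicting $\sup_c(\beta)=N$; but since that criterion is itself a consequence of the present circle of results, we keep the argument self-contained.)
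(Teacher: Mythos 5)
Your overall architecture is sound and runs parallel to the paper's: both arguments reduce everything to computing the extreme baric slices of a dual-positive braid acting on reflection complexes, by induction on the canonical length of a greedy normal form, using Lemmas \ref{lemma:technical1a}, \ref{lemma:technical1b} and \ref{lemma:cShift}. You differ in two ways. First, you dispose of the mixed case by writing $\beta=\gamma^{m}\beta_0$ with $\beta_0$ dual-positive and $\gamma\nmid\beta_0$ and using that $\gamma^{\pm1}$ shifts every baric slice by $\pm1$; the paper instead works with the decomposition $\beta=\beta^-\beta^+$ and proves Proposition \ref{prop:3} directly. Your reduction is legitimate and arguably cleaner. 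Second, for the positive case you test against a single, well-chosen reflection complex $\calC_u$ and propagate a descent through the left-greedy factors, whereas the paper (Proposition \ref{prop:2}) tests against the aggregate object $\oplus_{t\in T}\calC_t$ and tracks the invariant $\nu_+$ of the top slice. Incidentally, the step you flag as ``the main obstacle'' is not one: greediness plus Lemma \ref{lemma:16} gives $s_N\mid s_{N-1}$ as a right divisor, so $s_{N-1}=r\,\tau_{u_1}\,s_N'$ is a minimal-length reflection expression, and Remark \ref{rem:descents} together with Lemma \ref{lemma:technical1b}(1) (which puts $\tau_{u_1}=s_N'\tau_u (s_N')^{-1}$ in $\boldsfT$) yields $\tau_{u_1}\in D_c^\vee(s_{N-1})$. (Note also that the definitions of $\calL^\gamma_\pm$ must be read with ``for some $X$'' rather than ``for all $X$'' for either your argument or the paper's to make sense; both proofs implicitly do so.)

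The genuine gap is in the other half: the lower bound $\calL^\gamma_-(\beta_0)\geq 0$ for a dual-positive $\beta_0$ with $\gamma\nmid\beta_0$ and canonical length $N>1$, i.e.\ the claim that some $X\in\calK_0$ has $\calK_0(\beta_0 X)\neq 0$. You prove this only for $N=1$ and assert the rest follows by ``the mirror argument,'' but the mirror of your top-slice induction does not go through as stated: when you pick $\tau_u\notin D_c^\vee(s_1)$ and apply $s_N$, the bottom baric slice $\calK_0(s_N\calC_u)$ is in general a direct sum of several reflection complexes, not a single $\calC_{u_1}$, so there is no single descent to propagate. What is needed at each step is that $\nu_+\bigl(\calK_0(s_j\cdots s_N\calC_u)\bigr)$ does not divide $s_{j-1}$, and controlling this gcd/lcm is exactly the bookkeeping the paper performs with Lemma \ref{lemma:6}, Proposition \ref{prop:2} and Corollary \ref{cor:6} (via Lemma \ref{lemma:10}, which converts the left-greedy positive word into a right-greedy negative one). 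Your proof needs either that machinery or an equivalent argument here; as written, this case is not covered, and without it the reduction $\calL^\gamma_-(\beta)=\calL^\gamma_-(\beta_0)-m=-\inf_c(\beta)$ — and hence items (\ref{item:3thm1}) and (\ref{item:5thm1}) — does not close.
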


The proof will be based on a succession of technical results that will give a step-by-step description of the image of linear complexes under the action of Bessis braids, and braids that one can form from them. Starting from an expression of a braid $\beta$ as $\beta=\beta^-\beta^+$, with $\beta^-\in \langle \calBe^-\rangle$ and $\beta^+\in \calBe^+$, written as $\beta^+=\beta_1^+\cdots \beta_k^+$ left-greedy (that is, $\beta_1^+$ is the lowest common multiple of the descents of $\beta$, etc.) and $\beta^-=\beta_l^-\cdots \beta_1^-$ right-greedy, the goal will be to control well enough the different layers of the image of a complex under the action of the first terms of $\beta$ so that we know what result applying one more term will yield. In that context, Lemma \ref{lemma:technical2} already gives us some control, that we will want to extend first to any Bessis braid, then to any braid lying in Bessis' positive monoid, and finally to any braid.

%
\subsubsection{Bessis' braids}
%

We first introduce a useful piece of notation.

\begin{definition}\label{def:check}
For $\beta \in \calBe^+$, define:
\[
{}^\vee\beta=\beta^{-1}\gamma\quad,\quad \beta^\vee=\gamma\beta^{-1}
\]
\end{definition}

The following are basic facts about the lattice $\calBe^+$ that we will use freely in what follows.
\begin{itemize}
\item
$
{}^{\vee}(\beta^\vee)=\beta=({}^\vee\beta)^\vee;
$
\item
$
\quad {}^\vee(\alpha\beta)=({}^\vee\beta)\gamma({}^{\vee}\alpha);
$
\item 
$
 \quad ({}^\vee\beta)({}^\vee\alpha)={}^\vee(\alpha \gamma^{-1}\beta); 
$
\item
$\gcd(\beta,\beta^\vee)=1;$
\item
$\lcm(\beta,\beta^\vee)=\gamma$.
\end{itemize}

We can then state the main statement describing the action of a braid $\beta\in \calBe^+$ on a complex in the baric heart $\calK_{0}$.

\begin{lemma} \label{lemma:6}
For $\beta\in \calBe^+$, $\beta(\oplus_{t\in J}\calC_t )\in \calK_{[0,1]}$ with:
\begin{itemize}
\item the bottom baric slice $X_0 = \calK_0(\beta \oplus_{t\in J}\calC_t) \in \calK_0$ satisfies
\[
	\nu_+(X_0)\beta=\lcm(\nu_+(\oplus_{t\in J}\calC_t),\beta);
\]
\item the top baric slice $X_1\{-1\}=\calK_1(\beta(\oplus_{t\in J}\calC_t )\{-1\} \in \calK_1$ satisfies
\[
	\beta=\nu_+(X_1)\gcd((\nu_+(\oplus_{t\in J}\calC_t))^\vee,\beta).
\]
\end{itemize}
\end{lemma}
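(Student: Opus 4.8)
The plan is to reduce everything to the two-reflection case already handled in Lemma \ref{lemma:technical2} (via Proposition \ref{prop:toolsdualpp}) together with the lattice identities for $\nu_+$, $\nu_-$ recorded in Corollary \ref{cor:nu} and Lemma \ref{lemma:5a}. First I would reduce to the case where $J$ is a singleton, or rather to understanding $\nu_+(\oplus_{t\in J}\calC_t)$: by Corollary \ref{cor:nu} we have $\nu_+(\oplus_{t\in J}\calC_t) = \lcm\{\tau_t\}_{t\in J}$, so the statement only involves $\beta$ and this $\lcm$. The fact that $\beta(\oplus_{t\in J}\calC_t)\in \calK_{[0,1]}$ is Lemma \ref{lemma:technical1a}, applied summand-by-summand, so the only content is the identification of $\nu_+$ of the two baric slices. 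Write $\beta = \tau_{t_1}\cdots\tau_{t_k}$ as a minimal-length product of reflections in $\boldsfT$ (possible since $\beta\in\calBe^+$).

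The main step is an induction on $k = l_{refl}(\beta)$. For $k=1$, $\beta = \tau_t$ is a single reflection, and the claim for the bottom slice is exactly the content of Lemma \ref{lemma:technical2}: $\nu_+(\calK_0(\tau_t\cdot\oplus_{s\in J}\calC_s))\,\tau_t = \lcm(\tau_t, \lcm\{\tau_s\}_J) = \lcm(\nu_+(\oplus\calC_s),\tau_t)$, once one checks Lemma \ref{lemma:technical2} extends from a single $\calC_u$ to a finite direct sum — which follows formally since $\nu_+$ of a direct sum is the $\lcm$ of the pieces (Corollary \ref{cor:nu}) and $\calK_0$ commutes with direct sums. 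The top-slice statement for $k=1$ I would get by applying the negative analogue: $\gamma^{-1}\tau_t = ({}^\vee\tau_t)^{-1}$ is in $\calBe^-$, so $\calK_1(\tau_t X) = \gamma\cdot\calK_0(\gamma^{-1}\tau_t X)$ up to the shift, and then use the negative version of Lemma \ref{lemma:technical1b}/Lemma \ref{lemma:technical2} (Proposition \ref{prop:toolsdualpp_neg}) together with $\nu_-(\,\cdot\,) = \nu_+(\,\cdot\,)^{-1}$ (Lemma \ref{lemma:5a}) and the identity $\gcd(\beta,\beta^\vee)=1$, $\lcm(\beta,\beta^\vee)=\gamma$.

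For the inductive step, write $\beta = \tau_{t_1}\beta'$ with $\beta'=\tau_{t_2}\cdots\tau_{t_k}\in\calBe^+$ of length $k-1$ (here I use Remark \ref{rem:descents}: any reflection appearing in a minimal expression is a left descent, so such a factorization exists). By the inductive hypothesis, $\beta'(\oplus_J\calC_t)\in\calK_{[0,1]}$ with controlled slices; by Theorem \ref{thm:indecCplxes} its bottom slice $\calK_0(\beta'\oplus_J\calC_t)$ is again a direct sum $\oplus_{s\in J'}\calC_s$ of reflection complexes with $\nu_+ = \lcm\{\tau_s\}_{J'}$, and similarly for the top slice after a shift. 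Now apply $\tau_{t_1}$: since $\tau_{t_1}\beta'\in\calBe^+$, Lemma \ref{lemma:technical1a} keeps us in $\calK_{[0,1]}$; I then need to track how $\tau_{t_1}$ acts on each of the two existing slices, using Remark \ref{rem:3} (applied to $\beta=\tau_{t_1}$, $\beta'=\beta'$) to see that the top slice can only grow, never vanish, and using Lemma \ref{lemma:technical2} / the $k=1$ case to compute $\nu_+$ of the new bottom slice as $\lcm(\tau_{t_1}, \lcm\{\tau_s\}_{J'})$ divided off by $\tau_{t_1}$. Unwinding the lattice identities — $\lcm(\lcm(A,\tau_{t_1}),\ldots)$ telescoping, combined with $\nu_+(\beta' X_0)\beta' = \lcm(\nu_+(X_0),\beta')$ from the hypothesis — gives $\nu_+(\calK_0(\beta X))\,\beta = \lcm(\nu_+(\oplus_J\calC_t),\beta)$. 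The top-slice formula follows by the same negative-braid translation as in the base case, applied now to $\gamma^{-1}\beta\in\calBe^-$ and using $\gcd$–$\lcm$ duality ${}^\vee(\alpha\beta) = ({}^\vee\beta)\gamma({}^\vee\alpha)$.

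I expect the main obstacle to be the bookkeeping in the inductive step: after applying $\tau_{t_1}$ to $\beta' X$, the result $\tau_{t_1}\beta' X$ has a bottom slice that is \emph{not} simply $\tau_{t_1}$ applied to the old bottom slice — there can be cancellation between the image of the old top slice and the old bottom slice under $\tau_{t_1}$. Controlling this requires carefully invoking Remark \ref{rem:3} and the structure of $\tau_{t_1}\cdot\calC_s$ from Proposition \ref{prop:toolsdualpp}(2) (the picture with $\calC_{t_1}\{-1\}$ on top of $X = \calK_0$) to argue that the net effect on $\nu_+$ of the bottom slice is still governed by the $\lcm$ identity. An alternative that may be cleaner is to avoid the induction on reflection-length entirely and instead argue directly: characterize $\nu_+(\calK_0(\beta X))$ as $\min\{\delta\in\calBe^+ : \delta\beta X\in\calK_{\geq 1}\}$ conjugated appropriately, show $\{\delta\beta : \delta\in\calBe^+, \delta\beta X\in\calK_{\geq 1}\}$ is closed under $\gcd$ (Remark \ref{rem:factorizationBe+}), and identify its minimum with $\lcm(\nu_+(X),\beta)$ using Lemma \ref{lemma:technical1b}(2) applied to the reflection factors of $\nu_+(X)$; this repackages the combinatorics into the single lattice statement $\{\delta : \delta\beta \geq_{\text{left}} \lcm(\nu_+(X),\beta)\}$ and sidesteps the slice-by-slice cancellation analysis.
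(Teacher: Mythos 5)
Your primary route --- induction on $l_{refl}(\beta)$, peeling off one reflection at a time and tracking the two baric slices --- runs into exactly the obstacle you name in your penultimate paragraph, and nothing in the proposal resolves it: after applying $\tau_{t_1}$ to $\beta'(\oplus_{t\in J}\calC_t)\in\calK_{[0,1]}$, the complex is an iterated extension of its two slices rather than their direct sum, so the new bottom slice is not obtained by applying $\tau_{t_1}$ to the old one and taking $\calK_0$ of a direct summand; Remark \ref{rem:3} only guarantees that the top slice survives, and gives no handle on $\nu_+$ of the new bottom slice. As written, the inductive step is not a proof.

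However, the ``alternative'' you sketch in your final sentences is essentially the paper's actual argument, and it closes the gap with no induction on reflection length at all. Write $X=\oplus_{t\in J}\calC_t$ and $X_0=\calK_0(\beta X)$. Since $\beta X\in\calK_{[0,1]}$ (Lemma \ref{lemma:technical1a}) and dual positive braids do not lower baric degree, one has $\nu_+(X_0)\beta X\in\calK_{[1,\infty)}$, so by the minimality of $\nu_+(X)$ (Remark \ref{rem:factorizationBe+}) $\nu_+(X)$ divides $\nu_+(X_0)\beta$; as $\beta$ divides $\nu_+(X_0)\beta$ trivially, $\lcm(\nu_+(X),\beta)$ divides $\nu_+(X_0)\beta$. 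Conversely, writing $\lcm(\nu_+(X),\beta)=\delta\beta$ with $\delta\in\calBe^+$ and reflection lengths adding, the containment $\delta\beta X\in\calK_{[1,\infty)}$ forces $\delta X_0\in\calK_{[1,\infty)}$, whence $\nu_+(X_0)$ divides $\delta$ and $\nu_+(X_0)\beta$ divides $\lcm(\nu_+(X),\beta)$. Mutual divisibility in the cancellative lattice ${\B_c^\vee}^+$ gives equality, and the top-slice formula follows, exactly as you propose, by applying the $\calBe^-$ analogue to $\gamma^{-1}\beta$ together with Lemma \ref{lemma:5a} and the $\vee$-identities. In short: discard the induction and promote your last paragraph to the proof.
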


\begin{proof}
Note that $\nu_+(X_0)\beta (\oplus_{t\in J}\calC_t) \in \calK_{[1,\infty)}$, so we have
\[\nu_+(\oplus_{t\in J}\calC_t)|\nu_+(X_0)\beta.\]
The right hand side is obviously also divisible by $\beta$, so we get: 
\[\lcm(\nu_+(\oplus_{t\in J}\calC_t),\beta)|\nu_+(X_0)\beta.\]

Now, $\lcm(\nu_+(\oplus_{t\in J}\calC_t),\beta) (\oplus_{t\in J}\calC_t)\in \calK_{[1,\infty)}$. We may write
\[
\lcm(\nu_+(\oplus_{t\in J}\mathfrak{C}_t),\beta)=
\delta \beta,
\]
with $\delta \in \calBe^+$ and
\[
	l_{refl}(\lcm(\nu_+(\oplus_{t\in J}\mathfrak{C}_t),\beta) = l_{refl}(\delta)  + l_{refl}(\beta).
\]
From this we see that $\delta(X_0)\in \calK_1$, so $\nu_+(X_0) | \delta$.
Thus we see that
\[
\nu_+(X_0)\beta | \lcm(\nu_+(\oplus_{t\in J}\calC_t),\beta)
\]
and also that
\[
\lcm(\nu_+(\oplus_{t\in J}\calC_t),\beta) | \nu_+(X_0)\beta.
\]
Thus the two braids are equal, proving the first statement.

The second statement follows from the first (or, rather, its analog for braids in $\calBe^-$) once we replace 
$\beta$ by $\gamma^{-1}\beta\in \calBe^-$ . 

\end{proof}

We state some special cases of the above as corollaries for later use.

\begin{corollary} \label{cor:3}
Let $\beta\in \calBe^+$, and let $J$ be such that $lcm\{\tau_t, t\in J\} = \gamma$.  Then
\[
	\nu^+(\calK_0(\beta (\oplus_{t\in J}\calC_t)))=\beta^\vee.
\]

\end{corollary}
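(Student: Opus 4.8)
This is a direct specialization of Lemma \ref{lemma:6} once one feeds in the value of $\nu_+(\oplus_{t\in J}\calC_t)$ supplied by the hypothesis. The only thing to check carefully is that the object $X_0 = \calK_0(\beta(\oplus_{t\in J}\calC_t))$ to which we apply $\nu_+$ genuinely lies in $\calK_0$; this is part of the conclusion of Lemma \ref{lemma:6} (equivalently Lemma \ref{lemma:technical1a}), which tells us that $\beta(\oplus_{t\in J}\calC_t)\in\calK_{[0,1]}$, so that the bottom baric slice is a well-defined object of $\calK_0$ on which $\nu_+$ makes sense.

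\textbf{Key steps.} First I would invoke Corollary \ref{cor:nu}, which gives $\nu_+(\oplus_{t\in J}\calC_t)=\lcm\{\tau_t\}_{t\in J}$; by the hypothesis on $J$ this equals $\gamma$. Next I would apply the first bullet of Lemma \ref{lemma:6} with this input, obtaining
\[
	\nu_+(X_0)\,\beta=\lcm\bigl(\nu_+(\oplus_{t\in J}\calC_t),\beta\bigr)=\lcm(\gamma,\beta).
\]
Since $\beta\in\calBe^+$ is by definition a divisor of $\gamma$ in the dual positive monoid ${\B_c^\vee}^+$, we have $\lcm(\gamma,\beta)=\gamma$, hence $\nu_+(X_0)\beta=\gamma$. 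Finally, using left-cancellativity in the Garside monoid ${\B_c^\vee}^+$ (or simply Definition \ref{def:check}), this yields $\nu_+(X_0)=\gamma\beta^{-1}=\beta^\vee$, which is the claim.

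\textbf{Main obstacle.} There is essentially no obstacle: the content has already been extracted in Lemma \ref{lemma:6} and Corollary \ref{cor:nu}, and the corollary is just the observation that when $J$ is chosen so that $\lcm\{\tau_t\}_{t\in J}$ is the full Garside element $\gamma$, the $\lcm$ appearing in Lemma \ref{lemma:6} collapses. The only point worth a sentence of care in the write-up is recording why $\lcm(\gamma,\beta)=\gamma$ (because $\beta\mid\gamma$ in ${\B_c^\vee}^+$) and why one may cancel $\beta$ on the right.
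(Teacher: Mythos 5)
Your proposal is correct and matches the paper's intent exactly: the paper states Corollary \ref{cor:3} without proof as an immediate specialization of Lemma \ref{lemma:6}, and your argument (compute $\nu_+(\oplus_{t\in J}\calC_t)=\gamma$ via Corollary \ref{cor:nu}, note $\lcm(\gamma,\beta)=\gamma$ since $\beta\mid\gamma$, then cancel $\beta$ to get $\nu_+(X_0)=\gamma\beta^{-1}=\beta^\vee$) is precisely the intended derivation.
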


\begin{corollary}\label{cor:4}
Let $\beta\in \calBe^+$, and suppose that $\gcd(\nu_+(\oplus_{t\in J}\calC_t)^\vee,\beta)=1$.
Then $\beta=\nu_+(\calK_1(\beta(\oplus_{t\in J}\calC_t))$.
\end{corollary}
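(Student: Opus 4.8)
The plan is to read off Corollary~\ref{cor:4} directly from the second bullet point of Lemma~\ref{lemma:6}, of which it is simply the special case where the relevant $\gcd$ is trivial. Recall that Lemma~\ref{lemma:6}, applied to $\beta\in\calBe^+$ and to the object $\oplus_{t\in J}\calC_t\in\calK_0$, says that $\beta\bigl(\oplus_{t\in J}\calC_t\bigr)\in\calK_{[0,1]}$ and that its top baric slice, normalized to lie in $\calK_0$ (that is, $X_1=\tau_1\bigl(\beta(\oplus_{t\in J}\calC_t)\bigr)=\calK_1\bigl(\beta(\oplus_{t\in J}\calC_t)\bigr)\{1\}$ in the notation of Section~\ref{subsec:baricstructure}), satisfies
\[
\beta=\nu_+(X_1)\,\gcd\bigl((\nu_+(\oplus_{t\in J}\calC_t))^\vee,\beta\bigr)
\]
in the dual positive monoid ${\B_c^\vee}^+$.

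Given that factorization, the proof is a single step: under the hypothesis $\gcd\bigl((\nu_+(\oplus_{t\in J}\calC_t))^\vee,\beta\bigr)=1$ the second factor is the identity, so the displayed identity collapses to $\beta=\nu_+(X_1)=\nu_+\bigl(\calK_1(\beta(\oplus_{t\in J}\calC_t))\bigr)$, which is the assertion of the corollary. As with Corollary~\ref{cor:3}, which is the companion specialization obtained by combining the first bullet of Lemma~\ref{lemma:6} with the computation $\nu_+(\oplus_{t\in J}\calC_t)=\lcm\{\tau_t\}_{t\in J}$ from Corollary~\ref{cor:nu}, there is nothing further to do once Lemma~\ref{lemma:6} is available.

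The only point that needs any attention is the bookkeeping of the orientation-grading shift relating the actual baric slice $\calK_1(\beta(\oplus_{t\in J}\calC_t))\in\calK_1$ to the object $X_1\in\calK_0$ on which $\nu_+$ is evaluated; I would simply unwind the definition $\tau_N(Y)=\calK_N(Y)\{N\}$, so this carries no mathematical content. Consequently I do not anticipate any genuine obstacle here: the whole weight of the argument sits in Lemma~\ref{lemma:6}, and Corollary~\ref{cor:4} is a one-line deduction from it.
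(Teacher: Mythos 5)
Your proposal is correct and is exactly what the paper intends: Corollary \ref{cor:4} is stated there without proof, introduced as a "special case" of Lemma \ref{lemma:6}, and specializing the second bullet of that lemma to the case $\gcd\bigl((\nu_+(\oplus_{t\in J}\calC_t))^\vee,\beta\bigr)=1$ gives the statement immediately. Your remark about the orientation-shift bookkeeping between the baric slice in $\calK_1$ and its normalization in $\calK_0$ is the right (and only) point to be careful about, and it is handled correctly.
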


%
\subsubsection{The dual positive monoid}
%

The following proposition is the main result describing the top baric slice of a general dual positive braid acting on 
the baric heart $\calK_0$.

\begin{prop} \label{prop:2}
Let $\beta=\beta_k\cdots \beta_1$ with $\beta_i\in \calBe^+$ be right-greedy, and let 
\[
	X_k = \calK_k(\beta\oplus_{t\in T}\calC_t )
\]
Then
\begin{equation*}
\nu_+(X_k)=\beta_k.
\end{equation*} 
\end{prop}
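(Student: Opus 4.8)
The plan is to prove this by induction on $k$, the number of greedy factors, using Lemma \ref{lemma:6} as the engine for the inductive step. The base case $k=1$ is exactly Corollary \ref{cor:nu}: since $\nu_+(\oplus_{t\in T}\calC_t) = \lcm\{\tau_t\}_{t\in T} = \gamma$, and $\beta_1$ divides $\gamma$, we have $\lcm(\gamma,\beta_1) = \gamma = \beta_1 \cdot {}^\vee\beta_1$, which combined with the first bullet of Lemma \ref{lemma:6} (or directly Corollary \ref{cor:3}) gives that the bottom baric slice has $\nu_+$-invariant $\beta_1^\vee$, hence the top baric slice $X_1$ satisfies $\nu_+(X_1) = \beta_1$ by the second bullet of Lemma \ref{lemma:6} together with $\gcd(\gamma^\vee,\beta_1)=\gcd(1,\beta_1)=1$. (Here I am using the notation from Definition \ref{def:check} and the basic lattice facts listed after it.)

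For the inductive step, I would set $\beta' = \beta_k\cdots\beta_2$ and apply the induction hypothesis: writing $Y = \oplus_{t\in T}\calC_t$, first $\beta_1 Y \in \calK_{[0,1]}$ by Lemma \ref{lemma:technical1a}, and its top slice $\calK_1(\beta_1 Y)$ is, by Corollary \ref{cor:3}, a complex $Z \in \calK_1$ with $\nu_+(Z\{-1\}) = \beta_1^\vee = \gamma\beta_1^{-1}$. The key point is that because the expression is right-greedy, the descents of $\beta_1$ contain the descents of $\beta_2$, i.e. $D_c^\vee(\beta_2) \subset D_c^\vee(\beta_1)$, so $\gcd(\beta_1^\vee, \beta_2) = 1$: no reflection dividing $\beta_2$ can divide $\gamma\beta_1^{-1}$, since $\beta_2$'s reflections all appear in $\beta_1$ and a reflection appears at most once in a minimal expression for a Bessis braid. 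Then applying $\beta_2$ to $Z\{-1\}$ and using Lemma \ref{lemma:6} with this $\gcd$-vanishing, the top baric slice of $\beta_2\beta_1 Y$ has $\nu_+$-invariant $\beta_2 \cdot \gcd((\beta_1^\vee)^\vee,\beta_2)$... wait, I need to be careful: I want to iterate so that at each stage I am taking top slices of top slices. The cleaner formulation: I would prove by induction that $\calK_j(\beta_j\cdots\beta_1 Y)\{-j\}$ is a complex whose $\nu_+$-invariant is $(\beta_j\cdots\beta_1)^\vee = \gamma(\beta_j\cdots\beta_1)^{-1}$, and simultaneously that lower baric slices of the partial products don't interfere. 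Then $\nu_+(X_k) = \nu_+(\calK_k(\beta Y))$ is extracted via the second bullet of Lemma \ref{lemma:6} applied to the stage-$(k-1)$ top slice: we get $\nu_+(X_k) = \beta_k\gcd(((\beta_{k-1}\cdots\beta_1)^\vee)^\vee,\beta_k) = \beta_k\gcd(\beta_{k-1}\cdots\beta_1,\beta_k)$, and right-greediness forces this $\gcd$ to be trivial, giving $\nu_+(X_k) = \beta_k$.

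The main obstacle I anticipate is bookkeeping the interaction between the \emph{top} baric slice (where the action of $\beta_j$ genuinely lifts) and the \emph{lower} baric slices of the intermediate complexes $\beta_{j-1}\cdots\beta_1 Y$: a priori, applying $\beta_j$ to a complex spread over several baric levels could create new contributions in level $j$ coming from the lower levels, not just from level $j-1$. I would handle this using Lemma \ref{lemma:technical1a} and Remark \ref{rem:3}, which control how far a Bessis braid can shift a complex, to argue that the level-$j$ part of $\beta_j\cdots\beta_1 Y$ only receives contributions from the level-$(j-1)$ part of $\beta_{j-1}\cdots\beta_1 Y$ (shifted up by one) — i.e. that $\calK_j(\beta_j\cdot W) \cong \calK_j(\beta_j\cdot \calK_{j-1}(W))$ for $W$ with baric levels in $[0,j-1]$. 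Combined with the right-greedy $\gcd$-vanishing, this should close the induction; the lattice identities $\lcm(\beta,\beta^\vee)=\gamma$, $\gcd(\beta,\beta^\vee)=1$, and ${}^\vee(\alpha\beta) = ({}^\vee\beta)\gamma({}^\vee\alpha)$ are exactly what is needed to translate "$\beta_j$'s descents lie among $\beta_{j-1}$'s descents" into the vanishing of the relevant $\gcd$ at each stage.
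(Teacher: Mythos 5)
Your overall strategy -- induct on the number of greedy factors, use Lemma \ref{lemma:6} to track the top baric slice, and use right-greediness to make the relevant $\gcd$ trivial -- is essentially the paper's, and your base case is correct. But the ``cleaner formulation'' of your inductive step contains a genuine error: you claim that the normalized top slice $\calK_j(\beta_j\cdots\beta_1 Y)$ has $\nu_+$-invariant $(\beta_j\cdots\beta_1)^\vee$. That is the invariant of the \emph{bottom} slice (Corollary \ref{cor:3}, and more generally Corollary \ref{cor:6}); the top slice at level $j$ has $\nu_+$ equal to $\beta_j$ -- which is exactly what Proposition \ref{prop:2} asserts, and which you yourself correctly derive for $j=1$ in your base case before contradicting it. Already for $j=1$ with $\beta_1=\tau_t$ a single reflection, the top slice is $\calC_t\{-1\}$ with $\nu_+=\tau_t$, not $\gamma\tau_t^{-1}$. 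The error propagates into your final display: Lemma \ref{lemma:6} gives $\nu_+(X_k)=\beta_k\cdot\gcd(\,\cdot\,)^{-1}$, not $\beta_k\cdot\gcd(\,\cdot\,)$, and the identity $\gcd(\beta_{k-1}\cdots\beta_1,\beta_k)=1$ that you invoke is false in general: $\gamma\cdot\gamma$ is right-greedy and $\gcd(\gamma,\gamma)=\gamma$. (Also $(\alpha^\vee)^\vee=\gamma\alpha\gamma^{-1}$, not $\alpha$; the listed identity is ${}^\vee(\alpha^\vee)=\alpha$.)

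The repair is close at hand, and it is what the paper does: take as inductive hypothesis the statement of the proposition itself, $\nu_+(X_{k-1})=\beta_{k-1}$. Then the $\gcd$ appearing in Lemma \ref{lemma:6} is $\gcd(\beta_{k-1}^\vee,\beta_k)$, and your own argument in the middle of the proposal -- $D_c^\vee(\beta_k)\subset D_c^\vee(\beta_{k-1})$ by right-greediness, no reflection divides both $\beta_{k-1}$ and $\beta_{k-1}^\vee$ since $\gcd(\beta,\beta^\vee)=1$ -- shows this $\gcd$ is trivial; this is precisely the correct key step. (The paper uses this same $\gcd$ computation to show $\beta_kX_{k-1}\notin\calK_0$, hence $X_k\neq 0$, and then finishes slightly differently: it shows $\nu_+(X_k)$ divides $\beta_k$ and kills the complementary factor $\delta$ by observing that $\delta\beta_{k-1}\cdots\beta_1$ sends $\calK_0$ into $\calK_{[0,k-1]}$ and so admits a greedy decomposition with only $k-1$ factors, forcing $\delta=1$.) Finally, the bookkeeping issue you flag -- that level $k$ of $\beta_k(\beta_{k-1}\cdots\beta_1 Y)$ receives contributions only from level $k-1$ -- is real, is handled correctly by your appeal to Lemma \ref{lemma:technical1a} and Remark \ref{rem:3}, and is glossed over in the paper's proof, so flagging it is a point in your favor.
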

\begin{proof}
When $k=1$, the claim follows from Lemma \ref{lemma:6}.

Let $k>1$. By induction, the claim is true for $\beta_{k-1}\cdots \beta_1$, hence $X_{k-1}$, the degree $k-1$ baric slice  $\beta_{k-1}\cdots \beta_1(\oplus_{t\in T} \calC_t)$, satisfies
\[
	\nu_+(X_{k-1}) = \beta_{k-1}.
\]
Since the decomposition is greedy, $\beta_k\beta_{k-1}\notin \calBe^+$. We want to claim that this implies that 
$\beta_kX_{k-1}\notin \calK_0$.

If $\beta_kX_{k-1}\in \calK_0$, then Lemma \ref{lemma:6} implies that 
\[
	\beta_k=\gcd(\gamma\beta_{k-1}^{-1},\beta_k).
\] 
 In particular, this gives that that $\beta_k|\gamma\beta_{k-1}^{-1}$ and thus that $\beta_k\beta_{k-1}|\gamma$, which contradicts the greediness of the decomposition $\beta_k\beta_{k-1}\dots \beta_1$.
 
 Since $\beta_kX_{k-1}\notin \calK_0$, we have that 
 \[
 	X_k \cong \calK_1(\beta_kX_{k-1})\neq0.
\]
 
 Now, clearly, $\beta_k^{-1}(X_k)\in \calK_{-1}$, whence $\beta_k$ is divisible by $\alpha=\nu_+(X_k)$, and we may write $\beta_k = \alpha \delta$, with $\delta \in \calBe^+$.  We want to show that $\delta = 1$, as that will imply that 
 $\beta_k = \nu_+(X_k)$. 
 
To see that $\delta = 1$, we observe that
\[
	\delta\beta_{k-1}\cdots \beta_1\cdot \calK_0 \subset \calK_{[0,k-1]}.
\]
But, from what we have shown above, this implies that $\delta\beta_{k-1}\cdots \beta_1$ has a greedy decomposition with $k-1$ Bessis braids.  Thus this greedy decomposition must be $\beta_{k-1}\cdots \beta_1$, which shows that $\delta=1$.
\end{proof}

As an immediate corollary, we obtain a theorem of Birman-Ko-Lee~\cite{BKL}, Bessis \cite{Bessis} and Brady-Watt \cite{BW}.

\begin{corollary}\label{cor:dualmonoid}
The map of monoids 
\[
	{\B_c^\vee}^+_W  \longrightarrow \B_W
\]
is injective.
\end{corollary}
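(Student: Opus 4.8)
The plan is to mimic the proof of Corollary \ref{cor:moninj}, replacing the canonical $t$-structure by the $\vec{o}$-baric structure and Lemma \ref{lem:neg} by Proposition \ref{prop:2}. The point is that the triangulated autoequivalence of $\calK$ induced by a dual positive braid determines its right-greedy normal form, and hence the braid itself as an element of the monoid.

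Concretely, let $\beta \in {\B_c^\vee}^+$ and fix a right-greedy normal form $\beta = \beta_k \cdots \beta_1$ with each $\beta_i \in \calBe^+$ a nontrivial Bessis braid; such a decomposition exists because ${\B_c^\vee}^+$ is a lattice. I would first note that $k$ can be read off from the action of $\beta$ on $\calK$: iterating Lemma \ref{lemma:technical1a} shows that a product of $k$ Bessis braids sends $\calK_0$ into $\calK_{[0,k]}$, so $\beta(\oplus_{t\in T}\calC_t)\in \calK_{[0,k]}$, while Proposition \ref{prop:2} gives $\nu_+(\calK_k(\beta(\oplus_{t\in T}\calC_t))) = \beta_k \neq 1$, which forces $\calK_k(\beta(\oplus_{t\in T}\calC_t)) \neq 0$. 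Hence $k = \max\{N : \calK_N(\beta(\oplus_{t\in T}\calC_t)) \neq 0\}$ depends only on the autoequivalence induced by $\beta$, and Proposition \ref{prop:2} then extracts the leftmost factor $\beta_k = \nu_+(\calK_k(\beta(\oplus_{t\in T}\calC_t)))$ from that same autoequivalence.

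Next I would run the evident downward induction. Since $\beta_k$ left-divides $\beta$ in ${\B_c^\vee}^+$, the braid $\beta_k^{-1}\beta = \beta_{k-1}\cdots \beta_1$ again lies in ${\B_c^\vee}^+$, is right-greedy of length $k-1$ (a suffix of a greedy word is greedy), and its induced autoequivalence of $\calK$ is $(\beta_k)^{-1}$ post-composed with that of $\beta$, hence is itself determined by the autoequivalence of $\beta$. Applying the previous step to $\beta_k^{-1}\beta$ recovers $\beta_{k-1}$, and so on; after $k$ steps the whole sequence $\beta_k,\dots,\beta_1$, and therefore $\beta = \beta_k\cdots\beta_1$, has been reconstructed from the action of $\beta$ on $\calK$. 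This shows that the monoid homomorphism ${\B_c^\vee}^+ \to [\mbox{Aut}(\calK)]$ sending a dual positive braid to the isomorphism class of the autoequivalence it induces is injective. Since the braid group $\B_W \cong \B_c^\vee$ acts on $\calK$ by triangulated autoequivalences, this homomorphism factors through $\B_W$ via the canonical map ${\B_c^\vee}^+ \to \B_W$, and so that canonical map is injective as well.

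The only real input here is Proposition \ref{prop:2}, which is already proven, so there is no substantive obstacle; the one point needing a little care — handled above — is that the number of greedy factors $k$ is detected as the top nonzero baric level of $\beta(\oplus_{t\in T}\calC_t)$, which in turn rests on iterating the baric bound of Lemma \ref{lemma:technical1a} together with the non-vanishing $\nu_+(\calK_k(\cdot)) = \beta_k \neq 1$ from Proposition \ref{prop:2}.
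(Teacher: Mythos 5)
Your proof is correct and follows the same route as the paper: Corollary \ref{cor:dualmonoid} is deduced there exactly by invoking Proposition \ref{prop:2} to conclude that the map from ${\B_c^\vee}^+$ to isomorphism classes of autoequivalences of $\calK$ is injective, and then factoring through $\B_W$. You have simply spelled out the reconstruction of the right-greedy normal form (top baric level gives $k$, $\nu_+$ of the top slice gives $\beta_k$, then induct) that the paper leaves implicit.
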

\begin{proof}
By Proposition \ref{prop:2}, the map from ${\B_c^\vee}^+_W$ to the group of isomorphism classes of autoequivalences of $\calK$ is injective.  Since this map factors through $ \B_W$, the result follows.
\end{proof}

Proposition \ref{prop:2} described the relationship between the top baric slice and the right-greedy normal form of a dual positive braid.  It turns out that the bottom baric slice is analogously related to the left-greedy normal form.  We will deduce this from Proposition \ref{prop:2} together with Lemma \ref{lemma:10} below, which explains how to pass between left-greedy decompositions of dual positive braids and right-greedy decompositions of dual negative braids.  
We leave the proof of Lemma \ref{lemma:10} as an exercise for the reader.
\begin{lemma} \label{lemma:10}
If $\beta=\beta_1\cdots \beta_k$ with $\beta_i\in \calBe^+$ is left-greedy, then
\begin{equation*}
\gamma^{-k}\beta=(\gamma^{-k}\beta_1\gamma^{k-1})(\gamma^{-k+1}\beta_2\gamma^{k-2})\cdots(\gamma^{-1}\beta_k)
\end{equation*}
is a right-greedy expression of $\gamma^{-k}\beta$ in the generators $\calBe^-$.
\end{lemma}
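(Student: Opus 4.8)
The statement to be established (the final Lemma) has three separate contents: the displayed equality holds in $\B_W$; each displayed factor lies in $\calBe^-$; and the resulting word is right-greedy. The first is a telescoping computation — in the product $(\gamma^{-k}\beta_1\gamma^{k-1})(\gamma^{-k+1}\beta_2\gamma^{k-2})\cdots(\gamma^{-1}\beta_k)$ the power $\gamma^{k-j}$ ending the $j$-th factor cancels the power $\gamma^{-k+j}$ opening the $(j+1)$-st, leaving $\gamma^{-k}\beta_1\beta_2\cdots\beta_k=\gamma^{-k}\beta$ — and needs no real argument. For the second, write $\phi$ for conjugation by $\gamma$, i.e. $\phi(x)=\gamma^{-1}x\gamma$; the $j$-th displayed factor is $\gamma^{-k+j-1}\beta_j\gamma^{k-j}=\gamma^{-1}\,\phi^{\,k-j}(\beta_j)$. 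Conjugation by $\gamma$ is an automorphism of $\B_W$ fixing $\gamma$ and permuting the dual braid reflections $\boldsfT$ (it shifts the exponent $k$ in the description (\ref{eq:dualpositivelift})), hence preserves the dual positive monoid ${\B_c^\vee}^+$ and the interval $\calBe^+=[1,\gamma]$; so $\phi^{\,k-j}(\beta_j)\in\calBe^+$. Since for any $\delta\in\calBe^+$ one has $\gamma^{-1}\delta=(\delta^{-1}\gamma)^{-1}=({}^\vee\delta)^{-1}$ with ${}^\vee\delta\in\calBe^+$ (Definition \ref{def:check}), each factor equals $({}^\vee\phi^{\,k-j}(\beta_j))^{-1}\in\calBe^-$.

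The plan for right-greediness is to reduce to the case $k=2$. Being in right-greedy normal form is a local condition: a word $\alpha_1\cdots\alpha_k$ in $\calBe^-$ is right-greedy iff each consecutive pair $\alpha_j\alpha_{j+1}$ is right-weighted (the usual characterization of normal forms in the Garside monoid $\calBe^-$ with Garside element $\gamma^{-1}$; see \cite{DDGKM}). I would induct on $k$: the suffix $\beta_2\cdots\beta_k$ of a left-greedy word is again left-greedy, and one checks (using the telescoping identity with $k$ replaced by $k-1$) that $\gamma^{-(k-1)}(\beta_2\cdots\beta_k)$ is exactly the tail $\alpha_2\cdots\alpha_k$ of our expression; by the inductive hypothesis this tail is right-greedy. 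So, combined with the first two steps, it remains only to verify that the new leading pair $\alpha_1\alpha_2$ is right-weighted. Here $\alpha_1\alpha_2=\gamma^{-k}\beta_1\beta_2\gamma^{k-2}=\phi^{\,k-2}\!\bigl((\gamma^{-2}\beta_1\gamma)(\gamma^{-1}\beta_2)\bigr)$, and since $\phi$ preserves $\boldsfT$ and $\calBe^-$ and satisfies $D_c^\vee(\phi(x))=\phi(D_c^\vee(x))$, the pair $\alpha_1\alpha_2$ is right-weighted if and only if the pair $(\gamma^{-2}\beta_1\gamma)(\gamma^{-1}\beta_2)$ is. Thus everything is reduced to $k=2$.

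The case $k=2$ is the main obstacle. There $\beta_1\beta_2$ left-greedy means $D_c^\vee(\beta_2)\subseteq D_c^\vee(\beta_1)$, and one must show that $(\gamma^{-2}\beta_1\gamma)(\gamma^{-1}\beta_2)=({}^\vee\phi(\beta_1))^{-1}\,({}^\vee\beta_2)^{-1}$ is right-weighted in $\calBe^-$. This is precisely the standard Garside-theoretic fact that converts the $\gamma$-normal form of a positive braid into the negative normal form of the corresponding fraction: the right-complement map ${}^\vee\colon\calBe^+\to\calBe^+$ is an order-reversing bijection of the lattice $[1,\gamma]$, and together with conjugation by $\gamma$ it sends left-weighted pairs to right-weighted pairs. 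To verify it one unwinds the definitions using (a) the standard characterization of a left-weighted pair $(s,t)$, namely that no atom left-divides both $t$ and the right complement ${}^\vee s$; (b) the left-right symmetry of Bessis descent sets (Remark \ref{rem:descents}), which lets one replace ``atom left-divides $x$'' by ``$\tau\in D_c^\vee(x)$'' everywhere and thereby match the paper's $D_c^\vee$-inclusion definition of greedy; and (c) the anti-isomorphism $x\mapsto x^{-1}\colon\calBe^+\to\calBe^-$, under which right-greedy words in $\calBe^-$ correspond to left-greedy words in $\calBe^+$, translating the target into a statement about $D_c^\vee$ of the inverses. The genuinely delicate point is the bookkeeping: keeping every inclusion pointing the right way and correctly accounting for the single copy of $\phi$ that distinguishes $\alpha_1$ from $\gamma^{-1}\beta_1$; I would pin down all conventions by first checking $k=1$ (nothing to check) and $k=2$ (one pair) explicitly. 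The whole argument stays inside the dual monoid ${\B_c^\vee}^+$ and uses none of the categorical machinery of the paper, as one expects since this is a standard fact of Garside theory (cf. \cite{DDGKM}).
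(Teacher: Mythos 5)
The paper offers no proof to compare against --- it explicitly leaves Lemma \ref{lemma:10} as an exercise --- so your proposal can only be judged on its own terms. On those terms the skeleton is sound: the telescoping identity, the observation that each factor is $\gamma^{-1}\delta=({}^\vee\delta)^{-1}$ for some $\delta\in\calBe^+$ (using that conjugation by $\gamma$ permutes $\boldsfT$ and preserves $\calBe^+$), the reduction to consecutive pairs, and the use of $\phi$ to reduce to $k=2$ are all correct. The $k=2$ case, which you leave as ``standard bookkeeping,'' does close up cleanly: for an atom $\tau'$ one computes $({}^\vee\beta_2)\,\phi(\tau')\preceq\gamma$ if and only if the complementary factor $\phi(\tau'^{-1}\beta_2)$ lies in $\calBe^+$ with length $l_{refl}(\beta_2)-1$, i.e.\ if and only if $\tau'$ divides $\beta_2$; since the atoms dividing $\phi({}^\vee\beta_1)$ are exactly the $\phi(\tau')$ with $\beta_1\tau'\preceq\gamma$, right-weightedness of the pair $(\gamma^{-2}\beta_1\gamma)(\gamma^{-1}\beta_2)$ is literally the contrapositive of left-weightedness of $(\beta_1,\beta_2)$. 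So the proof works.

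The one assertion I would not let stand is in your step (b): the standard left-weighted condition does \emph{not} ``match the paper's $D_c^\vee$-inclusion definition of greedy'' in the dual monoid. In the classical monoid the two agree because $u\sigma_i$ is simple exactly when $s_i\notin D_R(\pi(u))$; the dual analogue ``$u\tau\in\calBe^+$ iff $\tau\notin D_c^\vee(u)$'' is false (in the paper's $A_3$ example, $\sigma_1\notin D_c^\vee(\sigma_2)$ yet $\sigma_2\sigma_1\notin\calBe^+$), so $D_c^\vee(\beta_2)\subset D_c^\vee(\beta_1)$ is strictly stronger than left-weightedness, and not every dual-positive braid even admits a factorization satisfying it. Worse, with the descent-inclusion reading on both sides the lemma is false: in type $A_2$ with $\gamma=\sigma_1\sigma_2$, the word $\sigma_2\cdot\sigma_2$ satisfies the inclusion condition, but the output $(\gamma^{-2}\sigma_2\gamma)(\gamma^{-1}\sigma_2)=\sigma_2^{-1}\cdot(\sigma_1\sigma_2\sigma_1^{-1})^{-1}$ has descent sets $\{\sigma_2\}$ and $\{\sigma_1\sigma_2\sigma_1^{-1}\}$, which are not nested. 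Your proof is therefore a correct proof of the lemma for the standard Garside normal form (which is what Proposition \ref{prop:2} and Corollary \ref{cor:6} actually require), but you should state explicitly that this is the notion you are using, rather than claiming it coincides with the paper's $D_c^\vee$-inclusion phrasing.
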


\begin{corollary} \label{cor:6}
If $\beta=\beta_1\cdots \beta_k$ with $\beta_i\in \calBe^+$ is left-greedy, and let
\[
	X_0 = \calK_0(\beta (\oplus_{t\in T} \calC_t)).
\]
Then $\nu_+(X_0)=\beta_1^\vee$.
\end{corollary}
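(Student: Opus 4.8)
The plan is to deduce Corollary \ref{cor:6} from Proposition \ref{prop:2} by passing from left-greedy decompositions of dual positive braids to right-greedy decompositions of dual negative braids, using Lemma \ref{lemma:10}, and then relating bottom baric slices to top baric slices via the auto-equivalence $\gamma$ and the already-established identity $\nu_-(X) = \nu_+(X)^{-1}$ from Lemma \ref{lemma:5a}.

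First I would record the effect of shifting by $\gamma^{-k}$. Let $\beta = \beta_1\cdots\beta_k$ with $\beta_i\in\calBe^+$ left-greedy, set $Y = \oplus_{t\in T}\calC_t$, and let $X_0 = \calK_0(\beta Y)$. By Lemma \ref{lemma:technical1a} (or rather Lemma \ref{lemma:6} applied iteratively, as in the proof of Proposition \ref{prop:2}) the complex $\beta Y$ lives in $\calK_{[0,k]}$, so $X_0$ is genuinely its bottom baric slice. Applying $\gamma^{-k}$, which lowers baric level by $k$ by Lemma \ref{lemma:cShift}, we get $\gamma^{-k}\beta Y\in\calK_{[-k,0]}$, and the bottom slice $X_0$ of $\beta Y$ becomes the bottom slice $\calK_{-k}(\gamma^{-k}\beta Y)$ — more precisely $\calK_{-k}(\gamma^{-k}\beta Y)\cong \gamma^{-k}X_0$ up to the appropriate baric shift, since $\gamma^{-k}$ acts on each baric slice by dropping its level by $k$ and nothing collapses into level $-k$ from above.

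Next I would invoke Lemma \ref{lemma:10}: $\gamma^{-k}\beta = (\gamma^{-k}\beta_1\gamma^{k-1})(\gamma^{-k+1}\beta_2\gamma^{k-2})\cdots(\gamma^{-1}\beta_k)$ is a right-greedy expression in the generators $\calBe^-$, whose last (rightmost) factor is $\gamma^{-k}\beta_1\gamma^{k-1}$. This is the dual-negative analogue of the setup of Proposition \ref{prop:2}: a right-greedy product of dual negative Bessis braids acting on $Y$, and I would apply the evident dual-negative version of Proposition \ref{prop:2} — interchanging $\calBe^+\leftrightarrow\calBe^-$, top $\leftrightarrow$ bottom, $\nu_+\leftrightarrow\nu_-$ — to conclude that the bottom baric slice of $\gamma^{-k}\beta Y$, which is $\gamma^{-k}X_0$ (up to shift), satisfies $\nu_-(\gamma^{-k}X_0) = \gamma^{-k}\beta_1\gamma^{k-1}$, the rightmost factor of the right-greedy decomposition. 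Since baric shifts do not affect $\nu_\pm$ and $\gamma$ is an auto-equivalence commuting with the baric structure up to level shift, one checks $\nu_-(\gamma^{-k}X_0) = \gamma^{-k}\nu_-(X_0)\gamma^{k}$ (the conjugation coming from how $\gamma$ intertwines the baric gradings, exactly as in Lemma \ref{lemma:10}); combined with Lemma \ref{lemma:5a}, $\nu_-(X_0) = \nu_+(X_0)^{-1}$, this gives $\gamma^{-k}\nu_+(X_0)^{-1}\gamma^{k} = \gamma^{-k}\beta_1\gamma^{k-1}$, i.e.\ $\nu_+(X_0)^{-1} = \beta_1\gamma^{-1}$, hence $\nu_+(X_0) = \gamma\beta_1^{-1} = \beta_1^\vee$ by Definition \ref{def:check}. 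That is the claim.

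The main obstacle, and the step deserving the most care, is the bookkeeping of baric shifts and of the conjugation by powers of $\gamma$: one must verify precisely that $\calK_{-k}(\gamma^{-k}\beta Y)$ is the image of $X_0 = \calK_0(\beta Y)$ under $\gamma^{-k}$ (with the correct $\{\,\cdot\,\}$-shift so that it lands back in $\calK_0$), that no baric slice of $\beta Y$ in positive level degenerates into level $-k$ after applying $\gamma^{-k}$ (this is immediate from Lemma \ref{lemma:cShift} since $\gamma^{-k}$ shifts every slice down by exactly $k$), and that the $\nu_-$ of the shifted bottom slice relates to $\nu_-(X_0)$ by the stated conjugation — which is forced by the compatibility of $\gamma$ with the dual Garside structure already implicit in Lemma \ref{lemma:10}. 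Everything else is a direct citation of Proposition \ref{prop:2} (in its dual-negative form), Lemma \ref{lemma:5a}, Lemma \ref{lemma:10}, and the elementary identities for ${}^\vee\beta$ and $\beta^\vee$ collected after Definition \ref{def:check}.
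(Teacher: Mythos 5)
Your proof is correct and follows essentially the same route as the paper: convert the left-greedy positive decomposition to a right-greedy dual-negative one via Lemma \ref{lemma:10}, apply the negative analogue of Proposition \ref{prop:2} to the bottom slice $\gamma^{-k}X_0$, and transport back using conjugation by $\gamma^{k}$ together with Lemma \ref{lemma:5a}. The only blemish is that $\gamma^{-k}\beta_1\gamma^{k-1}$ is the \emph{leftmost}, not the rightmost, factor of the expression in Lemma \ref{lemma:10}; since you nevertheless work with the correct braid, this mislabeling does not affect the argument.
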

\begin{proof}
Consider the right-greedy expression for $\gamma^{-k}\beta$ coming from Lemma \ref{lemma:10}. 
Let $X_{-k} = \calK_{-k}(\gamma^{-k}\beta (\oplus_{t\in T}\calC_t)).$
Then by Proposition \ref{prop:2} and Lemma \ref{lemma:5a}, we have that
\[
\nu_+(X_{-k})=\gamma^{-k+1}\beta_1^{-1}\gamma^{k}.
\]

Furthermore, if $X_0 = \calK_0(\beta (\oplus_{t\in T}\calC_t))$, then clearly
\[
X_{-k}\cong\gamma^{-k}X_0.
\]
Thus,
\[
\nu_+(X_0)=\gamma^k\nu_+(X_{-k})\gamma^{-k}=\gamma\beta_1^{-1}=\beta_1^{\vee}.
\]
\end{proof}

%
\subsubsection{General braids} \label{subsec:genbraids_or}
%
We now generalize the results from the previous section from dual positive braids to arbitrary braids.  Recall from Section \ref{subsec:prelimWordLength} that every braid $\beta$ can be written as 
\[
	\beta = \beta^-\cdot \beta^+,
\]
with $\beta^+\in {\B_c^\vee}^+$, $\beta^-\in {\B_c^\vee}^-$.  Furthermore, we may write
$\beta^+=\beta^+_1\cdots\beta^+_k$ left-greedy, and $\beta^-=\beta^-_l\cdots \beta^-_1$ right-greedy, and 
$\gcd(\beta_1^+,\beta_1^-)=1$.

Proposition \ref{prop:2}, together with Corollary \ref{cor:4}, now imply the following result, which gives a linear-algebraic algorithm for computing the greedy normal form of any braid.

\begin{prop}\label{prop:3}
Let $\beta = \beta^-\cdot \beta^+$ be a reduced minimal decomposition, with
$\beta^- = \beta^-_l\cdots \beta^-_1$ right-greedy.  Let $X_{-l} = \calK_{-l}(\beta \oplus_{t\in T} \calC_t)$.  Then we have
$\nu_-(X_{-l})=\beta^-_l$.
\end{prop}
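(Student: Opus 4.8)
The plan is to peel off the dual-positive part $\beta^+$ first, control the bottom baric slice it produces using Corollary \ref{cor:6}, and then run the dual-negative analogue of the argument of Proposition \ref{prop:2} on that slice. Throughout set $Y=\oplus_{t\in T}\calC_t$, so that $\nu_+(Y)=\lcm\{\tau_t\}_{t\in T}=\gamma$ by Corollary \ref{cor:nu}. First I would analyse $\beta^+Y$: iterating Lemma \ref{lemma:technical1a} through the $k$ Bessis factors of the left-greedy expression $\beta^+=\beta_1^+\cdots\beta_k^+$ gives $\beta^+Y\in\calK_{[0,k]}$, and Corollary \ref{cor:6} identifies its bottom slice $Z_0:=\calK_0(\beta^+Y)\in\calK_0$ through $\nu_+(Z_0)=(\beta_1^+)^\vee$. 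By Theorem \ref{thm:indecCplxes} we may take $Z_0\cong\oplus_{t\in J}\calC_t$ with $\lcm\{\tau_t\}_{t\in J}=(\beta_1^+)^\vee$.

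Next I would localise the degree $-l$ slice. Since the differential of any minimal complex in $\calK$ cannot decrease baric degree by more than it increases it, each Bessis factor of $\beta^-=\beta_l^-\cdots\beta_1^-$ lowers baric degree by at most one; hence for every $m\geq1$ the slice $\calK_m(\beta^+Y)$ is carried by $\beta^-$ into baric degrees $\geq 1-l>-l$, while only the bottom slice $Z_0$ can reach degree $-l$. Using that $Z_0$ is a subcomplex of $\beta^+Y$ and that for a single Bessis$^{\pm}$ braid the extreme baric slice of its action is computed slice by slice (as in the proof of Proposition \ref{prop:2} and Remark \ref{rem:3}), one obtains $X_{-l}=\calK_{-l}(\beta Y)=\calK_{-l}(\beta^-Z_0)$.

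Finally I would apply the dual-negative counterpart of Proposition \ref{prop:2} — produced from Proposition \ref{prop:2} by conjugating with $\gamma^{l}$ (Lemma \ref{lemma:cShift}) and inverting (Lemmas \ref{lemma:5a} and \ref{lemma:10}, exactly as in the proof of Corollary \ref{cor:6}) — to the right-greedy expression $\beta^-=\beta_l^-\cdots\beta_1^-$ acting on $Z_0$ rather than on $\oplus_{t\in T}\calC_t$. The proof of Proposition \ref{prop:2} uses the full complex $\oplus_{t\in T}\calC_t$ only through the values of $\nu_+$ of its slices, together with Lemma \ref{lemma:6} and the greediness of the decomposition; it carries over with $Z_0$ in place of $\oplus_{t\in T}\calC_t$ once one checks the coprimality input $\gcd\big(\nu_+(Z_0)^\vee,(\beta_1^-)^{-1}\big)=1$ (the hypothesis under which Corollary \ref{cor:4} and its dual apply). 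Since $\nu_+(Z_0)=(\beta_1^+)^\vee$, this condition, after using the basic identities for ${}^\vee$ and $\vee$ on $\calBe^+$ and absorbing the $\gamma$-conjugation coming from the level shift, is precisely the reduced-minimality hypothesis $\gcd(\beta_1^+,\beta_1^-)=1$. Granting this, the dual of Proposition \ref{prop:2} yields $\nu_-\big(\calK_{-l}(\beta^-Z_0)\big)=\beta_l^-$, i.e.\ $\nu_-(X_{-l})=\beta_l^-$, as claimed.

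The main obstacle is exactly this last coprimality verification: one must show that $\beta^-$ \emph{does not collapse} when applied to $Z_0$ — that it pushes $Z_0$ all the way down to baric degree $-l$, with bottom slice having $\nu_-$ equal to $\beta_l^-$ — and that this non-collapse is nothing but the absence of cancellation between $\beta^-$ and $\beta^+$, encoded by $\gcd(\beta_1^+,\beta_1^-)=1$. Equivalently, one must reconcile the divisibility statement $\nu_+(Z_0)=(\beta_1^+)^\vee$ coming from the positive side with the greedy structure of $\beta^-$ on the negative side. The attendant bookkeeping with the $\vee/{}^\vee$ operations on $\calBe^+$ and the $\gamma^{\pm l}$-conjugations arising from the baric shifts (Lemmas \ref{lemma:cShift} and \ref{lemma:5a}) is routine, but must be carried out carefully to land exactly on the reduced-minimality hypothesis.
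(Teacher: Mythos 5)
Your strategy is, as far as one can tell, exactly what the authors intend by their one\nobreakdash-line justification of Proposition \ref{prop:3} (the paper offers no proof beyond ``Proposition \ref{prop:2}, together with Corollary \ref{cor:4}, now imply the following result''): compute the bottom slice $Z_0$ of $\beta^+(\oplus_{t}\calC_t)$ via Corollary \ref{cor:6}, observe that only $Z_0$ can contribute in baric degree $-l$ (correct: each factor of $\beta^-$ lowers baric degree by at most one, and Gaussian elimination preserves baric degree, so $\calK_{-l}(\beta Y)\cong\calK_{-l}(\beta^- Z_0)$), and then drive $Z_0$ down through the right-greedy factors of $\beta^-$. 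Your reduction of the first negative step to reduced-minimality is also correct, provided you use the \emph{left} complement: the condition coming from the negative analogue of Lemma \ref{lemma:6} is $\lcm\bigl(\nu_+(Z_0),\gamma\beta_1^-\bigr)=\gamma$, which under the order-reversing map ${}^\vee$ becomes $\gcd\bigl({}^\vee(\nu_+(Z_0)),(\beta_1^-)^{-1}\bigr)=\gcd\bigl(\beta_1^+,(\beta_1^-)^{-1}\bigr)=1$, using ${}^\vee\bigl((\beta_1^+)^\vee\bigr)=\beta_1^+$. With $(-)^\vee$ in place of ${}^\vee(-)$ you get a genuine $\gamma$-conjugate, so this is not purely cosmetic bookkeeping.

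The one step I would push back on is the assertion that the proof of Proposition \ref{prop:2} ``carries over with $Z_0$ in place of $\oplus_{t\in T}\calC_t$.'' In the paper's inductive step the conclusion $\delta=1$ is reached by promoting a bound on the action on the test object to a bound on the action on \emph{all} of $\calK_0$, and that promotion uses that $\oplus_{t\in T}\calC_t$ contains every reflection complex; for a proper partial sum this detection of canonical length fails. Concretely, in type $A_2$ with $\gamma=\sigma_1\sigma_2$ the braid $\sigma_1^{-1}\sigma_2^{-1}$ has dual canonical length $2$ but sends $P_1$ only into $\calK_{[-1,0]}$, so one cannot read canonical length off the action on a single reflection complex. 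The repair is to bypass the canonical-length detection and instead iterate the negative analogue of Lemma \ref{lemma:6} slice by slice: writing $a_i=(\beta_i^-)^{-1}\in\calBe^+$, the bottom slice obtained by applying $\beta_{j+1}^-$ to $X_{-j}$ has $\nu_-$ equal to $\beta_{j+1}^-$ precisely when $\gcd\bigl({}^\vee a_j,a_{j+1}\bigr)=1$, and this follows from right-greediness: $D_c^\vee(a_{j+1})\subset D_c^\vee(a_j)$ forces any common reflection divisor of ${}^\vee a_j$ and $a_{j+1}$ to divide both $a_j$ and ${}^\vee a_j$, which is impossible. With that substitution the induction closes and your argument is complete; the overall architecture (Corollary \ref{cor:6} for the positive part, coprimality for the junction, greediness for the negative tail) is the right one.
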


We can now prove Theorem \ref{thm:1}.

\begin{proof}[Proof of Theorem \ref{thm:1}]
  Let us first prove that $d$ is a metric. The only non-trivial thing to check is that $d(\beta_1,\beta_2)=0$ implies $\beta_1=\beta_2$, or in other words that
  \[
  	\beta_2^{-1}\beta_1\cdot \calK_0 \subset \calK_0\implies \beta_2^{-1}\beta_1=1.
\]
Writing the braid $\beta_2^{-1}\beta_1$ in a reduced minimal decomposition, Proposition \ref{prop:3} shows that $l$, the length of the dual negative braid in such a decomposition, is equal to 0. Thus $\beta_2^{-1}\beta_1$ is a dual positive braid, and Proposition \ref{prop:2} then implies that the braid must be trivial.

The remaining items follow from Proposition \ref{prop:3}.  For example, to prove \ref{item:3thm1}, write  $\beta = \beta^-\beta^+$ in a reduced minimal decomposition.  Then, by Proposition \ref{prop:3}, $\calL_-^\gamma(\beta)$ equals the minimal number of generators from $\calBe^{-}$ requested to write $\beta^-$.

Item \ref{item:2thm1} follows similarly.  For item \ref{item:5thm1}, suppose that $\beta (\oplus_{t\in T} \calC_t) \in \calK_{[a,\infty)}$ with $a = -\calL^\gamma_-(\beta) > 0$.  Then both $\gamma^{-a}\beta$ is a dual positive braid.  Moreover, since 
\[
\calK_0(	\gamma^{-a}\beta \oplus_{t\in T} \calC_t) \neq 0,
\]
it follows that $\gamma^{-a}\beta$ does not have
$\gamma$ as one of its letters in a minimal length decomposition.  Thus $\beta$ has exactly $a$ letters equal to $\gamma$ in a minimal length decomposition.  Item \ref{item:4thm1} is proven similarly.
\end{proof}


%
\subsection{The standard metric and the canonical $t$-structure}
%

As in the oriented graded case, for $\beta$ a braid in the braid group $\B_W$, define:
\begin{align}
\calL'_+(\beta)&:=\max \{k\in \Z:\forall X\in \calK^0, \calK^k(\beta X)\neq 0\} \\
\calL'_-(\beta)&:=\max \{k\in \Z:\forall X\in \calK^0, \calK^{-k}(\beta X)\neq 0\} \\
\calL'(\beta)&:=\max(\calL'_+(\beta),0)+\max(\calL'_-(\beta),0)
\end{align}

We now prove an analog of Theorem \ref{thm:1} for the word-length metric in the Weyl generators $L$ and the canonical $t$-structure.  The content of the theorem below is very similar to the main result of Brav-Thomas in \cite{BT}.

\begin{theorem} \label{thm:2}
Let $\beta\in \B_W$. Then we have: 
\begin{enumerate}
\item \label{item:1thm2} $d'(\beta_1,\beta_2)=\calL'(\beta_2^{-1}\beta_1)$ is a metric on $\B_W$;
\item \label{item:2thm2} $\calL'_+(\beta)$ equals the number of letters from $\mathcal{W}^+$ used in any minimal length expression for $\beta$ in the alphabet $\mathcal{W}^+\cup \mathcal{W}^-$;
\item \label{item:3thm2} $\calL'_-(\beta)$ equals the number of letters from $\mathcal{W}^-$ used in any minimal length expression for $\beta$ in the alphabet $\mathcal{W}^+\cup \mathcal{W}^-$;
\item \label{item:4thm2} $-\calL'_+(\beta)$ equals the number of times $\Delta^{-1}$ appears in any minimal length expression for $\beta$ in the alphabet $\mathcal{W}^+\cup \mathcal{W}^-$;
\item \label{item:5thm2} $-\calL'_-(\beta)$ equals the number of times $\Delta$ appears in any minimal length expression for $\beta$ in the alphabet $\mathcal{W}^+\cup \mathcal{W}^-$.
\end{enumerate}
In any of the above statement, if the considered number is negative, it stands for zero letters appearing in the expression. Combined together, these statements tell
that $\calL'$ is the word-length metric in the Weyl generators $\mathcal{W}^+\cup \mathcal{W}^-$.
\end{theorem}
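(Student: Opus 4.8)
The plan is to mirror, step by step, the proof of Theorem \ref{thm:1}, replacing the $\vec{o}$-baric structure on $\calK$ by the canonical $t$-structure and the dual Garside data by the standard Garside data. The key conceptual dictionary is: baric slices $\calK_N(Y)$ become $t$-slices $\tau^N(Y) = \calK^N(Y)[-N]$; the Bessis braids $\calBe^+$ become the positive Weyl braids $\mathcal{W}^+$ (positive lifts of elements of $W$); the dual Garside element $\gamma$ becomes $\Delta$, the positive lift of $w_0$; and $\nu_\pm$ are replaced by analogous functions $\mu_\pm(X) = \min\{\beta \in \mathcal{W}^+ : \beta X \in \calK^{\geq 1}\}$ (resp. with $\mathcal{W}^-$ and $\calK^{\leq -1}$). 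Because $\Delta \cdot \calK^0 \subset \calK^{[0,1]}$ in exactly the same way that $\gamma \cdot \calK_0 \subset \calK_{[0,1]}$ (indeed $\sigma_i(P_j) \in \calK^0$ unless $i = j$, as noted after \eqref{rel:KS2}, and $\Delta = \lcm$ of the $\sigma_i$), all the lattice-theoretic manipulations go through verbatim. I would first establish the analogue of Lemma \ref{lemma:technical1a} (that $\beta \cdot \calC \in \calK^{[0,1]}$ for $\beta \in \mathcal{W}^+$ a positive lift of a Weyl element and $\calC \in \calK^0$ a linear complex), then the analogue of Lemma \ref{lemma:2} (a product $\sigma_{i_1}\cdots\sigma_{i_k}$ is a positive lift of a reduced word iff it sends $\calK^0$ into $\calK^{[0,1]}$).

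Next I would run the chain Lemma \ref{lemma:6} $\to$ Proposition \ref{prop:2} $\to$ Corollary \ref{cor:6} $\to$ Proposition \ref{prop:3} in the $t$-structure setting. Concretely: for $\beta \in \mathcal{W}^+$ and a linear complex $X \in \calK^0$, the top $t$-slice $\calK^1(\beta X)$ has $\mu_+$ equal to a predictable factor, governed by $\lcm$ and $\gcd$ in the lattice $\B_W^+$; the analogue of Proposition \ref{prop:2} then says that if $\beta = \beta_k \cdots \beta_1$ is right-greedy in $\mathcal{W}^+$, and $X_k = \calK^k(\beta(\oplus_i P_i))$, then $\mu_+(X_k) = \beta_k$. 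The same $w_0$-translation trick as in Lemma \ref{lemma:10} (conjugating greedy factorizations by powers of $\Delta$ to pass between left-greedy positive and right-greedy negative forms — here one uses that $\Delta \sigma_i \Delta^{-1} = \sigma_{w_0(i)}$) gives the bottom-slice statement, and then the general-braid Proposition \ref{prop:3} follows since every braid has a reduced minimal expression $\beta = \beta^-\beta^+$ with $\gcd(\beta_1^+, \beta_1^-) = 1$ in the Weyl sense (Section \ref{subsec:prelimWordLength}). The injectivity of $\B_W^+ \to \B_W$ needed along the way is already available as Corollary \ref{cor:moninj}.

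Finally, with the $t$-structure analogue of Proposition \ref{prop:3} in hand, the proof of Theorem \ref{thm:2} is formally identical to that of Theorem \ref{thm:1}: the metric axiom $d'(\beta_1,\beta_2) = 0 \Rightarrow \beta_1 = \beta_2$ reduces, via the reduced minimal decomposition and the bottom-slice statement, to the case of a positive braid, which is then forced to be trivial by the top-slice statement; items \ref{item:2thm2}--\ref{item:5thm2} read off directly the lengths of $\beta^+$ and $\beta^-$ in the greedy normal form, with the contributions of $\Delta^{\pm 1}$ accounted for exactly as in the last paragraph of the proof of Theorem \ref{thm:1} (if $\beta(\oplus_i P_i) \in \calK^{[a,\infty)}$ with $a = -\calL'_-(\beta) > 0$ then $\Delta^{-a}\beta$ is positive and has no $\Delta$ in its minimal decomposition). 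The main obstacle — though it is really bookkeeping rather than a genuine difficulty — is verifying that the $t$-structure enjoys the same compatibility with the lattice operations as the baric structure did; the one place to be careful is that the heart $\calK^0$ is \emph{not} preserved by $[1]$ (unlike $\calK_0$), so the shift $\langle 1 \rangle[1]$ that stabilized things in the baric setting must be replaced throughout by the appropriate $t$-slice truncation, and one must check that the relevant reflection complexes $P_j$ (rather than the $\calC_t$) still generate enough objects to detect positivity. I expect that Proposition \ref{prop:4} (referenced earlier in the excerpt) supplies precisely the converse statement $\beta(\calK^{\geq 0}) \subset \calK^{\geq 0} \Rightarrow \beta \in \B_W^+$ that makes this rigorous.
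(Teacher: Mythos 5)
Your overall strategy---rerun the proof of Theorem \ref{thm:1} with the canonical $t$-structure in place of the baric structure and $\mathcal{W}^+$, $\Delta$ in place of $\calBe^+$, $\gamma$---is the right one, and several of your intermediate steps do appear in the paper (the analogue of Lemma \ref{lemma:technical1a} is Lemma \ref{lemma:11b}; the $\Delta$-conjugation trick is Lemma \ref{lemma:DeltaShift} and Corollary \ref{cor:5}). But the claim that ``all the lattice-theoretic manipulations go through verbatim'' fails at the central point. The chain Lemma \ref{lemma:6} $\to$ Proposition \ref{prop:2} in the dual setting rests on Corollary \ref{cor:nu}, i.e.\ on the fact that $\nu_+$ of any object of the baric heart is an $\lcm$ of reflections and that the gcd of lifters is itself a lifter; this in turn depends on Theorem \ref{thm:indecCplxes}, which decomposes every object of $\calK^0_0$ into root complexes $\calC_t$. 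The $t$-heart $\calK^0$ admits no such classification: the slices $\calK^k(\beta(\oplus_r P_r))$ are general linear complexes, not direct sums of the $P_i$, so your function $\mu_+(X)=\min\{\beta\in\mathcal{W}^+ : \beta X\in\calK^{\geq1}\}$ is not known to satisfy $\mu_+(X)X\in\calK^{\geq 1}$, and the proposed analogue of Proposition \ref{prop:2} (that $\mu_+(X_k)=\beta_k$ for the top slice of a right-greedy product) has no proof by mirroring. Deferring to Proposition \ref{prop:4} is circular, since that proposition is itself part of the proof of Theorem \ref{thm:2}.

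The paper's substitute for the missing decomposition theorem is to detect descents homologically rather than via divisibility in the monoid: it introduces $\mathfrak{P}_i=\oplus_k P_i[k]\langle k\rangle$ and the sets $E_r^{+}(X)=\{i : \Hom(\mathfrak{P}_i\langle-r\rangle,X)\neq0\}$ and $E_r^{-}(X)=\{i : \Hom(X,\mathfrak{P}_i\langle-r\rangle)\neq0\}$, and proves by induction on length that $E_1^+(\beta\oplus_r P_r)=D_L(\pi(\beta))$ for $\beta\in\mathcal{W}^+$ (Proposition \ref{prop:11a}). That induction is the real technical content of the standard case: by adjunction one must analyse chain maps out of the two-term complex $\mathfrak{P}_j\rightarrow\mathfrak{P}_i\langle-1\rangle$ and argue according to whether a component of such a map is nullhomotopic. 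Once $E_r^{\pm}$ is identified with the descent sets of the greedy factors (Lemma \ref{lemma:BT}, Corollary \ref{cor:5}, Proposition \ref{prop:4}), the final deduction of Theorem \ref{thm:2} does proceed exactly as you describe for Theorem \ref{thm:1}. So the gap is concrete: you need to replace $\mu_\pm$ by a Hom-theoretic detection of descents and prove the inductive Proposition \ref{prop:11a}; this is a genuine missing step, not bookkeeping.
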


As before, this theorem will be a consequence of some technical results.

%
\subsubsection{Understanding the action}
%

For $i\in I$, set $\mathfrak{P}_i=\oplus_{k\in \Z} P_i[k]\langle k \rangle$.
For $X\in \calK$ and $r\in \Z$, we define the following subsets of the vertex set $I$: 
\begin{align}
E_r^+(X)=\{i : \Hom(\mathfrak{P}_i\langle -r\rangle,X)\neq 0\} \label{eq:E+}, \\
E_r^-(X)=\{i : \Hom(X,\mathfrak{P}_i\langle -r\rangle)\neq 0\} \label{eq:E-}.
\end{align}
For example, let $\Gamma$ be of type $A_2$, so that $I = \{1,2\}$.  Let $X$ be the complex:
\[
\sigma_2^2\sigma_1 P_2 = \quad
\begin{tikzpicture}[anchorzero]
\node at (0,0) {$P_1$};
\node at (1.5,0) {$P_2\langle -1\rangle$};
\node at (1.5,1) {$P_2\langle -3 \rangle$};
\draw [->] (.3,0) -- (.9,0);
\draw [->] (1.5,.3) -- (1.5,.7);
\end{tikzpicture}
\]
Then we have $E_0^-=\{1\}$, $E_0^+=\emptyset$ and $E_1^-=\emptyset$, $E_1^+=\{2\}$. 

We first prove some facts about the $E_r^{\pm}$ sets for positive Weyl braids $\beta\in \mathcal{W}^+$. This essentially recovers part of \cite[Proposition 3.1]{BT}.

\begin{prop} \label{prop:11a}
Let $\beta \in \mathcal{W}^+$, with $\pi(\beta) = w\in W$. Then 
\[
	E_1^+(\beta \oplus_{r\in I} P_r)= D_L(w).
\]
\end{prop}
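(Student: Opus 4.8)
The plan is to analyze the minimal complex of $\beta \oplus_{r\in I}P_r$ directly in terms of the Khovanov--Seidel formulas \eqref{rel:KS1} by inducting on the word-length $l(w) = l(\beta)$. First I would set up the base case: if $\beta = 1$ then $\beta\oplus_r P_r = \oplus_r P_r$ sits in $\calK^0$, and $\Hom(\mathfrak{P}_i\langle -1\rangle, \oplus_r P_r) = 0$ for all $i$ because the only nonzero $\Hom(\mathfrak{P}_i\langle -1\rangle, P_r)$ would need a degree-$(-1)$ path-length map $P_i\to P_r$, which does not exist (all such homs live in non-negative path-length degree, with the degree-$0$ part only when $i=r$). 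So $E_1^+(\oplus_r P_r) = \emptyset = D_L(1)$.

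For the inductive step, write $\beta = \sigma_i\beta'$ with $l(\beta) = l(\beta')+1$, so $w = s_iw'$ with $l(w)=l(w')+1$, i.e. $s_i\notin D_L(w')$. By induction $E_1^+(\beta'\oplus_r P_r) = D_L(w')$. I would then compute $E_1^+(\sigma_i\cdot(\beta'\oplus_r P_r))$ by studying what tensoring with the Khovanov--Seidel complex for $\sigma_i$ does to the level-$0$ and level-$1$ slices. The key adjunction is that $\Hom(\mathfrak{P}_j\langle -1\rangle, \sigma_i Y) \cong \Hom(\sigma_i^{-1}\mathfrak{P}_j\langle -1\rangle, Y)$, so I need to understand $\sigma_i^{-1}\mathfrak{P}_j$: by \eqref{rel:KS2}, $\sigma_i^{-1}P_i = P_i[1]\{1\}\langle 2\rangle$, $\sigma_i^{-1}P_j$ is a two-term complex $P_i\langle?\rangle\to P_j$ for $j$ adjacent to $i$, and $\sigma_i^{-1}P_j = P_j$ for $j\nadj i$. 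Passing to the $\mathfrak{P}$-version (direct sum over all shifts $[k]\langle k\rangle$), the cleanest route is: the functor $\sigma_i$ sends $\calK^{\geq 0}$ into $\calK^{\geq -1}$ and preserves $\calK^{\leq 0}$ only after truncation; more precisely I want the statement that $i\in E_1^+(\sigma_i Y)$ iff $P_i\langle -1\rangle[?]$ survives in the minimal complex of $\sigma_i Y$ in the appropriate level, and to track this via the cone description of $\sigma_i Y = \mathrm{Cone}(Q_iY\{-1\}\langle-1\rangle \to$ (copies of) $P_i\otimes\cdots) $ — equivalently $\sigma_i Y$ fits in a triangle $P_i\otimes_\C(Q_iY)\{-1\}\langle-1\rangle \to Y \to \sigma_i Y\to$. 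I would use this triangle together with the fact that $Q_iY$ computes (up to grading) the "$i$-multiplicity" of $Y$, i.e. $\Hom(\mathfrak{P}_i,Y)$, to get an exact-sequence relation between $E^\pm_\bullet(\sigma_i Y)$ and $E^\pm_\bullet(Y)$.

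Concretely the mechanism should be the standard "wall-crossing" behavior: applying $\sigma_i$ either adds $i$ to the left-descent set (when $s_i\notin D_L(w')$, which happens exactly when the $P_i\langle 1\rangle$-end of $\sigma_i^{-1}P_j$ does not cancel against anything, so a new $P_i$-summand at level $1$ appears), or removes it; since we are in the case $s_i\notin D_L(w')$, we are in the "adding" case and $E_1^+(\beta\oplus_r P_r) = E_1^+(\beta'\oplus_r P_r) \cup\{i\}$ — but I must also check no other $j$ enters or leaves, which follows because $\sigma_i^{-1}\mathfrak{P}_j$ for $j\neq i$ only involves $P_j$ and $P_i$ with path-length/level shifts that, when $j\ne i$, keep $E_1^+$ unchanged in the $j$-coordinate (the cancellations are governed exactly by whether $s_j\in D_L(s_iw')$, i.e. by the subword/exchange condition in $W$). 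The main obstacle is precisely this bookkeeping: making rigorous the claim that the level-$1$, path-length-$1$ part of the minimal complex of $\sigma_i(\beta'\oplus_r P_r)$ in the $P_j$-isotypic component is nonzero iff $s_j\in D_L(s_iw')$, controlling all the Gaussian-elimination cancellations. I expect the right framework to handle this is to prove simultaneously, by induction, a more refined statement identifying $E_1^+(\beta\oplus_r P_r)$, $E_0^-(\beta\oplus_r P_r)$ and perhaps the structure of the bottom two $t$-slices of $\beta\oplus_r P_r$, mirroring \cite[Proposition 3.1]{BT}; the extra inductive hypotheses are what make the cancellation analysis go through cleanly, since knowing the shape of $\tau^0$ and $\tau^1$ of $\beta'\oplus_r P_r$ tells you exactly which terms of $\sigma_i^{-1}\mathfrak{P}_j$ pair off. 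Once that refined statement is in place, restricting to the $E_1^+$-coordinate gives $E_1^+(\beta\oplus_r P_r) = D_L(w)$ as desired.
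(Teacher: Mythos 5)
Your overall strategy (induction on $l(w)$, the adjunction $\Hom(\mathfrak{P}_i\langle -1\rangle,\sigma_j Y)\cong\Hom(\sigma_j^{-1}\mathfrak{P}_i\langle -1\rangle,Y)$, and the Khovanov--Seidel formulas for $\sigma_j^{-1}P_i$) is exactly the paper's starting point, but the proposal stops short of the actual content of the proof. The step you flag as ``the main obstacle'' --- controlling which summands survive Gaussian elimination, i.e.\ proving that the relevant $\Hom$-space is nonzero precisely when $s_j\in D_L(w)$ --- \emph{is} the proposition, and your plan for it (``prove simultaneously a more refined statement identifying $E_1^+$, $E_0^-$ and the bottom two $t$-slices'') is never formulated or carried out, so nothing is actually established. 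Worse, the intermediate claim $E_1^+(\beta\oplus_r P_r)=E_1^+(\beta'\oplus_r P_r)\cup\{i\}$ in the ``adding'' case is false as stated: in type $A_2$ with $w'=s_2$ and $w=s_1s_2$ one has $D_L(w')=\{s_2\}$ but $D_L(w)=\{s_1\}$, so a descent of $w'$ disappears even though the length increases; and your subsequent assertion that the $j$-coordinate bookkeeping ``is governed exactly by whether $s_j\in D_L(s_iw')$'' is circular, since that equivalence is precisely what must be proved.

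The paper closes this gap without strengthening the induction hypothesis, by a direct homotopy-category argument you could adopt. After the adjunction, a class in $\Hom(\mathfrak{P}_i\langle-1\rangle,\sigma_j\mu(\oplus P_r))$ is represented by a chain map $(f,g)$ out of the two-term complex $\mathfrak{P}_j\to\mathfrak{P}_i\langle-1\rangle$, and the component $g$ on the second term is itself a chain map from $\mathfrak{P}_i\langle-1\rangle$ to $\mu(\oplus P_r)$. One then splits into two cases: if $g$ is not nullhomotopic, the induction hypothesis gives $s_i\in D_L(\pi(\mu))$, so one may rewrite $\beta=\sigma_j\sigma_i\beta'$ and apply the adjunction once more; if $g$ is nullhomotopic via a homotopy $h$, then $f+hp$ is a non-nullhomotopic chain map out of $\mathfrak{P}_j\langle-1\rangle$, contradicting (by induction) the fact that $s_j$ is not a descent of $\pi(\mu)$. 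This two-case homotopy analysis, run in both directions of the inclusion, is what replaces the unresolved ``cancellation bookkeeping'' in your sketch; without it, or the unstated refined inductive statement, the argument does not go through.
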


\begin{proof}

The proof goes by induction on the length $l(w)$.  When $l(w)=1$, the result is immediate, as $E_1^+(\sigma_i \oplus_{r\in I} P_r)= \{i\} = D_L(s_i)$.

Suppose now that the claim is proven for elements in $W$ of length $k$, and let $\beta\in \mathcal{W}^+$ with 
$\pi(\beta) = w$ and $l(w)= k+1$.  

We first prove that  $ E_1^+(\beta (\oplus_{r\in I} P_r)) \subset D_L(w)$.
Suppose that
\[
\Hom(\mathfrak{P}_i\langle -1\rangle,\beta (\oplus P_r))\neq \{0\}.
\]
That $s_i\in D_L(w)$ follows easily by induction hypothesis if $\beta = \sigma_j \mu$ with $\sigma_i\sigma_j = \sigma_j\sigma_i$.  For in that case:
\[
\Hom(\mathfrak{P}_i\langle -1\rangle,\sigma_j\mu (\oplus P_r))=\Hom(\mathfrak{P}_i\langle -1\rangle,\mu(\oplus P_r))\neq\{0\},
\]
and by induction hypothesis, we have that $s_i\in D_L^+(\pi(\mu))$; since $s_is_j=s_js_i$,
we conclude that $s_i\in D_L^+(w)$.

So suppose that $\beta = \sigma_j \mu$ with $l(\pi(\beta)) = l(\pi(\mu))+1$, and $\sigma_i\sigma_j \sigma_i= \sigma_j\sigma_i\sigma_j$.
In that case, we have:
\begin{equation}\label{eq:chainmap}
0\neq \Hom(\mathfrak{P}_i\langle -1 \rangle,\sigma_j\mu (\oplus P_r))=\Hom((\mathfrak{P}_j \rightarrow \mathfrak{P}_i\langle -1\rangle),\mu (\oplus P_r)).
\end{equation}

Denote $\mu(\oplus P_r)=\oplus X_r$ the decomposition of the complex $\mu(\oplus P_r)$ in terms of the homological grading, and consider a non-zero representative $(f,g)$ of $\Hom((P_j\langle -k\rangle \rightarrow P_i\langle -k-1\rangle)[-k],\mu(\oplus P_r))$ as illustrated below:
\[
\begin{tikzpicture}[anchorbase]
  \node (P1) at (0,0) {$P_j\langle -k\rangle$};
  \node (P2) at (2.5,0) {$P_i\langle -k-1\rangle$};
  \draw [->] (P1) -- (P2);
  \node at (1.25,.3) {$p$};
  \node (dotsl) at (-4,-2) {$\cdots$};
  \node (Xk-2) at (-2.5,-2) {$X_{k-2}$};
  \node (Xk-1) at (0,-2) {$X_{k-1}$};
  \node (Xk) at (2.5,-2) {$X_k$};
  \node (Xk+1) at (5,-2) {$X_{k+1}$};
  \node (dotsr) at (6.5,-2) {$\cdots$};
  \draw [->] (dotsl) -- (Xk-2);
  \draw [->] (Xk-2) -- (Xk-1);
  \node at (-1.25,-1.7) {$\partial$};
  \draw [->] (Xk-1) -- (Xk);
  \node at (1.25,-1.7) {$\partial$}; 
  \draw [->] (Xk) -- (Xk+1);
  \draw [->] (Xk+1) -- (dotsr);
  \node at (3.75,-1.7) {$\partial$};
  \draw [->] (P1) -- (Xk-1);
  \node at (.2,-1) {$f$};
  \draw [->] (P2) -- (Xk);
  \node at (2.7,-1) {$g$};
\end{tikzpicture}
\]

Then, $g$ can be restricted to a chain map in $\Hom(\mathfrak{P_i}\langle -1\rangle,\mu(\oplus P_r))$. If it is not homotopic to zero, then this proves that $\Hom(\mathfrak{P_i}\langle -1\rangle, \mu(\oplus P_r))\neq \{0\}$ and by induction that $s_i$ is a descent of $\pi(\mu)$. Thus $\beta=\sigma_j\sigma_i\beta'$ with $\beta'\in \mathcal{W}^+$ and $l(w)=l(\pi(\beta')+2$ and 
\[
	\Hom(\mathfrak{P}_i\langle -1 \rangle,\sigma_j\sigma_i\beta'(\oplus P_r))=\Hom(\mathfrak{P}_j\langle -1 \rangle,\beta'(\oplus P_r)) \neq 0.
\]
This shows that $s_j$ is a left descent of $\pi(\beta')$ and, using the braid relation, that $s_i$ indeed is a left descent of $\pi(\beta)$.

Now, if $g$ were homotopic to zero with homotopy map $h:P_i[-k]\langle-k-1\rangle \rightarrow X_{k-1}[-(k-1)]$, then $f+hp\in \Hom(\mathfrak{P_i}\langle -1\rangle,\mu (\oplus P_r)$ would be a chain map non-homotopic to zero, for if it were, $(f,g)$ would also have been homotopic to zero. But since $s_j$ is not a descent of $\pi(\mu)$, by induction hypothesis we know that 
\[
\Hom(\mathfrak{P}_j\langle -1 \rangle, \mu(\oplus P_r))=0.
\]

We now prove the inclusion $D_L(w)\subset E_1^+(\beta \oplus_{r\in I} P_r)$.
Suppose $\beta=\sigma_i\beta'$, $\pi(\beta) = w$, $\pi(\beta') = w'$, and so that $w=s_iw'$, $k+1=l(w)>l(w')$.  
We want to show that $i\in E_1^+(\beta)$.
Suppose that $\beta'=\sigma_j\mu$, with $l(w') > l(\pi(\mu)$. If $\sigma_i$ and $\sigma_j$ commute, the claim follows by induction hypothesis, as then $\beta = \sigma_i\sigma_j \mu = \sigma_j\sigma_i\mu$, with 
$i \in D_L(s_i\pi(\mu)) = E_1^+(\sigma_i\mu)$.  Now, since $\sigma_j P_i \cong P_i$, it follows that $i\in E_1^+(\beta)$.

Now suppose $\beta'=\sigma_j\mu$ with $s_is_js_i = s_js_is_j$ in $W$. 
Then, since $s_j$ is a descent of $\pi(\beta')$, we have by induction hypothesis that:
\[
\Hom(\mathfrak{P_j}\langle-1\rangle, \beta'(\oplus P_r))\neq \{0\}.
\]
This implies that
\[
\Hom((\mathfrak{P}_j\langle-1\rangle\rightarrow \mathfrak{P}_i\langle -2\rangle), \sigma_i\beta'(\oplus P_r))\neq \{0\}.
\]
Let $(f,g)$ be chain map between minimal complexes representing a non-zero element of 
$\Hom((\mathfrak{P}_j\langle-1\rangle\rightarrow \mathfrak{P}_i\langle -2\rangle), \sigma_i\beta'(\oplus P_r))$, as illustrated below.
\[
\begin{tikzpicture}[anchorbase]
  \node (P1) at (0,0) {$P_j\langle -k\rangle$};
  \node (P2) at (2.5,0) {$P_i\langle -k-1\rangle$};
  \draw [->] (P1) -- (P2);
  \node at (1.25,.3) {$p$};
  \node at (-4.5,-2) {$\sigma_i\beta'(\oplus P_r):$};
   \node (Xk-2) at (-2.5,-2) {$X_{k-2}$};
  \node (Xk-1) at (0,-2) {$X_{k-1}$};
  \node (Xk) at (2.5,-2) {$X_k$};
  \node (Xk+1) at (5,-2) {$X_{k+1}$};
  \draw [->] (-3.5,-2) -- (Xk-2);
  \draw [->] (Xk-2) -- (Xk-1);
  \node at (-1.25,-1.7) {$\partial$};
  \draw [->] (Xk-1) -- (Xk);
  \node at (1.25,-1.7) {$\partial$}; 
  \draw [->] (Xk) -- (Xk+1);
  \node at (3.75,-1.7) {$\partial$};
  \draw [->] (Xk+1) -- (5.8,-2);
  \draw [->] (P1) -- (Xk-1);
  \node at (.2,-1) {$f$};
  \draw [->] (P2) -- (Xk);
  \node at (2.7,-1) {$g$};
\end{tikzpicture}
\]

Then $g$ is a well-defined chain map.  If $g$ is not homotopic to 0, then we are done, as then 
$\Hom(\mathfrak{P}_i \langle -1 \rangle, \beta(\oplus P_r))\neq 0$.  On the other hand, if $g$ is homotopic to 0, then following the same argument as in the first half of the above proof, it follows that
$\Hom(\mathfrak{P}_j \langle -1 \rangle, \beta(\oplus P_r))\neq 0$, in which case $s_j$ is a descent of $w$.  In that case, since $s_i$ and $s_j$ are both descents of $w$, it follows that 
\[
	\beta = \sigma_i\sigma_j\sigma_i \alpha = \sigma_j\sigma_i\sigma_j \alpha
\]
for $\alpha\in \mathcal{W}^+$ with $l(\pi(\alpha))+3 = l(w)$.

By induction hypothesis, we have:
\[
\Hom(\mathfrak{P}_j\langle-1\rangle,\sigma_j\alpha(\oplus P_r))\neq \{0\},
\]
from which we deduce:
\[
\Hom(\sigma_j\sigma_i\mathfrak{P}_j\langle -1\rangle,\beta(\oplus P_r))=\Hom(\mathfrak{P}_i\langle -1\rangle,\beta (\oplus P_r))\neq \{0\}
\]
which concludes the proof.

\end{proof}

The following lemma is the analog for the classical Garside monoid of the dual Garside monoid's Lemma \ref{lemma:technical1a}.

\begin{lemma} \label{lemma:11b}
Let $\beta\in \mathcal{W}^+$. Then $\beta \cdot(\oplus P_r)\in \calK^{[0,1]}$.
\end{lemma}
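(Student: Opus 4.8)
The plan is to induct on the length $l(\pi(\beta))$ of the Weyl group element underlying $\beta$, using the Khovanov--Seidel formulas \eqref{rel:KS1} for the action of a single generator $\sigma_i$ on an indecomposable projective $P_j$. The base case $l=0$ is trivial since $\oplus_r P_r \in \calK^0 \subset \calK^{[0,1]}$, and the case $l=1$ is immediate from \eqref{rel:KS1}: indeed $\sigma_i P_i \cong P_i[-1]\{-1\}\langle -2\rangle \in \calK^1$ (it has a single chain summand of level $1 - (-2)$... more precisely one checks directly that $\sigma_i(\oplus_r P_r)$ has $t$-slices only in degrees $0$ and $1$), and for $j\neq i$ the complex $\sigma_i P_j$ is $P_j \to P_i\langle -1\rangle$ (up to an orientation shift that we ignore here), which lies in $\calK^0$, while the commuting case gives $P_j \in \calK^0$. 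So $\sigma_i(\oplus_r P_r) \in \calK^{[0,1]}$.

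For the inductive step, write $\beta = \sigma_i \beta'$ with $l(\pi(\beta)) = l(\pi(\beta')) + 1$, so by the induction hypothesis $\beta'(\oplus_r P_r) \in \calK^{[0,1]}$. I then want to apply $\sigma_i$ and show the result stays in $\calK^{[0,1]}$. Since $\sigma_i$ is right adjoint/left adjoint to explicit functors (Lemma \ref{lem:dualPi_ex}), and more directly since the cone construction of Section~2.3 tells us $\sigma_i(Y)$ is built from $Y$ and $P_i\otimes Q_i\{-1\}\langle-1\rangle \otimes Y$ via a cone, the key point is to understand how the truncation functors $\tau^N$ interact with $\sigma_i$. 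The cleanest route: $\sigma_i$ sends a complex in $\calK^0$ (the $t$-heart) into $\calK^{[0,1]}$ — this is visible from \eqref{rel:KS1} since $\sigma_i(P_j)\in \calK^0$ for $j\neq i$ and $\sigma_i(P_i)\in \calK^1$, and $\calK^0$ is generated under extensions and $\langle 1\rangle[1]$-shifts (within a single $t$-slice) by the $P_j$. More precisely, if $Z \in \calK^0$ is a linear complex, then the chain groups of $Z$ in cohomological degree $m$ are sums of $P_j\langle -m\rangle$, and applying $\sigma_i$ replaces each such summand according to \eqref{rel:KS1}, keeping everything within cohomological/level range $[0,1]$ relative to $Z$; hence $\sigma_i(\calK^0) \subset \calK^{[0,1]}$, and more generally $\sigma_i(\calK^N)\subset \calK^{[N,N+1]}$. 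Applying this to each of the two $t$-slices $\tau^0(\beta'(\oplus_r P_r)) \in \calK^0$ and $\tau^1(\beta'(\oplus_r P_r))\in \calK^1$ of the inductively-controlled complex, and using that $\sigma_i$ is triangulated so it sends the truncation triangle of $\beta'(\oplus_r P_r)$ to a triangle relating $\sigma_i$ of the slices, we conclude $\sigma_i\beta'(\oplus_r P_r) \in \calK^{[0,2]}$ a priori. To rule out a nonzero $\calK^2$-slice, I would invoke Proposition \ref{prop:11a} together with Lemma \ref{lem:pingpong}-type positivity: more honestly, since $\beta \in \mathcal W^+$, the positive monoid acts preserving $\calK^{\geq 0}$ (the lemma just before Remark in Section 2.7 / the ``immediately implies'' lemma), so $\beta(\oplus_r P_r)\in \calK^{\geq 0}$; and an entirely symmetric argument with the \emph{opposite} greedy factorization $\beta = \beta'' \sigma_i$, using that $\sigma_i^{-1}$ preserves $\calK^{\leq 0}$ applied to... no — rather, one shows directly by the same induction on the \emph{other} side that $\beta(\oplus_r P_r)\in \calK^{\leq 1}$: write $\beta = \beta''\sigma_j$ with $l$ dropping, note $\sigma_j(\oplus_r P_r)\in\calK^{[0,1]}$ and then $\beta'' \in \mathcal W^+$ preserves... this needs the converse bound.

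The cleanest fix, and the step I expect to be the main obstacle, is establishing the upper bound $\beta(\oplus_r P_r)\in\calK^{\leq 1}$: the naive induction loses a degree at each step. The right approach is the dual of Lemma \ref{lem:neg}/Lemma \ref{lem:neg_gen} reasoning: one argues that for $\beta$ positive, $\tau^{\geq 2}(\beta(\oplus_r P_r)) = 0$ by observing that any such surviving term would have to be cancelled, under Gaussian elimination in the minimal complex, against terms which cannot exist because $\beta$ positive forces $\beta(\oplus_r P_r)\in\calK^{\geq 0}$ and a level count: the functor $\sigma_i$ raises the level of a projective summand by at most $1$ (from \eqref{rel:KS1}, $\sigma_i$ takes $P_j$ of level $\ell$ to summands of level $\ell$ or $\ell+1$), and $\oplus_r P_r$ has all summands of level $0$; since $\beta$ has length equal to $l(\pi(\beta))$ only if... no, length in \emph{Artin} generators. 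I will instead mirror the Brav--Thomas argument (this lemma ``essentially recovers part of \cite[Prop.~3.1]{BT}''): prove by induction on $l(\pi(\beta))$ that the minimal complex of $\beta(\oplus_r P_r)$ has chain groups only in cohomological degrees $0$ and $1$ — the degree-$1$ part being controlled by $D_L(\pi(\beta))$ exactly as in Proposition \ref{prop:11a}, and the inductive step for $\sigma_i$ showing that applying $\sigma_i$ to a complex supported in degrees $[0,1]$ with degree-$1$ part a sum of $P_k$'s either keeps it in $[0,1]$ (when $s_i$ is a descent, there is a cancellation) or moves nothing up past degree $1$ (when $s_i$ is not a descent, $\sigma_i$ acts on the degree-$0$ part landing in $[0,1]$ and fixes/extends the degree-$1$ part). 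Combining the resulting two-sided support statement with the already-known $\calK^{\geq 0}$ containment yields $\beta(\oplus_r P_r)\in\calK^{[0,1]}$, as claimed. $\square$
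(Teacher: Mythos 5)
Your final sketch lands on the same strategy as the paper: induct on $l(\pi(\beta))$, observe that a single $\sigma_i$ raises the level of a chain summand by at most one (and by one only on $P_i$-summands), so that the only issue is ruling out a nonzero $\calK^2$-slice, and then use Proposition \ref{prop:11a} to kill that slice via descents. Your base case and the observation $\sigma_i(\calK^N)\subset\calK^{[N,N+1]}$ are fine. One small simplification you miss: writing $\beta=\sigma_i\beta'$ with $l(\pi(\beta))=l(\pi(\beta'))+1$ automatically forces $s_i\notin D_L(\pi(\beta'))$, so only your ``non-descent'' branch ever occurs in the induction; there is no cancellation case to analyze.

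The genuine gap is in the decisive step, which you assert rather than prove: ``when $s_i$ is not a descent, $\sigma_i$ moves nothing up past degree $1$.'' As phrased, you are implicitly claiming that the level-$1$ slice of $\beta'(\oplus_r P_r)$ contains no $P_i$-summands, but Proposition \ref{prop:11a} does not say that: it controls the morphism space $\Hom(\mathfrak{P}_i\langle -1\rangle,\beta'(\oplus_r P_r))$, not the presence of chain summands, and a priori the level-$1$ slice could contain $P_i$-summands which $\sigma_i$ pushes to level $2$. The argument that actually closes this is the following (and is what the paper does): if $\calK^2(\sigma_i\beta'(\oplus_r P_r))\neq 0$, then since every level-$2$ summand is a shift of $P_i$ and there are no degree-one endomorphisms of $P_i$, this nonzero top slice produces a nonzero element of $\Hom(\mathfrak{P}_i\langle -2\rangle,\sigma_i\beta'(\oplus_r P_r))$; by adjunction (equivalently, applying $\sigma_i^{-1}$ and using $\sigma_i^{-1}\mathfrak{P}_i\langle -2\rangle\cong\mathfrak{P}_i\langle -1\rangle$ up to orientation shift) this space is isomorphic to $\Hom(\mathfrak{P}_i\langle -1\rangle,\beta'(\oplus_r P_r))$, which vanishes by Proposition \ref{prop:11a} precisely because $s_i\notin D_L(\pi(\beta'))$. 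Without this Hom-space/adjunction step your induction does not close, and your earlier detours (the two-sided induction, the Gaussian-elimination count) do not supply a substitute for it.
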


\begin{proof}
The proof goes by induction. When $l(\pi(\beta))=1$, the claim follows by explicit computation.

Let us now take $\beta=\sigma_k \beta'$, with $l(\pi(\beta)) = l = l(\pi(\beta')) + 1$.  Note that this implies that 
$s_k\notin D_L(\pi(\beta'))$.
By induction hypothesis, $\beta'(\oplus P_r)\in \calK^{[0,1]}$.  Now, if 
\[
	\calK^2 (\sigma_k\beta' (\oplus P_r))\neq 0,
\]
then we must have 
\[
	\Hom(\mathfrak{P}_k\langle -2 \rangle,\beta \oplus P_r) \cong
	\Hom(\mathfrak{P}_k\langle -1 \rangle,\beta'\oplus P_r) \neq 0,
\]
whence we would have $s_k\in D_L(\pi(\beta'))$.
From this it follows that $\beta (\oplus P_r) \in \calK^{[0,1]}$, as desired.
 \end{proof}

The above, together with (the positive analog of) Lemma \ref{lem:neg} now implies the following.
\begin{lemma}\label{lemma:11c}
Let $\beta\in \mathcal{W}^+$. Then $\beta P_i\in \calK^1\Longleftrightarrow s_i\in D_R(\pi(\beta))$.
\end{lemma}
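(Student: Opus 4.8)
The plan is to read the statement off from Lemma \ref{lemma:11b} — which confines $\beta(\oplus_r P_r)$, and hence each $\beta P_i$, to the window $\calK^{[0,1]}$ — together with the bookkeeping fact that $\sigma_i P_i = P_i[-1]\{-1\}\langle -2\rangle$ sits one $t$-level higher than $P_i$, i.e. in $\calK^1$, and the fact recorded just before the braid-group-action remarks that any $\beta\in\B_W^+$ satisfies $\beta(\calK^{\geq 0})\subset\calK^{\geq 0}$, hence also $\beta(\calK^{\geq 1})\subset\calK^{\geq 1}$. Since $\beta$ acts by an autoequivalence, $\beta P_i$ is never the zero object, so a complex lying in $\calK^{[0,1]}$ and also bounded below at level $1$ must in fact lie in $\calK^1$.

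For the forward implication I would use the standard Garside fact that a right descent of $\pi(\beta)$ lifts to a right divisor of $\beta$ in the positive monoid: if $s_i\in D_R(\pi(\beta))$, pick a reduced word for $\pi(\beta)$ ending in $s_i$ and lift it, giving $\beta = \beta'\sigma_i$ with $\beta'\in\mathcal{W}^+$ and $l(\pi(\beta')) = l(\pi(\beta))-1$. Then $\beta P_i = \beta'(\sigma_i P_i)$ with $\sigma_i P_i\in\calK^1\subset\calK^{\geq 1}$ and $\beta'$ preserving $\calK^{\geq 1}$; intersecting the resulting membership $\beta P_i\in\calK^{\geq 1}$ with $\beta P_i\in\calK^{[0,1]}$ from Lemma \ref{lemma:11b} forces $\beta P_i\in\calK^1$.

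For the reverse implication I would argue by contraposition. If $s_i\notin D_R(\pi(\beta))$ then $l(\pi(\beta)s_i)=l(\pi(\beta))+1$, so $\beta\sigma_i$ is again a positive Weyl braid in $\mathcal{W}^+$, and $(\beta\sigma_i)P_i = \beta(\sigma_i P_i) = (\beta P_i)[-1]\{-1\}\langle -2\rangle$. Were $\beta P_i$ in $\calK^1$, this complex would lie in $\calK^1[-1]\langle-2\rangle = \calK^2$; but Lemma \ref{lemma:11b} applied to the simple braid $\beta\sigma_i$ puts $(\beta\sigma_i)P_i$ in $\calK^{[0,1]}$, and a nonzero object cannot lie in both $\calK^2$ and $\calK^{[0,1]}$. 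This contradiction shows $\beta P_i\notin\calK^1$.

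I do not expect a genuine obstacle here; the lemma is a short corollary of Lemma \ref{lemma:11b}. The only point requiring care is tracking grading conventions — one must check that $[-1]\langle -2\rangle$ raises the $t$-level of every chain summand by exactly $1$ (since $[1]$ raises the level by $1$ and $\langle 1\rangle$ lowers it by $1$, so $[-1]$ lowers by $1$ and $\langle -2\rangle$ raises by $2$), which is exactly what puts $\sigma_i P_i$ in $\calK^1$ and drives both halves of the argument. As an alternative to the contraposition step, one may instead invoke the positive-braid analogue of Lemma \ref{lem:neg} — for $\beta\in\B_W^+$, $\beta P_i\in\calK^{>0}$ iff $\beta=\alpha\sigma_i$ with $\alpha\in\B_W^+$ and $l(\alpha)+1=l(\beta)$, proved by the same induction on $l(\beta)$ as Lemma \ref{lem:neg} — and combine it with Lemma \ref{lemma:11b} to read off the right descent directly.
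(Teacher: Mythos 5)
Your argument is correct, and your closing ``alternative'' is in fact exactly the paper's proof: the authors dispose of this lemma in one line by combining Lemma \ref{lemma:11b} with the positive-braid analogue of Lemma \ref{lem:neg} (i.e.\ $\beta P_i\in\calK^{>0}$ iff $\sigma_i$ right-divides $\beta$ in $\B_W^+$), which for $\beta\in\mathcal{W}^+$ translates into $s_i\in D_R(\pi(\beta))$. Your primary argument is a mild but genuine variant that sidesteps re-running the induction behind Lemma \ref{lem:neg}: you exploit the fact that $\beta$ is a \emph{simple} braid, so that when $s_i\notin D_R(\pi(\beta))$ the product $\beta\sigma_i$ is again in $\mathcal{W}^+$ and Lemma \ref{lemma:11b} can be applied to it as well, with the shift bookkeeping $\sigma_iP_i=P_i[-1]\{-1\}\langle-2\rangle\in\calK^1$ (correctly computed: level $m-k=-1-(-2)=1$) driving both implications. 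What the paper's route buys is generality — the divisibility criterion of Lemma \ref{lem:neg} works for arbitrary positive braids, not just simple ones — while your route is shorter and entirely local to Lemma \ref{lemma:11b}; both are sound, and all the small points you would need (positive lifts of reduced words are well-defined, $\beta P_i\neq 0$ since $\beta$ is an autoequivalence, positive braids preserve $\calK^{\geq 1}$ because they preserve $\calK^{\geq 0}$ and commute with shifts) are available in the paper.
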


\begin{lemma} \label{lemma:DeltaShift}
$\Delta \mathfrak{P}_i \cong \mathfrak{P}_j \langle -1 \rangle \in \calK^{1}$, with $\Delta \sigma_i\Delta^{-1}=\sigma_j$.
\end{lemma}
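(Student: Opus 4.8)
The statement has two parts: first, that $\Delta \mathfrak{P}_i \cong \mathfrak{P}_j\langle -1\rangle$ for an appropriate $j$ (lying in $\calK^1$), and second, that conjugation by $\Delta$ realizes the diagram automorphism $\sigma_i \mapsto \sigma_j$. These are of course classical facts about the Garside element, but here the point is to derive them from the categorical action, consistently with the style of the surrounding section. The natural starting point is the observation that $\Delta$ is the positive lift of the longest element $w_0$, and that $w_0 s_i w_0^{-1} = s_{j}$ where $i \mapsto j$ is the automorphism of $\Gamma$ induced by $-w_0$; correspondingly $\Delta \sigma_i \Delta^{-1} = \sigma_j$ holds already in $\B_W^+$ (this is standard Garside theory and may be quoted, or re-derived from $\Delta = \sigma_i \Delta_i' = \Delta_i'' \sigma_j$-type factorizations). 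Given this, the bimodule identity $\Delta \sigma_i \Delta^{-1} = \sigma_j$ in $\B_W$ follows by applying the homomorphism of Proposition~\ref{prop:braid} (the braid action), so the second assertion reduces to a citation plus the functoriality of the action.

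For the first assertion, the plan is to compute $\Delta \mathfrak{P}_i$ directly using the tools already assembled. Recall $\mathfrak{P}_i = \oplus_{k} P_i[k]\langle k\rangle$. By Lemma \ref{lemma:11b}, $\Delta(\oplus_r P_r) \in \calK^{[0,1]}$; since every $s_r$ is a right descent of $w_0$, Lemma \ref{lemma:11c} gives $\Delta P_r \in \calK^1$ for every $r$, i.e.\ $\Delta P_r$ is (homotopic to) a linear complex concentrated in a single $t$-slice, namely $\calK^1$. Moreover $\Delta$ is a self-equivalence of $\calK$ which, by the above, sends $\calK^0$ into $\calK^1$ and hence (being invertible, with $\Delta^{-1}$ sending $\calK^1$ back) restricts to an equivalence $\calK^0 \xrightarrow{\sim} \calK^1$. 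Now $\{P_r\}_{r\in I}$ are, up to grading shifts, precisely the indecomposable projective objects generating $\calK^0$ (equivalently the simple-top objects, by the relation between $\calK^0$ and the category of linear complexes recalled in \S\ref{subsec:tbstruct}), so $\Delta$ must permute the set $\{P_r\langle *\rangle[*]\}$ of indecomposables in $\calK^1 = \calK^0[1]$. Thus $\Delta P_i \cong P_j\langle a\rangle[b]$ for some $j$ and some shifts $a,b$. To pin down $j$ and the shift, I would take Euler characteristic / Grothendieck-group classes (the action on $K_0$ is the $q$-Burau representation, and at $q=1$ it is the $W$-action on the root lattice, under which $w_0(\alpha_i) = -\alpha_{j}$), which identifies $j$ with the diagram-automorphism image of $i$ and fixes the homological shift to make the result land in $\calK^1$; matching the internal path-length grading then forces $\langle -1\rangle$, consistent with $\Delta P_r \in \calK^1$ meaning ``level $= 1$'', i.e.\ cohomological degree $1$ and internal shift $\langle -1\rangle$ (since level $= m - k$). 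Summing over all shifts $[k]\langle k\rangle$ then yields $\Delta \mathfrak{P}_i \cong \mathfrak{P}_j\langle -1\rangle$, and this object lies in $\calK^1$ as claimed.

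Concretely, the key steps in order are: (1) quote/recall $\Delta \sigma_i\Delta^{-1} = \sigma_j$ in $\B_W^+$ with $j$ the $-w_0$-image of $i$, and deduce the bimodule statement via functoriality of the action; (2) invoke Lemmas \ref{lemma:11b} and \ref{lemma:11c} to get $\Delta P_r \in \calK^1$ for all $r$, hence that $\Delta$ restricts to an equivalence $\calK^0 \to \calK^1$ permuting indecomposables; (3) use $K_0$ (the Burau/root-lattice specialization) to identify the permutation with the diagram automorphism and thereby determine $\Delta P_i \cong P_j\langle -1\rangle[1]$, equivalently $\Delta P_i \cong P_j\langle -1\rangle$ as an element of $\calK^1$; (4) sum over the shifts defining $\mathfrak{P}_i$. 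The main obstacle I anticipate is step (3): verifying that the grading shifts come out exactly as $\langle -1\rangle[1]$ (rather than some other shift) requires being careful about the normalization conventions for levels and for $\mathfrak{P}_i$, and one may instead prefer to avoid $K_0$ entirely and argue the shift directly by computing $\Hom(\mathfrak{P}_i\langle -1\rangle, \Delta\mathfrak{P}_i)$ using adjunctions (Lemmas \ref{lem:dualPi_ex}, \ref{lem:dualotherside}) together with $\Delta\sigma_i\Delta^{-1} = \sigma_j$ to reduce to a self-Hom computation on a single $P_j$ — this latter route may in fact be the cleanest, as it sidesteps any delicacy about the $W$-action on $K_0$ and keeps everything in the homotopy category.
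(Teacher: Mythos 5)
Your proposal follows essentially the same route as the paper: Lemmas \ref{lemma:11b} and \ref{lemma:11c} give $\Delta P_i\in\calK^1$, and then the Grothendieck-group (Burau) specialization identifies $\Delta P_i$ as a shift of $P_j$ with $j$ given by the diagram automorphism induced by $w_0$. The only point worth tightening is that your intermediate claim that $\Delta$ ``permutes the indecomposables $\{P_r\langle\ast\rangle[\ast]\}$'' is not needed and not immediate (a single $t$-slice contains many other indecomposables, e.g.\ root complexes); what actually closes the argument, as in the paper, is that a complex concentrated in one $t$-slice is determined by its class in $K_0$, and that class here is $\pm$ a simple root.
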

\begin{proof}
Lemma \ref{lemma:11b} ensures that $\Delta P_i\in \calK^{[0,1]}$, and Lemma \ref{lemma:11c} shows that since $\sigma_i$ is a descent of $\Delta$, we have $\Delta P_i\in \calK^1$. 

Now, after passing to the Grothendieck group and setting $q=-1$, $\Delta$ acts as the long element of $W$.  In particular, the action of $\Delta$ on the $q=-1$ Grothendieck group permutes the simple roots according to an automorphism of the Dynkin diagram. 
Since the complex $\Delta P_i\in \calK^1$, the minimal complex is completely determined by its image in the Grothendieck group.  Since the image of $\Delta P_i$ in the Grothendieck group is a simple root $\alpha_j$, it follows that $\Delta P_i$ is isomorphic to (a shift of) $P_j$.
\end{proof}

The following Lemma also essentially appears in \cite{BT}.
\begin{lemma} \label{lemma:BT}
Let $\beta=\beta_k\cdots \beta_1\in \B_W^+$ be right-greedy.  Then 
\[
	E_k^{+}(\beta\cdot (\oplus_{\sigma_i\in D_R^+(\beta_1)}P_i)) = D_L(\pi(\beta_k)).
\]
\end{lemma}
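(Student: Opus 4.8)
The plan is to argue by induction on $k$, the number of factors in the right-greedy normal form, using Proposition~\ref{prop:11a} together with Lemmas~\ref{lemma:11b}, \ref{lemma:11c} and \ref{lemma:DeltaShift}. This parallels the dual statement Proposition~\ref{prop:2}, except that one keeps track of descent sets rather than only of the simple factors themselves, and works with $t$-slices in place of baric slices.

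\emph{Base case $k=1$.} Here one must show $E_1^+\bigl(\beta_1\cdot(\oplus_{\sigma_i\in D_R^+(\beta_1)}P_i)\bigr)=D_L(\pi(\beta_1))$. By Lemma~\ref{lemma:11c}, the condition $\sigma_i\in D_R^+(\beta_1)$ is exactly the condition $\beta_1 P_i\in\calK^1$, so the complex $\beta_1\cdot(\oplus_{\sigma_i\in D_R^+(\beta_1)}P_i)$ lies entirely in $\calK^1$. I would then compare it with $\beta_1\cdot(\oplus_{r\in I}P_r)$, which by Lemma~\ref{lemma:11b} lies in $\calK^{[0,1]}$ and whose top $t$-slice is $\oplus_{r\in I}\tau^1(\beta_1 P_r)$. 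For $\sigma_r\in D_R^+(\beta_1)$ the summand $\beta_1 P_r$ is already at level $1$; for $\sigma_r\notin D_R^+(\beta_1)$ one checks that $\beta_1 P_r\in\calK^0$ (it is indecomposable, lies in $\calK^{[0,1]}$ and, by Lemma~\ref{lemma:11c}, is not in $\calK^1$; that it actually lies in $\calK^0$ follows from the usual inductive analysis of $\sigma_j$ acting on a minimal linear complex, writing $\beta_1=\sigma_j\mu$). Hence $\beta_1\cdot(\oplus_{\sigma_i\in D_R^+(\beta_1)}P_i)$ and $\beta_1\cdot(\oplus_{r\in I}P_r)$ have the same top $t$-slice, and since $E_1^+$ of a complex depends only on its level-$1$ part, Proposition~\ref{prop:11a} gives the claim.

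\emph{Inductive step.} Write $\beta=\beta_k\cdot\beta'$ with $\beta'=\beta_{k-1}\cdots\beta_1$ right-greedy, and set $Y=\beta'\cdot(\oplus_{\sigma_i\in D_R^+(\beta_1)}P_i)$. By the inductive hypothesis $E_{k-1}^+(Y)=D_L(\pi(\beta_{k-1}))$, and right-greediness gives $D_R(\pi(\beta_k))\subseteq D_L(\pi(\beta_{k-1}))=E_{k-1}^+(Y)$. Since a simple positive braid raises $t$-levels by at most one (Lemma~\ref{lemma:11b}), $\beta_k Y$ has top level at most $k$, and its level-$k$ part can only come from the level-$(k-1)$ part $\tau^{k-1}(Y)$ of $Y$. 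After a homological shift reducing to the heart, one is then exactly in the setting of Proposition~\ref{prop:11a}, with the role of $\oplus_{r\in I}P_r$ now played by the minimal linear complex $\tau^{k-1}(Y)$, which by $E_{k-1}^+(Y)\supseteq D_R(\pi(\beta_k))$ contains a copy of $P_i$ in the relevant cohomological position for every $\sigma_i\in D_R^+(\beta_k)$. Re-running the chain-map-restriction and homotopy argument of Proposition~\ref{prop:11a} with this seed shows that $\beta_k Y$ has a nonzero level-$k$ part and $E_k^+(\beta_k Y)=D_L(\pi(\beta_k))$, as desired.

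\emph{Main obstacle.} The genuine content is the inductive step, where one must re-run the delicate alternating argument of Proposition~\ref{prop:11a} (restrict a chain map out of $\mathfrak{P}_i\langle -k\rangle$ to one level lower, apply the induction hypothesis, and either conclude directly or push the problem through a braid relation when the restricted map is nullhomotopic), but now with the seed $\oplus_r P_r$ replaced by the top $t$-slice $\tau^{k-1}(Y)$ of a complicated complex. Two inputs keep this under control: first, that $\tau^{k-1}(Y)$ genuinely contains the projectives $P_i$, $\sigma_i\in D_R^+(\beta_k)$, in the correct cohomological degree --- this is where greediness is used, via $D_R(\pi(\beta_k))\subseteq E_{k-1}^+(Y)$; and second, that no spurious contributions to level $k$ can arise, which follows from the confinement $\beta_k\cdot\calK^{[a,b]}\subseteq\calK^{[a,b+1]}$ coming from Lemma~\ref{lemma:11b}. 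The auxiliary claim in the base case --- that $\beta_1 P_r\in\calK^0$ when $\sigma_r\notin D_R^+(\beta_1)$ --- is a minor point, handled by the same induction on $\ell(\pi(\beta_1))$ used throughout Section~\ref{sec:metric}.
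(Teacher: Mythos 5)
Your argument is correct and follows the same route as the paper, whose entire proof of this lemma consists of the remark that the base case is a direct consequence of Lemma~\ref{lemma:11c} (read together with Proposition~\ref{prop:11a}) and that the general case then follows by induction on $k$. You have in fact supplied more detail than the authors do: your observations that right-greediness propagates the containment $D_R(\pi(\beta_k))\subseteq D_L(\pi(\beta_{k-1}))=E_{k-1}^+(Y)$, that $E_k^+$ of a complex in $\calK^{[0,k]}$ depends only on its top $t$-slice, and that the level-$k$ slice of $\beta_k Y$ is governed by $\beta_k$ acting on the level-$(k-1)$ slice of $Y$, are precisely the points the paper leaves implicit.
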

Moreover, the conclusion remains the same if we replace  be the same if we replace $\oplus_{i\in D_R^+(\pi(\beta_1))}P_i$ by $\oplus_{r\in I}P_r$.
\begin{proof}
The equality follows by induction once the base case $k=1$ case is proven, and that case is a direct consequence of Lemma \ref{lemma:11c}.
\end{proof}

As a corollary, we obtain an analogous statement relating the greedy decomposition of $\beta\in \B_W^+$ to the action of $\beta$ on $( \oplus_{i}P_i)$.

\begin{corollary} \label{cor:5}
Let $\beta=\beta_1\cdots \beta_k\in \B_W^+$ be left-greedy.  Then
\[ 
 E_0^{-}(\beta\cdot \oplus P_i) = I - D_L(\pi(\beta_1)).
\]
\end{corollary}

\begin{proof}
First, notice that $\Delta^{-k}\beta\in \B_W^-$.  Set 
\[
	\alpha_i = \Delta^{-k+i} (\Delta^{-1} \beta_i) \Delta^{k-i},
\]
and note that
\[
	\Delta^{-k}\beta = \alpha_1 \alpha_2\dots \alpha_k
\]
is right-greedy.
From the negative braid analog of Lemma \ref{lemma:BT}, we see that 
\[
	E_{-k}^-(\Delta^{-k}\beta(\oplus P_i)) = D_L(\alpha_1).
\]
Now, it is also straightforward to see that
\[
E_{-k}^-(\Delta^{-k}\beta (\oplus P_i))=w_0^{-k}(E_{0}^-(\beta  \oplus P_i))w_0^k,
\]
where $w_0$ is the long element of $W$.
Thus
\[
E_{0}^-(\beta  \oplus P_i)=	w_0^{k}D_L(\alpha_1)w_0^{-k}.
\]
Since 
\[
	D_L(\pi(\alpha_1)) = w_0^{k-1}D_L(w_0^{-1}\pi(\beta_1))w_0^{-k+1},
\]
it follows that
\[
	E_{0}^-(\beta  \oplus P_i) = w_0D_L(\pi(\beta_1))w_0^{-1} = I - D_L(\pi(\beta_1)),
\]
as desired.
\end{proof}

%
\subsubsection{General braids} \label{subsec:genbraids_ex}
%

We have a situation very similar to Section \ref{subsec:genbraids_or}. Indeed, recall from Section \ref{subsec:prelimWordLength} that every braid $\beta$ can be written as 
\[
	\beta = \beta^-\cdot \beta^+,
\]
with $\beta^+\in \B_W^+$, $\beta^-\in \B_W^-$.  Furthermore, we may write $\beta^+=\beta^+_1\cdots\beta^+_k$ left-greedy, and $\beta^-=\beta^-_l\cdots \beta^-_1$ right-greedy, and $\gcd(\beta_1^+,\beta_1^-)=1$.

We can now deduce the following statement for arbitrary $\beta\in \B_W$. We also re-emphasize the point that in order to read the descents of a braid $\beta$, it suffices to compute the minimal complexes of $\beta$ on a finite number of objects (namely, the indecomposable projective modules $P_r$; the computation of minimal complexes, in turn, requires a finite number of linear algebraic computations.

\begin{prop}\label{prop:4}
Let $\beta = \beta^-\beta^+$ be a reduced minimal $(-,+)$ decomposition, with 
\[
	\beta^- = \beta_l^- \dots \beta_1^-
\]
right-greedy.
Then $E_l^-(\beta\oplus P_r)=D_L(\pi(\beta_l^-))$.
\end{prop}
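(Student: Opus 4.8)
The plan is to run the argument of the dual statement Proposition~\ref{prop:3} in the setting of the canonical $t$-structure, with Corollary~\ref{cor:5} and Lemma~\ref{lemma:BT} in the roles played there by Corollary~\ref{cor:6} and Proposition~\ref{prop:2}, and Lemma~\ref{lemma:11b} in the role of Lemma~\ref{lemma:technical1a}. The cleanest route conjugates the computation by a power of $\Delta$ so as to reduce to a \emph{positive} braid, where Corollary~\ref{cor:5} applies directly. In a reduced minimal decomposition $\beta=\beta^-\beta^+$ with $\beta^-=\beta_l^-\cdots\beta_1^-$ right-greedy of canonical length $l$, the braid $\Delta^l\beta^-$ lies in $\B_W^+$, and hence so does $\Delta^l\beta=(\Delta^l\beta^-)\beta^+$. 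By Lemma~\ref{lemma:DeltaShift}, $\Delta^l$ acts on $\calK$ by shifting the levels of the canonical $t$-structure and applying the diagram automorphism induced by $w_0$; since $\oplus_r P_r$ is $w_0$-invariant, this expresses $E_l^-(\beta\oplus_r P_r)$ in terms of $E_0^-(\Delta^l\beta\oplus_r P_r)$, up to conjugation by a power of $w_0$, exactly parallel to the identity $E_{-k}^-(\Delta^{-k}\beta\oplus P_i)=w_0^{-k}E_0^-(\beta\oplus P_i)w_0^k$ used in the proof of Corollary~\ref{cor:5}.

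Now $\Delta^l\beta$ is positive, so Corollary~\ref{cor:5} identifies $E_0^-(\Delta^l\beta\oplus_r P_r)$ with $I-D_L(\pi((\Delta^l\beta)_1))$, where $(\Delta^l\beta)_1$ is the first factor of the left-greedy normal form of $\Delta^l\beta$. Two purely combinatorial facts about the standard Garside structure then finish the argument. First, the left-greedy normal forms of $\Delta^l\beta^-$ and of $\beta^+$ concatenate --- without merging at the junction --- to the left-greedy normal form of $\Delta^l\beta$; this is precisely where the reduced-minimal hypothesis $D_R(\pi(\beta_1^-))\cap D_L(\pi(\beta_1^+))=\emptyset$ is used, and it gives $(\Delta^l\beta)_1=(\Delta^l\beta^-)_1$. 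Second, the standard-monoid analog of Lemma~\ref{lemma:10} together with Lemma~\ref{lemma:DeltaShift} identifies $(\Delta^l\beta^-)_1$ with a definite $\Delta$-conjugate of $\beta_l^-$, whose left descent set is carried, under the transport from the previous paragraph, precisely onto $D_L(\pi(\beta_l^-))$; this is the same bookkeeping with $w_0$-conjugation that appears at the end of the proof of Corollary~\ref{cor:5}. Combining these facts yields $E_l^-(\beta\oplus_r P_r)=D_L(\pi(\beta_l^-))$.

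A more structural alternative, closer in spirit to the proof of Proposition~\ref{prop:3}, avoids conjugation: put $Y=\beta^+(\oplus_r P_r)$, which lies in $\calK^{[0,1]}$ by Lemma~\ref{lemma:11b}, and apply the triangulated autoequivalence $\beta^-$ to the $t$-truncation triangle $\calK^0(Y)\to Y\to \calK^1(Y)\to \calK^0(Y)[1]$. Using the negative analog of Lemma~\ref{lemma:11b}, the term $\beta^-\calK^1(Y)$ cannot contribute to the extremal $t$-slice of $\beta^- Y=\beta(\oplus_r P_r)$, so that slice coincides with the extremal slice of $\beta^-\calK^0(Y)$; since $\calK^0(Y)\in\calK^0$ has $E_0^-(\calK^0(Y))=I-D_L(\pi(\beta_1^+))$ by Corollary~\ref{cor:5}, and since $D_R(\pi(\beta_1^-))\subseteq I-D_L(\pi(\beta_1^+))$ by the reduced-minimal hypothesis, the claim reduces to the assertion that the extremal slice of $\beta^-$ applied to an object of $\calK^0$ is governed solely by its set of terminal indecomposable projectives. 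Proving this --- the standard-monoid analog of the combination of Lemma~\ref{lemma:technical1b} and Corollary~\ref{cor:nu} --- is the main obstacle on this route: it requires rerunning the chain-map/homotopy bookkeeping from the proofs of Proposition~\ref{prop:11a} and Lemma~\ref{lemma:11c}, checking that a nonzero chain map from $\mathfrak{P}_j\langle -l\rangle$ into $\beta^- Z$ --- equivalently, by adjunction, a chain map from the positive-braid complex $(\beta^-)^{-1}\mathfrak{P}_j\langle -l\rangle$ into $Z$ --- cannot be killed by a homotopy coming from the lower cohomology of $Z$. Taking the $\Delta$-conjugation route sidesteps this difficulty, leaving only the combinatorial bookkeeping of the previous paragraph, which is of the same nature as that already in the proof of Corollary~\ref{cor:5}.
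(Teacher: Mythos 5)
Your proposal is correct, but your preferred route is not the one the paper takes: the paper's proof is essentially your second, ``structural'' alternative, compressed into two lines --- Corollary \ref{cor:5} applied to $\beta^+$ shows $D_R(\pi(\beta_1^-))\subset E_0^-(\beta^+\oplus P_r)$, and then the negative-braid analogue of Lemma \ref{lemma:BT}, applied to the right-greedy $\beta^-$ acting on the complex $\beta^+(\oplus P_r)$, gives the claim. The ``main obstacle'' you flag on that route (that the extremal slice of $\beta^-$ applied to an object of $\calK^0$ depends only on the set of terminal indecomposable projectives) is a real point, but it is precisely the content of Lemma \ref{lemma:BT}, whose inductive step already acts on general complexes and which the paper treats as established; so the paper cites it rather than rerunning the homotopy bookkeeping. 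Your $\Delta^l$-conjugation route is a genuine, and in one respect cleaner, alternative: $\Delta^l\beta$ is positive, Corollary \ref{cor:5} applies to it \emph{verbatim} (acting on $\oplus_r P_r$, with no implicit extension to general complexes), and the reduced-minimal hypothesis $D_R(\pi(\beta_1^-))\cap D_L(\pi(\beta_1^+))=\emptyset$ is exactly the left-greedy condition at the junction of the normal forms of $\Delta^l\beta^-$ and $\beta^+$. Two small points of care in writing it up: the first left-greedy factor of $\Delta^l\beta$ is a $\Delta$-power conjugate of $\Delta\beta_l^-$ (the complement of $(\beta_l^-)^{-1}$ in $\Delta$), not of $\beta_l^-$ itself, and the complementation $I-(\cdot)$ appearing in Corollary \ref{cor:5} is undone by the identity $I-D_L(\pi(\Delta\beta_l^-))=\phi\bigl(D_L(\pi(\beta_l^-))\bigr)$, where $\phi$ is the diagram automorphism of Lemma \ref{lemma:DeltaShift}, after which the powers of $\phi$ coming from transporting $\mathfrak{P}_i$ across $\Delta^l$ cancel. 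Both are routine and of the same nature as the end of the proof of Corollary \ref{cor:5}, as you note.
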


\begin{proof}
Corollary \ref{cor:5} shows that $D_R(\beta_1^-)\subset E_0^-(\beta^+\oplus P_r)$. Then the conclusion follows from Lemma \ref{lemma:BT}.
\end{proof}

This completes the proof of Theorem \ref{thm:2}.

%
\section{Standard and dual word-length metrics and a conjecture of Digne-Gobet}
\label{sec:comparing}
%

It appears as a very natural question to ask what kind of mutual control both gradings can have, giving, for example, more information about the length in the usual or the dual generators. We give here a specific result that solves a conjecture of Digne and Gobet \cite[Conjecture 8.7]{DigneGobet}, that can be restated as follows.

\begin{theorem} \label{thm:DGConj}
Let $\beta\in \calBe^+$, then $\beta \in [\Delta^{-1},\Delta]$ (that is, $\beta=\beta^+\beta^-$ with $\beta^+$, $(\beta^-)^{-1}\in \mathcal{W}^+$).
\end{theorem}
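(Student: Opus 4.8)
The plan is to deduce the statement from Theorem \ref{thm:2}. By that theorem, $\calL'_+$ and $\calL'_-$ count, respectively, the number of $\mathcal{W}^+$-letters and $\mathcal{W}^-$-letters in a minimal-length expression in the Weyl alphabet $\mathcal{W}^+\cup\mathcal{W}^-$; writing $\beta=\beta^+\beta^-$ as in Section \ref{subsec:prelimWordLength}, with $L_{\mathcal{W}}(\beta)=L_{\mathcal{W}}(\beta^+)+L_{\mathcal{W}}(\beta^-)$, concatenating greedy expressions shows that this is a minimal expression with exactly $L_{\mathcal{W}}(\beta^+)$ positive letters and $L_{\mathcal{W}}(\beta^-)$ negative letters, so $L_{\mathcal{W}}(\beta^+)=\calL'_+(\beta)$ and $L_{\mathcal{W}}(\beta^-)=\calL'_-(\beta)$. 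Hence $\beta\in[\Delta^{-1},\Delta]$ (i.e. $\beta^+,(\beta^-)^{-1}\in\mathcal{W}^+$) will follow once we establish
\[
\calL'_+(\beta)\le 1 \quad\text{and}\quad \calL'_-(\beta)\le 1 \quad\text{for all }\beta\in\calBe^+ .
\]
It is worth recording the extreme case: $\gamma$ itself is the positive lift of a Coxeter element, which has a reduced expression as a product of $n$ distinct simple reflections, so $\gamma\in\mathcal{W}^+$ is a simple braid; thus $\calL'_+(\gamma)=1$, $\calL'_-(\gamma)=0$, and $\gamma(\calK^0)\subset\calK^{[0,1]}$.

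The main tool will be the minimal complex $B$ of $(A_\Gamma,A_\Gamma)$-bimodules attached to $\beta$, together with the bimodule techniques of Section \ref{sec:prooftoolsdualpp2}. Since $\beta\in\calBe^+$, Corollary \ref{cor:64} gives that $B$ has nonzero $\vec{o}$-baric slices only in degrees $0$ and $1$, with $\calK_0(B)\cong A_\Gamma$. The claim I would prove is that this forces $B\in\calK^{[-1,1]}$ for the canonical $t$-structure. Granting it, one concludes as follows: inspecting the extreme $t$-slices of the modules $\beta(P_i)\cong B\otimes_{A_\Gamma}P_i$ for varying $i$, in the spirit of Lemmas \ref{lemma:dualbimod1}--\ref{lemma:dualbimod2}, the bound $B\in\calK^{[-1,1]}$ translates into $\calL'_+(\beta)\le 1$ and $\calL'_-(\beta)\le 1$. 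So the whole theorem reduces to showing that the baric-bounded complex $B$, whose bottom baric slice is $A_\Gamma$, is squeezed into $t$-degrees $[-1,1]$.

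To bound $B$ in the $t$-structure I would use the baric truncation triangle $A_\Gamma\cong\calK_0(B)\to B\to\calK_1(B)\to A_\Gamma[1]$: since $A_\Gamma$ lies in the $t$-heart $\calK^0$, a cohomology long exact sequence reduces everything to $\calK_1(B)\in\calK^{[-1,1]}$. To control this single baric slice I would bring in the factorization $\gamma=\beta^\vee\beta$ from Definition \ref{def:check}, which on bimodule complexes reads $B_\gamma=B_{\beta^\vee}\otimes_{A_\Gamma}B$ with $B_\gamma$ the bimodule complex of the \emph{simple} braid $\gamma$, hence $B_\gamma\in\calK^{[0,1]}$; tensoring the baric triangle of $B$ on the left by $B_{\beta^\vee}$ and comparing with $B_\gamma$ — using that $B_{\beta^\vee}$ is itself baric-$[0,1]$ with $\calK_0(B_{\beta^\vee})\cong A_\Gamma$ (again Corollary \ref{cor:64}) — expresses $\calK_1(B)$ in terms of the $t$-bounded objects $B_\gamma$, $B_{\beta^\vee}$ and $A_\Gamma$. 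The point that makes the degree bookkeeping close up, and which I expect to be the main obstacle, is exactly the \emph{compatibility} of the baric and $t$-structures: $\calK^0\cap\calK_0$ is equivalent, by Proposition \ref{prop:equiv} and Theorem \ref{thm:indecCplxes} (and their bimodule counterparts, cf. Section \ref{sec:prooftoolsdualpp2}), to the category of representations of the hereditary quiver $\Gamma_{\vec{o}}$, so that $\mathrm{Ext}^{\geq 2}$ between distinct baric slices vanishes and no extra $t$-spread is created. A more hands-on variant of the same estimate is to argue directly on root complexes: by Proposition \ref{prop:toolsdualpp} and Lemma \ref{lemma:6} each $\beta\cdot\calC_t$ has baric slices that are direct sums of shifts of root complexes, hence lie (up to a $\langle 1\rangle[1]$-shift) inside $\calK^0_0$, and one then propagates this information across the at most two baric levels occupied by $\beta$; verifying that this propagation stays within $t$-degrees $[-1,1]$ is the step I would expect to require the most care.
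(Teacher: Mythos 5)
Your overall skeleton matches the paper's: reduce via Theorem \ref{thm:2} to showing $\calL'_+(\beta)\le 1$ and $\calL'_-(\beta)\le 1$, and get there by analysing the minimal complex of bimodules $B$ attached to $\beta$, whose baric slices are controlled by Corollary \ref{cor:64}. But there is a genuine gap in the middle, in two respects. First, a quantitative one: the bound you aim for, $B\in\calK^{[-1,1]}$ as a complex of bimodules, is too weak to give the theorem. A bimodule summand $P_iQ_j\{-1\}\langle k\rangle[m]$ of level $m-k$ produces, upon tensoring with $P_r$, module summands of level $m-k-\epsilon$ with $\epsilon\in\{-1,0,1\}$, because $Q_jP_r$ is spread over path-length degrees $-1,0,1$ (see \eqref{eq:qipjdual}). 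So $B\in\calK^{[-1,1]}$ only yields $\beta P_r\in\calK^{[-2,2]}$, hence $\calL'_{\pm}(\beta)\le 2$ and $\beta\in[\Delta^{-2},\Delta^{2}]$. What is actually needed --- and what the paper proves --- is the sharper statement that $B$ lies in the \emph{heart} $\calK^0$ of the canonical $t$-structure on bimodule complexes, i.e.\ every summand $P_iQ_j\{-1\}\langle n\rangle$ sits in homological degree $-n$.

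Second, your proposed mechanism does not engage with the real obstruction. Once one knows from Corollary \ref{cor:64} that all non-identity chain summands of $B$ carry orientation shift $\{-1\}$, the differential of the minimal complex can only have components of path-length degree $0$, $1$ or $2$ between them, and linearity of $B$ is exactly the vanishing of the degree-$0$ and degree-$2$ components. Degree $0$ is easy (such maps are multiples of the identity, impossible in a minimal complex), but ruling out the degree-$2$ components $P_iQ_j\langle n\rangle\to P_{i'}Q_{j'}\langle n-2\rangle$ is the crux, and neither the step of tensoring the baric truncation triangle by $B_{\beta^\vee}$ (baric truncation does not commute with $-\otimes B_{\beta^\vee}$ in a way that isolates $\calK_1(B)$) nor the asserted $\Ext^{\ge 2}$-vanishing between baric slices (not established anywhere in the paper, and not obviously true) addresses it. The paper's device here is different and essential: enlarge $\Gamma$ to $\widehat{\Gamma}$ by adjoining a vertex $x$ joined to every vertex, so that $Q_jP_x\cong\C$ for all $j$; acting on $P_x$ kills all degree-$2$ components, so $\beta P_x$ splits as the direct sum of its $t$-slices, and indecomposability of $\beta P_x$ together with the summand $P_x\{0\}\langle 0\rangle$ coming from $A_{\widehat{\Gamma}}$ forces $\beta P_x\in\calK^0$, whence no degree-$2$ differentials occur in $B$ and $B$ is linear. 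Without a substitute for this step your argument does not close.
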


\begin{proof}
To prove the above theorem, rather than studying the action of $\beta$ on linear complexes of $A_{\Gamma}$-modules, we will focus on the gradings on the category of complexes of bimodules.  We wish to show that the complex of bimodules associated to $\beta$ is in the heart of the canonical $t$-structure on the homotopy category of $(A_{\Gamma},A_\Gamma)$ bimodules; that is, that in the minimal complex of $\beta$, all terms of the form
$P_iQ_j\{m\}\langle n\rangle$ lie in homological degree $-n$.

The proof has two main steps.
The first point is to note that the assumption $\beta\in \calBe^+$ implies that in the minimal complex of bimodules, all terms of the form $P_iQ_j\{m\}\langle n\rangle$ have $m=-1$.  (This is a direct consequence of Lemmas \ref{lemma:dualbimod1} and \ref{lemma:dualbimod2}.)

Having established this, it now follows that the only maps which contribute to the boundary in the minimal complex of $\beta$ are either maps into or out of the identity bimodule $A_\Gamma$, or are bimodule maps of orientation degree $\{0\}$.  The possible orientation degree $\{0\}$ bimodule maps are then the following:
\begin{itemize}
\item $P_iQ_j \{-1\}\langle n \rangle \rightarrow P_iQ_j\{-1\}\langle n \rangle$;
\item $P_iQ_j\{-1\} \langle n \rangle \rightarrow P_iQ_{j'} \{-1\}\langle n-1 \rangle$, with the edge between $j$ and $j'$ oriented from $j'$ to $j$;
\item $P_iQ_j\{-1\} \langle n \rangle \rightarrow P_{i'}Q_{j} \{-1\}\langle n-1 \rangle$, with the edge between $i$ and $i'$ oriented from $i$ to $i'$;
\item $P_iQ_j\{-1\}\langle n \rangle \rightarrow P_{i'}Q_{j'}\{-1\}\langle n-2 \rangle$, with oriented edges $i$ to $i'$ and $j'$ to $j$.
\end{itemize}

The complex of bimodules for $\beta$ is in the heart of the canonical $t$-structure if and only if the first and last kinds of maps do not occur in the minimal complex.  Since the space of orientation-degree $\{0\}$ maps is such that
\[
	\dim \Hom(P_iQ_j, P_iQ_j) = 1,
\]
all maps of the first kind are multiplies of the identity map $1: P_iQ_j \{-1\}\langle n \rangle \rightarrow P_iQ_j\{-1\}\langle n \rangle$, and hence do not occur in the minimal complex.  Thus we must show that maps of the last kind
\[
	P_iQ_j\{-1\}\langle n \rangle \rightarrow P_i'Q_j'\{-1\}\langle n-2 \rangle
\]
do not appear in the boundary map of the minimal complex of $\beta$.

In order to show that these maps do not appear in the minimal complex, we need a little bit of setup.  Starting from the Dynkin diagram $\Gamma$, we construct a larger graph $\widehat{\Gamma}$ by adding a single new vertex $x$, and connecting this new vertex to every other vertex of $\Gamma$.  The indecomposable projective (right, respectively, left) $A_{\widehat{\Gamma}}$ modules are $Q_i$, respectively $P_i$ for $i\in \Gamma$ and $Q_x$, respectively $P_x$.  Note that, by construction,
\[
	Q_i P_x \cong \C \text{ for all } i \in \Gamma.
\]

The inclusion of graphs $\Gamma \hookrightarrow \widehat{\Gamma}$ induces an inclusion of zigzag algebras 
\[
	A_{\Gamma} \hookrightarrow A_{\widehat{\Gamma}}.
\]
The induction functor
\[
	\mbox{Ind}:A_{\Gamma}-\mbox{mod} \longrightarrow A_{\widehat{\Gamma}}-\mbox{mod} ,
\]
sends the indecomposable projective $A_{\Gamma}$ module $P_i = A_\Gamma e_i$ to the 
indecomposable projective $A_{\widehat{\Gamma}}$ module $A_{\widehat{\Gamma}} e_i= P_i$.  (Note that we have abused notation slightly here by using the same notation for modules over different algebras.)
Under the induction functor, the homotopy category of graded projective $A_{\Gamma}$-modules embeds as a full triangulated subcategory of the homotopy category of graded projective $A_{\widehat{\Gamma}}$ modules; similarly, the homotopy category of graded projective $(A_{\Gamma},A_{\Gamma})$-bimodules embeds as a full triangulated subcategory of the homotopy category of graded projective $(A_{\widehat{\Gamma}},A_{\widehat{\Gamma}})$-bimodules.  Moreover, a complex of graded, projective $A_{\Gamma}$ modules (or bimodules) is a minimal complex if and only if it is minimal when induced and regarded as a complex of graded, projective $A_{\widehat{\Gamma}}$ modules.

Now, in order to show that maps of $(A_\Gamma,A_\Gamma)$ bimodules of the form 
\[
	P_iQ_j\{-1\}\langle n \rangle \rightarrow P_i'Q_j'\{-1\}\langle n-2 \rangle
\]
do not appear in the minimal complex of $\beta$, we will consider the complex for $\beta$ in the homotopy category of $(\widehat{A}_\Gamma,\widehat{A}_\Gamma)$ bimodules, and show that there are no such maps there.  The reason for this consideration is that we can study the complex of bimodules $(\widehat{A}_\Gamma,\widehat{A}_\Gamma)$ by letting it act on the new indecomposable project module $P_x$.

Now we consider the complex of projective $A_{\widehat{\Gamma}}$ modules $\beta P_x$.  
The minimal complex of $\beta P_x$ has a unique summand of the chain group isomorphic to $P_x\{0\}\langle 0 \rangle$, coming from the unique summand $A_{\widehat{\Gamma}}$ in the minimal complex of bimodules for $\beta$ tensored with $P_x$.  We consider what happens to each of the kinds of bimodule maps considered above when we tensor with the left $A_{\widehat{\Gamma}}$-module $P_x$.
\begin{itemize}
\item Consider first bimodule maps of the form $P_iQ_j\{-1\} \langle n \rangle \rightarrow P_iQ_{j'} \{-1\}\langle n-1 \rangle$, with the edge between $j$ and $j'$ oriented from $j'$ to $j$.  When we act on $P_x$, we obtain the 0 map
\[
	P_i\{-1\}\langle n \rangle \xrightarrow{0} P_i \{-1\} \langle n-1 \rangle.
\]
\item Consider bimodule maps of the form $P_iQ_j\{-1\} \langle n \rangle \rightarrow P_{i'}Q_{j} \{-1\}\langle n-1 \rangle$, with the edge between $i$ and $i'$ oriented from $i$ to $i'$.  When we act on $P_x$, we obtain
\[
	P_i\{-1\}\langle n \rangle \xrightarrow{f} P_{i'} \{-1\} \langle n-1 \rangle.
\]
where
\[
	f\in \Hom(P_i,P_{i'}\langle -1 \rangle) \cong \C.
\]
\item Finally, consider maps of the form $P_iQ_j\{-1\}\langle n \rangle \rightarrow P_i'Q_j'\{-1\}\langle n-2 \rangle$, with oriented edges $i$ to $i'$ and $j'$ to $j$.  When we act on $P_x$, we obtain 
\[
	P_i\{-1\}\langle n \rangle \xrightarrow{0} P_i' \{-1\} \langle n-2 \rangle.
\]
Note that this map must be zero, because
\[
	\Hom(P_i,P_{i'}\langle -2 \rangle) \cong 0
\]
when $i\neq i'$.
\end{itemize}
There are two important things to note regarding the above maps. The first is that none of them are isomorphisms, and thus the complex of modules obtained by taking the minimal complex for $\beta$ and acting on $P_x$ term-by-term is already minimal.  The second thing to note is that, since no non-zero maps of the form $P_i \rightarrow P_i\{-1\}\langle -2 \rangle$ are obtained when we act on $P_x$, the complex $\beta P_x$ is isomorphic to a direct sum of its $t$-slices:
\[
	\beta P_x \cong  \oplus_l \calK^l(\beta P_x)
\]
Since $P_x$ itself is an indecomposable complex, and $\beta$ is an equivalence, it follows that $\beta P_x$ is an indecomposable complex, and so all but one of the summands above must be 0; thus $\beta P_x \cong \calK^l (\beta P_x)$ for some $l\in \Z$.  Since the unique term of the form $P_x$ appears in the minimal complex of $\beta P_x$, we must have $l=0$.  Now from this it follows that no map of the form $P_iQ_j\{-1\}\langle n \rangle \rightarrow P_i'Q_j'\{-1\}\langle n-2 \rangle$ can appear in the minimal complex for $\beta$.

This proves that the minimal complex of bimodules associated to $\beta$ is in the heart of the canonical $t$-structure on the homotopy category of bigraded bimodules.
Using Lemma \ref{thm:2}, it follows that $\beta=\beta^+\beta^-$ with $\beta^+,(\beta^-)^{-1}\in \mathcal{W}^+$.
\end{proof}


\newcommand{\etalchar}[1]{$^{#1}$}
\providecommand{\bysame}{\leavevmode\hbox to3em{\hrulefill}\thinspace}
\providecommand{\MR}{\relax\ifhmode\unskip\space\fi MR }
\providecommand{\MRhref}[2]{%
  \href{http://www.ams.org/mathscinet-getitem?mr=#1}{#2}
}
\providecommand{\href}[2]{#2}

\end{document}